\newtheorem{theorem}{Theorem}[section]
\newtheorem{corollary}[theorem]{Corollary}
\newtheorem{lemma}[theorem]{Lemma}
\newtheorem{definition}[theorem]{Definition}
\newtheorem{proposition}[theorem]{Proposition}
\newtheorem{remark}[theorem]{Remark}
\numberwithin{equation}{section}
\numberwithin{figure}{section}
\newcommand{\N}{\mathbb{N}}
\newcommand{\R}{\mathbb{R}}
\newcommand{\abs}[1]{\left\vert #1\right\vert}
\newcommand{\norm}[2][]{\left\| #2 \right\|_{#1}}
\newcommand{\inner}[3][]{\left\langle #2 , #3\right\rangle_{#1}}
\newcommand{\supp}{\operatorname{supp}}
\newcommand{\nablasym}{{\varepsilon}}
\newcommand{\weakto}{\rightharpoonup}
\newcommand{\weaktostar}{\weakto^*}
\providecommand{\no}[1]{  \lVert  #1  \rVert }
\newcommand{\rn}{{\mathbb R}^n}
\DeclareMathOperator{\Lip}{Lip}
\renewcommand*\env@matrix[1][*\c@MaxMatrixCols c]{%
	\hskip -\arraycolsep
	\let\@ifnextchar\new@ifnextchar
	\array{#1}}
\newcommand{\Bcal}{{\mathcal{B}}}
\newcommand{\ext}[1]{\mathbb{E}^{{#1}}}
\newcommand{\lam}[1]{\lambda^{{#1}}}
\title{Unrestricted deformations of thin elastic structures interacting with fluids}%
\author{M.~Kampschulte, S.~Schwarzacher, G.~Sperone}
\begin{document}

\begin{abstract}
 In this paper we discuss the motion of a beam in interaction with fluids. We allow the beam to move freely in all coordinate directions. We consider the case of a beam situated in between two different fluids as well as the case where the beam is attached only to one fluid. In both cases the fluid-domain is time changing. The fluid is governed by the incompressible Navier-Stokes equations. The beam is elastic and governed by a hyperbolic partial differential equation. In order to allow for large deformations the elastic potential of the beam is non-quadratic and naturally possesses a non-convex state space. We derive the existence of weak-solutions up to the point of a potential collision. 
 \par\noindent
 {\bf AMS Subject Classification:} 74F10, 76D05, 35Q30, 35Q35, 35Q74, 35R35, 76D03.\par\noindent
 {\bf Keywords:} incompressible fluids,  Navier–Stokes equations, weak solution, elastic beam, time dependent
 domains, moving boundary, Bogovskij-operator.
\end{abstract}

\maketitle

\section{Introduction}

The field of fluid-structure interaction (FSI) consists of a multitude of vastly different problems, from rigid bodies slowly floating in a container, to the study of fast oscillations of bridges swinging in the wind. Applications are many and the research of the topic is extensive and far reaching. See~\cite{pironneau1994optimal,Quarteroni2000,
kamakoti2004fluid,FSIforBIO,canic2020moving} for some applications and their relation to mathematical modelling. Perhaps one of the most important mathematical benchmarks comes in the form of a one-dimensional elastic beam on top of a two-dimensional fluid reservoir (see Figure \ref{sheet2} for a sketch of that setting) and some of the most fundamental papers in the field have dealt with this particular topic. See~\cite{canic2020moving} and the references there for an overview, see also the recent advances~\cite{GraHil16,CasGraHil20}. The attention spent to this type of problems is by no means surprising, considering that any realistic solid will be at least somewhat deformable and dealing with a thin, lower-dimensional object focuses the interaction to its most essential feature, the interaction-interface. It also relates to fundamental applications of fluid flows through tubes (like blood-flow) where large deformations are known to have a most sincere effect on the dynamics~\cite{FSIforBIO,DEGLT,canic2020moving}.

In fluid-structure interactions more elaborate settings have been investigated. An ever-increasing effort has been granted the study of elastic shells in contact with a three-dimensional fluid. Relevant here is the study on weak solutions. It includes (non-linear) Koiter-shells interacting with incompressible Navier-Stokes equations~\cite{DE,DEGLT,muha2013nonlinear,
LenRuz14,BorSun15,MuhSch20}, their interaction with non-Newtonian fluids~\cite{Len14}, compressible fluids~\cite{BreSch18} and heat-conducting fluids~\cite{BreSch20,Srdan}. 

In all that effort, however the deformation is given as a graph and the interaction is restricted to happen in scalar form in a fixed direction (usually the outer normal to the reference domain) only. The so-called ``tangential interaction'' has mostly been excluded in the study of weak-solutions. In case of the beam on top of a fluid, the beam is %
commonly allowed to move vertically only (see~\cite{GraHil16,GraHil19} and the references there). While this is indeed the direction in which most of the movement is expected, it is nevertheless as artificial as restricting the solid to rigid body deformations in other FSI problems. Concerning the literature on weak solutions, the only available work seems to be~\cite{CanMuh20}. Here, however the large deformation regime is excluded as compression in tangential direction is not penalized substantially enough.

The aim of this paper is to open up the study of thin elastic structures interacting with fluids, allowing the interaction to happen at all times, at all points of the interaction surface and in all directions; including potentially large deformations of the structure (See Theorems \ref{maintheorem} and \ref{maintheorem2}). 

For that innovation we focus on a beam interacting with the two dimensional incompressible Navier-Stokes equations (as is sketched in Figure \ref{sheet2}). %
We believe it to be a good benchmark as a lot of the interesting phenomena already appear in the case of a beam, while at the same time it introduces as little as possible unrelated difficulties. Thus a thorough understanding of this setup involving large deformations and tangential interactions can serve as a crucial guide to the study of more complex situations. Indeed, looking back in history one discovers that many methods developed for the beam transfer to the more general cases, once their details are fully understood.

\begin{figure}[H]
	\centering
	\begin{tikzpicture}
	 \draw (-7,2.5) -- (-2,2.5);
	 \node[below] at (-7,2.5) {$0$};
	 \node[below] at (-2,2.5) {$\ell$};
	 \draw (0,2.5) -- (0,0) -- (5,0) -- (5,2.5);
	 \draw (0,2.5) .. controls +(2,.5) and +(-2,-.5) .. (5,2.5);
	 \draw[->] (-4.5,2.7) .. controls +(2,1) and +(-2, 1) .. node[above] {$\eta(t)$} (2.5,2.7);
	 \node at (2.5,1) {Fluid domain at time $t$};
	\end{tikzpicture}

	\caption{Fluid domain restricted by an elastic beam parametrized by $\eta(t):[0,\ell]\to \R^2$.}\label{sheet2}
\end{figure}
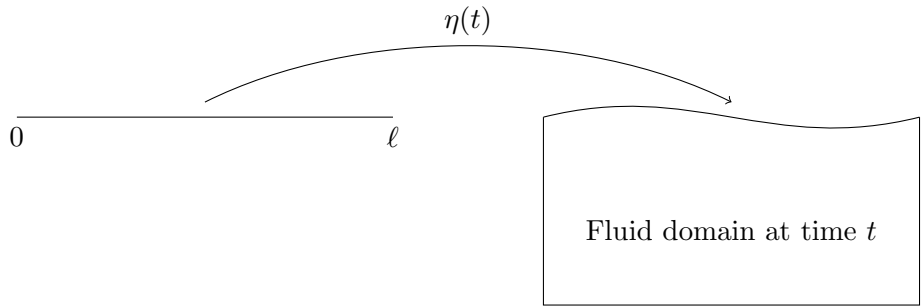
As said, we decided to take a rather specific PDE for the elastic structure.\footnote{See~\eqref{energyfunc} for our particular choice of elastic potential. See also Section \ref{ssec:outlook} where we relate our model to other models.} The main features (each relating to a substantial mathematical challenge)  of the beam equation considered here are:
\begin{enumerate}
\item The beam may deform both horizontally and tangentially. The fluid and the solid velocity are coupled with respect to both coordinate directions.
\item The beam may deform largely. This regime, which has been almost unexplored in the mathematical theory of thin structures so far, includes the penalization of compression (the elastic energy needs to go to infinity with the deformation map becoming degenerate). It includes non-linearities with negative powers.
\item The beam can be purely elastic. We allow for existence of solids that themselves are not viscous. Mathematically this means that the respective PDE for the beam alone is hyperbolic.\footnote{It is noteworthy that purely elastic solids have to be excluded in the existence theory for non measure-valued bulk solids interacting with fluids, as there viscosity is needed to stabilize the evolution of the solid in its interior, away from the fluid.}~\cite{BenKamSch20}.
\end{enumerate}

What is unavoidable to ensure injectivity a-priori, is to include a small additional higher order term. While this can be seen as a type of mathematical relaxation, these so-called second order materials are also studied directly in the mathematical modelling of more complex solid materials.

In this paper we study two different setups. The first is a beam immersed between two potentially different fluids~(Fig. \ref{sheet1}). The second is a fluid interacting with a beam at its boundary~(Fig. \ref{sheet2}). For both settings we show the existence of weak solutions to the problem for arbitrary, well posed initial data; the time-interval of existence can be prolonged provided the boundary of the fluid is not degenerating. Furthermore the setting guarantees a minimal interval of existence (see Lemma \ref{lem:nocol}). On a technical level, the second setting is derived via a limit passage of the first one, by ``removing one of the two fluids''. 

The theory of largely deforming solids involving tangential interactions we develop here is possible due to the overcoming of two major technical {\em challenges} to construct weak solutions. These challenges are (1) to obtain an approximation of the solution and (2) to show the strong convergence of the fluid-velocity by some argument of Aubin-Lions type. 

While (2) is a well known challenge in the framework of weak solutions of fluid-structure interactions (in particular involving hyperbolic structure equations)~\cite{CSS1,CSS2}, 
(1) is a peculiar problem when large deformations of solids are concerned~\cite{BenKamSch20}. Indeed, to date, the only available strategy to produce injective approximants in large deformation models of solids is a variational approach, based on a minimizing movements scheme developed in \cite{BenKamSch20}. It allows us to treat the problem directly from an energetic point of view. This strongly deviates from the more common approaches to FSI-problems, which usually involve a so called ``arbitrary Lagrangian-Eulerian map'', maps that depend strongly on the problem geometry and thus approaches that grow in complexity and difficulty with the number of directions available to and size of deformations. %

We thus emphasize that the reason for the usual restrictions of the normal direction for the interaction is mostly mathematical in nature. What is perhaps the key difficulty of fluid-structure interactions is the mismatch in description between fluid and solid. While there have been some attempts to change this paradigm, fundamentally the only practical way of describing an elastic solid is in a Lagrangian fashion, using a fixed reference configuration. On the other hand a full description of a fluid in such a way is fundamentally impossible for anything other than very short times and ultimately there is no way to avoid at least an Eulerian aspect to them.

As a result, the key problem is the injectivity of the solid deformation. This has already been a traditional concern in solid mechanics~\cite{CiaNec87,
healeyInjectiveWeakSolutions2009}. There, mathematically, injectivity is an {\em optional condition}. (One can for example construct solutions and then later verify that they are injective). In contrast, when dealing with fluid structure interaction, the injectivity is a-priori necessary to formulate the coupling with the fluid. As such it needs to be guaranteed at all points in the proof, even when just constructing approximative solutions.

Now this relates back to the problem at hand, by noting that in the classical spaces available for a-priori estimates, if one allows tangential movement, then there are {\em non-injective deformations in every neighbourhood of any injective configuration} (for instance by doubling back on a tiny interval).

Even though the construction seems only possible due to the recent advances for large deformation solids~\cite{BenKamSch20}, the technical highlight of the present paper is (2), namely the strong convergence of the fluid velocity. Here it was not possible to rely on methods developed in \cite{BenKamSch20} as we consider {\em hyperbolic} beam equations. Hence, due to the non-linear coupling, the dissipative influence of the fluid on the motion of the beam has to be used. This in general is a difficult task~\cite{LenRuz14,muha2013nonlinear,MuhSch20}. We can to some extent rely on ideas developed in these references (in particular~\cite{MuhSch20}), but in order to apply this here we need to invent substantially new analytic tools. We briefly summarize our efforts, which also might be of independent value:
\begin{itemize}
\item {\bf (Almost) solenoidal extensions of vector fields.} We introduce appropriate extension operators of given boundary values in vectors including tangential directions. This is the technical core of the construction. (See Proposition~\ref{prop:arbMeanExt}).
\item {\bf Universal Bogovskij operators.} In order to make functions solenoidal, we use so-called Bogovskij operators. We introduce here Bogovskij operators that act on all domains with certain restrictions (see Theorem~\ref{theobog}). The provided construction here allows for very precise dependencies on the potential degeneration of the fluid-domain.
\item \textbf{Differential pressure across the interface.} Instead of just focusing on divergence free test-functions and ignoring the effects of the pressure during the approximations, at each step we additionally construct the difference in total pressures as a single time-dependent scalar. This allows us to test the beam with test-functions which cannot be extended into divergence-free vector-fields, without at the same time having to deal with the bad compactness-properties of the entire pressure (See \eqref{eq:press1} and \eqref{eq:longTimedelayed-pressure}).
\end{itemize}
All three points are interrelated. In particular, the last two are used in the first one, which seems to be irreplaceable for two known critical challenges in FSI-analysis: The $L^2$-compactness of the fluid velocity and the establishment of a limit PDE (see Section~5).

 The last point seems to be the key missing ingredient in some of the previous literature and as such might have great potential for further applications. Essentially, by breaking with the standard paradigm in the analysis of weak solutions for FSI, of keeping the pressure implicit when constructing weak solutions, we are able to construct them for fluid-structure interactions {\em involving tangential motions of the structure}. The idea here was motivated by the (partial) pressure reconstructions (and decompositions)~\cite{DieWolf10} that were successfully used for the existence analysis for non-Newtonian fluids.

The layout of this paper is as follows: In Section 2 we will give a precise mathematical description of the fluid-structure interaction problem we study and state the main results, Theorems \ref{maintheorem} and \ref{maintheorem2}. Section 3 is the technical core of the paper where we will provide the universal Bogovskij operator and the solenoidal extension. Section 4 will consist of constructing an approximative solution by variational methods. In Section 5 we will proceed with the limit passage. Most of this section is dedicated to an Aubin-Lions type argument showing compactness of the fluid velocities. This finishes the proof of Theorem \ref{maintheorem}. Section 6 will be devoted to the proof of Theorem \ref{maintheorem2}, by ``removing one of the two fluids'' via another limit. Some outlook how to extend the here developed methodology to non-linear shell-like structures concludes the paper in Section~7. 

\section{Concept of solutions and main results} 

\subsection{General description of the problem}
In order to have an easy to follow setup, we will study the FSI-problem on the square domain
\[
\Omega := (0,\ell) \times \left( -\dfrac{\ell}{2}, \dfrac{\ell}{2} \right) \text{ for }\ell > 0. \] 
which will be intersected by a solid in the form of a curve connecting the points $(0,0)$ and $(\ell,0)$.

Here $\Omega$ represents a two-dimensional cavity bisected in its middle section by a one-dimensional viscoelastic beam, corresponding to a simplified one-dimensional analogue of a linear viscoelastic Koiter shell model \cite{K1}. The container is filled with two fluids in motion that deform the beam separating them, allowing for both normal and tangential displacements of the beam, whereas the remaining walls of the cavity remain fixed in time, see Figure \ref{sheet1}. 

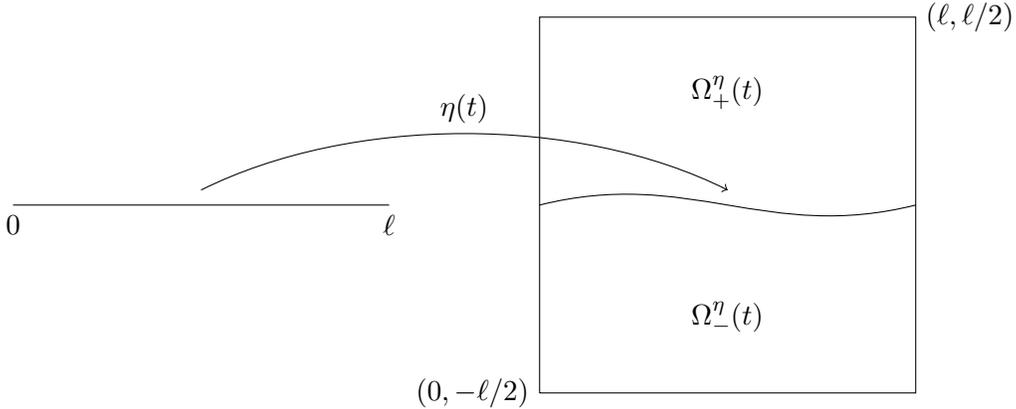
\begin{figure}[H]
	\centering
	\begin{tikzpicture}
	 \draw (-7,2.5) -- (-2,2.5);
	 \node[below] at (-7,2.5) {$0$};
	 \node[below] at (-2,2.5) {$\ell$};
	 \draw (0,0) -- (5,0) -- (5,5) -- (0,5) -- (0,0);
	 \node[left] at (0,0) {$(0,-\ell/2)$};
	 \node[right] at (5,5) {$(\ell,\ell/2)$};
	 \draw (0,2.5) .. controls +(2,.5) and +(-2,-.5) .. (5,2.5);
	 \draw[->] (-4.5,2.7) .. controls +(2,1) and +(-2, 1) .. node[above] {$\eta(t)$} (2.5,2.7);
	 \node at (2.5,1) {$\Omega^{\eta}_-(t)$};
	 \node at (2.5,4) {$\Omega^{\eta}_+(t)$};
	\end{tikzpicture}

	\caption{Two fluid domains $\Omega^\eta_{+}(t)$ and $\Omega^\eta_{-}(t)$ separated by the elastic beam parametrized through $\eta$.}\label{sheet1}
\end{figure}

The fluids and the structure are fully coupled through two coupling conditions: The no-slip boundary condition and the dynamic coupling condition describing the
balance of forces at the structure interface. The flows are driven by initial and right hand side data, which includes
a time-dependent external forcing defined over the fixed domain $\Omega$. %

The time-deformation of the beam will be described by  $\eta : [0,T) \times [0,\ell] \longrightarrow \Omega$, where $[0,\ell]$ serves as a reference configuration. This elastic beam deforms, splitting $\Omega$ into two time-dependent fluid domains which are not known \textit{a priori}. We denote those by $\Omega^{\eta}_{+}(t)$ and $\Omega^{\eta}_{-}(t)$ at each time $t \in [0,T)$.

Accordingly, %
the tangent vector to the deformed structure will be denoted by $\tau(x,t) = \partial_{x} \eta(t,x)$, and the outer
unit normals at the point $\eta(x,t)$ will be denoted by $\nu_{+}(x,t)$ and $\nu_{-}(t,x) = - \nu_{+}(t,x)$. Notice that, for every $t \in [0,T)$ and $x \in (0,\ell)$, $\nu_{+}(t,x)$ is directed to the interior of $\Omega_{-}^{\eta}(t)$ and that $\nu_{-}(t,x)$ is directed to the interior of $\Omega_{+}^{\eta}(t)$. 

The \textbf{fluids} in the two halves of the domain are both assumed to be two-dimensional, homogeneous, viscous, incompressible and Newtonian. They are however allowed to have different densities and viscosities. Their velocity fields $u_{+}$, $u_{-}$ and scalar pressures $p_{+}$, $p_{-}$ satisfy the incompressible Navier-Stokes equations in $\Omega_{+}^{\eta}(t)$ and $\Omega_{-}^{\eta}(t)$, respectively, for $t \in [0,T)$: 
\begin{align} \label{nstokes0}
\rho_{\pm} \left( \partial_{t} u_{\pm} + u_{\pm} \cdot \nabla u_{\pm} \right) - \nabla \cdot \sigma_{\pm} (u_\pm,p_\pm)= \rho_{\pm} f,\ \qquad  \nabla\cdot u_{\pm}=0\qquad\mbox{in} \ \ \Omega_{\pm}^{\eta}(t).
\end{align}
Here and in the future we use the index $\pm$ as a shorthand to consistently denote either $+$ or $-$ as an index, when dealing with otherwise identical equations for the two parts of the domain.

In the above equation $\rho_{\pm} > 0$ denote the (constant) densities of the fluids and $\sigma_{\pm} (u_\pm,p_\pm)$ are the stress tensors of the fluids given by the Newtonian law:
\begin{align*}
\sigma_{\pm}(u_\pm,p_\pm) := 2 \mu_{\pm} \nablasym(u_{\pm}) - p_{\pm} \mathbb{I}_{2} = \mu_{\pm} \left[ \nabla u_{\pm} + (\nabla u_{\pm})^{\top} \right] - p_{\pm} \mathbb{I}_{2}
\end{align*}
with $\mu_{\pm}> 0$ the (constant) kinematic viscosities of the fluids and where $\nablasym(\pm u)$ denotes the symmetrized gradient of $u_{\pm}$.

On the right-hand side of \eqref{nstokes0}, $f : [0,T) \times \Omega \longrightarrow \mathbb{R}^2$ represents an external body force (for example, gravity) acting over each of the fluids. To complete the formulation of the problem, we impose the standard no-slip boundary condition on the rigid walls of $\partial \Omega$:
\begin{align} \label{bc0}
u_{\pm} = 0 \qquad \text{on} \ \ \ [0,T) \times \partial \Omega,
\end{align}
the initial condition
\begin{align} \label{ic0}
u_{\pm}(0,z) = u_0(z) \ \ \ \text{ for all } z \in \Omega_{\pm}(0) \, ,
\end{align}
as well as a coupling with the solid that will be described later.

$\bullet$ The \textbf{elastodynamics of the solid} will be given in terms of displacement with respect to the its parametrization over $[0,\ell]$. Please see the Section \ref{ssec:outlook} for a formal derivation of the model.
 The strong formulation of the equations is given in terms of a differential operator $\mathcal{L}_{\Gamma}$ which can be obtained as the derivative of the elastic energy $\mathcal{E}_{K}(\eta)$ of the beam, as well as additional dissipation and inertial terms. For the model case we are treating here, this energy includes the Koiter-type contributions due to stretching and bending, but also importantly, terms that \textit{penalize} compression in order to avoid degeneracy of the resulting interface. More precisely, we define
\begin{align} \label{energyfunc}
\mathcal{E}_{K}(\eta) := \int_0^{\ell} \left( \dfrac{c_1}{2} \abs{\partial_x \eta_1-1}^2 + \frac{1}{\abs{\partial_x \eta_1}^{2\alpha}} + \dfrac{\lambda_0}{2} \abs{\partial_{xx} \eta_1}^2 + \dfrac{c_2}{2} \abs{\partial_{xx} \eta_2}^2  \right)dx \, ,
\end{align}
for given constants $c_{1}, c_{2}, \lambda_{0} > 0$ and $\alpha > 1$, so that 
\begin{align*}
\mathcal{L}_{\Gamma}(\eta) = \dfrac{\partial^{4}}{\partial x^{4}} \left[\begin{matrix} \lambda_{0} \eta_{1} \\[6pt] c_{2} \eta_{2} \end{matrix}\right] - c_{1} \dfrac{\partial^{2}}{\partial x^{2}} \left[\begin{matrix} \eta_{1} \\[6pt] 0 \end{matrix}\right] - 2 \alpha \dfrac{\partial}{\partial x} \left[\begin{matrix}  \dfrac{\partial_{x}\eta_{1}}{| \partial_{x} \eta_{1} |^{2(\alpha + 1)}} \\  0 \end{matrix}\right].
\end{align*}
The structure elastodynamics problem is then given by

\begin{equation} \label{movingbeam0}
\left\lbrace
\begin{aligned}
& \rho_{s}  \partial_{tt} \eta + \mathcal{L}_{\Gamma}(\eta) = g \qquad & &\text{in} \ \ [0,T) \times (0,\ell), \\[6pt]
& \eta(t,0) = (0,0), \quad \eta(t,\ell) = (\ell,0), \quad \partial_{x} \eta(t,0) = \partial_{x} \eta(t,\ell) = (1,0) &&\text{for all } t \in [0,T) , \\[6pt]
& \eta(0,x) = \eta_{0}(x) \ \ \text{and} \ \ \partial_{t} \eta(0,x) = \eta^*(x) &&\text{for all } x \in [0,\ell],
\end{aligned}
\right.	
\end{equation}
where $\rho_{s} > 0$ denotes the density of the structure and $g$ represents the momentum forces of the fluid acting on $\eta(t)$. Finally, $\eta_{0}$ and $\eta^*$ are the initial structure deformation and the initial structure velocity, respectively.

$\bullet$ The fluid and the structure are linked via \textbf{kinematic and dynamic coupling conditions}. We prescribe the no-slip kinematic coupling condition which means that the fluid and the structure velocities are equal on the elastic boundary: 
\begin{align} \label{coupling1}
u_{+}(t,\eta(t,x)) = u_{-}(t,\eta(t,x)) = \partial_{t} \eta(t,x) \text{ for all } (t,x) \in [0,T) \times (0,\ell).
\end{align}

On the other hand, the dynamic boundary condition states that the total external force on the solid is precisely the force exerted by the fluid:
\begin{align} \label{coupling2}
g(t,x) &= - \abs{\partial_x\eta(t,x)} \left[ \sigma_{+}(u_+,p_+)(t,\eta(t,x)) \, \nu_{+}(t,x) + \sigma_{-}(u_-,p_-)(t,\eta(t,x)) \, \nu_{-}(t,x) \right] \nonumber \\
 &= -\sigma_{+}(u_+,p_+)(t,\eta(t,x)) \, \partial_x \eta^\bot(t,x) + \sigma_{-}(u_-,p_-)(t,\eta(t,x)) \, \partial_x \eta^\bot(t,x), 
\end{align}
for every $(t,x) \in [0,T) \times (0,\ell)$. Relation \eqref{coupling2} states that the structure is driven by the forces resulting from the fluid stress across the interface. Here the term $\abs{\partial_x\eta}$, which multiplies the normal fluid stress $\sigma_\pm(u_\pm,p_\pm) \nu_\pm$, is the line element resulting from the Lagrangian representation of the structure problem. Since $\partial_x \eta$ always points in tangential direction and is of the same magnitude, we use the perpendicular function $\partial_x\eta^\bot$ with $(v_1,v_2)^\bot := (-v_2,v_1)^T$ to simplify notation here.

\subsection{Weak coupled solutions and energy (in)equality} \label{weakcoupledsol}
We use the standard notation of Bochner spaces related to Lebesgue and Sobolev spaces. In order to define the weak solutions of the coupled system \eqref{nstokes0}-\eqref{coupling2}, let us first introduce the appropriate function spaces related to the structure
\begin{align*}
V_{K} &= L^{\infty}(0,T;H_{0}^{2}((0,\ell);\mathbb{R}^{2})) \cap W^{1,\infty}(0,T;L^{2}((0,\ell);\mathbb{R}^{2})), \\
\mathcal{V}_{K} &:=\{ \eta \in H^{2}((0,\ell);\mathbb{R}^{2})) \ | \ (t,x)\mapsto \eta(t,x)-(x,0) \in V_K, \ \ \mathcal{E}_K(\eta(t))<\infty \, \text{ a.e. on } \, t \in [0,T]\} \, ,
\end{align*} 
the spaces associated to the fluids:
\begin{align*}
 V_{\pm}^{\eta}(t) &= \left\{ v \in H^{1}(\Omega_{\pm}^{\eta}(t);\mathbb{R}^2) \ | \ \nabla \cdot v(t,\cdot) = 0 \ \ \text{in} \ \ \Omega_{\pm}^{\eta}(t), \quad v=0 \ \ \text{on} \ \  [0,T) \times \partial\Omega \right\}, \\
 V_{\pm}^\eta &= L^{\infty}(0,T;L^{2}(\Omega_{\pm}^{\eta}(t);\mathbb{R}^2)) \cap L^{2}(0,T;V_{\pm}^{\eta}(t)),
\end{align*}
and the spaces for the solution pair and for the test functions, respectively:
\begin{align*}
V_{S} &=\left\{ (u_{\pm},\eta) \in V_{\pm}^\eta \times \mathcal{V}_{K} \ | \ u_{\pm}(t,\eta(t,x)) = \partial_{t} \eta(t,x) \ \ \ \text{ for all } (t,x) \in [0,T) \times [0,\ell] \right\},\\
V^{\eta}_{T} &= \left\lbrace (q_{\pm}, \xi) \in V_{\pm}^\eta \times V_{K} \ \Bigg\rvert \ 
\begin{aligned}
& q_{\pm}(t,\eta(t,x)) = \xi(t,x) \ \ \ \text{ for all } (t,x) \in [0,T) \times [0,\ell], \\ 
& \partial_{t}q_{\pm} \in L^{2}(0,T;L^{2}(\Omega_{\pm}^{\eta}(t);\mathbb{R}^2)) \}
\end{aligned}
\right\rbrace
\end{align*}
Here $\Omega^\eta_\pm(t)$ always denotes the two connected components of $\Omega \setminus \eta(t,(0,\ell))$.

The space of solutions in the case when only one fluid is considered in $\Omega^\eta_-$ is defined as
\begin{align*}
V_{S}^-&=\left\{ (u_{-},\eta) \in V_{-}^{\eta} \times \mathcal{V}_{K} \ | \ u_{-}(t,\eta(t,x)) = \partial_{t} \eta(t,x) \ \ \ \text{ for all } (t,x) \in [0,T) \times [0,\ell] \right\},
\end{align*}

It should be noted that without additional information, provided for example by the energy, for a given deformation $\eta \in \mathcal{V}_{K}$ the variable domain $\Omega^{\eta}(t)$ may not have a Lipschitz boundary. Nevertheless, the traces used in the definition of $V_{S}$ and $V_{T}$  are still well defined, see for instance \cite{CDEM,Muh14}.

Given $f \in L^{2}([0,T) \times \Omega;\mathbb{R}^2)$, consider a smooth solution $(u_{\pm}, \eta) \in V_{S}$ of the system \eqref{nstokes0}-\eqref{movingbeam0}, with an associated scalar pressure $p_{\pm} \in L^{2}(0,T;L^{2}(\Omega_{\pm}^{\eta}(t);\mathbb{R}))$, and a pair of test functions $(q_{\pm}, \xi) \in V_{T}^\eta$. First multiply the equation of momentum conservation in \eqref{nstokes0}$_{1}$ by $q_{+}$ and integrate in $\Omega_{+}^{\eta}(t)$, for any $t \in [0,T)$. Then, since the elastic beam deforms itself with velocity equal to $u_{+}$, repeated integration by parts and the Reynolds Transport Theorem imply that
\begin{equation} \label{byparts1}
\begin{aligned}
& \int_{\Omega_{+}^{\eta}(t)} \left[ \rho_{+} \left( \partial_{t} u_{+} + (u_{+} \cdot \nabla) u_{+} \right) - \nabla \cdot \sigma_{+} (u_+,p_+) - \rho_{+} f \right] \cdot q_{+} \, dz \\[6pt]
& \hspace{-5mm} = \rho_{+} \left( \dfrac{d}{dt} \int_{\Omega_{+}^{\eta}(t)} u_{+} \cdot q_{+} \, dz - \int_{\Omega_{+}^{\eta}(t)} \left[ u_{+} \cdot \partial_{t} q_{+} + \left(u_{+} \otimes u_{+} \right) \cdot \nabla q_{+} + f \cdot q_{+} \right] dz \right) \\[6pt]
& + \mu_{+} \int_{\Omega_{+}^{\eta}(t)} \nabla u_{+} \cdot \nabla q_{+} \, dz 
+ \int_{\eta(t)} \left[ p_{+} (q_{+} \cdot \nu_{+}) - \mu_{+}(\nabla u_{+}) \nu_{+} \cdot q_{+} \right] dz,
\end{aligned}
\end{equation}
and in the exact same way:
\begin{equation} \label{byparts1-}
\begin{aligned}
&\phantom{{}={}}  \int_{\Omega_{-}^{\eta}(t)} \left[ \rho_{-} \left( \partial_{t} u_{-} + (u_{-} \cdot \nabla) u_{-} \right) - \nabla \cdot \sigma_{-} (u_-,p_-) - \rho_{-} f \right] \cdot q_{-} \, dz \\
&  = \rho_{-} \left( \dfrac{d}{dt} \int_{\Omega_{-}^{\eta}(t)} u_{-} \cdot q_{-} \, dz - \int_{\Omega_{-}^{\eta}(t)} \left[ u_{-} \cdot \partial_{t} q_{-} + \left(u_{-} \otimes u_{-} \right) \cdot \nabla q_{-} + f \cdot q_{-} \right] dz \right) \\
&\phantom{{}={}}  + \mu_{-} \int_{\Omega_{-}^{\eta}(t)} \nabla u_{-} \cdot \nabla q_{-} \, dz 
+ \int_{\eta(t)} \left[ p_{-} (q_{-} \cdot \nu_{-}) - \mu_{-}(\nabla u_{-}) \nu_{-} \cdot q_{-} \right] dz.
\end{aligned}
\end{equation}
Secondly, multiply the dynamic equation of the beam \eqref{movingbeam0}$_{1}$ by $\xi$, and integrate in $[0,\ell]$ to obtain:
\begin{equation} \label{byparts2}
\begin{aligned}
& \phantom{{}={}}   \int_{0}^{\ell} \left[ \rho_{s} \partial_{tt} \eta  %
+ \mathcal{L}_{\Gamma}(\eta) - g \right] \cdot \xi \, dx \\
&  = \rho_{s} \left( \dfrac{d}{dt} \int_{0}^{\ell} \partial_{t} \eta \cdot \xi \, dx - \int_{0}^{\ell} \partial_{t} \eta \cdot \partial_{t} \xi \, dx \right)
+ c_{2} \int_{0}^{\ell} \partial_{xx} \eta_{2} \partial_{xx} \xi_{2} \, dx  \\
& \phantom{{}={}}  + \int_{0}^{\ell} \left( \lambda_{0} \partial_{xx} \eta_{1} \partial_{xx} \xi_{1} + c_{1} \partial_{x} \eta_{1} \partial_{x} \xi_{1} \right) dx 
+ 2\alpha \int_{0}^{\ell} \dfrac{\partial_{x} \eta_{1} \partial_{x} \xi_{1}}{| \partial_{x} \eta_{1} |^{2(\alpha + 1)}} \, dx - \int_{0}^{\ell} g  \cdot \xi \, dx 
\end{aligned}
\end{equation}
The term involving $g \cdot \xi$ in \eqref{byparts2} deserves special attention, because it directly expresses the interaction between the fluids and the beam. Indeed, by the Divergence Theorem we have 
\begin{equation} \label{byparts3}
\begin{aligned}
- \int_{0}^{\ell} g  \cdot \xi \, dx & = - \int_{0}^{\ell}  p_{+}  (\xi \cdot \partial_{x}\eta(x,t)^\bot) \, dx + 2 \mu_{+} \int_{0}^{\ell} \left(\nablasym(u_{+}) \partial_{x}\eta(x,t)^\bot \right) \cdot \xi \, dx \\
& \phantom{{}={}} + \int_{0}^{\ell}  p_{-} \, (\xi \cdot \partial_{x}\eta(x,t)^\bot) \, dx - 2\mu_{-} \int_{0}^{\ell} \left(\nablasym(u_{-}) \partial_{x}\eta(x,t)^\bot \right)  \cdot \xi \, dx \\
& = - \int_{\eta(t)} \left[ p_{+} (q_{+} \cdot \nu_{+}) - \mu_{+}(\nabla u_{+}) \nu_{+} \cdot q_{+} \right] dz + \mu_{+} \int_{\Omega_{+}^{\eta}(t)} (\nabla u_{+})^{\top} \cdot \nabla q_{+} \, dz \\
& \phantom{{}={}} - \int_{\eta(t)} \left[ p_{-} (q_{-} \cdot \nu_{-}) - \mu_{-}(\nabla u_{-}) \nu_{-} \cdot q_{-} \right] dz + \mu_{-} \int_{\Omega_{-}^{\eta}(t)} (\nabla u_{-})^{\top} \cdot \nabla q_{-} \, dz.
\end{aligned}
\end{equation}
Recall that we use a Lagrangian description for the deformation of the beam, but in order to simplify notation, we have omitted the dependence on $(\eta(t,x),t)$ of the integrands over $[0,\ell]$. In identities \eqref{byparts1}-\eqref{byparts2}-\eqref{byparts3} we have repeatedly used the coupling condition \eqref{coupling1}, which is also included in the test space $V^{\eta}_{T}$. Therefore, by inserting \eqref{byparts3} into \eqref{byparts2} and adding this to \eqref{byparts1}-\eqref{byparts1-} we obtain the following weak formulation for the fluid-structure interaction problem:
\begin{definition}
\label{def:weak}
Given $f \in L^{2}([0,T) \times \Omega;\mathbb{R}^2)$ and suitable initial data, we say that $(u_{\pm}, \eta) \in V_{S}$ is a weak solution to  the equations \eqref{nstokes0}-\eqref{movingbeam0}, complemented with boundary conditions \eqref{bc0}, coupling conditions \eqref{coupling1}-\eqref{coupling2} and initial conditions \eqref{ic0}, if it is satisfying the identity
\begin{align} \label{weak0}
& \phantom{{}+{}} \rho_{+} \left( \dfrac{d}{dt} \int_{\Omega_{+}^{\eta}(t)} u_{+} \cdot q_{+} \, dz - \int_{\Omega_{+}^{\eta}(t)} \left[ u_{+} \cdot \partial_{t} q_{+} + \left(u_{+} \otimes u_{+} \right) \cdot \nabla q_{+} \right] dz \right) \nonumber \\
&+  \rho_{-} \left( \dfrac{d}{dt} \int_{\Omega_{-}^{\eta}(t)} u_{-} \cdot q_{-} \, dz - \int_{\Omega_{-}^{\eta}(t)} \left[ u_{-} \cdot \partial_{t} q_{-} + \left(u_{-} \otimes u_{-} \right) \cdot \nabla q_{-} \right] dz \right) \nonumber \\
&+  2\mu_{+} \int_{\Omega_{+}^{\eta}(t)} \nablasym(u_{+}) \cdot \nabla q_{+} \, dz + 2\mu_{-} \int_{\Omega_{-}^{\eta}(t)} \nablasym(u_{-}) \cdot \nabla q_{-} \, dz
\\
&+  \rho_{s} \left( \dfrac{d}{dt} \int_{0}^{\ell} \partial_{t} \eta \cdot \xi \, dx - \int_{0}^{\ell} \partial_{t} \eta \cdot \partial_{t} \xi \, dx \right) + \int_{0}^{\ell} \left( \lambda_{0} \partial_{xx} \eta_{1} \partial_{xx} \xi_{1} + c_{1} \partial_{x} \eta_{1} \partial_{x} \xi_{1} \right) dx \nonumber \\
&+  c_{2} \int_{0}^{\ell} \partial_{xx} \eta_{2} \partial_{xx} \xi_{2} \, dx + 2\alpha \int_{0}^{\ell} \dfrac{\partial_{x} \eta_{1} \partial_{x} \xi_{1}}{| \partial_{x} \eta_{1} |^{2(\alpha + 1)}} \, dx = \rho_{+} \int_{\Omega_{+}^{\eta}(t)} \left( f \cdot q_{+} \right) \, dz + \rho_{-} \int_{\Omega_{-}^{\eta}(t)} \left( f \cdot q_{-} \right) \, dz \nonumber
\end{align}
for almost every $t \in [0,T)$ and for every $(q_{\pm}, \xi) \in V^{\eta}_{T}$ and if the initial conditions \eqref{ic0} and \eqref{movingbeam0}$_{3}$ are attained in the respective weakly continuous sense.
\end{definition}

If we assume enough regularity of a weak solution $(u_{\pm}, \eta) \in V_{S}$ of \eqref{weak0}, the use of no-slip boundary conditions for $u_{\pm}$ in \eqref{bc0} and the condition of clamping for the deformation $\eta$ in \eqref{movingbeam0}$_{2}$ allow us to obtain a-priori estimates. Indeed, in view of the Reynolds transport theorem, the incompressibility condition for $u_{\pm}$ and the coupling condition \eqref{coupling1} we have
\begin{align*}
 \int_{\Omega_{\pm}^{\eta}(s)} \left(\partial_{t} u_{\pm} + (u_{\pm} \cdot \nabla) u_{\pm} \right) \cdot u_{\pm}\, dz &= \dfrac{1}{2} \left( \dfrac{d}{dt} \int_{\Omega_{\pm}^{\eta}(s)} |u_{\pm}|^2 \, dz \right), \\
 \int_{\Omega_{\pm}^{\eta}(s)} \nabla \cdot \sigma_{\pm}(u,p) \cdot u_{\pm} \, dz &= - \int_{\Omega_{\pm}^{\eta}(s)} \mu_\pm| \nabla u_{\pm} |^2 \, dz - \int_{\eta(s)} \left[ p_{\pm} (u_{\pm} \cdot \nu_{\pm}) - \mu_{\pm}(\nabla u_{\pm}) \nu_{\pm} \cdot u_{\pm} \right] dz,
\end{align*}
for every $s \in [0,T)$, so that
\begin{align} \label{apriori1}
& \phantom{{}={}} \int_{\Omega_{\pm}^{\eta}(s)} \left[ \rho_{\pm} \left( \partial_{t} u_{\pm} + (u_{\pm} \cdot \nabla) u_{\pm} \right) - \nabla \cdot \sigma_{\pm} (u,p) - \rho_{\pm} f \right] \cdot u_{\pm} \, dz \nonumber \\
&  = \dfrac{\rho_{\pm}}{2} \dfrac{d}{dt} \left( \int_{\Omega_{\pm}^{\eta}(s)} |u_{\pm}|^2 \, dz \right) + \int_{\Omega_{\pm}^{\eta}(s)} \left[ \mu_{\pm} | \nabla u_{\pm} |^2 - \rho_{\pm} (f \cdot u_{\pm}) \right] \, dz \\
& \phantom{{}={}} + \int_{\eta(s)} \left[ p_{\pm} (u_{\pm} \cdot \nu_{\pm}) - \mu_{\pm}(\nabla u_{\pm}) \nu_{\pm} \cdot u_{\pm} \right] dz, \nonumber
\end{align}
for every $s \in [0,T)$. On the other hand, after multiplying the dynamic equation of the beam \eqref{movingbeam0}$_{1}$ by the velocity $\partial_{t} \eta(s,x)$ (which is identical to $u_\pm(s,\eta(s,x))$ due to the kinematic coupling) and integrating in $[0,\ell]$ we obtain:
\begin{equation} \label{apriori2}
\begin{aligned}
& \phantom{{}={}} \int_{0}^{\ell} \left[ \rho_{s} \partial_{tt}\eta
+ \mathcal{L}_{\Gamma}(\eta) - g \right] \cdot \partial_{t}\eta \, dx \\
&   = \dfrac{d}{dt} \left( \dfrac{\rho_{s}}{2}  \int_{0}^{\ell} |\partial_{t} \eta|^2 \, dx %
+ \mathcal{E}_{K}(\eta(s)) \right) \\
&\phantom{{}={}} + \mu_{+} \int_{\Omega_{+}^{\eta}(s)} \left[ \left( \nabla u_{+} \right)^{\top} \cdot \nabla u_{+} \right] \, dz + \mu_{-} \int_{\Omega_{-}^{\eta}(s)} \left[ \left( \nabla u_{-} \right)^{\top} \cdot \nabla u_{-} \right] \, dz \\
& \phantom{{}={}}- \int_{\eta(s)} \left[ p_{+} (u_{+} \cdot \nu_{+}) - \mu_{+}(\nabla u_{+}) \nu_{+} \cdot u_{+} \right] dz - \int_{\eta(s)} \left[ p_{-} (u_{-} \cdot \nu_{-}) - \mu_{-}(\nabla u_{-}) \nu_{-} \cdot u_{-} \right] dz.
\end{aligned}
\end{equation}
Therefore, we add identities \eqref{apriori1}-\eqref{apriori2} and integrate with respect to $s \in [0,t]$ to obtain the following equality:
\begin{align*}
& \phantom{{}={}}\dfrac{\rho_{+}}{2} \| u_{+}(t) \|^{2}_{L^{2}(\Omega_{+}^{\eta}(t))} + \dfrac{\rho_{-}}{2} \| u_{-}(t) \|^{2}_{L^{2}(\Omega_{-}^{\eta}(t))} + \dfrac{\mu_{+}}{2} \int_{0}^{t} \| \nablasym(u_{+})(s) \|^{2}_{L^{2}(\Omega_{+}^{\eta}(s))} \, ds \\
&\phantom{{}={}} + \dfrac{\mu_{-}}{2} \int_{0}^{t} \| \nablasym(u_{-})(s) \|^{2}_{L^{2}(\Omega_{-}^{\eta}(s))} \, ds + \dfrac{\rho_{s}}{2} \| \partial_{t} \eta(t) \|^{2}_{L^{2}(\Gamma^{\eta}(t))} 
 + \mathcal{E}_{K}(\eta(t)) \\
&  = \dfrac{\rho_{+}}{2} \| u_{0} \|^{2}_{L^{2}(\Omega_{+})} + \dfrac{\rho_{-}}{2} \| u_{0} \|^{2}_{L^{2}(\Omega_{-})} + \dfrac{\rho_{s}}{2} \| \eta^{*} \|^{2}_{L^{2}(\Gamma)}
 + \mathcal{E}_{K}(\eta_0)\\
& \phantom{{}={}}+ \rho_{+} \int_{0}^{t} \int_{\Omega_{+}^{\eta}(s)} (f \cdot u_{+}) \, dz \, ds + \rho_{-} \int_{0}^{t} \int_{\Omega_{-}^{\eta}(s)} (f \cdot u_{-}) \, dz \, ds \text{ for all } t \in [0,T),
\end{align*}
from which we derive the energy estimate
\begin{align}
\label{eq:energ}
\begin{aligned}
& \phantom{{}={}}\dfrac{\rho_{+}}{4} \| u_{+}(t) \|^{2}_{L^{2}(\Omega_{+}^{\eta}(t))} + \dfrac{\rho_{-}}{4} \| u_{-}(t) \|^{2}_{L^{2}(\Omega_{-}^{\eta}(t))} + \dfrac{\mu_{+}}{2} \int_{0}^{t} \| \nablasym(u_{+})(s) \|^{2}_{L^{2}(\Omega_{+}^{\eta}(s))} \, ds \\
&\phantom{{}={}} + \dfrac{\mu_{-}}{2} \int_{0}^{t} \| \nablasym(u_{-})(s) \|^{2}_{L^{2}(\Omega_{-}^{\eta}(s))} \, ds + \dfrac{\rho_{s}}{2} \| \partial_{t} \eta(t) \|^{2}_{L^{2}(\Gamma^{\eta}(t))}
+ \mathcal{E}_{K}(\eta(t)) \\
&  \leq \dfrac{\rho_{+}}{2} \| u_{0} \|^{2}_{L^{2}(\Omega_{+})} + \dfrac{\rho_{-}}{2} \| u_{0} \|^{2}_{L^{2}(\Omega_{-})} + \dfrac{\rho_{s}}{2} \| \eta^{*} \|^{2}_{L^{2}(\Gamma)} 
+ \mathcal{E}_{K}(\eta_0)\\ 
&\phantom{{}={}} + \rho_{+} \int_{0}^{t} \| f(s) \|^{2}_{L^{2}(\Omega_{+}^{\eta}(s))} \, ds + \rho_{-} \int_{0}^{t} \| f(s) \|^{2}_{L^{2}(\Omega_{-}^{\eta}(s))} \, ds \text{ for all } t \in [0,T).
\end{aligned}
\end{align}

\subsection{Main results}
The first main result of this article concerns the setting of Figure \ref{sheet1}: 

\begin{theorem} \label{maintheorem}
Let $\rho^\pm,\mu^\pm,\rho_s>0$. For any initial data $\eta_{0} \in  \mathcal{V}_{K}$, $u_{0} \in L^2(\Omega;\mathbb{R}^2)$  and $\eta^*\in L^2((0,\ell);\mathbb{R}^{2})$
there exists $T > 0$ and a weak solution $(u,\eta) \in V_{S}$ in the sense of Definition \ref{def:weak} in $[0,T]$. Further it satisfies the energy inequality~\eqref{eq:energ}. If $T<\infty$ and there is no collision, $T$ can be further extended.  
\end{theorem}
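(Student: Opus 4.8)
The plan is the two-stage scheme announced in the introduction: first produce \emph{injective} approximate solutions by a minimizing-movements time discretization in the spirit of \cite{BenKamSch20}, and then pass to the limit, the crux being the strong $L^2$-compactness of the fluid velocities on the time-changing domains. For the discretization, fix a step $h=T/N$ and build inductively, for $k=0,\dots,N-1$, a pair $(u_\pm^{k+1},\eta^{k+1})$ minimizing a functional that couples the discretized fluid and beam kinetic energies, the elastic energy $\mathcal{E}_K$ of \eqref{energyfunc}, a quadratic penalization of the symmetric fluid gradient (the viscous dissipation), and the constraint $u_\pm=(\eta^{k+1}-\eta^{k})/h$ on the deformed interface. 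The compression-penalizing term $\int_0^\ell|\partial_x\eta_1|^{-2\alpha}\,dx$ together with the higher-order term $\tfrac{\lambda_0}{2}\int_0^\ell|\partial_{xx}\eta_1|^2\,dx$ force every finite-energy configuration to satisfy $\eta\in H^2((0,\ell))\hookrightarrow C^1([0,\ell])$ with $\partial_x\eta_1$ bounded away from $0$; this is exactly what keeps the minimizers injective and the subdomains $\Omega^\eta_\pm$ well defined at each step. Compatibility of the fluid part with incompressibility is arranged by first subtracting a divergence-free extension of the interface velocity into the fluid subdomains (Proposition~\ref{prop:arbMeanExt}) and then correcting the residual divergence with the universal Bogovskij operator of Theorem~\ref{theobog}, whose constants depend on the geometry only through its controlled degeneration.

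Comparing each minimizer with the previous state yields a discrete energy inequality, the exact analogue of \eqref{eq:energ}. Letting $h\to0$ and interpolating in time, this delivers bounds uniform in $h$ for $u_\pm$ in $L^\infty(0,T;L^2)\cap L^2(0,T;H^1)$ of the moving domains, for $\partial_t\eta$ in $L^\infty(0,T;L^2)$, for $\eta$ in $L^\infty(0,T;H^2)$, and for $\sup_{t}\mathcal{E}_K(\eta(t))$. The last bound controls $\sup_t\|\,|\partial_x\eta_1|^{-1}\,\|_{L^\infty(0,\ell)}$, and together with the $H^2$-bound and Lemma~\ref{lem:nocol} it guarantees a time $T>0$, depending only on the initial data, on which the interface stays embedded and away from $\partial\Omega$, so that the scheme does not break down; this already produces the claimed minimal interval of existence.

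For the passage to the limit, Banach--Alaoglu provides weakly(-$*$) convergent subsequences of $u_\pm$, $\eta$ and $\partial_t\eta$. Since $\eta$ is bounded in $L^\infty(0,T;H^2)$ with $\partial_t\eta$ bounded in $L^\infty(0,T;L^2)$, an Aubin--Lions argument yields strong convergence $\eta_h\to\eta$ in $C([0,T];C^1([0,\ell]))$; hence the interface curves and the domains $\Omega^\eta_\pm(t)$ converge, and the singular elastic term $\partial_x\eta_1/|\partial_x\eta_1|^{2(\alpha+1)}$ passes to the limit because $|\partial_x\eta_1|$ remains uniformly positive along the sequence. The linear beam terms and the kinetic terms are then handled by standard weak-convergence arguments, so what remains is the limit in the fluid equations and in the coupling.

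The genuinely delicate point is the strong $L^2$-convergence of $u_\pm$ on the space-time moving domains, which is needed to pass to the limit in the convective term $(u_\pm\otimes u_\pm)\cdot\nabla q_\pm$. The admissible test functions must carry the time-changing trace $\partial_t\eta$, and since the beam equation is hyperbolic one cannot simply test the limiting momentum balance with the velocity itself, but must recover the damping effect of the fluid on the beam through the nonlinear coupling. This I would carry out by an Aubin--Lions type argument on variable domains adapted from \cite{MuhSch20}: the solenoidal extension of Proposition~\ref{prop:arbMeanExt} furnishes admissible divergence-free test functions from given interface values, and the reconstructed \emph{differential pressure} -- a single scalar function of time, cf.\ \eqref{eq:press1} -- absorbs precisely those beam test functions that do not admit a divergence-free extension, so that the beam can effectively be tested against its own velocity and the fluid dissipation extracted. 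With the strong $L^2$-convergence of $u_\pm$ secured, all remaining terms converge and the limit is a weak solution in the sense of Definition~\ref{def:weak} (the kinematic coupling being built into $V_S$); the energy inequality \eqref{eq:energ} then follows by weak lower semicontinuity. Finally, if $T<\infty$ and no collision has occurred, the attained state $(u(T),\eta(T))$ again lies in the admissible class $V_S$, so the whole construction restarts and $T$ is prolonged. The one serious obstacle is the fluid-velocity compactness just described; the remaining steps, while technically demanding on the moving domains, follow known patterns.
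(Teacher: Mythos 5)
Your high-level roadmap matches the paper's: variational (minimizing-movements) construction of injective approximations, injectivity forced by the compression penalty plus the $\lambda_0$-term (Proposition~\ref{propenergy}), solenoidal extension (Proposition~\ref{prop:arbMeanExt}) corrected by the universal Bogovskij operator (Theorem~\ref{theobog}), strong $L^2$-compactness of the fluid velocity via the Aubin--Lions-type Theorem~\ref{thm:auba} using the differential pressure, Lemma~\ref{lem:nocol} for a uniform time to collision, lower semicontinuity for \eqref{eq:energ}, and restart for prolongation. All of that is the right skeleton.

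However, there is a genuine gap in the core of the construction: you describe a \emph{single-scale} minimizing-movements iteration with step $h=T/N$ in which each step compares the new velocity to the old one. This cannot produce the required flow map $\Phi$, and without $\Phi$ the inertial term and the transport of the fluid domain have no meaning at the discrete level. If you try to write the discrete kinetic energy as $\tfrac{1}{2h}\int\rho\,|u^{k+1}\circ\Phi^{k+1}-u^k\circ\Phi^k|^2$, the dependence of $\Phi^{k+1}=(\operatorname{id}+hu^{k+1})\circ\Phi^k$ on $u^{k+1}$ makes the functional implicit and non-convex, so the direct method does not apply. The paper resolves this with a \emph{two-scale} architecture that you never mention: first an intermediate \emph{time-delayed} problem on each window $[0,h]$ (Definition~\ref{timedelay0}), where the acceleration is $\tfrac{u-w}{h}$ with $w$ a \emph{given} velocity from the previous window transported along a \emph{given} flow map, making the problem quasi-static and convex in $u$; and second, an inner minimizing-movements scheme with a much finer step $\tau\to 0$ (Proposition~\ref{minfunctional1}) that builds the flow map as a concatenation $\Phi^{k+1}_\tau=(\operatorname{id}+\tau u^{k+1}_\tau)\circ\Phi^k_\tau$ and, crucially, controls its Lipschitz constant and Jacobian uniformly (Proposition~\ref{prop:regularityOfV}). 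Also omitted are the two regularizers that make all of this possible: the $\delta_0\int|\nabla\Delta u|^2$ term, which gives $u\in H^3\hookrightarrow C^{1,\lambda}$ and hence the uniform diffeomorphism bounds on $\Phi$, and the $\varepsilon_0\int|\partial_t\partial_x^3\eta|^2$ term, which supplies enough time-regularity of $\partial_t\eta$ to test the time-delayed equation with the pair $(\partial_t\eta,u)$ and derive the intermediate energy inequality (Lemma~\ref{timeDelayedEnergyInequality}). These parameters are then removed in a precise order ($\varepsilon_0\to0$ first, then $h\to0$ with $\delta_0=h^{\alpha_0}$ coupled), and this order is essential to keep the flow-map estimates and to land on \eqref{eq:energ} with the correct constants. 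As it stands, your proposal does not describe a scheme that could actually be implemented, because the construction of $\Phi$ and the linearization of the inertia are the parts that make the existence of approximants non-trivial.

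A smaller omission: the strong convergence step is not just ``adapted from \cite{MuhSch20}''; the verification of the equicontinuity condition (3) in Theorem~\ref{thm:auba} hinges on the specific weak time-derivative estimates of Lemmas~\ref{lem:time1} and~\ref{lem:time2}, which in turn require the $H^3$ regularity from $\delta_0$ and the explicit $H^3$-bound on the extension operator. Your description glosses over this, but since the broad idea is right I would count it as an under-specification rather than a second gap.
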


The second main result of this article concerns the setting of Figure \ref{sheet2}: 
\begin{theorem} \label{maintheorem2}
Let $\rho^-,\mu^-,\rho_s>0$. For any initial data $\eta_{0} \in  \mathcal{V}_{K}$, $u_{0} \in L^2(\Omega_{-}^{\eta_0};\mathbb{R}^2)$  and $\eta^*\in L^2((0,\ell);\mathbb{R}^{2})$
there exists $T > 0$ and a weak solution $(u_{-},\eta) \in V_{S}^{-}$ to \eqref{weak00} (which is the same as \eqref{weak0} with all terms related to $\Omega_+$ removed). Further it satisfies the energy inequality~\eqref{oneSidedIneq}. If $T<\infty$ and there is no collision, $T$ can be further extended. 
\end{theorem}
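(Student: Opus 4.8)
My plan is to obtain the one-fluid solution of Theorem~\ref{maintheorem2} as a limit of two-fluid solutions supplied by Theorem~\ref{maintheorem}, in which the upper fluid is switched off by sending both its density and its viscosity to zero at the same rate. Concretely, I would fix a sequence $\varepsilon_n\downarrow 0$, set $\rho_+^n=\mu_+^n=\varepsilon_n$ while keeping $\rho^-,\mu^-,\rho_s$ and the data $\eta_0,\eta^*$ unchanged, and extend $u_0$ from $\Omega_-^{\eta_0}$ to some $\widetilde u_0\in L^2(\Omega;\R^2)$ (say by $0$ on $\Omega_+^{\eta_0}$). Theorem~\ref{maintheorem} then yields weak solutions $(u_+^n,u_-^n,\eta^n)\in V_S$ on intervals $[0,T_n]$ satisfying~\eqref{eq:energ}. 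Since the $\Omega_+$-contributions to its right-hand side are $\tfrac{\varepsilon_n}{2}\|\widetilde u_0\|^2_{L^2(\Omega_+^{\eta_0})}$ and $\varepsilon_n\int_0^t\|f\|^2_{L^2(\Omega_+^{\eta^n})}\,ds$ — uniformly bounded, in fact vanishing — while all remaining data are fixed, the a~priori energy bound is uniform in $n$. Combined with the quantitative no-collision statement of Lemma~\ref{lem:nocol}, this produces a common $T_0>0$ with $T_n\ge T_0$, together with a uniform lower bound on the distance of $\eta^n(t,\cdot)$ to $\partial\Omega$ and to self-intersection on $[0,T_0]$; from now on I restrict to $[0,T_0]$.

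From~\eqref{eq:energ} one reads off, uniformly in $n$: $\eta^n$ bounded in $L^\infty(0,T_0;H^2((0,\ell);\R^2))\cap W^{1,\infty}(0,T_0;L^2)$ — hence, using the clamping in~\eqref{movingbeam0}, the embedding $H^2\hookrightarrow C^{1,1/2}$ and the bound on $\int_0^\ell |\partial_x\eta_1^n|^{-2\alpha}\,dx$, a uniform pointwise bound $\partial_x\eta_1^n\ge c_0>0$ — and $u_-^n$ bounded in $L^\infty(0,T_0;L^2(\Omega_-^{\eta^n}))\cap L^2(0,T_0;H^1)$, with the uniform pressure estimates of Section~5 available too. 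For the upper fluid~\eqref{eq:energ} gives only the weak information that $\varepsilon_n^{1/2}u_+^n$ is bounded in $L^\infty_tL^2\cap L^2_tH^1$, i.e. $\|u_+^n(t)\|_{L^2}\le C\varepsilon_n^{-1/2}$. The key step is to upgrade this to the \emph{$\varepsilon_n$-uniform} bound $\|u_+^n\|_{L^\infty(0,T_0;L^2(\Omega_+^{\eta^n}))\cap L^2(0,T_0;H^1)}\le C$. To get it I would test the $\Omega_+$-momentum equation against $u_+^n-W^n$, where $W^n$ is a divergence-free extension into $\Omega_+^{\eta^n}$ of the interface velocity $\partial_t\eta^n=u_-^n|_{\eta^n}$, built by reflecting $u_-^n$ across the uniformly non-degenerate $C^1$ curve $\eta^n$ in a collar, cutting off, and restoring solenoidality with the universal Bogovskij operator of Theorem~\ref{theobog}; the compatibility $\int_0^\ell \partial_t\eta^n\cdot\partial_x(\eta^n)^\bot\,dx=0$ needed for such an extension holds automatically because $u_-^n$ is solenoidal in $\Omega_-^{\eta^n}$ and vanishes on the rigid part of its boundary. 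This $W^n$ inherits $\varepsilon_n$-uniform bounds in $L^\infty_tL^2\cap L^2_tH^1$ from $u_-^n$, and $\partial_t W^n$ is $\varepsilon_n$-uniformly bounded in $L^2(0,T_0;H^{-1})$ via the $\Omega_-$-momentum equation and the uniform pressure bounds. Since $\rho_+^n=\mu_+^n=\varepsilon_n$, every term of the resulting identity carries a factor $\varepsilon_n$ that cancels, leaving a Gr\"onwall inequality with $\varepsilon_n$-independent constants; this yields the bound and, in particular, $\varepsilon_n\|u_+^n\|^2_{L^4((0,T_0)\times\Omega_+^{\eta^n})}\le C\varepsilon_n\to 0$, so that \emph{every} $\Omega_+$-term in~\eqref{weak0} — including the quadratic one — is $O(\varepsilon_n)$.

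With these bounds I would extract (not relabelled) $\eta^n\to\eta$ in $C([0,T_0];C^1([0,\ell];\R^2))$ (Arzel\`a--Ascoli plus time interpolation), $\partial_t\eta^n\weakto\partial_t\eta$ weakly-$*$ in $L^\infty(0,T_0;L^2)$, and $u_-^n\weakto u_-$ weakly in $L^2(0,T_0;H^1)$ and weakly-$*$ in $L^\infty(0,T_0;L^2)$ on the converging moving domains (standard transfer to a fixed reference domain, cf. the references in \S2.2). The passage to the limit in the convective term $\rho^-\int(u_-^n\otimes u_-^n):\nabla q_-^n$ requires strong convergence $u_-^n\to u_-$ in $L^2(0,T_0;L^2(\Omega_-^{\eta^n}))$: this is precisely the Aubin--Lions-type argument of Section~5, whose proof carries over since all of its inputs — the uniform bounds above, the uniform pressure estimates, the solenoidal extensions of Proposition~\ref{prop:arbMeanExt} — are $\varepsilon_n$-independent and are not disturbed by the vanishing upper fluid. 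The kinematic coupling $u_-(t,\eta(t,x))=\partial_t\eta(t,x)$ survives by $C^1$-convergence of $\eta^n$ and trace continuity, so $(u_-,\eta)\in V_S^-$. Given any test pair $(q_-,\xi)\in V_T^\eta$ for the one-fluid problem I would then construct an adapted triple $(q_-^n,q_+^n,\xi)\in V_T^{\eta^n}$ with $q_-^n\to q_-$ and $q_+^n$ uniformly bounded in $L^\infty_tH^1$ and $\partial_t q_+^n$ uniformly bounded in $L^2_tL^2$, via Proposition~\ref{prop:arbMeanExt} and the differential-pressure device absorbing the flux mismatch $\int_0^\ell \xi\cdot\partial_x(\eta^n)^\bot\,dx$, exactly as in Section~5. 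Inserting $(q_\pm^n,\xi)$ into~\eqref{weak0}: the $\Omega_+$-terms vanish by the previous paragraph, the convective $\Omega_-$-term passes by strong convergence, and the elastic terms pass by weak convergence and continuity — for the singular term $2\alpha\int_0^\ell \partial_x\eta_1\partial_x\xi_1|\partial_x\eta_1|^{-2(\alpha+1)}\,dx$ one uses $\partial_x\eta_1^n\to\partial_x\eta_1$ uniformly and $\partial_x\eta_1^n\ge c_0$. Hence $(u_-,\eta)$ solves~\eqref{weak00}; the energy inequality~\eqref{oneSidedIneq} follows from~\eqref{eq:energ} by dropping the nonnegative $\Omega_+$-terms on the left and using weak lower semicontinuity of the $L^2$-norms and of $\mathcal{E}_K$ (convexity of the quadratic parts, lower semicontinuity of $\int|\partial_x\eta_1|^{-2\alpha}$), the $\Omega_+$-terms on the right tending to $0$; the initial data are attained weakly, inherited from $(u_-^n,\eta^n)$; and the continuation criterion is again Lemma~\ref{lem:nocol}.

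The step I expect to be the main obstacle is the $\varepsilon_n$-uniform estimate on the upper fluid. The energy inequality~\eqref{eq:energ} is useless here — it only controls $\varepsilon_n^{1/2}u_+^n$ — so one must exploit the dissipative coupling of the upper fluid with the beam and the lower fluid through the extension $W^n$, and the fact that $\partial_t\eta^n$ is merely bounded in $L^\infty_tL^2_x$ (never in a trace space uniformly in $n$) is exactly what rules out na\"ive liftings and forces the reflected-and-Bogovskij-corrected construction, with the extra delicacy of controlling $\partial_t W^n$ through the lower-fluid equation. Making this rigorous at the level of weak solutions will in practice require running the estimate on the time-discrete/variational approximation scheme of Section~4, where the approximants are regular enough to be tested against, and then passing it to the limit. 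A secondary, more routine point is to verify that every constant in the Aubin--Lions compactness argument of Section~5 is genuinely independent of $\varepsilon_n$.
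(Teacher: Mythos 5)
Your overall plan is the paper's: treat the one-fluid problem as the vanishing-fluid limit of the two-fluid problem, using the uniform energy bound, the quantitative no-collision Lemma~\ref{lem:nocol}, the uniform convergence of $\eta^{(k)}$, the Section~5 compactness machinery for $u_-^{(k)}$, and the extension/differential-pressure device for passing to the limit in the weak equation. The energy inequality \eqref{oneSidedIneq} and the continuation argument are also handled as the paper does. Where you diverge, and where the gap lies, is the rate at which $(\rho_+,\mu_+)$ is sent to zero.

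You fix $\rho_+^n=\mu_+^n=\varepsilon_n$. With that choice, the energy estimate \eqref{eq:energ} only controls $\sqrt{\rho_+}\,u_+^n$ in $L^\infty_t L^2$ and $\sqrt{\mu_+}\,\nablasym(u_+^n)$ in $L^2_tL^2$, and the convective term
\[
\rho_+\int_{\Omega_+^{\eta^n}} u_+^n\otimes u_+^n:\nabla q\,dz
\]
is then only bounded, not vanishing: every estimate you can run (Ladyzhenskaya $\|u_+\|^2_{L^4}\lesssim\|u_+\|_{L^2}\|u_+\|_{H^1}$, or Korn plus $\|\nabla q\|_\infty$) produces a factor $\rho_+/\mu_+$ or $\sqrt{\rho_+/\mu_+}$, which is $\equiv 1$ with your rates. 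You correctly identify this and try to repair it by proving an $\varepsilon_n$-uniform bound on $\|u_+^n\|_{L^\infty_tL^2\cap L^2_tH^1}$ by testing the $\Omega_+$-momentum equation against $u_+^n-W^n$. But that step does not close at the level of weak solutions: $u_+^n-W^n$ does not belong to the test space $V_T^{\eta^n}$ (it lacks $\partial_t q\in L^2_tL^2$), so the manoeuvre would have to be run on the discrete scheme and then passed through two successive limits — you say this yourself. Moreover, the time derivative $\partial_t W^n$ is genuinely delicate, because $W^n$ is built by composing $u_-^n$ with the time-dependent interface $\eta^n$, and $\partial_t\eta^n$ is only in $L^\infty_tL^2_x$; controlling $\partial_t W^n$ through the lower-fluid equation is not at all routine and is not carried out. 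So as written the proof has a hole exactly where you flag the ``main obstacle''.

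The paper avoids all of this by simply choosing the rates so that $\rho_+/\mu_+\to 0$ (it takes $\rho_+=1/k^2$, $\mu_+=1/k$). Then no uniform bound on $u_+^{(k)}$ is needed: the linear $\Omega_+$-terms are $O(\sqrt{\rho_+})$ or $O(\sqrt{\mu_+})$ straight from \eqref{eq:energ}, and the convective term is controlled — for $q$ smooth and compactly supported in $\Omega_+^{\eta}(t)$ — by
\[
\Big|\int_{\Omega_+^{\eta}(t)} \rho_+\,u_+^{(k)}\otimes u_+^{(k)}:\nabla q\,dz\Big|
\leq \rho_+\,\norm[L^2]{u_+^{(k)}}^2\,\norm[\infty]{\nabla q}
\leq c\,\frac{\rho_+}{\mu_+}\,\norm[L^2]{\sqrt{\mu_+}\,\nablasym(u_+^{(k)})}^2\,\norm[\infty]{\nabla q}\to 0,
\]
using the uniform Korn inequality on $\Omega_+^{\eta^{(k)}}(t)$ (valid with a constant depending only on the energy bound, cf.~\cite{DieRS10}). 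That single observation removes the entire middle paragraph of your argument. So: keep your strategy, but replace $\rho_+^n=\mu_+^n=\varepsilon_n$ by any rates with $\rho_+^n/\mu_+^n\to 0$, and the problematic uniform bound on $u_+^n$ is never needed.
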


We will first prove Theorem \ref{maintheorem} in several steps using a time-discretization in the form of a minimizing
movements iteration. Theorem \ref{maintheorem2} is then derived in the last section of the paper by passing to the limit with $\rho_+,\mu_+\to 0$.

But before that, in the next section, we need to construct an universal Bogovkij operator which also turns out to be rather suitable to give a precise pressure reconstruction, or decomposition (compare with \cite{SaaSch21}). This is performed in \eqref{eq:final-pres}. See also Remarks \ref{rem:pres1} and \ref{rem:pres2}.

\section{Analysis for variable in time domains}

This section is the technical heart of the paper. We derive the critical tools in order to prove the main results here. In particular a universal Bogovskij operator and a quasi-solenoidal extension are developed. Before, we provide some properties for functions with finite elastic energy. These determine the Eulerian geometry and translate into uniform Lipschitz properties of the variable-in-time domains. 

\subsection{Estimates for the elastic deformation}
We begin with some further properties of the energy functional $\mathcal{E}_{K}$ as defined in \eqref{energyfunc}. 

Let us introduce the affine space of admissible deformations
\begin{equation*}
\mathcal{H} := \left\{ \eta \in H^{2}((0,\ell);\mathbb{R}^{2})) \ | \ \eta(0) = (0,0), \quad \eta(\ell)=(\ell,0), \quad \partial_{x}\eta(0) = \partial_{x}\eta(\ell) = (1,0)  \right\}\, ,
\end{equation*}
endowed with the standard $H^2$-topology. We then have:
\begin{proposition} \label{propenergy}
Let $\mathcal{E}_{K} : \mathcal{H} \longrightarrow [0,\infty]$ be the energy functional defined in \eqref{energyfunc}. 
\begin{itemize}
\item[(1)] Given any $E_{0} >0$, there exists $\delta_{1} > 0$ such that for all $\eta \in \mathcal{H}$
$$
 \mathcal{E}_{K}(\eta) \leq E_{0} \quad \Longrightarrow \quad  \partial_{x} \eta_1  \geq \delta_{1} \ \ \ \emph{a.e. on} \ \ \ (0,\ell) \, .
$$
As a consequence, if $\eta\in{W^{k,p}(0,\ell)}$ for $p\in [1,\infty]$ and $k\geq 1$, then so is $\eta^{-1}$ with bounds depending on $\delta_1,k,p$ and $\norm[{W^{k,p}(0,\ell)}]{\eta_1})$.
\item[(2)] The energy functional $\mathcal{E}_{K}$ is coercive. More precisely,
$$
\mathcal{E}_{K}(\eta) \geq \dfrac{1}{2} \min\{c_{1}, c_{2}, \lambda_{0}\} \| \eta \|^{2}_{H^2} - \dfrac{\ell c_{1}}{2} \text{ for all } \eta \in \mathcal{H} \, .
$$ 
\item[(3)] The energy functional $\mathcal{E}_{K}$ is lower semi-continuous under weak $\mathcal{H}$-convergence: If $\{\eta_{n} \}_{n \in \mathbb{N}} \subset \mathcal{H}$ converges weakly to $\eta \in \mathcal{H}$, then
$$
\mathcal{E}_{K}(\eta) \leq \liminf_{n \to \infty} \, \mathcal{E}_{K}(\eta_{n}).
$$
\end{itemize}
\end{proposition}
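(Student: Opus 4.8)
The plan is to establish the three items in turn; item~(1) carries essentially all of the difficulty, the other two being soft consequences.

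\emph{Item (1).} Fix $\eta\in\mathcal{H}$ with $\mathcal{E}_K(\eta)\le E_0$. Discarding the nonnegative compression term, the bound $\mathcal{E}_K(\eta)\le E_0$ controls $\norm[L^2(0,\ell)]{\partial_x\eta_1-1}$ and $\norm[L^2(0,\ell)]{\partial_{xx}\eta_1}$, so $\partial_x\eta_1$ is bounded in $H^1(0,\ell)$ by a constant depending only on $E_0$ (and on the fixed $c_1,\lambda_0,\ell$); by the one-dimensional embedding $H^1(0,\ell)\hookrightarrow C^{0,1/2}([0,\ell])$ there is $C_0=C_0(E_0)$ with $\abs{\partial_x\eta_1(x)-\partial_x\eta_1(y)}\le C_0\abs{x-y}^{1/2}$. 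First I would note that $\partial_x\eta_1$ cannot vanish: if $\partial_x\eta_1(x_0)=0$ then $\abs{\partial_x\eta_1(x)}\le C_0\abs{x-x_0}^{1/2}$ near $x_0$, so $\abs{\partial_x\eta_1}^{-2\alpha}\gtrsim\abs{x-x_0}^{-\alpha}$ is non-integrable there because $\alpha>1$, contradicting $\mathcal{E}_K(\eta)<\infty$; since $\partial_x\eta_1$ is continuous and $\partial_x\eta_1(0)=1$, it is positive on $[0,\ell]$. Next, let $m:=\min_{[0,\ell]}\partial_x\eta_1>0$, attained at $x_0$. The Hölder bound gives $\partial_x\eta_1\le 2m$ on $I:=\{\abs{x-x_0}\le (m/C_0)^2\}\cap[0,\ell]$, an interval of length at least $\min\{(m/C_0)^2,\ell\}$, so
\begin{equation*}
E_0\ \ge\ \int_0^\ell\frac{dx}{\abs{\partial_x\eta_1}^{2\alpha}}\ \ge\ \abs{I}\,(2m)^{-2\alpha}\ \ge\ \min\{(m/C_0)^2,\ell\}\,(2m)^{-2\alpha}.
\end{equation*}
Since $\alpha>1$, the right-hand side diverges as $m\to0^+$; hence $m\ge\delta_1$ for some $\delta_1=\delta_1(E_0)>0$, which is the claim.

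\emph{Consequence of (1) and item (2).} Because $\partial_x\eta_1\ge\delta_1>0$, $\eta_1$ is a strictly increasing bijection of $[0,\ell]$ and $\phi:=\eta_1^{-1}$ obeys $\phi'=(\eta_1'\circ\phi)^{-1}\le\delta_1^{-1}$; differentiating this identity $k-1$ further times (Fa\`a di Bruno) writes $\phi^{(j)}$ for $j\le k$ as a finite sum of products of terms $\eta_1^{(i)}\circ\phi$ ($i\le j$) with powers of $\phi'$, and estimating the $\phi'$-factors by $\delta_1^{-1}$ and transforming the $L^p$-norms by the change of variables $y=\eta_1(x)$ (Jacobian $\eta_1'\in[\delta_1,\norm[W^{1,\infty}]{\eta_1}]$) yields $\norm[W^{k,p}]{\phi}\le C(\delta_1,k,p,\norm[W^{k,p}]{\eta_1})$. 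For item~(2), drop again the nonnegative penalization and use $\int_0^\ell\partial_x\eta_1\,dx=\eta_1(\ell)-\eta_1(0)=\ell$ to get $\int_0^\ell\abs{\partial_x\eta_1-1}^2\,dx=\norm[L^2]{\partial_x\eta_1}^2-\ell$, whence
\begin{equation*}
\mathcal{E}_K(\eta)\ \ge\ \frac{c_1}{2}\norm[L^2]{\partial_x\eta_1}^2+\frac{\lambda_0}{2}\norm[L^2]{\partial_{xx}\eta_1}^2+\frac{c_2}{2}\norm[L^2]{\partial_{xx}\eta_2}^2-\frac{c_1\ell}{2}.
\end{equation*}
Bounding each coefficient below by $\min\{c_1,c_2,\lambda_0\}$, and (if the full $H^2$-norm is intended) absorbing the remaining lower-order parts of $\norm[H^2]{\eta}^2$ by Poincar\'e's inequality, which applies since $\eta-(x,0)$ and its first derivative vanish at $x=0,\ell$, yields the stated bound.

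\emph{Item (3).} Write $\mathcal{E}_K$ as the sum of the four integrals in \eqref{energyfunc}. If $\liminf_n\mathcal{E}_K(\eta_n)=\infty$ there is nothing to prove; otherwise pass to a subsequence realizing the $\liminf$ with $\sup_n\mathcal{E}_K(\eta_n)<\infty$. By the compact embedding $H^2(0,\ell)\hookrightarrow\hookrightarrow C^1([0,\ell])$ one has $\partial_x(\eta_n)_1\to\partial_x\eta_1$ uniformly, so $\int_0^\ell\frac{c_1}{2}\abs{\partial_x(\eta_n)_1-1}^2\,dx\to\int_0^\ell\frac{c_1}{2}\abs{\partial_x\eta_1-1}^2\,dx$, while the compression integrand is nonnegative and converges pointwise, so Fatou gives $\liminf_n\int_0^\ell\abs{\partial_x(\eta_n)_1}^{-2\alpha}\,dx\ge\int_0^\ell\abs{\partial_x\eta_1}^{-2\alpha}\,dx$ (with value $+\infty$ where $\partial_x\eta_1$ vanishes). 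For the two bending terms, $\partial_{xx}(\eta_n)_i\weakto\partial_{xx}\eta_i$ in $L^2(0,\ell)$ and weak lower semicontinuity of the $L^2$-norm give $\liminf_n\norm[L^2]{\partial_{xx}(\eta_n)_i}^2\ge\norm[L^2]{\partial_{xx}\eta_i}^2$ for $i=1,2$; adding the four lower bounds proves the assertion. The main obstacle is the quantitative estimate in item~(1): one must combine the $C^{0,1/2}$-modulus of $\partial_x\eta_1$ produced by the bending energy with the supercritical singularity ($\alpha>1$) of the compression penalty to exclude, uniformly in $\eta$, that $\partial_x\eta_1$ gets close to $0$; once this is in place the inverse-function bound, the coercivity, and the lower semicontinuity are all routine.
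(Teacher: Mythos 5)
Your proposal is correct. Items (2) and (3) are essentially the paper's arguments (for (3) you use the compact embedding $H^2\hookrightarrow\hookrightarrow C^1$ plus Fatou for the compression term, whereas the paper invokes convexity of $p\mapsto p^{-2\alpha}$ on $\R^+$; these are interchangeable, and your reduction to the case $\sup_n\mathcal{E}_K(\eta_n)<\infty$ is a small bit of care the paper leaves implicit).

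Item (1), however, is proved by a genuinely different and somewhat more pedestrian route. The paper finds $x_*$ via the mean value theorem with $\partial_x\eta_1(x_*)=1$, then applies Young's inequality pointwise in the form
\begin{equation*}
\frac{1}{(\partial_x\eta_1)^{2\alpha}}+\frac{\lambda_0}{2}(\partial_{xx}\eta_1)^2\ \ge\ \sqrt{2\lambda_0}\,\frac{\partial_{xx}\eta_1}{(\partial_x\eta_1)^{\alpha}}\ =\ \frac{\sqrt{2\lambda_0}}{1-\alpha}\,\frac{d}{dx}\!\left(\frac{1}{(\partial_x\eta_1)^{\alpha-1}}\right),
\end{equation*}
and integrates this total derivative to obtain the \emph{explicit} quantitative lower bound $\partial_x\eta_1\ge\big[\tfrac{\alpha-1}{\sqrt{2\lambda_0}}E_0+1\big]^{1/(1-\alpha)}$. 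Your argument instead extracts a $C^{0,1/2}$ modulus of continuity for $\partial_x\eta_1$ from the bending energy and plays it against the non-integrability of the compression singularity (here is where $\alpha>1$ enters, exactly as it does in the paper but for a different reason: you need $m^{2-2\alpha}\to\infty$, they need $1/(1-\alpha)<0$). Your route yields a qualitative $\delta_1(E_0)>0$ rather than a closed-form constant, but it is arguably more robust: it does not rely on the compression and bending penalties combining into an exact derivative, so it would survive, e.g., replacing $\lambda_0|\partial_{xx}\eta_1|^2$ by any term controlling $\partial_x\eta_1$ in $C^{0,\beta}$ with $\alpha\beta>1$. Both proofs are valid; the paper's is slicker and sharper, yours is more modular.
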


\begin{proof}
\begin{enumerate}[wide=0pt,label=(\arabic{enumi})]
\item Recall that the embedding $H^{2}((0,\ell);\mathbb{R}^2) \subset C^{1, 1/2}([0,\ell];\mathbb{R}^2)$ holds. In particular, given any function $\eta \in \mathcal{H}$, we can apply the mean value theorem to deduce the existence of $x_{*} \in (0,\ell)$ such that $\partial_{x} \eta_{1}(x_{*}) = 1$. Consequently, we can choose $y_{0} \in (0,x_{*})$ sufficiently close to $x_{*}$ such that $\partial_{x} \eta_{1}(\tilde{x}) > 0$, for every $\tilde{x} \in [y_{0},x_{*}]$. Now, since we have using Young's inequality
$$
\frac{1}{(\partial_{x} \eta_{1}(\tilde{x}))^{2 \alpha}} + \dfrac{\lambda_{0}}{2}  \, (\partial_{xx} \eta_{1}(\tilde{x}))^{2} \geq \sqrt{2 \lambda_{0}} \, \frac{\partial_{xx} \eta_{1}(\tilde{x})}{(\partial_{x} \eta_{1}(\tilde{x}))^{\alpha}}  \text{ for all } \tilde{x} \in [y_{0},x_{*}],
$$
we can write
$$
\begin{aligned}
\mathcal{E}_{K}(\eta) & \geq \int_{y_{0}}^{x_{*}} \left[ \dfrac{1}{(\partial_{x} \eta_{1}(\tilde{x}))^{2 \alpha}} + \dfrac{\lambda_{0}}{2} (\partial_{xx} \eta_{1}(\tilde{x}))^{2 } \right] d\tilde{x} \geq \sqrt{2 \lambda_{0}} \int_{y_{0}}^{x_{*}} \dfrac{\partial_{xx} \eta_{1}(\tilde{x})}{(\partial_{x} \eta_{1}(\tilde{x}))^{\alpha}} \, d\tilde{x} \\[3pt]
& = \dfrac{\sqrt{2 \lambda_{0}}}{1 - \alpha} \int_{y_{0}}^{x_{*}} \dfrac{d}{d\tilde{x}} \left[ \dfrac{1}{(\partial_{x} \eta_{1}(\tilde{x}))^{\alpha-1}} \right] d\tilde{x} = \dfrac{\sqrt{2 \lambda_{0}}}{\alpha - 1} \left[ \dfrac{1}{(\partial_{x} \eta_{1}(y_{0}))^{\alpha-1}} - 1 \right],
\end{aligned}
$$
so that
\begin{equation} \label{goodone}
\partial_{x} \eta_{1}(y_{0}) \geq \left[ \dfrac{\alpha-1}{\sqrt{2 \lambda_{0}}} \mathcal{E}_{K}(\eta) + 1 \right]^{\frac{1}{1-\alpha}} \, .
\end{equation}
Now, given any $E_0 >0$, we define
$$
\delta_{1} := \left( \dfrac{\alpha-1}{\sqrt{2 \lambda_{0}}} E_{0} + 1 \right)^{\frac{1}{1-\alpha}} \, ,
$$  
so that, for every $\eta \in \mathcal{H}$ such that $\mathcal{E}_{K}(\eta) \leq E_{0}$, \eqref{goodone} yields the lower bound
\begin{equation} \label{goodone2}
\partial_{x} \eta_{1}(t) \geq \delta_{1}  \quad\text{ for all } t \in I, \quad\text{ for all } I \subset (0,\ell) \quad \text{such that} \quad x_{*} \in I \text{ and } \partial_x \eta_1 > 0 \text{ in }I.
\end{equation}
Since the function $\partial_{x} \eta_{1}$ is continuous in $[0,\ell]$, \eqref{goodone2} guarantees that actually $\partial_{x} \eta_{1} > 0$ in $(0,\ell)$, and so
\[
\partial_{x} \eta_{1}(t) \geq \delta_{1}  \quad\text{ for all } t \in (0,\ell).
\]
This implies further that $\eta_1$ is strictly-monotone, which implies that $\eta_1^{-1}$ is bounded. By the usual chain-rule argument the estimates follow.

\item For any $\eta \in \mathcal{H}$ we clearly have
\begin{align*}
\mathcal{E}_{K}(\eta) & \geq \int_0^{\ell} \left( \dfrac{c_1}{2} \abs{\partial_x \eta_1-1}^2 + \dfrac{\lambda_0}{2} \abs{\partial_{xx} \eta_1}^2 + \dfrac{c_2}{2} \abs{\partial_{xx} \eta_2}^2  \right)dx \\[3pt]
& = \int_0^{\ell} \left( \dfrac{c_1}{2} \abs{\partial_x \eta_1}^2 + \dfrac{\lambda_0}{2} \abs{\partial_{xx} \eta_1}^2 + \dfrac{c_2}{2} \abs{\partial_{xx} \eta_2}^2  \right)dx - \dfrac{\ell c_{1}}{2} \, ,
\end{align*}
from which the statement easily follows.

\item Since $\{\eta_{n} = \left( \eta_{n,1}, \eta_{n,2} \right) \}_{n \in \mathbb{N}} \subset \mathcal{H}$ converges weakly to $\eta \in \mathcal{H}$, there exist $\eta_{1} \in H^{2}((0,\ell);\mathbb{R})$ and $\eta_{2} \in H_{0}^{2}((0,\ell);\mathbb{R})$ such that
$$
(\partial_x \eta_{n,1} - 1) \rightharpoonup (\partial_x \eta_{1} - 1) \, , \qquad \partial_{xx} \eta_{n,1} \rightharpoonup \partial_{xx} \eta_{1} \, , \qquad \partial_{xx} \eta_{n,2} \rightharpoonup \partial_{xx} \eta_{2} \qquad \text{weakly in} \ \ L^{2}((0,\ell);\mathbb{R}),
$$
and therefore
\begin{equation} \label{liminf0}
\left\{
\begin{aligned}
& \| \partial_x \eta_{1} - 1 \|_{L^{2}(0,\ell)} \leq \liminf_{n \to \infty} \| \partial_x \eta_{n,1} - 1 \|_{L^{2}(0,\ell)} \, , \\[3pt]   
& \| \partial_{xx} \eta_{1} \|_{L^{2}(0,\ell)} \leq \liminf_{n \to \infty} \| \partial_{xx} \eta_{n,1} \|_{L^{2}(0,\ell)} \, , \qquad \| \partial_{xx} \eta_{2} \|_{L^{2}(0,\ell)} \leq \liminf_{n \to \infty} \| \partial_{xx} \eta_{n,2} \|_{L^{2}(0,\ell)} \, .
\end{aligned}
\right.
\end{equation}
Finally, consider 
$$
\widetilde{\mathcal{\mathcal{E}_{K}}}(\eta) := \int_{0}^{\ell} \dfrac{1}{|\partial_{x} \eta|^{2\alpha}}  \, dx \text{ for } \eta \in H^{2}((0,\ell);\mathbb{R}^+).
$$
Since $\alpha > 1$, the function $p \mapsto |p|^{-2\alpha}$ is convex in $\mathbb{R}^+$. So since $\partial_x \eta$ cannot take negative values by (1), the functional $\widetilde{\mathcal{\mathcal{E}_{K}}}$ is lower semi-continuous under weak $H^{1}((0,\ell);\mathbb{R})$-convergence. But since $\{\eta_{n,1} \}_{n \in \mathbb{N}}$ also converges weakly to $\eta_{1}$ in $H^{1}((0,\ell);\mathbb{R})$ we deduce that
\begin{equation} \label{liminf1}
\widetilde{\mathcal{\mathcal{E}_{K}}}(\eta) \leq \liminf_{n \to \infty} \, \widetilde{\mathcal{\mathcal{E}_{K}}}(\eta_{n}).
\end{equation}
The conclusion is reached by combining \eqref{liminf0} with \eqref{liminf1}.
\end{enumerate}
\end{proof} 

\subsection{Universal Bogovskij operator} 
Next we need to derive the existence of a Bogovskij-operator for our problem that is in a sense independent of the changing domain. For this we will need the following fundamental theorem. A proof can be found for instance in \cite{hieberbog}.

\begin{proposition}
  \label{prop:reference}
  Let $ Q \subset \mathbb{R}^{n}$ be a cube and  $ b : \rn \to \mathbb{R} $ be any smooth function supported in $ Q $
and having unit integral. Then there exists an operator $\Bcal_Q: C_c^\infty(\R^n) \to C_c^\infty(\R^n;\R^n)$ such that for any $\Omega \subset \rn $ which is star-shaped with respect to $ Q $, the operator $ \Bcal_Q $ maps $ C_0^{\infty}(\Omega) $ into $C_0^{\infty}(\Omega;\rn)$ 
  and for all functions $ g \in C_0^{\infty}(\Omega) $ it holds
\[
\nabla \cdot \Bcal_Q g = g - b \int g \, dx.
\]
In addition,
\[\no{\Bcal_Q}_{W^{s,p}( \Omega )\to W^{s+1,p}(\Omega;\rn)}
\leq  C\] %
for all $ 1<p<\infty $ and $ s \ge 0$
with $C$ only depending on $Q$, $s$, $p$ and $n$. %
\end{proposition}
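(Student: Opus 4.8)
The plan is to realise $\Bcal_Q$ through Bogovskij's explicit integral formula and then to read off the divergence identity by direct differentiation of the kernel and the Sobolev bounds from Calderón–Zygmund theory, taking care throughout that every constant depends only on $b$, $Q$, $s$, $p$, $n$ and never on $\Omega$ itself (which is what makes the operator ``universal''; cf.\ \cite{hieberbog}). Concretely, I would set, for $g\in C_c^\infty(\rn)$,
\[
(\Bcal_Q g)(x) := \int_{\rn} g(y)\,\frac{x-y}{|x-y|^{n}}\left( \int_{|x-y|}^{\infty} b\!\left(y+r\,\tfrac{x-y}{|x-y|}\right) r^{\,n-1}\,dr\right)dy ,
\]
which is manifestly a single operator independent of $\Omega$, mapping $C_c^\infty(\rn)$ into $C_c^\infty(\rn;\rn)$.

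First I would establish the mapping property: if $g$ is supported in a compact set $K\Subset\Omega$ and $\Omega$ is star-shaped with respect to every point of $Q\supset\supp b$, then $\Bcal_Q g\in C_0^\infty(\Omega;\rn)$. For the support one notes that $(\Bcal_Q g)(x)\neq 0$ forces $x$ to lie on a segment $[y,z]$ with $y\in\supp g\subset K$ and $z\in\supp b\subset Q$; the union of all such segments is the continuous image of $K\times Q\times[0,1]$, hence compact, and lies in $\Omega$ by star-shapedness, hence at positive distance from $\partial\Omega$. Smoothness follows by differentiating under the integral after the substitution ($y\mapsto x+\rho\,\omega$, or $r\mapsto r+|x-y|$) that removes the singularity at $y=x$. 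The divergence identity $\nabla\cdot\Bcal_Q g=g-b\int g\,dx$ is then a direct calculation: differentiating the kernel, the boundary term at $r=|x-y|$ reproduces $g(x)$, while the remaining contribution collapses, using $\int b\,dx=1$, to $-b(x)\int g\,dx$.

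For the quantitative estimates I would treat $s=0$ first. Writing $\nabla\Bcal_Q g$ componentwise, each entry has the form
\[
g\ \longmapsto\ \mathrm{p.v.}\!\int_{\rn} K_{ij}(x,x-y)\,g(y)\,dy \;+\; c_{ij}(x)\,g(x),
\]
where, for fixed $x$, $z\mapsto K_{ij}(x,z)$ is homogeneous of degree $-n$, smooth away from the origin, and has vanishing mean over $\{|z|=1\}$, and $c_{ij}$ is bounded --- all with bounds depending only on $b$ and $Q$. The Calderón–Zygmund theorem for singular integrals with such uniformly controlled variable kernels then gives $\|\nabla\Bcal_Q g\|_{L^p(\Omega)}\le C(n,p,Q)\,\|g\|_{L^p(\Omega)}$ for $1<p<\infty$, and $\|\Bcal_Q g\|_{L^p(\Omega)}$ is controlled by the same right-hand side since the kernel of $\Bcal_Q$ is only weakly singular (degree $-(n-1)$, with a $Q$-dependent support localisation). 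For integer $s=m\ge1$ I would argue inductively: differentiating the representation $m$ further times, $D^{m+1}\Bcal_Q g$ is again a sum of operators of the above type applied to $D^m g$ plus terms of strictly lower order in $g$ (equivalently $D^\beta\Bcal_Q g=\Bcal_Q(D^\beta g)+(\text{lower order})$, using that the derivatives of $b$ inherit the required cancellation), giving $\|\Bcal_Q\|_{W^{m,p}\to W^{m+1,p}}\le C(n,p,m,Q)$. The general $s\ge0$ then follows by real interpolation between consecutive integers, and since $C_c^\infty(\Omega)$ is dense, the operator extends to the full Sobolev scale.

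The main obstacle will be the $s=0$ gradient estimate: one must verify honestly that $\nabla\Bcal_Q g$ is a genuine Calderón–Zygmund operator --- the exact homogeneity, the mean-zero property on spheres, and a Hörmander-type regularity bound --- and, more delicately, that its operator norm is uniform over the \emph{entire} admissible class of domains, i.e.\ depends on the geometry of $Q$ (its diameter and inradius) but not on $\Omega$; this is exactly the point where star-shapedness must be used economically, only to keep the operator supported in $\Omega$. A secondary technical point is organising the higher-order identities cleanly enough that the interpolation step for fractional $s$ is legitimate, which requires the full ladder of integer estimates with the stated constant dependence.
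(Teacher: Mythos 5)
The paper does not actually prove this proposition; it only cites \cite{hieberbog}, so there is no in-paper argument to compare against. Your route --- the explicit Bogovskij kernel together with a Calder\'on--Zygmund argument for the derivative --- is precisely the standard one taken in those references, and the structural parts of your sketch are sound: the support argument (every segment from $\supp g$ through $\supp b$ lies in $\Omega$ by star-shapedness, the union of such segments is compact and at positive distance from $\partial\Omega$) and the divergence identity are correct, as is the outline of the induction and interpolation for higher-order norms.

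There is, however, a genuine gap in the quantitative claim, and it is in fact also a latent imprecision in the proposition as the paper states it. You assert that $\|\Bcal_Q g\|_{L^p}$ is controlled with a constant depending only on $Q$ because ``the kernel is only weakly singular with a $Q$-dependent support localisation'', and more globally that the operator norm ``depends on the geometry of $Q$ but not on $\Omega$.'' Neither is right. The kernel $k(x,y)$ is nonzero whenever the ray from $y$ through $x$ meets $\supp b\subset Q$, so if $\supp g$ sits at distance $R$ from $Q$ the relevant $|x-y|$ range up to $\approx R$ and the weight $r^{n-1}$ in the inner integral contributes a factor $\approx R^{n-1}$; the support localisation and the kernel size depend on where $g$ lives relative to $Q$, not on $Q$ alone. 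A concrete obstruction: take $g$ a unit-$L^p$ bump at distance $R$ from $Q$. Then $\Bcal_Q g$ is divergence-free on the tube joining $\supp g$ to $Q$ (of volume $\approx R$) with unit flux through each $O(1)$-area cross-section, so $|\Bcal_Q g|\approx 1$ along the tube and $\|\Bcal_Q g\|_{L^p}\gtrsim R^{1/p}\|g\|_{L^p}$; a scaling argument shows that for $n\geq 2$ the gradient norm degrades with $R$ as well. Thus the $W^{s,p}\to W^{s+1,p}$ operator norm necessarily depends on $\operatorname{diam}(\Omega)/\operatorname{diam}(Q)$ (as is indeed recorded in the standard references, where the constant contains a power of this ratio), and cannot be uniform over \emph{all} star-shaped $\Omega$ --- e.g.\ $\Omega=\R^n$ is star-shaped w.r.t.\ any $Q$, yet the bound fails there. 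To repair your argument you should split the kernel of $\nabla\Bcal_Q$ into a near-diagonal part (a genuine CZ operator whose constants depend only on $b$) and a far-field part, and make the diameter dependence of the latter explicit; the zeroth-order bound then follows by Poincar\'e on the bounded domain. In the paper's actual use of the proposition (Theorem~\ref{theobog} and Corollary~\ref{cor:ndBog}) the cubes $Q_k$ and the strips $\Omega_k$ are comparable by construction, so this dependence is controlled and the imprecision is harmless downstream, but your proof should record it rather than assert it away.
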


Note here that the estimate is universal for all star-shaped domains and in particular, the support commutes with the operator, as long as the support of the function stays star-shaped w.r.t.\ the same reference cube $Q$. Moreover, due to the scaling invariance of the operator, we find that $(\mathcal{B}_{Q_r}(g))(x)=(\mathcal{B}_Q (g_r(x))(\frac{x}{r})$, with $g_r(x):=g(rx)$, where $Q_r$ is a cube of side-length $r$.
Hence there is a constant {\em independent of $r$}, such that 
\begin{align*}
\norm[L^p(\Omega)]{\nabla \Bcal_{Q_r} g}\leq c \norm[L^p(\Omega)]{g} \, .
\end{align*}

The theorem below shows that actually an analoguous universal Bogovskij operator can be constructed for a family of uniformly Lipschitz domains. We construct the operator here for our setup, meaning two space dimensions and domains that are sub-graphs. The construction however can be extended to higher dimensions and more general classes of domains without further difficulty (see Corollary \ref{cor:ndBog}). Noteworthy is the fact that the construction does indeed fail once the class of domains is more general than Lipschitz. In this case no universal operator can be expected. See~\cite{SaaSch21} for more details on that complicated matter.

Hence in the following we consider 
 a family of domains given as subgraphs of Lipschitz functions. For $\ell > 0$ and $\eta \in W^{1,\infty}([0,\ell];(0,\infty))$, throughout this section we will denote by
$$
\Omega_{\eta} := \{ z=(x_1,x_2) \in \mathbb{R}^{2} \ | \ 0 < x_1 < \ell \, , \quad 0 < x_2 < \eta(x) \} \, ,
$$
the subgraph of $\eta$.\footnote{Note that for the purposes of this proof, $\eta$ only describes vertical displacement. For the purposes of the rest of this paper, the correct quantity would be $\eta_2 \circ \eta_1^{-1}(x_1)$ but at this point this would only add superfluous notation as this has the same estimates throughout.}

We then have:

\begin{theorem} \label{theobog}
	Let $\ell > 0$, $L > 0$ and $M \geq \gamma > 0$ and take any fixed $b\in C^\infty_0((0,\ell)\times (0,\gamma))$ with unit integral. Then, there is a linear, universal Bogovskij operator $\Bcal: C^\infty_0([0,\ell]\times[0,M]) \to C^\infty_0([0,\ell]\times [0,M];\R^2)$ such that for any $\eta \in W^{1,\infty}([0,\ell];(0,\infty))$ with
	\begin{equation*} %
	\eta(x) \geq \gamma \qquad \text{for a.e. } x \in [0,\ell] \, , \qquad \| \eta \|_{L^{\infty}([0,\ell])} \leq M \, , \qquad \| \eta' \|_{L^{\infty}([0,\ell])} \leq L \, .
	\end{equation*}
	\noindent 
	 the operator $\Bcal$ maps $C^\infty_0(\Omega_\eta)$ to $C^\infty_0(\Omega_\eta;\R^2)$ with
	$
	\nabla\cdot  \Bcal f= f-b\int f\, dx$.
In addition,
\[\no{\Bcal}_{W^{s,p}( \Omega_\eta )\to W^{s+1,p}(\Omega_\eta;\R^2)} \leq C_B^{s,p}\]
for all $ 1<p<\infty $ and $ s \ge 0$
with $C_B^{s,p}$ only depending on $s$, $p$, $\ell, L,\gamma $. In particular in case $s=1$ we find (assuming $\ell L\geq 1$)
	$$
	\begin{aligned}
	C_{B}^{1,p} & \lesssim (1+\ell)\frac{(\ell L)^{\frac{1}{p}}}{\gamma}.
	\end{aligned}
$$
\end{theorem}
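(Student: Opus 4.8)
The plan is to build $\Bcal$ by gluing copies of the reference operator of Proposition~\ref{prop:reference} along a fixed dissection of $[0,\ell]$ into short intervals, exploiting that over each short interval the \emph{whole} vertical slice of $\Omega_\eta$ is star-shaped with respect to a small cube that can be placed inside the $\eta$-independent rectangle $R_\gamma:=(0,\ell)\times(0,\gamma)\subset\Omega_\eta$.

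\textbf{Step 1 (geometry).} Put $h:=\gamma/(2L)$ (shrunk slightly if necessary so that $N:=\ell/h\in\N$), choose intervals $\tilde I_1,\dots,\tilde I_N$ of length $\sim h$ with $[0,\ell]=\bigcup_j\tilde I_j$ and bounded overlap, and a subordinate partition of unity $\{\psi_j\}$ depending on $x_1$ only with $\sum_j\psi_j\equiv1$ on $[0,\ell]$ and $\|\psi_j^{(k)}\|_\infty\lesssim h^{-k}$. The one genuinely new geometric point is: if $x_1,q_1\in\tilde I_j$, $0\le q_2\le\gamma/2$, $0<x_2<\eta(x_1)$ and $t\in[0,1)$, then $|\eta((1-t)q_1+tx_1)-\eta(x_1)|\le L(1-t)|q_1-x_1|\le(1-t)\gamma/2$, hence $(1-t)q_2+tx_2<(1-t)\tfrac\gamma2+t\eta(x_1)\le\eta(x_1)-(1-t)\tfrac\gamma2\le\eta((1-t)q_1+tx_1)$; so the slice $\tilde S_j:=\Omega_\eta\cap(\tilde I_j\times(0,\infty))$ is star-shaped with respect to \emph{any} cube $Q_j$ of side $\sim h$ lying in $\tilde I_j\times(0,\gamma/2)$, and $Q_j\subset R_\gamma\subset\Omega_\eta$ for every admissible $\eta$; moreover $\bigcup_j\tilde S_j=\Omega_\eta$.

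\textbf{Step 2 (local operators) and Step 3 (assembly).} Pick unit-integral $\phi_j\in C^\infty_0(Q_j)$ and let $\Bcal_{Q_j}$ be the operator of Proposition~\ref{prop:reference} for the cube $Q_j$ and bump $\phi_j$; restricted to any domain star-shaped with respect to $Q_j$ — in particular to every $\tilde S_j$, \emph{with bounds independent of $\eta$} (hence of $M$) — it maps $C^\infty_0$ into $C^\infty_0$, satisfies $\nabla\cdot\Bcal_{Q_j}g=g-\phi_j\int g$, and obeys $\|\Bcal_{Q_j}\|_{W^{s,p}\to W^{s+1,p}}\le C(s,p,h)$ with the dependence on the side length $h\sim\gamma/L$ made explicit by the scaling identity recorded after Proposition~\ref{prop:reference} (for $s=1$ the top-order part is $h$-independent, the lower-order part is $\lesssim h^{-1}$). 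Set $V:=\sum_{j=1}^N\Bcal_{Q_j}(\psi_j f)$, so that each summand is supported in $\tilde S_j$ and, using $\sum_j\psi_j\equiv1$ on $\supp f$,
\[
\nabla\cdot V=\sum_j\Big(\psi_j f-\phi_j\!\int\psi_j f\Big)=f-g,\qquad g:=\sum_j\phi_j\!\int\psi_j f .
\]
Then $\supp g\subset\bigcup_jQ_j\subset R_\gamma$ and $\int g=\int f$, so $g-b\int f$ is mean-zero and supported in the \emph{fixed} Lipschitz rectangle $R_\gamma$; let $\Bcal_{R_\gamma}$ be a Bogovskij operator on $R_\gamma$ (built from Proposition~\ref{prop:reference} by covering $R_\gamma$ with $\sim\ell/\gamma$ overlapping squares and chaining them, its norm a fixed function of $\ell,\gamma,p,s$), and put $\Bcal f:=V+\Bcal_{R_\gamma}\big(g-b\int f\big)$. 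By construction $\Bcal$ is linear, maps $C^\infty_0([0,\ell]\times[0,M])$ into $C^\infty_0([0,\ell]\times[0,M];\R^2)$ and $C^\infty_0(\Omega_\eta)$ into $C^\infty_0(\Omega_\eta;\R^2)$, and $\nabla\cdot\Bcal f=(f-g)+(g-b\int f)=f-b\int f$.

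\textbf{Step 4 (estimates) and the main obstacle.} By bounded overlap of the $\tilde S_j$, $\|V\|_{W^{s+1,p}}^p\lesssim\sum_j\|\Bcal_{Q_j}(\psi_j f)\|_{W^{s+1,p}}^p\lesssim\sum_jC(s,p,h)^p\|\psi_j f\|_{W^{s,p}}^p$; expanding $\psi_j f$ by Leibniz (each derivative of $\psi_j$ costs $h^{-1}\sim L/\gamma$) and re-summing with bounded overlap bounds this by $C(s,p,\ell,L,\gamma)\|f\|_{W^{s,p}(\Omega_\eta)}$, and the second term is controlled by the fixed norm of $\Bcal_{R_\gamma}$ times $\|g-b\int f\|_{W^{s,p}}$, whose only new ingredients are the column masses $\int\psi_j f$, estimated by $\|f\|_{L^p}$ and the dimensions of $\tilde S_j$. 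Collecting contributions and optimizing the powers of $h$, $N=\ell/h\sim\ell L/\gamma$ and $\ell$ yields the asserted bound, and for $s=1$ one reads off $C_B^{1,p}\lesssim(1+\ell)(\ell L)^{1/p}/\gamma$. The heart of the matter is exactly Step~4: making every constant \emph{genuinely} independent of $\eta$ — in particular independent of the size and roughness of the part of $\partial\Omega_\eta$ far from $R_\gamma$. This is where the clause of Proposition~\ref{prop:reference} that the operator norm on a star-shaped domain depends on the reference cube but \emph{not} on the domain is indispensable: it lets the tall slices $\tilde S_j$ be treated by one small-cube operator with $M$-free bounds, so the whole $\eta$-dependence collapses to the finitely many fixed quantities $h\sim\gamma/L$, $N\sim\ell L/\gamma$ and the fixed rectangle $R_\gamma$. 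A secondary, more routine point requiring care is keeping the residual $g$ supported in the \emph{fixed} rectangle $R_\gamma$ rather than merely in $\Omega_\eta$, and checking that the enlarged slices $\tilde S_j$ stay sub-graph (hence star-shaped) domains so that the partition-of-unity gluing is legitimate.
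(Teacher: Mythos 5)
Your construction is correct and rests on the same geometric insight as the paper's: vertical slices of width $\sim\gamma/L$ are star-shaped with respect to small cubes placed inside the $\eta$-independent rectangle $R_\gamma=(0,\ell)\times(0,\gamma)$, so Proposition~\ref{prop:reference} applies slice by slice with $\eta$-free bounds. But the way you close the construction is different and in fact coincides with the route the paper takes in its higher-dimensional Corollary~\ref{cor:ndBog} rather than in the proof of Theorem~\ref{theobog} itself. In Theorem~\ref{theobog} the paper makes each localized piece mean zero from the outset by a telescoping correction $T_kf=f\psi_k-\big(\sum_{l\le k}a_l\big)\phi_k+\big(\sum_{l<k}a_l\big)\phi_{k-1}$, where the auxiliary bumps $\phi_k$ are supported in tall thin rectangles $\sim a\times\gamma$ and are \emph{distinct} from the reference bumps of Proposition~\ref{prop:reference}; then $\sum_k\Bcal_kT_kf$ already has divergence $f$ (once $\int f=0$) and no second-stage operator is needed. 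You instead apply $\Bcal_{Q_j}$ directly to $\psi_jf$, accept the residual $g=\sum_j\phi_j\int\psi_jf$, and correct it with a second, fixed Bogovskij operator on $R_\gamma$. This "localize, then globally correct" structure is cleaner and nicely exposes that all $\eta$-dependence collapses onto $R_\gamma$.

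The trade-off, however, is not innocuous for the quantitative constant, and this is precisely where your Step~4 is too thin. In your decomposition the $\phi_j$ are \emph{forced} to be the Prop.~\ref{prop:reference} bumps and hence supported in cubes of side $h\sim\gamma/L$, giving $\|\phi_j\|_{L^p}^p\sim h^{2(1-p)}$, whereas the paper's auxiliary bumps live in rectangles $\sim h\times\gamma$ with $\|\phi_k\|_{L^p}^p\sim(h\gamma)^{1-p}$; since $h\le\gamma/2$ this costs you roughly a factor $L^{1-1/p}$ in $\|g\|_{L^p}$. On top of this the norm of $\Bcal_{R_\gamma}$ (itself of size comparable to $\ell/\gamma$ if built by chaining) multiplies your estimate on $g$. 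The claim that "collecting contributions and optimizing powers yields the asserted bound" is therefore not substantiated; as written, the product of constants overshoots $(1+\ell)(\ell L)^{1/p}/\gamma$. Your construction does give a universal Bogovskij operator with $\eta$-free bounds, but to recover the paper's explicit constant you would need either to decouple the auxiliary bumps from the reference bumps (subtract a tall-rectangle bump from $\psi_jf$ before applying $\Bcal_{Q_j}$, so the pieces are already mean-zero), or to replace the second-stage $\Bcal_{R_\gamma}$ by the telescoping transport of residual mass, as the paper does.
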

\noindent
\begin{proof}
The idea is here to divide the domain into vertical stripes which are thin enough such that on each stripe the domain is star-shaped w.r.t.\ a fixed reference square and Proposition \ref{prop:reference} applies. Afterwards the stripes are connected using a partition of unity and excess/deficiency of mass is corrected on a fixed horizontal stripe away from the boundary.

For that we take a side length $a=\frac{\gamma}{2L}$.
	Without loss of generality we may assume that $L \geq 1$. 
	Accordingly we define for each $[x_0,x_0+a]\subset[0,\ell]$ the set
	$$
	\Omega_{x_0} := \Big\{ z=(x_1,x_2) \in \mathbb{R}^{2} \ \Bigg|\ x_{1} \in (x_0,x_0+a), \,  x_{2} \in (0, \eta(x_1)) \Big\} \, ,
	$$
	Let us now prove that, 
	the domain $\Omega_{x_0}$ is star-shaped with respect to the set $Q_{x_0} := (x_0,x_0+a)\times (0,a)$. %
	It suffices to prove that
	$$
	t(p,q) + (1-t)(x_{1}, \eta(x_{1})) \in \Omega_{x_0} 
	$$
	for all $x_{1} \in (x_0,x_0+a)$, all $(p,q)\in Q_{x_0}$ and all $t\in [0,1]$, which in turn is equivalent to showing
	\begin{equation*} %
	x_{0} \leq tp +  (1-t)x_{1} \leq x_{0} + a \quad \text{and} \quad 0 \leq tq + (1-t) \eta(x_{1}) \leq \eta(tp + (1-t)x_{1}) .
	\end{equation*}
	The first of these is trivially true, as is the lower bound in the second. 
	
	For the respective upper bound, note that equality holds for $t=0$. Treating both sides as a functions of $t$, the left hand side has a derivative
	\begin{align*}
	 \frac{d}{dt} \left(tq + (1-t) \eta(x_1)\right) = q-\eta(x_1) < a-\gamma < -\frac{\gamma}{2} 
	\end{align*}
	while the right-hand side is $L\abs{p-x_{1}}$-Lipschitz as a rescaling of $\eta$. Consequently since $L\abs{p-x_{1}} \leq La =\frac{\gamma}{2}$ the inequality has to hold for all $t>0$ as well.

In order to construct the Bogovskij operator we follow \cite{SaaSch21}. %
	With no loss of generality we may assume that $\int_{\Omega_\eta} f\, dx=0$ and that $f\in C^\infty_0(\Omega_\eta)$.
Next we decompose $ f $ into a sum of functions with mean value zero and support in sets of width $ a$. Let $N:= \lfloor 4L/\gamma\rfloor$
and $ \psi_k \ge 0$ be a positive partition of unity over $[0,\ell]$, such that 
\[
\supp(\psi_k)\subset \Big[\frac{(k-1)\gamma}{4L},\frac{(k+1)\gamma}{4L}\Big],
\] 
with $k\in \{1,\dots,N-1\}$,
\[
\supp(\psi_1)\subset \Big[0,\frac{\gamma}{4L}\Big]\text{ and }\supp(\psi_N)\subset \Big[\frac{(N-1)\gamma}{4L},\frac{N\gamma}{4L}\Big]=.
\] %

Additionally we define non-negative functions $\phi_k \in C_c^\infty(\R^2)$ such that each $\phi_k$ has unit integral and
\[
 \supp(\phi_k) \subset \left[\frac{k\gamma}{4L},\frac{(k+1)\gamma}{4L}\right] \times [0,\gamma]\text{ for }k=\{0,...,N-1\}
\]

Certainly, we may assume that $\abs{\partial^l\psi_k}\leq \frac{c L^l}{\gamma^l}$ and $\abs{\partial^l\psi_k}\leq \frac{c L^l}{\gamma^l}$
for some $c>0$ fixed.
Specifically we may choose 
\[
\phi_k(x_1,x_2)=\frac{2}{\gamma a}\phi\left(\frac{2}{a} \left(x_{1}-k\frac{4L}{\gamma} \right)\right)\phi\Big(\frac{x_{2}}{\gamma}\Big),
\]
where $\int_0^1\phi=1$ and $\norm[{L^p(0,1)}]{\phi}\leq c$ uniformly in $p$.

Next we define linear operators
\[(T_k f) (x_1,x_2) := f(x_1,x_2) \psi_k(x_1) - \sum_{l=1}^k a_l \phi_k(x_1,x_2) + \sum_{l=1}^{k-1} a_l \phi_{k-1}(x_1,x_2) \]
where $a_l := \int_{\Omega_{\eta}} f(x_1,x_2) \psi_{l}(x_1) dx$. Then per definition $T_k f$ has mean zero and $\sum_{k=1}^N T_k f = f$.

Now we have formed the decomposition. Observe that $\supp(T_kf)\subset \Omega_k$, where the domains $\Omega_k$ are of the form of the $\Omega_{x_0}$ studied above. Let us use $Q_k$ to denote the respective cube and accordingly $\Bcal_k$ for the corresponding operator $\Bcal_{Q_k}$ as introduced in Proposition~\ref{prop:reference}. We then define the bounded operator
\[
\Bcal f := \sum_{i=1}^{N} \Bcal_k T_k f.
\]
Now certainly $ \nabla\cdot \Bcal f = f $. Moreover, $\supp(T_k f)\subset \Omega_k$ implies $\supp(\Bcal_k T_k f)\subset \Omega_k$, so in total $\supp \Bcal f \subset \Omega_\eta$ for any $\eta$, as required.

Next we derive the uniform estimates. As the overlap of the $ \Omega_k $ is bounded by a dimensional constant,
we deduce by means of Proposition \ref{prop:reference} for $ \alpha\in \N^{n} $
\[
\|\partial^{\alpha} \Bcal f\|_{L^p(\Omega_\eta)}^{p}
 \le C \sum_{i=1}^{N}\|\partial^{\alpha} \Bcal_{i} T_i f\|_{L^p(\Omega_\eta)}^{p}
 \le C \sum_{i=1}^{N} \sum_{|\alpha'| = |\alpha| - 1}\|\partial^{\alpha'} T_i f\|_{L^p(\Omega_\eta)}^{p} .
\]
Expanding the definition of $ T_i $,
applying the Leibniz rule,
using that $ \psi_i $ and $\phi_i$ have bounded derivatives 
and finally applying Poincar\'e's inequality for trace zero functions
we conclude the claimed bounds for Sobolev spaces of integer order.
The bounds on fractional Sobolev spaces follow by interpolation
although we do not need them here.

We are left estimating the constant in the case $s=0$.
For that we realize that
\begin{align*}
\sum_{i=1}^N\| T_i f\|_{L^p(\Omega_\eta)}^{p}\leq c\sum_{i=1}^N\| f\psi_i-a_i\phi_i\|_{L^p(\Omega_i)}^{p}+c\sum_{i=1}^N\abs{a_i}^p\sum_{j=i}^N \norm[L^p(\Omega_j)]{\phi_j}.
\end{align*}
Since $\int^{\ell}_{0} \psi_i\,dx_1\sim a$, we find
\[
\abs{a_i}^p\leq ca^{p-1}\int_{\Omega_{\eta}}\abs{f}^p\psi_i\, dx\text{ and }\norm[L^p(\Omega_j)]{\phi_j}^p\leq c a^{1-p}\gamma^{1-p}
\]
and so
\[
\|\nabla \Bcal f\|_{L^p(\Omega_\eta)}^{p}\leq c\norm[{L^p(\Omega_\eta)}]{f}^p \left( 1+a^{p-1}\sum_{i=1}^N\norm[L^p(\Omega_j)]{\phi_j}^p \right) \leq  c\norm[{L^p(\Omega_\eta)}]{f}^p (1+N \gamma^{1-p}).
\]
Poincar\'e's inequality implies
\[
\| \mathcal{B} f\|_{L^p(\Omega_\eta)}\lesssim \ell \|\nabla \mathcal{B} f\|_{L^p(\Omega_\eta)}.
\]
Now, by noting that $aN\sim\ell$, we find that $(C_B^{1,p})^p\lesssim (\ell^{p}+1) \gamma^{-p}\ell L$.%
\end{proof}

Since we consider it relevant for future use we show here that the previous theorem can be extended to higher dimensions as well.
\begin{corollary} \label{cor:ndBog}
 Let $\Sigma \subset \R^{n-1}$ be a bounded Lipschitz-domain, $M > \gamma > 0$, $L>0$, $b\in C_0^\infty(\Sigma \times [0,\gamma])$ with unit integral. Then there exists a linear, universal Bogovskij-operator $\Bcal: C_0^\infty(\Sigma \times [0,M]) \to C_0^\infty(\Sigma \times [0,M];\R^{n-1})$ such that for any $L$-Lipschitz function $\eta:\Sigma \to [\gamma,M]$ and $\Omega_\eta := \{(x',x_n) \in \Sigma \times [0,M]: 0< x_n < \eta(x')\}$ the operator $\Bcal$ maps $C_0^\infty(\Omega_\eta)$ to $C_0^\infty(\Omega_\eta;\R^n)$ with $\nabla \cdot \Bcal f = f - b \int f dx$. 
In addition,
\[\no{\Bcal}_{W^{s,p}( \Omega_\eta )\to W^{s+1,p}(\Omega_\eta;\R^2)} \leq C_B^{s,p}\]
for all $ 1<p<\infty $ and $ s \ge 0$
with $C_B^{s,p}$ only depending on $s$, $p$, $\operatorname{diam}(\Sigma), L,\gamma $ and $\Sigma$. %
\end{corollary}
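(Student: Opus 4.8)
The plan is to run the same three-part construction as in the proof of Theorem~\ref{theobog} — cut $\Omega_\eta$ into finitely many star-shaped pieces, glue them with a partition of unity, and transport the mass defect to a fixed reference bubble through a chain of auxiliary bubbles sitting in the bottom slab $\Sigma\times(0,\gamma)$ — the only genuinely new ingredient being a covering of $\Sigma$ adapted both to its Lipschitz boundary and to the graph constraint coming from $\eta$. Concretely, I would first fix, using only $\Sigma$, $L$ and $\gamma$, a finite open cover $V_0,\dots,V_K\subset\R^{n-1}$ of $\overline{\Sigma}$ with $V_0\Subset\Sigma$ containing the projection of $\operatorname{supp}b$, together with cubes $Q_i$, such that for \emph{every} admissible $\eta$ the set $\mathcal O_i^\eta := \{(x',x_n):x'\in V_i\cap\Sigma,\ 0<x_n<\eta(x')\}$ is star-shaped with respect to $Q_i\subset\mathcal O_i^\eta\cap(\Sigma\times(0,\gamma))$. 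For interior patches $V_i\Subset\Sigma$ one takes cubes of side $\sim\gamma/L$ and star-shapedness is precisely the derivative-comparison argument of Theorem~\ref{theobog}, which is dimension free. For patches meeting $\partial\Sigma$ one invokes a finite Lipschitz atlas of $\partial\Sigma$: in a chart where $\partial\Sigma$ is the graph of an $L_\Sigma$-Lipschitz function, after intersecting with a small enough cube the set $\mathcal O_i^\eta$ has both its top boundary (the graph of $\eta$) and its lateral boundary (the chart graph) controlled, so the same comparison argument — run with the combined Lipschitz constant $\max(L,L_\Sigma,1)$ — gives star-shapedness with respect to a cube in the bottom--interior corner whose side length is bounded below in terms of $\Sigma$, $L$, $\gamma$. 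The reference cubes $Q_i$ are admissible \emph{uniformly in} $\eta$ precisely because $\eta\geq\gamma$ keeps the slab $\Sigma\times(0,\gamma)$ inside every $\Omega_\eta$, and the cover can be taken with overlap bounded by a constant depending only on $\Sigma$, $L$, $\gamma$.

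Next I would take a partition of unity $\{\psi_i\}_{i=0}^K$ subordinate to $\{V_i\}$, with $|\partial^l\psi_i|$ controlled in terms of $L,\gamma,\Sigma$. Since $\Sigma$ is connected, the intersection graph with edge set $\{\{i,j\}:V_i\cap V_j\cap\Sigma\neq\emptyset,\ i\neq j\}$ is connected, so I would pick a spanning tree rooted at $0$, write $p(j)$ for the parent of $j\neq 0$, and fix unit-integral bubbles $\phi_j\in C_0^\infty$ supported in a box $B_j\times(0,\gamma)$ with $B_j\Subset V_j\cap V_{p(j)}\cap\Sigma$; these boxes, and $\operatorname{supp}b$, lie inside the relevant $\mathcal O_i^\eta$ for every admissible $\eta$. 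With $m_i:=\int_{\Omega_\eta}f\psi_i\,dx$ and $D_i:=\sum_{k\preceq i}m_k$ (sum over the subtree rooted at $i$; note $D_0=\int_{\Omega_\eta}f\,dx$), routing the subtree mass $D_j$ from each node $j$ up to its parent through $\phi_j$ and absorbing $D_0$ at the root through $b$ yields the operators
\[
T_i f := f\psi_i + \sum_{p(j)=i}D_j\,\phi_j \;-\; \begin{cases}D_i\,\phi_i, & i\neq 0,\\ D_0\,b, & i=0,\end{cases}
\]
each $T_i f$ having zero mean and support in $\mathcal O_i^\eta$, and $\sum_{i=0}^K T_i f = f-b\int f\,dx$. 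Then $\Bcal f := \sum_{i=0}^K \Bcal_{Q_i}(T_i f)$, with $\Bcal_{Q_i}$ the operator from Proposition~\ref{prop:reference}, satisfies $\nabla\cdot\Bcal f = f-b\int f\,dx$ and $\operatorname{supp}\Bcal f\subset\Omega_\eta$, and — since the $\psi_i$, $\phi_j$, $Q_i$ are chosen once and for all — is independent of $\eta$.

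For the bounds, the bounded overlap of the $\mathcal O_i^\eta$ and Proposition~\ref{prop:reference} (whose constants are scale invariant and depend only on the shapes of the $Q_i$, hence on $\Sigma,L,\gamma$) give
\[
\|\partial^\alpha\Bcal f\|_{L^p(\Omega_\eta)}^p \;\lesssim\; \sum_{i=0}^K\|\partial^\alpha\Bcal_{Q_i}T_i f\|_{L^p(\Omega_\eta)}^p \;\lesssim\; \sum_{i=0}^K\sum_{|\alpha'|=|\alpha|-1}\|\partial^{\alpha'}T_i f\|_{L^p(\Omega_\eta)}^p.
\]
Expanding $T_i f$, applying the Leibniz rule with the derivative bounds on $\psi_i$ and $\phi_j$, Hölder's inequality to control the coefficients $D_j$, Poincaré's inequality for trace-zero functions, and summing over the finitely many patches would then give $\|\Bcal\|_{W^{s,p}(\Omega_\eta)\to W^{s+1,p}(\Omega_\eta;\R^n)}\leq C_B^{s,p}$ with $C_B^{s,p}$ depending only on $s,p,\operatorname{diam}\Sigma,L,\gamma,\Sigma$; fractional orders follow by interpolation, as in Theorem~\ref{theobog}.

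The hard part is Step~1, and within it the boundary patches: the cover and the reference cubes must be arranged so that star-shapedness — and hence every constant entering the estimate — is uniform over the whole admissible family $\{\eta\}$, which is where the geometry of $\Sigma$ enters through its Lipschitz atlas and is the reason the final bound is allowed to depend on $\Sigma$ and not merely on $\operatorname{diam}\Sigma$. Everything else, including replacing the one-dimensional chain of the proof of Theorem~\ref{theobog} by a spanning tree, is routine bookkeeping with no analytic content.
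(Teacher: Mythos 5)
Your plan is sound and correctly isolates the crux — a cover of $\Sigma$, adapted both to $\partial\Sigma$ and to the graph constraint coming from $\eta$, on which the derivative-comparison argument of Theorem~\ref{theobog} gives star-shapedness uniformly over all admissible $\eta$. The paper's own proof is terse on exactly this point, and your unfolding of the boundary patches via a Lipschitz atlas of $\partial\Sigma$ is the right way to fill it in.

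Where you genuinely diverge from the paper is in how the local mass defects are reassembled. You generalize the one-dimensional chain of Theorem~\ref{theobog} to a spanning tree of the cover and route cumulative subtree masses $D_j$ through bubbles $\phi_j$ supported in the overlaps of adjacent patches, absorbing $D_0$ at the root via $b$. The paper does something different and cheaper: it sets $T_k f := \psi_k f - \phi_k\int_{\Omega_k}f\psi_k$ with $\phi_k$ supported in the bottom slab $\Sigma\times[0,\gamma]$ (contained in every admissible $\Omega_\eta$), sums these, and then corrects the residual $\sum_k\phi_k\int_{\Omega_k}f\psi_k$ against $b\int f$ — a fixed-support function on the $\eta$-independent Lipschitz domain $\Sigma\times[0,\gamma]$ — by a single application of the ordinary Bogovskij operator $\Bcal'$ on that slab. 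This avoids the tree combinatorics entirely. Both routes produce the same bounds; the paper's is shorter, yours is more self-contained in that it never invokes a second, domain-specific Bogovskij operator.

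There is, however, a concrete gap in your version. You require $V_0\Subset\Sigma$ to contain the projection of $\operatorname{supp}b$, because $T_0f$ carries the term $-D_0\,b$. But the corollary makes no smallness assumption on $\operatorname{supp}b$: it may fill essentially all of $\Sigma\times[0,\gamma]$. In that case $V_0$ cannot simultaneously contain the projection of $\operatorname{supp}b$ and be small enough for $\mathcal O_0^\eta$ to be star-shaped with respect to a single cube, so Proposition~\ref{prop:reference} is not applicable to $T_0 f$. The fix is the reduction already used in the proof of Theorem~\ref{theobog}: replace $f$ by $f-b\int f$ at the outset so that $D_0=0$, run your tree construction (now with no $b$-term and with $V_0$ free to be a small patch), and observe that the resulting operator already satisfies $\nabla\cdot\Bcal f=f-b\int f$. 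Alternatively, route $D_0$ to a small bubble $\phi_0$ and correct $(\phi_0-b)D_0$ by $\Bcal'$ on $\Sigma\times[0,\gamma]$ — i.e.\ fall back on the paper's device at the very last step. Either patch makes your argument complete; as written it is not.
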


\begin{proof}
 The proof goes along the same lines as before. If we again denote $a:= \tfrac{\gamma}{2L}$, then we can split $\Sigma$ into $O((\operatorname{diam}(\Sigma)/a)^{n-1})$ overlapping sets $(\Sigma_k)_k$ which are each star-shaped with respect to a $n-1$-dimensional cube of side-length $a/c$, where all the constants except $a$ depend only on the boundary of $\Sigma$. We can also assume that $\int_{\Omega_\eta} f \, dx= 0$.
 
 Then by the same considerations as before $\Omega_\eta$ is covered by open sets $\Omega_k := \Sigma_k \times [0,M] \cap \Omega_\eta$ which are each again star-shaped w.r.t.\ an $n$-dimensional cube $Q_k$, moreover, we may assume that $Q_k$ are pairwise disjoint. We can then pick a partition of unity $(\psi_k)_k$ subordinate to $(\Sigma_k)_k$ and smooth functions of unit integral $(\phi_k)_k$ each supported in $Q_k$.
 
Now, we may apply the Proposition \ref{prop:reference} to obtain operators $\Bcal_k$ that map $C_0^\infty(\Omega_k)$ to $C_0^\infty(\Omega_k;\R^n)$ with $\nabla \cdot \Bcal_k f = f$ in case $f$ has mean zero over $\Omega_k$.
 
 Hence if we set $T_k f (x) := \psi_k f(x) - \phi_k \int_{\Omega_k} f(y)\psi_k(y) dy$ then a short calculation reveals that
 \begin{align*}
  \Bcal f := \sum_k \Bcal_k T_k f - \Bcal'\left( \sum_k \phi_k \int_{\Omega_k} f(y)\psi_k(y) dy\right)
 \end{align*}
 is the desired operator, where $\Bcal'$ denotes the Bogovskij-operator on $\Sigma \times [0,\gamma]$. %
\end{proof}

\subsection{Almost solenoidal extensions of vectors}

Finally we use the operator derived in the previous subsection to define an extension-operator tailored to our problem. Important is here that we make an extension of {\em arbitrary} vector-fields into the fluid domain. For that reason the extension cannot be solenoidal in general. In place we give a precise (scalar) control on the divergence of our extension.

\begin{proposition}[Extensions for vector-valued functions] \label{prop:arbMeanExt}
 Let $I \subset [0,T]$ a closed interval, $\gamma > 0$ such that $\sup_{I\times [0,\ell]} \abs{\eta} < \frac{\ell}{2}-\gamma$ and  %
 $\psi^\pm \in C_0^\infty(\Omega^\pm)$ such that $\int_{\Omega^+} \psi^+ = \int_{\Omega^-} \psi^- =1$. Then there exists a linear extension operator 
 \[
 \ext{\eta}:L^2(I;H_0^1([0,\ell];\R^2))\to L^2(I;H^1_0(\Omega;\R^2)),
 \]
 with $\ext{\eta}(b)(t, \eta(t,x)) = b(t,x)$ for all $(t,x) \in I\times [0,\ell]$ and a corresponding linear operator
 \[
 \lam{\eta}:L^2(I;H_0^1([0,\ell];\R^2))\to L^2(I)
 \]
such that  $\nabla \cdot \ext{\eta}(b)(t) = \lam{\eta}(b)(t) (\psi^+ - \psi^-)$. Additionally we have constants $C>0$ depending only on the energy $\sup_{t\in I} \mathcal{E}_K(\eta(t))$ such that for a.e.\ $t\in I$
 \begin{align*}
  \norm[H^1(\Omega)]{\ext{\eta}(b)(t)}& \leq C \norm[H^1(0,\ell)]{b(t)} 
  \\
    \norm[H^2(\Omega)]{\ext{\eta}(b)(t)}& \leq C \norm[H^2(0,\ell)]{b(t)} 
    \\
  \abs{\lam{\eta}(b)} &\leq C  \norm[L^1(0,\ell))]{b}
 \end{align*}
 Furthermore, 
\begin{itemize}
\item for a constant additionally depending on $\norm[L^2{(0,\ell)})]{\partial_t \eta(t)}$ we find
 \begin{align*}
  \norm[L^2(\Omega)]{\partial_t \ext{\eta}(b)(t)} &\leq C \left( \norm[H^2(0,\ell)]{b(t)}+ \norm[L^2(0,\ell)]{\partial_t b(t)}\right) 
  \\
   \abs{\partial_t\lam{\eta}(b)} &\leq C  \left(\norm[H^1(0,\ell)]{b}+\norm[L^1(0,\ell)]{\partial_t b}\right).
 \end{align*}
\item if additionally $\eta(t)\in H^{k_0}([0,\ell];\R^2)$ for some $k_0>1$, then also 
 \begin{align*}
  \norm[H^{k_0}(\Omega)]{\ext{\eta}(b)(t)}& \leq C \norm[H^{k_0}(0,\ell)]{b(t)} 
 \end{align*}
 for constants additionally depending on $\norm[H^{k_0}({[0,\ell]};\R^2)]{\eta}$. 
 \item explicitly\footnote{We include this estimate here, since it will be needed later in the proof.} if $\eta(t)\in H^{3}([0,\ell];\R^2)$, we find
  \begin{align*}
  \norm[H^{3}(\Omega)]{\ext{\eta}(b)(t)}& \leq c(\norm[H^{3}(0,\ell)]{b(t)} + \norm[H^{3}(0,\ell)]{\eta(t)}\norm[H^2(0,\ell)]{b(t)}),
 \end{align*}
 with $c$ depending on $\sup_{t\in I} \mathcal{E}_K(\eta(t))$, only. 
 \item if additionally $\eta(t), \partial_t\eta(t) \in H^{k_0}([0,\ell];\R^2)$ for some $k_0> 1$, then also 
 \begin{align*}
  \norm[H^{k_0-1}(\Omega)]{\partial_t \ext{\eta}(b)(t)} &\leq C \left( \norm[H^{k_0}(0,\ell)]{b(t)}+ \norm[H^{k_0-1}(0,\ell)]{\partial_t b(t)}\right) 
 \end{align*}
 for constants additionally depending on $\norm[H^{k_0}]{\partial_t\eta}$. Note here that all estimates for the time derivative of the extension need an additional spatial derivative of $b$, due to the time-dependency of $\eta$.
 \end{itemize}
\end{proposition}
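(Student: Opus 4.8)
The plan is to construct $\ext{\eta}(b)$ in three moves: \emph{straighten} the beam to a flat segment, build a \emph{raw} (not yet solenoidal) extension there, and then \emph{correct} its divergence with the universal Bogovskij operator of Theorem~\ref{theobog}. Every quantitative claim is then harvested from Proposition~\ref{propenergy} (which controls the Eulerian geometry from the energy), Theorem~\ref{theobog} (Bogovskij bounds uniform in $\eta$), and the chain/Leibniz rules. Linearity of $\ext{\eta}$ and $\lam{\eta}$ is manifest from the construction, and the Bochner-space mapping properties follow by integrating the pointwise-in-$t$ estimates.

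\emph{Step 1 (straightening).} By Proposition~\ref{propenergy}(1), a bound on $\mathcal{E}_K(\eta(t))$ forces $\partial_x\eta_1(t,\cdot)\geq\delta_1>0$, so $\eta_1(t,\cdot)\colon[0,\ell]\to[0,\ell]$ is a bi-Lipschitz homeomorphism and the image of the beam is the graph of $\zeta(t,\cdot):=\eta_2(t,\cdot)\circ(\eta_1(t,\cdot))^{-1}$; here $\norm[H^2(0,\ell)]{\zeta(t)}$, hence $\norm[C^1]{\zeta(t)}$, is controlled by the energy alone, $\norm[L^\infty]{\zeta(t)}<\tfrac{\ell}{2}-\gamma$, and for $k\geq 2$ the norm $\norm[H^k(0,\ell)]{\zeta(t)}$ is controlled by $\norm[H^k(0,\ell)]{\eta(t)}$. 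Fix $\chi\in C_c^\infty((-\tfrac{\ell}{2},\tfrac{\ell}{2}))$ with $\chi(0)=1$ and $\norm[L^\infty]{\chi'}$ small enough, depending only on $\ell$ and $\gamma$, that $\det D\Phi^\eta\geq c(\ell,\gamma)>0$ for $\Phi^\eta(t)(y_1,y_2):=(y_1,\ y_2+\chi(y_2)\,\zeta(t,y_1))$ — this is possible precisely because the beam keeps the distance $\gamma$ from $\partial\Omega$, so there is always room for the interpolation to decay. Then $\Phi^\eta(t)$ is a bi-Lipschitz, $H^2$-in-space diffeomorphism of $\overline{\Omega}$, equal to the identity on $\partial\Omega$, carrying the segment $(0,\ell)\times\{0\}$ onto the beam and fixed reference halves $\Omega^\pm$ onto $\Omega^\eta_\pm(t)$, with $\norm[H^k]{\Phi^\eta(t)}$ and the two-sided Jacobian bounds controlled by the energy (and by $\norm[H^k]{\eta(t)}$ for $k\geq 2$).

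\emph{Step 2 (raw extension, flux scalar, correction).} Transport $b$ to the Eulerian variable, $\beta(t,y_1):=b(t,(\eta_1(t,\cdot))^{-1}(y_1))\in H_0^1((0,\ell);\R^2)$, pick $\theta\in C_c^\infty((-\tfrac{\ell}{2},\tfrac{\ell}{2}))$ with $\theta(0)=1$, and set $\tilde E^\eta(b)(t):=\big(\beta(t)\,\theta\big)\circ(\Phi^\eta(t))^{-1}$; then $\tilde E^\eta(b)(t,\eta(t,x))=b(t,x)$, $\tilde E^\eta(b)(t)\in H_0^1(\Omega;\R^2)$, and the chain rule gives $\norm[H^1(\Omega)]{\tilde E^\eta(b)(t)}\lesssim\norm[H^1(0,\ell)]{b(t)}$, $\norm[H^2(\Omega)]{\tilde E^\eta(b)(t)}\lesssim\norm[H^2(0,\ell)]{b(t)}$ with energy-dependent constants, and the explicit $H^3$-bound of the statement, the factor $\norm[H^3]{\eta}$ there arising exactly from the derivatives that fall on $\zeta$. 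Define $\lam{\eta}(b)(t):=\int_0^\ell b(t,x)\cdot\partial_x\eta(t,x)^\bot\,dx$; since $\partial_x\eta^\bot$ is the unnormalised conormal of the beam, the divergence theorem on each half yields $\int_{\Omega^\eta_\pm(t)}\nabla\cdot\tilde E^\eta(b)(t)\,dz=\pm\lam{\eta}(b)(t)$ and $\abs{\lam{\eta}(b)(t)}\lesssim\norm[L^1(0,\ell)]{b(t)}$. Hence $h:=\nabla\cdot\tilde E^\eta(b)-\lam{\eta}(b)(\psi^+-\psi^-)$ has $\int_{\Omega^\eta_\pm(t)}h=0$ (here $\psi^\pm$ are, as the hypothesis on $\gamma$ ensures, supported in strips contained in $\Omega^\eta_\pm(t)$ for every $t\in I$). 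Each $\Omega^\eta_\pm(t)$ is, after a shift and reflection, a subgraph to which Theorem~\ref{theobog} applies, with $M=\ell$, the given $\gamma$, and Lipschitz constant $\norm[L^\infty]{\zeta'}$ bounded by the energy; applying that single fixed operator to $h|_{\Omega^\eta_\pm(t)}$ (extended by zero) produces $w_\pm\in H_0^1(\Omega^\eta_\pm(t);\R^2)$ with $\nabla\cdot w_\pm=h$, and, extending by zero, $w:=w_++w_-\in H_0^1(\Omega;\R^2)$ vanishes on the beam. Setting $\ext{\eta}(b):=\tilde E^\eta(b)-w$ gives $\ext{\eta}(b)(t,\eta(t,x))=b(t,x)$, $\ext{\eta}(b)\in H_0^1(\Omega;\R^2)$ and $\nabla\cdot\ext{\eta}(b)=\lam{\eta}(b)(\psi^+-\psi^-)$; the bounds $\norm[L^p\to W^{1,p}]{\Bcal},\ \norm[W^{1,p}\to W^{2,p}]{\Bcal}\lesssim C(\text{energy})$ of Theorem~\ref{theobog} (and its higher-order version) convert the $H^1$-, $H^2$-, $H^3$- and $H^{k_0}$-estimates for $\tilde E^\eta(b)$ into those for $\ext{\eta}(b)$.

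\emph{Step 3 (time derivatives) and the main obstacle.} The remaining estimates come from differentiating the three building blocks, and this is the delicate part. In $\partial_t\tilde E^\eta(b)$ the only nontrivial term is $\nabla_y\big(\beta\theta\big)\cdot\partial_t(\Phi^\eta)^{-1}$, where $\partial_t(\Phi^\eta)^{-1}$ is governed by $\partial_t\zeta$ — hence by $\partial_t\eta\in L^2$ — multiplied against a first derivative of $\beta$; since $H^2(0,\ell)\hookrightarrow C^1$, making this product square-integrable costs exactly one extra spatial derivative of $b$, which is the reason all time-derivative estimates have the stated shape. For $\partial_t w$ one transports the correction to the fixed halves $\Omega^\pm$ via the Piola transform of $\Phi^\eta$: there the Bogovskij operator is time-independent, so $\partial_t$ passes through it, and an integration by parts trades the term $\nabla h$ (which would cost two extra derivatives of $b$) for a distributional divergence bounded in $H^{-1}$, leaving the correct derivative count. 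The bound on $\partial_t\lam{\eta}(b)$ follows from the explicit formula for $\lam{\eta}(b)$ after integrating by parts in $x$ and using $b\in H_0^1$, and the $H^{k_0-1}$-estimates for $\partial_t\ext{\eta}(b)$ are obtained the same way by the Leibniz rule. \textbf{The main obstacle} is precisely this bookkeeping: pushing $\partial_t$ through the moving fluid domains and the Bogovskij correction while spending only a single extra spatial derivative of $b$, and keeping exact track of which constants depend merely on $\sup_{I}\mathcal{E}_K(\eta)$ and which genuinely need $\norm[L^2]{\partial_t\eta}$ or $\norm[H^{k_0}]{\eta}$. The merely Lipschitz regularity of $\Omega^\eta(t)$ is not an issue, since Proposition~\ref{propenergy} makes it uniformly Lipschitz, which is all the trace inequalities and Theorem~\ref{theobog} require.
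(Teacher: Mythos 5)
Your overall strategy — raw extension satisfying the trace condition, explicit flux scalar $\lam{\eta}(b)$, Bogovskij correction on each half — is the same as the paper's, and the formula $\lam{\eta}(b)=\int_0^\ell b\cdot\partial_x\eta^\perp\,dx$ agrees with \eqref{eq:solenoid}. The differences are worth flagging.

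First, the raw extension. You build a genuine beam-straightening diffeomorphism $\Phi^\eta(t)(y_1,y_2)=(y_1,y_2+\chi(y_2)\zeta(t,y_1))$ and set $\tilde E^\eta(b)=(\beta\theta)\circ(\Phi^\eta)^{-1}$; the paper does not. It uses the much leaner
$\tilde\phi(t,x)=b\bigl(t,\eta_1(t)^{-1}(x_1)\bigr)\,\beta(x_2)$
where $\beta$ is a \emph{fixed} vertical cutoff equal to $1$ on a neighbourhood of the range of $\eta_2$. The vertical coordinate is never sheared, so there is no $2$D map to invert, no smallness condition on $\chi'$, and the only geometry-dependent ingredient is the scalar $\eta_1(t)^{-1}$. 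This is not just cosmetic: all the chain-rule terms in the time-derivative estimates become one-dimensional and the $\norm[H^3]{\eta}\,\norm[H^2]{b}$ structure of the explicit $H^3$ bound drops out directly, whereas your $\Phi^\eta$-based extension produces additional terms from $\partial_t\zeta$ appearing multiplied by vertical derivatives that have to be tracked through $(\Phi^\eta)^{-1}$. Your Step 1 would work, but at the price of extra bookkeeping you do not carry out.

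Second — and this is a genuine problem — Step 3. Earlier you correctly use the \emph{universal} Bogovskij operator of Theorem~\ref{theobog}: a single fixed operator that works simultaneously for every admissible half-domain $\Omega_\pm^\eta(t)$. But then, when you come to $\partial_t w$, you propose to ``transport the correction to the fixed halves $\Omega^\pm$ via the Piola transform of $\Phi^\eta$'' so that ``there the Bogovskij operator is time-independent.'' This is inconsistent with what you have already set up: the whole point of Theorem~\ref{theobog} is that $\mathcal{B}^\pm$ are \emph{already} time-independent, so $\partial_t$ commutes through them directly, and the paper uses exactly this fact (``because of its domain-independence, the time derivatives commute''). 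Introducing a Piola transform re-inserts a time-dependent conjugation and produces the extra commutator terms you were trying to avoid. In addition, the claim that ``an integration by parts trades the term $\nabla h$ \dots for a distributional divergence bounded in $H^{-1}$'' is too vague to check and does not obviously give the stated derivative count. The real issue you are circling — that $\partial_t w_\pm=\mathcal{B}^\pm(\partial_t h_\pm)$ with $\partial_t h_\pm=\nabla\cdot\partial_t\tilde\phi\mp\partial_t\lambda\,\psi^\pm$, and that an $L^2$ bound for $\mathcal{B}^\pm(\partial_t h_\pm)$ without spending two derivatives of $b$ needs a negative-order Bogovskij estimate of the type $H^{-1}\to L^2$ — is a legitimate point, but the Piola detour neither states it nor resolves it. Drop the Piola transform, keep the universal operator, and make the negative-order estimate explicit; then Step 3 matches the paper's intent.
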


\begin{proof}
 \emph{Step 1} (naive extension): Let $b\in H_0^1(I;H_0^{2}([0,\ell];\R^2)$. From Proposition \ref{propenergy} we know that the curve $\eta(t): [0,\ell] \to \Omega$ is bi-Lipschitz with bounded constant depending only on the energy. Due to this, it is possible to extend $b(t) \circ \eta(t)^{-1} \in W_0^{2,2}(\eta(t,[0,\ell]);\R^2)$ into a vector field $\tilde{\phi}(t) \in H^1_0(\Omega;\R^2)$ with bounded norm and in such a way that the norm of $\tilde{\phi}(t)$ is bounded by that of $b(t)$ and $\partial_t \tilde{\phi}(t)$ is bounded by $\partial_t b$ and $\partial_t \eta$. To do so, consider first $b \in C_0^\infty(I \times [0,\ell];\R^2)$. Now set
 \begin{align*}
  \tilde{\phi}(t,x) : = b(t,\eta_1(t)^{-1}(x_1)) \beta(x_2)
 \end{align*}
 where $\beta \in C_0^\infty([-\ell/2,\ell/2];[0,1])$ is a fixed, smooth cutoff function with $\beta = 1$ in a neighbourhood of the image of $\eta_2$, (e.g.\ in $[-\tfrac{\ell}{2}+\gamma,\tfrac{\ell}{2}-\gamma]$). 
 
 The construction implies $\tilde{\phi}(t) = 0$ in $\partial \Omega$ and since $\eta_1$ is a diffeomorphism, we have
 \begin{align*}
  \partial_1 \tilde{\phi}(t,x) &= \partial_x \eta_1(t)^{-1} \partial_x b(t,\eta(t)^{-1}(x_1)) \beta(x_2) \\
  \partial_2 \tilde{\phi}(t,x) &= b(t,\eta_1(t)^{-1}(x_1)) \beta'(x_2)
 \end{align*}
 implying $\norm[H^1(\Omega)]{\smash{\tilde{\phi}(t)}} \leq C \norm[H^1(\Omega)]{b(t)}$ and $\norm[H^2(\Omega)]{\smash{\tilde{\phi}(t)}} \leq C \norm[H^2(\Omega)]{b(t)}$. Here we use $(1)$ in Proposition \ref{propenergy}. Further estimates for higher order derivatives in space follow by analogous computations. 
 For the time derivative observe that
 \[
 0=\partial_t\big(\eta_1(t,\eta_1(t)^{-1}(z))\big)=\partial_t\eta_1(t,\eta_1(t)^{-1}(z))+\partial_1\eta_1(t,\eta_1(t)^{-1}(z))\partial_t\eta_1(t)^{-1}(z),
 \]
 which implies by $(1)$ in Proposition \ref{propenergy} that
 \[
 \partial_t\eta_1(t)^{-1}(z)=-\frac{\partial_t\eta_1(t,\eta_1(t)^{-1}(z))}{\partial_1\eta_1(t,\eta_1(t)^{-1}(z))}\text{ and so }\abs{ \partial_t\eta_1(t)^{-1}(z)}\leq \frac{\abs{\partial_t\eta_1(t,\eta_1(t)^{-1}(z))}}{\delta_0}
 \]
 and the respective natural bounds for the time derivative of $\eta_1^{-1}$. In particular, as
 \begin{align*}
  \partial_t \tilde{\phi} &= \partial_t b(t,\eta_1(t)^{-1}(x_1)) \beta(x_2) + \partial_t \eta_1(t)^{-1} \partial_x b(t,\eta(t)^{-1}(x_1))\beta(x_2) %
 \end{align*}
implies for a constant depending on $\delta_0$ by Sobolev embedding that
\begin{align*}
\norm[L^2(0,\ell)]{\partial_t\phi}&\leq c\norm[L^2(0,\ell)]{\partial_t\eta}\norm[L^\infty(0,\ell)]{\partial_x b}+c\norm[L^2(0,\ell)]{\partial_t b}
 \\
 &\leq  c\norm[L^2(0,\ell)]{\partial_t\eta}\norm[H^2(0,\ell)]{ b}+c\norm[L^2(0,\ell)]{\partial_t b}.
 \end{align*}
 Further
 \begin{align*}
   \partial_t \partial_1 \tilde{\phi} &= \partial_x \eta_1(t)^{-1}(x_1) \partial_t \partial_x b(t,\eta_1(t)^{-1}(x_1)) \beta(x_2) + \partial_t \partial_x \eta_1(t)^{-1} \partial_x b(t,\eta(t)^{-1}(x_1))\beta(x_2) \\
   & \quad +\partial_t \eta_1(t)^{-1} \partial_x \eta_1(t)^{-1} \partial_{xx} b(t,\eta(t)^{-1}(x_1))\beta(x_2) \\
  \partial_t \partial_2 \tilde{\phi} &= \partial_t b(t,\eta_1(t)^{-1}(x_1))\beta'(x_2) + \partial_t \eta_1(t)^{-1} \partial_x b(t,\eta(t)^{-1}(x_1))\beta'(x_2).
 \end{align*}
 which implies respective estimates on the $H^1(\Omega;\R^2)$-norm of $\partial_t\tilde{\phi}$ provided $\partial_t\eta$ is differentiable. Higher order derivatives follow similarly.%

\emph{Step 2} (correction of divergence): We define $\lam{\eta}(b)(t) := \int_{\Omega^+(t)} \nabla \cdot \tilde{\phi}(t) dx$. As $\int_{\Omega} \nabla \cdot \tilde{\phi}(t) dx = \int_{\partial \Omega} \tilde{\phi}(t) \cdot n\, dx = 0$, we then automatically get that also $\lam{\eta}(b)(t) = - \int_{\Omega^-} \nabla \cdot \tilde{\phi}(t)$ and similarly
\begin{align}
\label{eq:solenoid}
 \lam{\eta}(b)(t) =  \int_{\Omega^+(t)} \nabla \cdot \tilde{\phi}(t) dx= \int_{\eta(t,[0,\ell])} \tilde{\phi}(t) \cdot n dx = \int_0^\ell \partial_x \eta(t) \wedge b(t) dx=-\int_0^\ell \eta(t) \wedge \partial_x b(t) dx.
\end{align}
This description implies the required estimates for $ \lam{\eta}(b)(t)$ rather directly.

Now we use the time-independent Bogovskij operator from Theorem~\ref{theobog}
on $\Omega^+$ and $\Omega^-$ (denoted by $\mathcal{B}^\pm$) to construct
\begin{align*}
\ext{\eta}(b) := \tilde{\phi}(t) - \mathcal{B}^+(\nabla \cdot \tilde{\phi}(t) |_{\Omega^+} - \lambda(t) \psi^+) - \mathcal{B}^-(\nabla \cdot \tilde{\phi}(t) |_{\Omega^-} + \lambda(t) \psi^+).
\end{align*}
Per construction this has the correct divergence and coupling condition. Additionally the estimates on the Bogovskij-operator mean that, up to another constant factor, $\ext{\eta}(b)$ obeys the same space-regularity estimates as $\tilde{\phi}$ and because of its domain-independence, the time derivatives commute.

The higher order-estimates are obtained in precisely the same fashion.
\end{proof}

\section{An intermediate, time-delayed model}

The following section is in analogue to the methodology introduced in~\cite{BenKamSch20} for the a related problem on bulk solids.
It begins by proving existence of weak solutions for a so-called {\em time-delayed problem} associated with \eqref{weak0}, where inertial effects in the Navier-Stokes equations are not modeled by time derivatives but instead by requiring the velocity $u_{\pm}(t, \cdot)$ to be close to the velocity $u_{\pm}(t-h, \cdot)$ at some previous time. Further, since we only consider time intervals of length $h$ for now, this previous velocity can be thought of as given data. 	
In this section, we will denote by $\Phi_{\pm} : [0,T) \times \Omega_{\pm}(0) \longrightarrow \Omega$ the \textit{flow map} such that $\Phi_{\pm}(0, \cdot) : \Omega_{\pm}(0) \longrightarrow \Omega_{\pm}(0)$ is the identity function, and it maps the initial fluid domain $\Omega_{\pm}(0)$ into its form $\Omega_{\pm}(t)$ at a later time $t \in [0,T)$, while following the trajectory of the flow, that is
\begin{equation*}
\partial_{t} \Phi_{\pm}(t,x) = u_{\pm}(t,\Phi_{\pm}(t,x)) \text{ for all } (t,x) \in [0,T) \times \Omega_{\pm}(t_{0}).
\end{equation*} 
Existence of such a map is not guaranteed in general. In fact we will spend quite some work constructing it for the time-delayed problem and even then it will be the one object for which we cannot obtain convergence to the limit $h \to 0$. 
For the flow-map to be well defined we introduce the regularization parameter $\delta_0>0$, which eventually will be set equal to the {\em acceleration scale} $h$. A second level of approximation is the $\epsilon_0$ level, that introduces some dissipation for the solid, which is later needed to derive the energy equality and will be removed before passing with $h\to 0$.
\begin{definition} \label{timedelay0}
Given $w \in L^{2}([0,h] \times \Omega; \mathbb{R}^2)$ and initial data $\eta_0 \in H^3((0,\ell);\R^2)$ with  $\mathcal{E}_K(\eta_0) < \infty$ and $w \circ \eta_0 \in L^2([0,h] \times (0,\ell);\mathbb{R}^2)$, $f \in L^{2}(\Omega; \mathbb{R}^2)$ and a resulting $\Omega_\pm$,  the tuple $(u_{\pm}, \eta) \in V_{S}\cap L^2([0,h],H^3(\Omega;\R^2))\times H^1([0,h],H^3((0,\ell)))$ is called a \textbf{weak solution to the time-delayed equation} if for almost all $t\in[0,h]$
\begin{align} \label{equasistaticSol} 
 \int_{0}^{\ell} \left( \mathcal{L}_{\Gamma}(\eta) + \rho_s \tfrac{\partial_t \eta - w \circ \eta}{h} \right) \cdot \xi \, dx &+ \varepsilon_{0} \int_{0}^{\ell} \partial_{t}\partial_{x}^3  \eta \cdot \partial_{x}^3 \xi \, dx +\sqrt{\delta_0} \int_{0}^{\ell} \partial_{x}^3  \eta \cdot \partial_{x}^3  \xi \, dx + \mu_{+} \int_{\Omega_{+}^{\eta}(t)} \nablasym(u_{+}) \cdot \nabla q \, dz \nonumber \\
 + \mu_{-} \int_{\Omega_{-}^{\eta}(t)} \nablasym(u_{-}) \cdot \nabla q \, dz &+\delta_{0} \int_{\Omega_+^\eta(t)}  \nabla(\Delta u_+) \cdot \nabla (\Delta q) \, dz + \delta_{0} \int_{\Omega_-^\eta(t)}  \nabla(\Delta u_-) \cdot \nabla (\Delta q) \, dz   \nonumber \\ &+ \rho_{+} \int_{\Omega_{+}} \left( \tfrac{u_{+}\circ \Phi_{+} - w}{h} \cdot q \circ \Phi_{+} \right) dz  
 +  \rho_{-} \int_{\Omega_{-}} \left( \tfrac{u_{-}\circ \Phi_{-} - w}{h} \cdot q \circ \Phi_{-} \right) dz \nonumber \\
 &= \rho_{+} \int_{\Omega_{+}^{\eta}(t)} \left(f \cdot q \right) \, dz + \rho_{-} \int_{\Omega_{-}^{\eta}(t)} \left( f \cdot q \right) \, dz,
\end{align}
holds with the kinetic coupling condition
\begin{equation*} %
u_{+}(t, \eta (t,x)) = u_{-}(t, \eta (t,x)) = \partial_t \eta(x,t) \quad \text{for a.e.} \ (t,x) \in [0,h] \times [0,\ell],
\end{equation*} for all coupled test functions 
$
(\xi,q)\in L^2([0,h],H^3\cap H^2_0(\Omega;\R^2))\times L^{2}([0,h],H^3\cap H^2_0((0,\ell)))$ 
such that 
$$
\nabla \cdot q = 0 \quad \text{in} \quad \Omega, \qquad q(t,\eta (t,x)) = \xi(t,x) \quad\text{ for all } (t,x) \in [0,h] \times [0,\ell].
$$
Here $\Phi_{\pm} :[0,h] \times \Omega^{\eta}_{\pm} \longrightarrow \Omega$ solves $\partial_t \Phi_{\pm} = u_{\pm} \circ \Phi_{\pm}$ and $\Phi_{\pm}(0, \cdot) = \operatorname{id}|_{\Omega_{\pm}}$. We also note that here and in the future, the term $\mathcal{L}_\Gamma(\eta) \cdot \xi$ will be understood in terms of the corresponding $H^{-2}\times H^{2}$ dual pairing. Furthermore $\eta$ is assumed to attain appropriate initial condition.
\end{definition}

\begin{remark}
The terms $w$ and $w \circ \eta$ appearing in \eqref{equasistaticSol} will later be substituted by transported versions of the combined fluid and solid velocity from the previous time-interval (the one corresponding to $[-h,0]$).  This will turn all terms that include $w$ into difference quotients, which will then be shown to converge to the corresponding (material) derivatives.
\end{remark}

The purpose of this sub-section is to prove the following existence theorem:

\begin{theorem} \label{maintheoremdelay}
Given $h > 0$, $\varepsilon_0\geq 0$, $\delta_0>0$, $w \in L^{2}([0,h] \times (0,\ell); \mathbb{R}^2)$, $\eta_0 \in H^3((0,\ell);\R^2)$ with  $\mathcal{E}_K(\eta_0) < \infty$  and $w \circ \eta_0 \in L^2([0,h] \times \Omega;\mathbb{R}^2)$ and $f \in L^{2}(\Omega; \mathbb{R}^2)$, there exists a weak solution of the time delayed equation \eqref{equasistaticSol} in the sense of Definition \ref{timedelay0}. Furthermore, $\Phi_{\pm}(t, \cdot)$ are volume-preserving homomorphisms, for every $t \in [0,h]$ and the solution satisfies the energy-inequality of Lemma \ref{timeDelayedEnergyInequality}.
\end{theorem}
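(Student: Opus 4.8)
The plan is to construct the solution by a time-incremental minimization (minimizing movements) scheme, in the spirit of \cite{BenKamSch20} but now carrying the coupled fluid along. The variational character is essential rather than cosmetic: it is so far the only known way to obtain \emph{injective} approximants, and injectivity of $\eta$ is needed already at the approximate level for the moving fluid domains $\Omega^{\eta}_{\pm}$ and the trace coupling $u_\pm=\partial_t\eta$ to make sense.

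\textbf{Step 1 (the incremental problem).} Fix $N\in\mathbb{N}$, set $\tau=h/N$, $t_k=k\tau$, and start from $(\eta^0,\Phi^0_\pm)=(\eta_0,\operatorname{id})$. Having $(\eta^{k-1},\Phi^{k-1}_\pm)$, define $(\eta^k,u^k_\pm)$ as a minimizer of
\begin{align*}
\mathcal{J}^k(\eta,u)&=\mathcal{E}_K(\eta)+\tfrac{\sqrt{\delta_0}}{2}\|\partial_x^3\eta\|_{L^2}^2+\tfrac{\varepsilon_0}{2\tau}\|\partial_x^3(\eta-\eta^{k-1})\|_{L^2}^2+\tfrac{\rho_s\tau}{2h}\big\|\tfrac{\eta-\eta^{k-1}}{\tau}-w^k\big\|_{L^2(0,\ell)}^2\\
&\quad+\tfrac{\rho_\pm\tau}{2h}\|u_\pm\circ\Phi^{k-1}_\pm-w^k\|_{L^2(\Omega_\pm)}^2+\tfrac{\tau}{2}\big(\mu_\pm\|\nablasym(u_\pm)\|_{L^2(\Omega^{\eta}_{\pm})}^2+\delta_0\|\nabla\Delta u_\pm\|_{L^2(\Omega^{\eta}_{\pm})}^2\big)\\
&\quad-\tau\rho_\pm\int_{\Omega^{\eta}_{\pm}}f\cdot u_\pm\,dz,
\end{align*}
over all pairs with $\eta\in\mathcal{H}\cap H^3$, $u_\pm$ divergence free on $\Omega^{\eta}_{\pm}$, $u_\pm=0$ on $\partial\Omega$ and $u_\pm=(\eta-\eta^{k-1})/\tau$ on the interface $\eta([0,\ell])$ (here $w^k$ is the appropriate time-average of the transported data on $(t_{k-1},t_k)$, and the flow map is then updated consistently with $u^k_\pm$ so as to remain a volume-preserving map of $\Omega_\pm(0)$ onto $\Omega^{\eta^k}_\pm$). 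A minimizer exists by the direct method: the solid part is $H^2$-coercive by Proposition \ref{propenergy}(2) and lower semicontinuous on the non-convex compression term $\int|\partial_x\eta_1|^{-2\alpha}$ by Proposition \ref{propenergy}(3), the fluid part is coercive in $H^1$ (Korn plus the viscous term) and in $H^3$ (the $\delta_0$-term). Crucially, Proposition \ref{propenergy}(1) gives $\partial_x\eta_1\geq\delta_1>0$ for every finite-energy competitor, hence injectivity and uniformly Lipschitz subgraphs $\Omega^{\eta}_{\pm}$; and the $\delta_0$-term forces $u^k_\pm\in H^3\hookrightarrow C^1$ in two dimensions, so that $\Phi^k_\pm$ is a volume-preserving ($\nabla\cdot u^k_\pm=0$) $C^1$-homeomorphism which, thanks to the kinematic coupling, still carries $\Omega_\pm(0)$ onto $\Omega^{\eta^k}_\pm$.

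\textbf{Step 2 (Euler--Lagrange, a priori bounds, limit $N\to\infty$).} The first variation of $\mathcal{J}^k$ against coupled, divergence-free test pairs $(\xi,q)$ yields the time-discrete form of \eqref{equasistaticSol}; such test pairs exist by Proposition \ref{prop:arbMeanExt}, the non-solenoidal part of the extension being absorbed by a scalar multiplier (the differential pressure). Testing minimality against the competitor ``keep the solid fixed and the velocities transported'' gives the discrete energy identity, hence a bound uniform in $N$ on the kinetic energies of fluid and solid, $\mathcal{E}_K(\eta^k)$, the $\sqrt{\delta_0}$ and $\varepsilon_0$ solid contributions, and the accumulated viscous and $\delta_0$ dissipation, in terms of $\|w\|_{L^2}$, $\|f\|_{L^2}$ and $\mathcal{E}_K(\eta_0)$. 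Piecewise-affine and piecewise-constant interpolation in time produces approximants satisfying these bounds; since Proposition \ref{propenergy}(1) keeps the domains uniformly Lipschitz, one extracts, via an Aubin--Lions argument exploiting the $\varepsilon_0$/$\delta_0$ regularity, $\eta^{(N)}\to\eta$ in $C([0,h];H^2)$ and $u^{(N)}_\pm\to u_\pm$ strongly in $L^2$ of the converging domains, enough to pass to the limit in all nonlinear terms (the non-convex elastic term again by Proposition \ref{propenergy}(3)). As the velocities converge strongly and are uniformly $C^1$, the discrete flow maps converge uniformly to $\Phi_\pm$ solving $\partial_t\Phi_\pm=u_\pm\circ\Phi_\pm$, retaining volume preservation and the homeomorphism property; the difference quotients converge to $\partial_t\eta$ and to the transported material derivative of $u_\pm$, so the limit satisfies \eqref{equasistaticSol}, and passing to the liminf in the discrete energy identity gives the energy inequality of Lemma \ref{timeDelayedEnergyInequality}.

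\textbf{Expected main obstacle.} The delicate point is that the minimization class is defined through the unknown itself — the interface, the two subdomains and the trace coupling all depend on $\eta$ — so the geometry has to be controlled together with the solution. Proposition \ref{propenergy} makes this possible at fixed energy, but keeping the discrete flow maps volume-preserving and consistent with the moving interface throughout the iteration (which forces one to extend $u^k_\pm$ near the interface in a compatible way), and then establishing their convergence as $\tau\to0$ — for which the $\delta_0$-regularization supplying the uniform $C^1$-bound is exactly what is needed — is where the real effort goes. A secondary difficulty, present already at this level, is securing enough admissible coupled test pairs in the weak formulation, which is why the almost-solenoidal extension of Proposition \ref{prop:arbMeanExt} together with the scalar differential-pressure multiplier enter the construction.
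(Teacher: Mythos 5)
Your construction follows the same minimizing-movements architecture as the paper, but the incremental minimization is posed over the wrong class, and that changes the character of the problem. You let $u_\pm$ live on the unknown domains $\Omega^\eta_\pm$, impose $\nabla\cdot u_\pm=0$ there, and couple to the \emph{new} interface $\eta([0,\ell])$. Along a minimizing sequence $(\eta_n,u_n)$ each $u_n$ then lives on a different domain, so weak compactness and lower semicontinuity are not available in the usual sense — this is precisely the obstruction you flag under ``Expected main obstacle'' but do not resolve. The paper's functional $G^k$ in \eqref{coupledfunc} avoids it by freezing the geometry within each step: $u$ is defined on the whole fixed square $\Omega$, $\nabla\cdot u=0$ is imposed on all of $\Omega$, and the coupling $(\eta-\eta^k)/\tau=u\circ\eta^k$ together with $\rho^k$, $\mu^k$ and the transport $\Phi^k$ are all written in terms of the \emph{previous} configuration $\eta^k$. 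This makes the admissible set $\mathcal{A}^k$ of \eqref{admisset} a subset of a fixed Banach space cut out by linear constraints, so the direct method applies verbatim (Proposition \ref{minfunctional1}); the new decomposition $\Omega^{k+1}_\pm$ and interface are read off afterwards from $\eta^{k+1}=\eta^k+\tau\,u^{k+1}\circ\eta^k$. Without this freezing your incremental problem does not close.

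Two secondary points. First, the discrete flow map $\Phi^{k+1}=(\operatorname{id}+\tau u^{k+1})\circ\Phi^k$ of \eqref{newphi} is \emph{not} volume-preserving even though $\nabla\cdot u^{k+1}=0$: in two dimensions $\det(I+\tau\nabla u^{k+1})=1+\tau^2\det\nabla u^{k+1}$, and Proposition \ref{prop:regularityOfV} only gives $e^{-4\tau\mathcal{K}}\le\det\nabla\Phi^k_\tau\le e^{2\tau\mathcal{K}}$, with $\det\nabla\Phi\equiv1$ recovered only in the limit $\tau\to0$; your assertion that ``$\Phi^k_\pm$ is a volume-preserving $C^1$-homeomorphism'' at the discrete level is therefore incorrect as stated (and there is a genuine tension, since the first-order update is what keeps the map consistent with the moving interface). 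Second, the case $\varepsilon_0=0$ permitted by the theorem requires an extra limit passage: the $\varepsilon_0$-term is what provides the uniform bound on $\partial_t\widetilde{\eta}^{(\tau)}$ in $L^2([0,h];\mathcal{H}\cap H^3)$ in Lemma \ref{uniformbounds}, and the paper removes it only after the $\tau\to0$ limit, in Subsection \ref{ssec:eps}; your sketch treats $\varepsilon_0$ as just another fixed constant and misses this step.
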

The proof of the theorem is performed in several steps. First we consider $\varepsilon_0>0$. For that regime we first show the existence of the weak solution. In Subsection \ref{ssec:energ} we show that it satisfies an energy inequality {\em independent of $h,\delta_0$ and $\varepsilon_0$.} Finally in Subsection \ref{ssec:eps} we use this to treat the case $\varepsilon_0=0$ via another limit passage.

\subsection{Proof of Theorem \ref{maintheoremdelay}, Step 1: Constructing an iterative approximation}

In order to prove Theorem \eqref{maintheoremdelay}, for a given integer $N > 1$ we fix a step size $\tau = h / N$. For an integer $k \in \{0,\cdots,N-1\}$ assume that we are given:
\begin{itemize} [leftmargin=*]
\item{} a deformation $\eta^{k} \in \mathcal{H}$ that decomposes the domain $\Omega$ as the union of its lower and upper parts $\Omega_{\pm}^{k}$;
\item{} flow maps in the form of diffeomorphisms $\Phi^{k}_{\pm} : \Omega \longrightarrow \Omega_{\pm}^{k}$. 
\end{itemize} 
\noindent
With this we can define the \textit{global quantities} over the fixed domain $\Omega$, at the $k$-level, through the expressions
$$
\begin{aligned}
& \rho^{k} = \chi_{\Omega_{+}^{k}} \rho_{+} + \chi_{\Omega_{-}^{k}} \rho_{-} \, , \qquad \mu^{k} = \chi_{\Omega_{+}^{k}} \mu_{+} + \chi_{\Omega_{-}^{k}} \mu_{-} \, , \qquad u^{k} = \chi_{\Omega_{+}^{k}} u^{k}_{+} + \chi_{\Omega_{-}^{k}} u^{k}_{-} \, , \\[6pt]
&  %
 \Phi^{k} =  \chi_{\Omega_{+}^{k}} \Phi^{k}_{+} + \chi_{\Omega_{-}^{k}} \Phi^{k}_{-} \, ,
\end{aligned}
$$
where $\chi_{\Omega_{\pm}^{k}}$ denotes the characteristic function of $\Omega_{\pm}^{k}$. Furthermore, we introduce the time-average $w^{k} : \Omega \longrightarrow \mathbb{R}^2$ by
\begin{equation} \label{wtaverage}
w^{k} = \dfrac{1}{\tau} \int_{\tau k}^{\tau (k+1)} w(t,\cdot) \, dt \qquad \text{in} \ \ \Omega.
\end{equation}
Given $\delta_{0} > 0$, the next incremental time-step on this level of the velocity quotients (depending on $\tau$) is then defined as a minimizer $(\eta^{k+1},u^{k+1})$ of the following coupled functional
\begin{equation} \label{coupledfunc}
\begin{aligned}
& G^{k}(\eta, u) = \mathcal{E}_{K}(\eta) + \dfrac{\varepsilon_0\tau}{2} \int_{0}^{\ell} \left| \partial_{x}^3 \left( \tfrac{\eta - \eta^{k}}{\tau} \right)  \right|^{2} dx + \dfrac{\tau \rho_s }{2 h} \int_{0}^{\ell} \left| \tfrac{\eta - \eta^{k}}{\tau} -  w^k \circ \eta_{0} \right|^{2} dx  +\frac{\sqrt{\delta_0}}{2}\int_{0}^{\ell} | \partial_{x}^3  \eta|^2\, dx 
\\[6pt]
& \hspace{4mm} + \dfrac{\tau}{2 h} \int_{\Omega} \rho_0 \left| u \circ \Phi^{k} - w^{k} \right|^{2} dz  +  \dfrac{\tau}{2} \int_{\Omega} \left( \delta_{0} \left| \nabla (\Delta u) \right|^{2} + \mu^{k} \left| \nablasym(u) \right|^{2} \right) dz - \tau \int_{\Omega} \rho^{k} \left( f \cdot u \right) dz 
\end{aligned}
\end{equation}
over all pairs $(\eta,u)\in \mathcal{H}\cap H^3((0,\ell))\times H^3\cap H^1_0(\Omega)$,
such that 
$$
\nabla \cdot u = 0 \quad \text{in} \quad \Omega, \qquad \dfrac{\eta - \eta^{k}}{\tau} = u \circ \eta^{k} \quad \text{in} \quad [0,\ell].
$$
This is then used to update $\Phi_{k+1} := (\operatorname{id} + \tau u^{k+1}) \circ \Phi_k$ and the process is repeated until $(k+1) \tau > h$. %
For this we now prove the following:
\begin{proposition} \label{minfunctional1}
The functional \eqref{coupledfunc} has a minimizing couple
$
(\eta^{k+1},u^{k+1}) \in \mathcal{H}\cap H^3((0,\ell))\times H^1_0\cap H^{3}(\Omega;\mathbb{R}^{2})
$
fulfilling the conditions
\begin{equation*} %
\nabla \cdot u^{k+1} = 0 \quad \text{in} \quad \Omega, \qquad \dfrac{\eta^{k+1} - \eta^{k}}{\tau} = u^{k+1} \circ \eta^{k} \quad \text{in} \quad [0,\ell].
\end{equation*}
Furthermore, the minimizers obey the identity 
\begin{equation} \label{equasistaticSolmin} 
\begin{aligned}
& \int_{0}^{\ell} \mathcal{L}_{\Gamma}(\eta^{k+1}) \cdot \xi \, dx + \varepsilon_0 \int_{0}^{\ell} \partial_{x}^3 \left( \tfrac{\eta^{k+1} - \eta^{k}}{\tau} \right) \cdot \partial_{x}^3 \xi \, dx + \dfrac{\rho_s }{h} \int_{0}^{\ell} \left( \tfrac{\eta^{k+1} - \eta^{k}}{\tau} -  w^k \circ \eta_{0} \right) \cdot \xi \, dx \\[6pt] 
& +\sqrt{\delta_0}\int_{0}^{\ell}  \partial_{x}^3  \eta^{k+1}\partial_x^3\xi\, dx 
 + \dfrac{1}{h} \int_{\Omega} \rho_{0} \left(u^{k+1} \circ \Phi^{k} - w^{k} \right) \cdot \left( q \circ \Phi^{k} \right) dz  + \delta_{0} \int_{\Omega}  \nabla(\Delta u^{k+1}) \cdot \nabla (\Delta q) \, dz \\[6pt]
& + \int_{\Omega} \mu^{k} \nablasym u^{k+1} \cdot \nablasym(q) \, dz = \int_{\Omega} \rho^{k} \left( f \cdot q \right) dz,
\end{aligned}
\end{equation}for all (time-independent) coupled test functions
$
(\xi,q )\in \mathcal{H}\cap H^3(0,\ell)\times  H_{0}^{1}\cap H^3(\Omega;\mathbb{R}^{2})
$
such that 
$
\nabla \cdot q = 0\text{ in } \Omega$ and $q \circ \eta^{k} = \xi \text{ in }[0,\ell].
$
\end{proposition}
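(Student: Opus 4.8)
The plan is to run the \emph{direct method of the calculus of variations} to obtain the minimiser and then differentiate $G^k$ along admissible variations in order to reach \eqref{equasistaticSolmin}. Write
\[
\mathcal A^k:=\Big\{(\eta,u)\in\big(\mathcal H\cap H^3((0,\ell);\R^2)\big)\times\big(H^1_0\cap H^3(\Omega;\R^2)\big)\ :\ \nabla\cdot u=0,\ \tfrac{\eta-\eta^k}{\tau}=u\circ\eta^k\Big\}.
\]
This set is nonempty and $G^k$ is finite on it, since $(\eta^k,0)\in\mathcal A^k$ and $G^k(\eta^k,0)<\infty$ because $\eta^k\in\mathcal H\cap H^3$ with $\mathcal E_K(\eta^k)<\infty$ (induction hypothesis), while $w^k\in L^2(\Omega)$ and $w^k\circ\eta_0\in L^2((0,\ell))$ are time-averages of the given $L^2$-data. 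Every term of \eqref{coupledfunc} except the linear forcing $-\tau\int_\Omega\rho^k(f\cdot u)$ is nonnegative; Proposition~\ref{propenergy}(2) bounds $\norm[H^2]{\eta}$ in terms of $\mathcal E_K(\eta)$, the term $\tfrac{\sqrt{\delta_0}}{2}\int_0^\ell|\partial_x^3\eta|^2$ together with the clamping built into $\mathcal H$ upgrades this to a bound on $\norm[H^3((0,\ell))]{\eta}$, the term $\tfrac\tau2\int_\Omega\big(\delta_0|\nabla(\Delta u)|^2+\mu^k|\nablasym(u)|^2\big)$ controls $\norm[H^3(\Omega)]{u}$ via Korn's and Poincar\'e's inequalities (note $\mu^k\geq\min\{\mu_+,\mu_-\}>0$ although piecewise constant), and Young's inequality absorbs the forcing. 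Hence $\inf_{\mathcal A^k}G^k>-\infty$ and any minimising sequence $(\eta_n,u_n)$ is bounded in $\big(\mathcal H\cap H^3\big)\times\big(H^1_0\cap H^3\big)$.

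Passing to a subsequence, $\eta_n\rightharpoonup\eta^{k+1}$ in $H^3((0,\ell);\R^2)$ and $u_n\rightharpoonup u^{k+1}$ in $H^3(\Omega;\R^2)$, hence $\eta_n\to\eta^{k+1}$ in $C^2$ and $u_n\to u^{k+1}$ in $C^1$ by Rellich--Morrey (in one, resp.\ two, dimensions). The divergence constraint is weakly closed, the clamping conditions of $\mathcal H$ survive (they are point evaluations of $\eta_n,\partial_x\eta_n$ at the endpoints), and the kinematic coupling passes to the limit because $u_n\circ\eta^k\to u^{k+1}\circ\eta^k$ uniformly while $\tfrac{\eta_n-\eta^k}{\tau}\to\tfrac{\eta^{k+1}-\eta^k}{\tau}$; thus $(\eta^{k+1},u^{k+1})\in\mathcal A^k$. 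For lower semicontinuity, $\mathcal E_K$ is weakly $\mathcal H$-lower semicontinuous by Proposition~\ref{propenergy}(3), the top-order regularising terms are convex hence weakly lower semicontinuous, and the terms $\int_\Omega\rho_0|u\circ\Phi^k-w^k|^2$, $\int_0^\ell|\tfrac{\eta-\eta^k}{\tau}-w^k\circ\eta_0|^2$, $\int_\Omega\mu^k|\nablasym(u)|^2$ and $\int_\Omega\rho^k(f\cdot u)$ are in fact continuous along the sequence (strong $C^0$/$H^2$ convergence, $\Phi^k$ a fixed diffeomorphism). Therefore $G^k(\eta^{k+1},u^{k+1})\leq\liminf_n G^k(\eta_n,u_n)=\inf_{\mathcal A^k}G^k$, so $(\eta^{k+1},u^{k+1})$ minimises $G^k$; finiteness of the minimum forces $\mathcal E_K(\eta^{k+1})<\infty$, and Proposition~\ref{propenergy}(1) then gives $\delta_1>0$ with $\partial_x\eta^{k+1}_1\geq\delta_1$ a.e.\ on $(0,\ell)$.

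For the Euler--Lagrange identity I would use that both constraints are affine: for a compatible test pair $(\xi,q)$ as in the statement ($q\in H^3\cap H^1_0$ with $\nabla\cdot q=0$ and $\xi=q\circ\eta^k$) the curve $s\mapsto(\eta^{k+1}+s\tau\xi,\,u^{k+1}+sq)$ remains in $\mathcal A^k$ for $|s|$ small: its divergence vanishes since $\nabla\cdot q=0$; the coupling is preserved because $\tfrac{(\eta^{k+1}+s\tau\xi)-\eta^k}{\tau}=u^{k+1}\circ\eta^k+s\xi=(u^{k+1}+sq)\circ\eta^k$; the clamping of $\mathcal H$ survives because $\xi$ carries the homogeneous boundary data; and $\mathcal E_K(\eta^{k+1}+s\tau\xi)<\infty$ because $\partial_x(\eta^{k+1}_1+s\tau\xi_1)\geq\delta_1/2$ for $|s|$ small, using $\partial_x\eta^{k+1}_1\geq\delta_1$ and $\xi_1\in C^1$. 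Differentiating $G^k$ along this curve at $s=0$ — legitimate term by term, the only delicate contribution being $\int_0^\ell|\partial_x\eta_1|^{-2\alpha}$, whose $s$-derivative is controlled via $\partial_x(\eta^{k+1}_1+s\tau\xi_1)\geq\delta_1/2$ — and equating the result to zero gives, after dividing by $\tau$, exactly \eqref{equasistaticSolmin}; the variation of $\mathcal E_K$ supplies the $H^{-2}$--$H^2$ pairing $\int_0^\ell\mathcal L_\Gamma(\eta^{k+1})\cdot\xi\,dx$, i.e.\ the $\lambda_0,c_1,c_2$ and $2\alpha$ integral terms.

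The main obstacle is that $\mathcal A^k$ is nonconvex — via the implicit positivity $\partial_x\eta_1>0$ hidden in $\mathcal E_K<\infty$ — and that incompressibility and the kinematic coupling must be respected simultaneously by the competitor sequence and by the variations. The nonconvexity is harmless because $\mathcal E_K$ is $[0,\infty]$-valued and remains weakly lower semicontinuous (Proposition~\ref{propenergy}(3)), and the a priori lower bound $\partial_x\eta_1\geq\delta_1$ of Proposition~\ref{propenergy}(1) makes the variations admissible; the double constraint is resolved by the observation that exactly the divergence-free test pairs $(\xi,q)$ with $\xi=q\circ\eta^k$ generate admissible linear perturbations. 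The one remaining ingredient feeding the compactness step — an $H^3(\Omega)$-bound for $u$ out of $\int_\Omega|\nabla(\Delta u)|^2$ on the cornered domain $\Omega$ for divergence-free fields vanishing on $\partial\Omega$ — is routine Stokes-type elliptic regularity.
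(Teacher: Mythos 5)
Your proposal reproduces the paper's argument: the direct method with the coercivity from Proposition~\ref{propenergy}(2) and the $\delta_0$-regularizers, lower semicontinuity of $\mathcal{E}_K$ from Proposition~\ref{propenergy}(3), closure of the affine constraints under weak convergence plus trace compactness, and the Euler--Lagrange identity from the one-parameter family $(\eta^{k+1}+\varepsilon\xi,\,u^{k+1}+\tfrac{\varepsilon}{\tau}q)$ (your $\varepsilon=s\tau$). The only refinements you add beyond the paper's text — explicitly invoking Proposition~\ref{propenergy}(1) to justify that $\mathcal{E}_K(\eta^{k+1}+s\tau\xi)$ stays finite for small $|s|$, and flagging the elliptic estimate behind the $H^3(\Omega)$-coercivity of $\int_\Omega|\nabla\Delta u|^2$ — are both correct and in the spirit of what the paper leaves implicit.
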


\begin{proof}
We investigate the existence of a minimizer for the functional $G^{k}$, defined in \eqref{coupledfunc}, using the direct method. Let us introduce the space of \textit{admissible} functions for $G^{k}$ by
\begin{equation} \label{admisset} 
\mathcal{A}^{k} = \left\lbrace (\eta, u) \in \mathcal{H}\cap H^3((0,\ell)) \times H^1_0\cap H^{3}(\Omega;\mathbb{R}^{2}) \ | \ \nabla \cdot u = 0 \ \ \text{in} \ \ \Omega, \quad \eta - \eta^{k} = \tau ( u \circ \eta^{k} ) \ \ \text{in} \quad [0,\ell] \right\rbrace.
\end{equation}
\noindent
Firstly, notice that for every $(\eta, u) \in \mathcal{A}^{k}$ we have
\begin{align} \label{lowerbound1} 
G^{k}(\eta, u) & \geq \dfrac{\tau \delta_{0}}{2} \int_{\Omega} \left| \nabla (\Delta u) \right|^{2} dz + \dfrac{\tau}{2} \int_{\Omega} \mu^{k} \left| \nablasym(u) \right|^{2} dz - \tau \int_{\Omega} \rho^{k} \left( f \cdot u \right) dz + \frac{\sqrt{\delta_0}}{2}\int_0^\ell \abs{\partial_{x}^3 \eta}^2 dx  \nonumber \\[6pt]
& \geq \dfrac{\tau \delta_{0}}{2} \int_{\Omega} \left| \nabla (\Delta u) \right|^{2} dz + \dfrac{\tau \mu_{\min}}{2} \int_{\Omega} \left| \nablasym(u) \right|^{2} dz - \tau \rho_{\max} \int_{\Omega} \left| f \cdot u \right| dz+ \frac{\sqrt{\delta_0}}{2}\int_0^\ell \abs{\partial_{x}^3 \eta}^2 dx \\[6pt]
& \geq \dfrac{\tau \delta_{0}}{2} \int_{\Omega} \left| \nabla (\Delta u) \right|^{2} dz + \tau \int_{\Omega} \left[ \left( \dfrac{\mu_{\min}}{4} - \varepsilon \rho_{\max} C_{P}  \right) \left| \nabla u \right|^{2} - \dfrac{\rho_{\max}}{4 \varepsilon}  \left| f \right|^{2} \right] dz+ \frac{\sqrt{\delta_0}}{2}\int_0^\ell \abs{\partial_{x}^3 \eta}^2 dx, \nonumber
\end{align}
for a sufficiently small $\varepsilon > 0$, where $\mu_{\min} := \min\{\mu_{+}, \mu_{-}\}$ and $\rho_{\max} := \max\{\rho_{+}, \rho_{-}\}$; the third inequality in \eqref{lowerbound1} is obtained after integrating by parts, applying Young and Poincaré inequalities, with $C_{P} := 2 L^{2} / \pi^{2}$ being the Poincaré constant of the square $\Omega$. Inequality \eqref{lowerbound1} clearly shows that the functional $G^{k}$ has a finite infimum, that is
$$
\inf_{(\eta, u) \in \mathcal{A}^{k}} G^{k}(\eta, u) > - \infty,
$$
so that a minimizing sequence $\{ (\eta_{n}, u_{n}) \}_{n \in \mathbb{N}} \subset \mathcal{A}^{k}$ exists and verifies
$$
\inf_{(\eta, u) \in \mathcal{A}^{k}} G^{k}(\eta, u) = \lim_{n \to \infty} G^{k}(\eta_{n}, u_{n}).
$$
By the coercivity of $\mathcal{E}_{K}$, see Proposition \ref{propenergy}, and the regularization term, we know that the sequence $\{ \eta_{n} \}_{n \in \mathbb{N}}$ is bounded in $\mathcal{H}\cap H^3$. In view of the compact embeddings $H^{2}((0,\ell);\mathbb{R}) \subset C^{1,1/2^{-}}([0,\ell];\mathbb{R})$
we can therefore extract a sub-sequence (not relabeled) such that
\begin{equation} \label{convergence1} 
\eta_{n} \rightharpoonup \eta \quad \text{weakly in} \quad \mathcal{H} \cap H^3([0,\ell];\R^2), \qquad \eta_{n} \longrightarrow \eta \quad \text{strongly in} \quad C^{1,1/2^{-}}([0,\ell];\mathbb{R}).
\end{equation}
Since $\mathcal{H}$ is clearly weakly closed, we deduce that $\eta^{k+1} := \eta$ is an element of $\mathcal{H}$. Regarding the velocity field $u$, let us go back to \eqref{lowerbound1}, from where we deduce the following coercivity property:
\begin{equation} \label{lowerbound2} 
G^{k}(\eta, u) \geq \dfrac{\tau \delta_{0}}{2} \| u \|_{H_{0}^{3}(\Omega)}^{2} +  \dfrac{\tau \mu_{\min}}{8} \| \nabla u \|_{L^{2}(\Omega)}^{2} - \dfrac{4 \tau \rho_{\max}^{2} L^{2}}{\mu_{\min} \pi^{2}} \| f \|_{L^{2}(\Omega)}^{2} \text{ for all } (\eta, u) \in \mathcal{A}^{k}.
\end{equation}
The coercivity condition \eqref{lowerbound2} ensures that the sequence $\{ u_{n} \}_{n \in \mathbb{N}}$ is bounded in $H^{3}_{0}(\Omega;\mathbb{R}^2)$, and therefore, we can extract a sub-sequence (not relabeled) such that
\begin{equation} \label{convergence2} 
u_{n} \rightharpoonup u \quad \text{weakly in} \quad H_{0}^{3}(\Omega;\mathbb{R}^2), \qquad u_{n} \longrightarrow u \quad \text{strongly in} \quad H^1(\Omega;\mathbb{R}^2).
\end{equation}

Furthermore, since $\nabla  \cdot u_{n} = 0$ in $\Omega$ for every $n \in \mathbb{N}$, by weak convergence we also deduce that $\nabla  \cdot u^{k+1} = 0$ in $\Omega$. In order to conclude that $\left( \eta^{k+1}, u^{k+1} \right) \in \mathcal{A}^{k}$ we need to check that the coupling condition is met; to see this, notice that convergence in \eqref{convergence2} implies also convergence of the trace of $u_n$. Since $\{ (\eta_{n}, u_{n}) \}_{n \in \mathbb{N}} \subset \mathcal{A}^{k}$, we have
\begin{equation} \label{convergence3} 
\eta_{n} - \eta^{k} = \tau ( u_{n} \circ \eta^{k} ) \quad \text{in} \quad [0,\ell], \quad\text{ for all } n \in \mathbb{N}.
\end{equation}
which also implies the same in the limit. Thus we have $\left( \eta^{k+1}, u^{k+1} \right) \in \mathcal{A}^{k}$. Finally in order to conclude 
$$
\min_{(\eta, u) \in \mathcal{A}^{k}} G^{k}(\eta, u) = G^{k} \left( \eta^{k+1}, u^{k+1} \right),
$$
we need to state the weak lower semi-continuity of the functional $G^{k}$, or equivalently, the weak lower semi-continuity of each of the terms appearing in \eqref{coupledfunc}. This property has been proved for the energy $\mathcal{E}_{K}$ in Proposition \ref{propenergy} and can be directly proved for the remaining terms in \eqref{coupledfunc} as all of them are either linear or semi-norms in spaces where we have weak convergence. In conclusion, $\left( \eta^{k+1}, u^{k+1} \right) \in \mathcal{A}^{k}$ is indeed a minimizer of $G^{k}$.\par
Let us now derive the Euler-Lagrange equation associated to the minimizer $\left( \eta^{k+1}, u^{k+1} \right)$.  If we consider a test pair $(\xi, q) \in \mathcal{H} \times H_{0}^{3}(\Omega;\mathbb{R}^{2})$ such that $\nabla \cdot q = 0$ in $\Omega$ and $q \circ \eta^{k} = \xi$ in $[0,\ell]$, we have
$$
\left( \eta^{k+1} + \varepsilon \xi, u^{k+1} + \dfrac{\varepsilon}{\tau} q \right) \in \mathcal{A}^{k}, \quad\text{ for all } \varepsilon > 0.
$$
Therefore, we are allowed to take the first variation with respect to $\left( \xi, \frac{q}{\tau} \right)$, resulting in the weak Euler-Lagrange equation \eqref{equasistaticSolmin}.
\end{proof}

Now continuing our construction we define
\begin{equation} \label{newphi}
\Phi^{k+1} := (\operatorname{id}+\tau u^{k+1}) \circ \Phi^k = \Phi^{k} + \tau \left( u^{k+1} \circ \Phi^{k} \right), \quad \Phi^{k+1} : \Omega \longrightarrow \Omega^{k+1}.
\end{equation}

From Proposition \ref{minfunctional1} we can immediately derive the following result:
\begin{lemma} \label{apriorih}
Let $\left( \eta^{k+1}, u^{k+1} \right) \in \mathcal{H}\cap H^3((0,\ell)) \times H_{0}^{1}\cap H^3(\Omega;\mathbb{R}^{2})$ be a minimizer of the functional \eqref{coupledfunc} and define $\Phi^{k+1} : \Omega \longrightarrow \Omega$ as in \eqref{newphi}. The following a priori estimate holds:
{\small
\begin{equation} \label{apriorieu1} 
\begin{aligned}
& \mathcal{E}_{K}(\eta^{k+1})+ \frac{\sqrt{\delta_0}}{2}\int_{0}^{\ell} | \partial_{x}^3  \eta^{k+1}|^2\, dx + \dfrac{\varepsilon_0}{2 \tau} \int_{0}^{\ell} \left| \partial_{x}^3 \left( \eta^{k+1} - \eta^{k} \right)  \right|^{2} dx + \dfrac{\tau \rho_s }{2 h} \int_{0}^{\ell} \left| \dfrac{\eta^{k+1} - \eta^{k}}{\tau} -  w^k \circ \eta_{0} \right|^{2} dx   \\[6pt]
& + \dfrac{\tau}{2 h} \int_{\Omega} \rho_{0} \left| u^{k+1} \circ \Phi^{k} - w^{k} \right|^{2} dz  +  \dfrac{\tau}{2} \int_{\Omega} \left[ \delta_{0} \left| \nabla \left( \Delta u^{k+1} \right) \right|^{2} + \mu^{k} \left| \nablasym u^{k+1} \right|^{2} \right] dz - \tau \int_{\Omega} \rho^{k} \left( f \cdot u^{k+1} \right) dz \\[6pt]
& \hspace{-3mm} \leq \mathcal{E}_{K}(\eta^{k}) + \frac{\sqrt{\delta_0}}{2}\int_{0}^{\ell} | \partial_{x}^3  \eta^{k}|^2\, dx+ \dfrac{\tau \rho_s }{2 h} \int_{0}^{\ell} \left| w^k \circ \eta_{0} \right|^{2} dx + \dfrac{\tau}{2 h} \int_{\Omega} \rho_{0} \left| w^{k} \right|^{2} dz   \, ,
\end{aligned}
\end{equation}}as well as
{\small
\begin{equation} \label{apriorieu2} 
\begin{aligned}
& \mathcal{E}_{K}(\eta^{N}) + \frac{\sqrt{\delta_0}}{2}\int_{0}^{\ell} | \partial_{x}^3  \eta^{N}|^2\, dx + \sum_{k=0}^{N-1} \Bigg[ \dfrac{\varepsilon_0}{2 \tau} \int_{0}^{\ell} \left| \partial_{x}^3 \left( \eta^{k+1} - \eta^{k} \right)  \right|^{2} dx + \dfrac{\tau \rho_s }{2 h} \int_{0}^{\ell} \left| \dfrac{\eta^{k+1} - \eta^{k}}{\tau} -  w^{k} \circ \eta_{0} \right|^{2} dx  \\[6pt]
&  + \dfrac{\tau}{2 h} \int_{\Omega} \rho_{0} \left| u^{k+1} \circ \Phi^{k} - w^{k} \right|^{2} dz  +  \dfrac{\tau}{2} \int_{\Omega} \left[ \delta_{0} \left| \nabla \left( \Delta u^{k+1} \right) \right|^{2} + \mu^{k} \left| \nablasym u^{k+1}  \right|^{2} \right] dz - \tau \int_{\Omega} \rho^{k} \left( f \cdot u^{k+1} \right) dz \Bigg] \\[6pt]
& \hspace{-3mm} \leq \mathcal{E}_{K}(\eta^{0}) + \frac{\sqrt{\delta_0}}{2}\int_{0}^{\ell} | \partial_{x}^3  \eta^{0}|^2\, dx + \dfrac{\rho_s }{2 h} \int_{0}^{h} \int_{0}^{\ell} \left| w \circ \eta_{0} \right|^{2} dx\,dt + \dfrac{\rho_{+} + \rho_{-}}{2 h} \int_{0}^{h} \int_{\Omega} \left| w \right|^{2} dz \, dt \, .\
\end{aligned}
\end{equation}}
\end{lemma}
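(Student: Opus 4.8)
The plan is to read off both estimates directly from the minimality of $(\eta^{k+1},u^{k+1})$ proven in Proposition~\ref{minfunctional1}: the one-step estimate \eqref{apriorieu1} is nothing but the inequality $G^k(\eta^{k+1},u^{k+1})\le G^k(\eta^k,0)$ written out term by term, and \eqref{apriorieu2} follows by telescoping \eqref{apriorieu1} over $k=0,\dots,N-1$.

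First I would check that the competitor $(\eta^k,0)$ belongs to the admissible set $\mathcal{A}^k$ of \eqref{admisset}: one has $0\in H_0^1\cap H^3(\Omega;\mathbb{R}^2)$ with $\nabla\cdot 0=0$, the coupling constraint $\tfrac{\eta^k-\eta^k}{\tau}=0=0\circ\eta^k$ on $[0,\ell]$ holds trivially, and $\eta^k\in\mathcal{H}\cap H^3((0,\ell))$. This last membership is proven by induction on $k$: the base case $\eta^0=\eta_0\in H^3((0,\ell))$ with $\mathcal{E}_K(\eta_0)<\infty$ is among the hypotheses, and given $\eta^k\in\mathcal{H}\cap H^3((0,\ell))$, Proposition~\ref{minfunctional1} produces $\eta^{k+1}\in\mathcal{H}\cap H^3((0,\ell))$ with $G^k(\eta^{k+1},u^{k+1})\le G^k(\eta^k,0)<\infty$, which in particular keeps $\mathcal{E}_K(\eta^{k+1})$ finite. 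Evaluating the functional \eqref{coupledfunc} at $(\eta^k,0)$ annihilates the $\varepsilon_0$-term, the dissipative $\delta_0$- and $\mu^k$-terms and the forcing term, while the two inertial terms reduce to $\tfrac{\tau\rho_s}{2h}\int_0^\ell|w^k\circ\eta_0|^2\,dx$ and $\tfrac{\tau}{2h}\int_\Omega\rho_0|w^k|^2\,dz$; together with the surviving $\mathcal{E}_K(\eta^k)+\tfrac{\sqrt{\delta_0}}{2}\int_0^\ell|\partial_x^3\eta^k|^2\,dx$ this is exactly the right-hand side of \eqref{apriorieu1}, whereas $G^k(\eta^{k+1},u^{k+1})$ is exactly its left-hand side.

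Next I would sum \eqref{apriorieu1} over $k=0,\dots,N-1$. The block $\mathcal{E}_K(\eta^{k+1})+\tfrac{\sqrt{\delta_0}}{2}\int_0^\ell|\partial_x^3\eta^{k+1}|^2\,dx$ on the left telescopes against the same expression at level $k$ on the right, leaving $\mathcal{E}_K(\eta^N)+\tfrac{\sqrt{\delta_0}}{2}\int_0^\ell|\partial_x^3\eta^N|^2\,dx$ on the left and $\mathcal{E}_K(\eta^0)+\tfrac{\sqrt{\delta_0}}{2}\int_0^\ell|\partial_x^3\eta^0|^2\,dx$ on the right, while all remaining left-hand terms of \eqref{apriorieu1} assemble into the summed block of \eqref{apriorieu2}. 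For the two data contributions I would use that $w^k$ is a time average: by Jensen's inequality $|w^k(z)|^2\le\tfrac{1}{\tau}\int_{\tau k}^{\tau(k+1)}|w(t,z)|^2\,dt$ and, since $w^k\circ\eta_0=(w\circ\eta_0)^k$ in the averaging sense, $|w^k\circ\eta_0(x)|^2\le\tfrac{1}{\tau}\int_{\tau k}^{\tau(k+1)}|(w\circ\eta_0)(t,x)|^2\,dt$, so that $\sum_{k=0}^{N-1}\tau\int_\Omega|w^k|^2\,dz\le\int_0^h\int_\Omega|w|^2\,dz\,dt$ (recall $N\tau=h$) and likewise for $w\circ\eta_0$; bounding $\rho_0\le\rho_++\rho_-$ then yields precisely the right-hand side of \eqref{apriorieu2}.

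There is no genuine obstacle here: the statement is a direct bookkeeping consequence of minimality. The only two points that need a word of care are the inductive verification that each iterate $\eta^k$ remains in $\mathcal{H}\cap H^3((0,\ell))$, so that $(\eta^k,0)$ is a legitimate competitor, and the passage from the $k$-dependent averages $w^k$ to the full time integrals of $w$ via Jensen when deriving \eqref{apriorieu2} from \eqref{apriorieu1}.
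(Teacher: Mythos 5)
Your proposal is correct and follows exactly the paper's argument: compare $G^k(\eta^{k+1},u^{k+1})\le G^k(\eta^k,0)$ term by term to get \eqref{apriorieu1}, then telescope over $k$ and apply Jensen's inequality to the time averages $w^k$ to obtain \eqref{apriorieu2}. The only addition you make beyond the paper's brief proof is spelling out the inductive verification that $\eta^k$ remains an admissible competitor, which the paper leaves implicit.
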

\begin{proof}
Inequality \eqref{apriorieu1} is obtained after comparing the value of $G_k$ at the minimizer $\left( \eta^{k+1}, u^{k+1} \right)$ with that at the admissible pair $\left( \eta^{k}, 0 \right) \in \mathcal{A}^{k}$, see \eqref{admisset}. In order to derive \eqref{apriorieu2} we add each of the inequalities \eqref{apriorieu1} for $k \in \{0,\cdots,N-1\}$, thus yielding
{\small
\begin{equation} \label{apriorieu3} 
\begin{aligned}
& \mathcal{E}_{K}(\eta^{N})+ \frac{\sqrt{\delta_0}}{2}\int_{0}^{\ell} | \partial_{x}^3  \eta^{N}|^2\, dx + \sum_{k=0}^{N-1} \Bigg[ \dfrac{\varepsilon_0}{2 \tau} \int_{0}^{\ell} \left| \partial_{x}^3 \left( \eta^{k+1} - \eta^{k} \right)  \right|^{2} dx + \dfrac{\tau \rho_s }{2 h} \int_{0}^{\ell} \left| \dfrac{\eta^{k+1} - \eta^{k}}{\tau} -  w^{k} \circ \eta_{0} \right|^{2} dx \\[6pt]
& + \dfrac{\tau}{2 h} \int_{\Omega} \rho_0 \left| u^{k+1} \circ \Phi^{k} - w^{k} \right|^{2} dz  +  \dfrac{\tau}{2} \int_{\Omega} \left[ \delta_{0} \left| \nabla \left( \Delta u^{k+1} \right) \right|^{2} + \mu^{k} \left| \nablasym u^{k+1} \right|^{2} \right] dz - \tau \int_{\Omega} \rho^{k} \left( f \cdot u^{k+1} \right) dz \Bigg] \\[6pt]
& \hspace{-2mm} \leq \mathcal{E}_{K}(\eta^{0}) + \frac{\sqrt{\delta_0}}{2}\int_{0}^{\ell} | \partial_{x}^3  \eta^{0}|^2\, dx + \sum_{k=0}^{N-1} \Bigg[ \dfrac{\tau \rho_s }{2 h} \int_{0}^{\ell} \left| w^k \circ \eta_{0} \right|^{2} dx + \dfrac{\tau}{2 h} \int_{\Omega} \rho^{k} \left| w^{k} \right|^{2} dz \Bigg] .
\end{aligned}
\end{equation}} Now, recalling the definition of $w^{k}$ in \eqref{wtaverage} and using Jensen's inequality, we notice that
$$
\begin{aligned}
& \sum_{k=0}^{N-1} \Bigg[ \dfrac{\tau \rho_s }{2 h} \int_{0}^{\ell} \left| w^k \circ \eta_{0} \right|^{2} dx + \dfrac{\tau}{2 h} \int_{\Omega} \rho^{k} \left| w^{k} \right|^{2} dz \Bigg] \\[6pt]
& \hspace{-5mm} = \sum_{k=0}^{N-1} \Bigg[ \dfrac{\tau \rho_s }{2 h} \int_{0}^{\ell} \left| \dfrac{1}{\tau} \int_{\tau k}^{\tau (k+1)} w \left( t, \eta_{0}(x) \right) dt \right|^{2} dx + \dfrac{\tau}{2 h} \int_{\Omega} \rho^{k} \left| \dfrac{1}{\tau} \int_{\tau k}^{\tau (k+1)} w \left( t, z \right) dt \right|^{2} dz \Bigg] \\[6pt]
& \hspace{-5mm} \leq \sum_{k=0}^{N-1} \Bigg[ \dfrac{\rho_s }{2 h} \int_{0}^{\ell} \int_{\tau k}^{\tau (k+1)} \left| w \left( t, \eta_{0}(x) \right) \right|^{2} dt \, dx + \dfrac{1}{2 h} \int_{\Omega} \rho^{k} \left( \int_{\tau k}^{\tau (k+1)} \left| w \left( t, z \right) \right|^{2} dt  \right) dz \Bigg] \\[6pt]
& \hspace{-5mm} \leq \dfrac{\rho_s }{2 h} \int_{0}^{\ell} \int_{0}^{h} \left| w \left( t, \eta_{0}(x) \right) \right|^{2} dt \, dx + \dfrac{\rho_{+} + \rho_{-}}{2 h} \int_{\Omega} \int_{0}^{h} \left| w \left( t, z \right) \right|^{2} dt \, dz \,
\end{aligned}
$$
which is then inserted into \eqref{apriorieu3} to yield \eqref{apriorieu2}. 
\end{proof}

\subsection{Proof of Theorem \ref{maintheoremdelay}, Step 2: Constructing interpolations}
Now we unfix $\tau$ and write the functions of the previous subsection as $\eta_{\tau}^{k}$, $u_{\tau}^{k}$ and $\Phi_{\tau}^{k}$ to prevent confusion. Using this and denoting $I_{k}(\tau) = [k \tau, (k+1) \tau)$, we define 
$$
\Omega^{(\tau)}(t) = \Omega^{k} \quad \text{for} \ \ t \in I_{k}(\tau),
$$
and the following time-dependent functions:
\begin{equation} \label{unfixedtau}
\left\{
\begin{aligned}
\eta^{(\tau)}(t,x) &= \eta_{\tau}^{k}(x) \qquad &\text{ for all } (t,x) \in I_{k}(\tau) \times [0,\ell],\\[3pt]
\overline{\eta}^{(\tau)}(t,x) &= \eta_{\tau}^{k+1}(x) \qquad &\text{ for all } (t,x) \in I_{k}(\tau) \times [0,\ell],\\[3pt]
\widetilde{\eta}^{(\tau)}(t,x) &= \frac{\tau (k+1)-t}{\tau} \eta_{\tau}^{k}(x) + \frac{t-\tau k}{\tau} \eta_{\tau}^{k+1}(x) \qquad &\text{ for all } (t,x) \in I_{k}(\tau) \times [0,\ell],\\[3pt]
u^{(\tau)}(t,z) &= u_{\tau}^{k}(z) \qquad &\text{ for all } (t,z) \in I_{k}(\tau) \times \Omega^{(\tau)}(t),\\[3pt]
\Phi^{(\tau)}(t,z) &= \Phi^{k-1}_{\tau}(z) \qquad &\text{ for all } (t,z) \in I_{k}(\tau) \times \Omega^{(\tau)}(t),\\[3pt]
\widetilde{\Phi}^{(\tau)}(t,z) &=\frac{\tau (k+1)-t}{\tau} \Phi^{k-1}_{\tau}(z) + \frac{t-\tau k}{\tau} \Phi^{k}_{\tau}(z) \qquad &\text{ for all } (t,z) \in I_{k}(\tau) \times \Omega^{(\tau)}(t).
\end{aligned}
\right.
\end{equation}
Note that this immediately implies
\begin{align*}
 u^{(\tau)} \left( t,\eta^{(\tau)}(t,x) \right) &= \partial_t \tilde{\eta}^{(\tau)}(t,x) \qquad &\text{ for all } (t,x) \in I \times [0,\ell].
\end{align*}
Using the estimates of Lemma \ref{apriorih}, we derive the following bounds on the functions defined in \eqref{unfixedtau}.

\begin{lemma}[Uniform bounds in $\tau$] \label{uniformbounds}
	The following sequences are uniformly bounded in $\tau$:
\begin{equation*} %
\left\{
\begin{aligned}
	\mathcal{E}_{K}(\eta^{(\tau)}(t,\cdot)) & \in L^\infty([0,h];\mathbb{R}),\\[3pt]
	\eta^{(\tau)}, \overline{\eta}^{(\tau)}, \widetilde{\eta}^{(\tau)} &\in L^\infty([0,h]; \mathcal{H}{\cap H^3((0,\ell))}),\\[3pt]
	\partial_t \widetilde{\eta}^{(\tau)} & \in L^2([0,h]; \mathcal{H}\cap H^3((0,\ell))),\\[3pt]
	u^{(\tau)} &\in L^2([0,h];H_{0}^{1}\cap H^3(\Omega;\R^2) \quad \text{and} \quad u^{(\tau)} \in L^2([0,h];C^{1,1^{-}}(\overline{\Omega};\R^2), \quad \\[3pt]
	u^{(\tau)} \circ \Phi^{(\tau)} &\in L^2([0,h]\times \Omega; \R^2).
\end{aligned}
\right.
\end{equation*}
Furthermore, by definition we have that $\partial_t \widetilde{\Phi}^{(\tau)} = u^{(\tau)} \circ \Phi^{(\tau)}$ in $\Omega$.
\end{lemma}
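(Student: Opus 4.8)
The plan is to derive every one of the listed bounds directly from the summed a priori estimate \eqref{apriorieu2} of Lemma~\ref{apriorih}, using in addition only the coercivity of $\mathcal{E}_K$ from Proposition~\ref{propenergy}(2), Korn's and Poincaré's inequalities on $H^1_0(\Omega)$, and the two-dimensional Sobolev embedding $H^3(\Omega)\hookrightarrow C^{1,\beta}(\overline\Omega)$ valid for every $\beta<1$. Throughout, $h$, $\delta_0>0$ and $\varepsilon_0>0$ are fixed, so the constants below may depend on them; only uniformity in $\tau$ is claimed. First I would bound the right-hand side of \eqref{apriorieu2}: the terms $\mathcal{E}_K(\eta^0)$ and $\tfrac{\sqrt{\delta_0}}{2}\|\partial_x^3\eta^0\|_{L^2}^2$ are finite and $\tau$-independent because $\eta_0\in H^3((0,\ell);\R^2)$ with $\mathcal{E}_K(\eta_0)<\infty$, while the two $w$-terms are dominated by $\|w\circ\eta_0\|_{L^2([0,h]\times(0,\ell))}^2$ and $\|w\|_{L^2([0,h]\times\Omega)}^2$; this is exactly the Jensen computation already carried out at the end of the proof of Lemma~\ref{apriorih}, which also shows that the piecewise-constant functions $t\mapsto w^k\circ\eta_0$ and $t\mapsto w^k$ are bounded in $L^2$ uniformly in $\tau$. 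On the left-hand side the only indefinite term is $-\tau\int_\Omega\rho^k(f\cdot u^{k+1})$; summing and using Young, Korn on $H^1_0(\Omega)$ and Poincaré gives $\sum_k\tau\int_\Omega\rho^k|f\cdot u^{k+1}|\le \tfrac{\mu_{\min}}{2}\sum_k\tau\|\nablasym u^{k+1}\|_{L^2}^2+C\,h\,\|f\|_{L^2(\Omega)}^2$, so half of the symmetric-gradient dissipation absorbs it and the remainder of the left-hand side of \eqref{apriorieu2} stays bounded by a $\tau$-independent constant $C$.

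From this I would read off the $\eta$-bounds. Telescoping \eqref{apriorieu1} over the first $k$ steps (again absorbing the force term as above) gives $\mathcal{E}_K(\eta_\tau^k)+\tfrac{\sqrt{\delta_0}}{2}\|\partial_x^3\eta_\tau^k\|_{L^2}^2\le C$ for every $k\le N$; combined with Proposition~\ref{propenergy}(2) this yields $\|\eta_\tau^k\|_{H^2}+\|\eta_\tau^k\|_{H^3}\le C$ uniformly, hence the uniform $L^\infty([0,h];\mathcal{H}\cap H^3((0,\ell)))$ bounds for $\eta^{(\tau)}$, $\overline\eta^{(\tau)}$ and, being a pointwise convex combination of the first two, $\widetilde\eta^{(\tau)}$, together with the $L^\infty([0,h])$-bound on $t\mapsto\mathcal{E}_K(\eta^{(\tau)}(t,\cdot))$. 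For $\partial_t\widetilde\eta^{(\tau)}$, which equals $\tfrac{\eta_\tau^{k+1}-\eta_\tau^k}{\tau}$ on $I_k(\tau)$, the term $\sum_k\tfrac{\varepsilon_0}{2\tau}\|\partial_x^3(\eta_\tau^{k+1}-\eta_\tau^k)\|_{L^2}^2$ equals $\tfrac{\varepsilon_0}{2}\int_0^h\|\partial_x^3\partial_t\widetilde\eta^{(\tau)}\|_{L^2}^2\,dt$ (here $\varepsilon_0>0$ is used), and the term $\sum_k\tfrac{\tau\rho_s}{2h}\|\tfrac{\eta_\tau^{k+1}-\eta_\tau^k}{\tau}-w^k\circ\eta_0\|_{L^2}^2$ together with the uniform $L^2$-bound on $w^k\circ\eta_0$ controls $\partial_t\widetilde\eta^{(\tau)}$ in $L^2([0,h];L^2((0,\ell);\R^2))$; the two combine to the claimed $L^2([0,h];\mathcal{H}\cap H^3)$ bound (more precisely a bound in the homogeneous space $H^2_0\cap H^3$ tangent to $\mathcal{H}$).

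It remains to treat the fluid quantities. The dissipation term in \eqref{apriorieu2} gives $\sum_k\tau\big(\tfrac{\delta_0}{2}\|\nabla\Delta u_\tau^{k+1}\|_{L^2}^2+\tfrac{\mu_{\min}}{2}\|\nablasym u_\tau^{k+1}\|_{L^2}^2\big)\le C$; since $u_\tau^{k+1}\in H^1_0\cap H^3(\Omega;\R^2)$ and $\Omega$ is a square, Korn, Poincaré and the standard interpolation of intermediate derivatives give $\|u_\tau^{k+1}\|_{H^3(\Omega)}\le C\big(\|\nabla\Delta u_\tau^{k+1}\|_{L^2}+\|\nabla u_\tau^{k+1}\|_{L^2}\big)$, whence $u^{(\tau)}$ is bounded in $L^2([0,h];H^1_0\cap H^3(\Omega;\R^2))$, and $H^3(\Omega)\hookrightarrow C^{1,1^-}(\overline\Omega)$ gives the bound in $L^2([0,h];C^{1,1^-}(\overline\Omega;\R^2))$. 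The term $\sum_k\tfrac{\tau}{2h}\int_\Omega\rho_0|u_\tau^{k+1}\circ\Phi_\tau^k-w^k|^2\le C$ reassembles (after re-indexing, matching the conventions of \eqref{unfixedtau}) as $\tfrac{1}{2h}\int_0^h\int_\Omega\rho_0|u^{(\tau)}\circ\Phi^{(\tau)}-w^{(\tau)}|^2$, which with the $L^2$-bound on the piecewise-constant $w^{(\tau)}$ yields the bound on $u^{(\tau)}\circ\Phi^{(\tau)}$ in $L^2([0,h]\times\Omega;\R^2)$. Finally, $\partial_t\widetilde\Phi^{(\tau)}=u^{(\tau)}\circ\Phi^{(\tau)}$ is immediate from the linear interpolation defining $\widetilde\Phi^{(\tau)}$ in \eqref{unfixedtau} and the update rule \eqref{newphi}: on $I_k(\tau)$ one has $\tfrac{\Phi_\tau^k-\Phi_\tau^{k-1}}{\tau}=u_\tau^k\circ\Phi_\tau^{k-1}$.

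The argument is in essence bookkeeping, and I expect no genuine obstacle; the points that need a little care are the consistent absorption of the indefinite force term (one must use Korn on $H^1_0$ so that the symmetric-gradient dissipation, rather than the full gradient, controls it), the fact that the pointwise-in-time bound on $\mathcal{E}_K$ requires telescoping \eqref{apriorieu1} and not merely \eqref{apriorieu2}, and keeping the index shift between $u^{(\tau)}$ and $\Phi^{(\tau)}$ straight when reinterpreting the discrete sums of Lemma~\ref{apriorih} as Bochner norms of the time interpolants \eqref{unfixedtau}.
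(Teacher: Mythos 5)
Your proof is correct and follows essentially the same route as the paper: telescope the one-step estimate \eqref{apriorieu1} after absorbing the force term to get the uniform-in-$k$ (hence $L^\infty$-in-$t$) bounds on $\mathcal{E}_K(\eta^k_\tau)$ and $\|\partial_x^3\eta^k_\tau\|_{L^2}$, then read off each remaining bound from the summed inequality \eqref{apriorieu2} using coercivity, the $\varepsilon_0$- and $\delta_0$-terms, and the Sobolev embedding $H^3(\Omega)\hookrightarrow C^{1,1^-}(\overline\Omega)$. The only cosmetic slips are that the paper uses the identity $\int_\Omega|\nablasym u|^2=\tfrac12\int_\Omega|\nabla u|^2$ for solenoidal $H^1_0$ fields rather than Korn (both work), and in your Young-inequality step the coefficient should be strictly less than $\tfrac{\mu_{\min}}{2}$ if you want to retain part of the symmetric-gradient dissipation, although this is harmless here since the $H^3$ control of $u^{(\tau)}$ comes entirely from the $\delta_0$-term.
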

\noindent
\begin{proof} 
First, arguing as in \eqref{lowerbound1}-\eqref{lowerbound2},  from inequality \eqref{apriorieu1} we can infer the following:
\begin{align} \label{linfenergy00}
&\phantom{{}={}}\mathcal{E}_{K}(\eta^{j}) + \frac{\sqrt{\delta_0}}{2}\int_{0}^{\ell} | \partial_{x}^3  \eta^{j}|^2\, dx - \tau \dfrac{4 \rho_{\max}^{2} L^{2}}{\mu_{\min} \pi^{2}} \| f \|_{L^{2}(\Omega)}^{2} \nonumber \\
&\leq   \mathcal{E}_{K}(\eta^{j-1}) + \frac{\sqrt{\delta_0}}{2}\int_{0}^{\ell} | \partial_{x}^3  \eta^{j-1}|^2\, dx + \dfrac{\tau \rho_s }{2 h} \int_{0}^{\ell} \left| w^{j-1} \circ \eta_{0} \right|^{2} dx + \dfrac{\tau}{2 h} \int_{\Omega} \rho^{j-1} \left| w^{j-1} \right|^{2} dz,
\end{align}	
for every $j \in \{1, \cdots, N\}$, where $\mu_{\min} := \min\{\mu_{+}, \mu_{-}\}$ and $\rho_{\max} := \max\{\rho_{+}, \rho_{-}\}$. By taking the sum from $j=1$ to $j=k$ in both sides of inequality \eqref{linfenergy00}, we obtain:  
{\small
\begin{equation} \label{linfenergy0}
\begin{aligned}
& \mathcal{E}_{K}(\eta^{k}) + \frac{\sqrt{\delta_0}}{2}\int_{0}^{\ell} | \partial_{x}^3  \eta^{k}|^2\, dx \leq  \mathcal{E}_{K}(\eta^{0}) + \frac{\sqrt{\delta_0}}{2}\int_{0}^{\ell} | \partial_{x}^3  \eta^{0}|^2\, dx \\&+ \sum_{j=1}^{k} \Bigg[ \dfrac{\tau \rho_s }{2 h} \int_{0}^{\ell} \left| w^{j-1} \circ \eta_{0} \right|^{2} dx + \dfrac{\tau}{2 h} \int_{\Omega} \rho^{j-1} \left| w^{j-1} \right|^{2} dz + \tau \dfrac{4 \rho_{\max}^{2} L^{2}}{\mu_{\min} \pi^{2}} \| f \|_{L^{2}(\Omega)}^{2} \Bigg] \\[6pt]
& \leq  \mathcal{E}_{K}(\eta^{0}) + \frac{\sqrt{\delta_0}}{2}\int_{0}^{\ell} | \partial_{x}^3  \eta^{0}|^2\, dx + \dfrac{\rho_s }{2 h} \int_{0}^{\ell} \int_{0}^{h} \left| w \left( t, \eta_{0}(x) \right) \right|^{2} dt \, dx \\&+ \dfrac{\rho_{+} + \rho_{-}}{2 h} \int_{\Omega} \int_{0}^{h} \left| w \left( t, z \right) \right|^{2} dt \, dz + \dfrac{4 h \rho_{\max}^{2} L^{2}}{\mu_{\min} \pi^{2}} \| f \|_{L^{2}(\Omega)}^{2} \, ,
\end{aligned}
\end{equation}}for every $k \in \{1, \cdots, N\}$. Since the right-hand side of \eqref{linfenergy0} is a (finite) constant that is independent of $t$ and $\tau$ (denoted henceforth by $C_{0}$), we obtain an uniform bound on $\mathcal{E}_{K}(\eta^{k}_\tau)$ and $\smash{\norm[H_0^3]{\eta^k_\tau}}$ and thus an $L^\infty$-bound on $\mathcal{E}_{K}(\eta^{(\tau)}(t,.))$ and $\smash{\norm[H_0^3]{\eta^{(\tau)}(t,.)}}$ over the time-interval $[0,h]$. By the coercivity of the energy functional $\mathcal{E}_{K}$, see Proposition \ref{propenergy}, inequality \eqref{linfenergy0} also yields a uniform bound on the $\mathcal{H}$-norm of $\eta^{k}$, thus resulting in a $L^\infty([0,h]; \mathcal{H})$-bound for $\eta^{(\tau)}$. Regarding $\widetilde{\eta}^{(\tau)}$, we notice that:
$$
\| \widetilde{\eta}^{(\tau)}(t,\cdot) \|_{\mathcal{H}} \leq \frac{\tau (k+1)-t}{\tau} \| \eta_{\tau}^{k} \|_{\mathcal{H}} + \frac{t-\tau k}{\tau} \| \eta_{\tau}^{k+1} \|_{\mathcal{H}} \leq C_{0} \text{ for all } t \in [0,h],
$$
from which we also derive a uniform $L^\infty([0,h]; \mathcal{H})$-bound on $\eta^\tau$. A similar calculation holds for the $H^3$-norm. On the other hand, in view of \eqref{apriorieu2} and arguing as in \eqref{linfenergy00}, we have
$$
\begin{aligned}
\int_{0}^{h} \| \partial_{x}^3 \, \partial_{t} \widetilde{\eta}^{(\tau)}(t,\cdot) \|^{2}_{L^{2}(0,\ell)} \, dt = \dfrac{1}{\tau} \sum_{k=0}^{N-1} \int_{0}^{\ell} \left| \partial_{x}^3 \left( \eta_{\tau}^{k+1}(x) - \eta_{\tau}^{k}(x) \right) \right|^{2} \, dx \leq \dfrac{2 C_{0}}{\varepsilon_{0}} \, .
\end{aligned}
$$
Using Poincaré's inequality this then extends into an uniform $L^2([0,h]; \mathcal{H})$-bound on $\partial_t \tilde{\eta}^{(\tau)}$. The uniform bound on the velocity gradient is obtained in a similar way, since
$$
\begin{aligned}
\int_0^{h} \| \nabla (\Delta u^{(\tau)}) \|^{2}_{L^{2}(\Omega)} dt = \tau \sum_{k=0}^{N-1} \| \nabla (\Delta u_{\tau}^{k+1}) \|^{2}_{L^{2}(\Omega)} \leq \dfrac{2 C_{0}}{\delta_{0}} \, ,
\end{aligned}
$$
which, in view of the (compact) embedding $H_{0}^{3}(\Omega) \subset C^{1,1^{-}}(\overline{\Omega})$ (here $C^{1,1^-}$ is used to denote $C^{1,1-\varepsilon}$ for any $\varepsilon > 0$), also yields a uniform for $u^{(\tau)}$ in the space $L^2([0,h];C^{1,1^{-}}(\overline{\Omega};\R^2)$. Finally, we consider the function $u^{(\tau)} \circ \Phi^{(\tau)}$, which can be handled in the following way:
\begin{align*}
& \int_0^{h} \| u^{(\tau)} \circ \Phi^{(\tau)} \|^{2}_{L^{2}(\Omega)} dt = \sum_{k=0}^{N-1} \tau \| u_{\tau}^{k+1} \circ \Phi_{\tau}^{k} \|^{2}_{L^{2}(\Omega)} \leq 2 \tau \sum_{k=0}^{N-1} \left (\|u_{\tau}^{k+1} \circ \Phi_{\tau}^{k} - w^k\|^{2}_{L^{2}(\Omega)} + \| w^k \|^{2}_{L^{2}(\Omega)} \right) \\[6pt]
& \leq \dfrac{4 h C}{\min\{\rho_{+}, \rho_{-}\}} + 2 \tau \sum_{k=0}^{N-1} \| w^k \|^{2}_{L^{2}(\Omega)} \leq \dfrac{4 h C}{\min\{\rho_{+}, \rho_{-}\}} + 2 \int_{\Omega} \int_{0}^{h} \left| w \left( t, z \right) \right|^{2} dt \, dz \, 
\qedhere
\end{align*}
\end{proof}
We now arrive at the main difficulty in implementing the scheme, establishing the properties of and bounds on $\Phi^{(\tau)}$, because $\Phi^{(\tau)}$ is defined via concatenation of an unbounded (for $\tau \to 0$) number of functions and is thus highly nonlinear. As any linearizing would break the coupling properties needed, we will instead rely on the $\delta$-terms proving a high enough regularity for the constituting functions. Very similar estimates have been shown in~\cite{BenKamSch20}. We do repeat them here to emphasize the methodological possibilities in the shell-regime of fluid-structure interactions.

\begin{proposition} \label{prop:regularityOfV}
There exists a constant $\mathcal{K} > 0$, depending on $w$, $\delta_{0}$, $h$, $\mathcal{E}_{K}(\eta_0)$ and $f$, such that
\begin{equation} \label{uniformkappa}
\sum_{j=0}^{N-1} \tau \left\| u_{\tau}^{j+1} \right\|^{2}_{C^{1,\lambda}(\overline{\Omega})} \leq \mathcal{K} \, , \quad \text{for any} \ \ \tau > 0 \quad \text{and} \quad \lambda \in (0,1).
\end{equation}	
For any $\tau > 0$ the function $\Phi^{(\tau)}$ is uniformly Lipschitz in the spatial variable; more precisely, for every $k \in \{1,\dots, N \}$ we have that $\Phi_{\tau}^{k} \in C^{1}(\overline{\Omega})$ is such that
\begin{equation} \label{lip0}
| \Phi_{\tau}^{k}(z_1) - \Phi_{\tau}^{k}(z_2) | \leq \exp \left( \sqrt{\mathcal{K} h} \right) |z_{1} - z_{2} | \text{ for all } z_{1}, z_{2} \in \overline{\Omega}.
\end{equation}
On the other hand, for any $\tau \in (0, 1/4 \mathcal{K})$ and any integer $k \geq 0$ such that $\tau k < h$, the function $\Phi_{\tau}^{k}: \Omega \longrightarrow \Omega$ is a diffeomorphism with 
\begin{equation} \label{uniformkappa2}
e^{-4 \tau \mathcal{K}} \leq \det (\nabla \Phi_{\tau}^{k})(z) \leq e^{2 \tau \mathcal{K}} \, \text{ for all } z \in \overline{\Omega},
\end{equation}
so that, in particular, $\lim\limits_{\tau \to 0} \det (\nabla \Phi_{\tau}^{k})(z) = 1$, for every $z \in \overline{\Omega}$. 
\end{proposition}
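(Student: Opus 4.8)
\emph{Step 1: the bound~\eqref{uniformkappa}.} I would read this off directly from the summed a priori estimate \eqref{apriorieu2}. First, absorbing the force term $-\tau\int_\Omega\rho^k(f\cdot u^{k+1}_\tau)$ into $\tfrac\tau2\int_\Omega\mu^k|\nablasym u^{k+1}_\tau|^2$ via Korn's and Young's inequalities — exactly as in the passage to \eqref{linfenergy00} — turns \eqref{apriorieu2} into a bound $\sum_{k=0}^{N-1}\tau\delta_0\no{\nabla(\Delta u^{k+1}_\tau)}_{L^2(\Omega)}^2\le C_0$, with $C_0$ depending only on $w$, $h$, $f$, $\mathcal{E}_K(\eta_0)$ and $\no{\eta_0}_{H^3}$. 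For a divergence-free $u\in H^1_0(\Omega)\cap H^3(\Omega;\R^2)$ this density controls the full $H^3$-norm (the same equivalence already used to deduce \eqref{lowerbound2} and the $L^2([0,h];H^3_0)$-bounds of Lemma~\ref{uniformbounds}), and in two space dimensions $H^3(\Omega)\hookrightarrow C^{1,\lambda}(\overline\Omega)$ for every $\lambda\in(0,1)$. Hence $\no{u^{k+1}_\tau}_{C^{1,\lambda}(\overline\Omega)}^2\le c\,\no{\nabla(\Delta u^{k+1}_\tau)}_{L^2(\Omega)}^2$, and summing against the weight $\tau$ yields \eqref{uniformkappa} with $\mathcal{K}:=c\,C_0/\delta_0$.

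\emph{Step 2: the Lipschitz bound~\eqref{lip0}.} Next I would differentiate the recursion \eqref{newphi}: the chain rule gives $\nabla\Phi^{k+1}_\tau(z)=\big(\mathbb{I}_2+\tau\,\nabla u^{k+1}_\tau(\Phi^k_\tau(z))\big)\nabla\Phi^k_\tau(z)$, so $\no{\nabla\Phi^{k+1}_\tau}_{L^\infty(\Omega)}\le\big(1+\tau\no{\nabla u^{k+1}_\tau}_{L^\infty(\Omega)}\big)\no{\nabla\Phi^k_\tau}_{L^\infty(\Omega)}$. Starting from $\Phi^0_\tau=\operatorname{id}$ and using $1+x\le e^x$, one gets $\no{\nabla\Phi^k_\tau}_{L^\infty(\Omega)}\le\exp\big(\sum_{j=0}^{k-1}\tau\no{\nabla u^{j+1}_\tau}_{L^\infty(\Omega)}\big)\le\exp\big(\sqrt{\mathcal{K}h}\big)$, the last step being Cauchy--Schwarz together with \eqref{uniformkappa} and $k\tau\le h$. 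Since $\Omega$ is convex, this pointwise bound on $\nabla\Phi^k_\tau$ is exactly \eqref{lip0}, and it uses no smallness of $\tau$.

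\emph{Step 3: the determinant bound and the diffeomorphism property.} Here I would fix $\tau<1/(4\mathcal{K})$ and exploit $\nabla\cdot u^{j+1}_\tau=0$: in two dimensions $\det(\mathbb{I}_2+\tau\nabla u^{j+1}_\tau)=1+\tau^2\det(\nabla u^{j+1}_\tau)$ — the first-order term vanishes — so $|\det(\mathbb{I}_2+\tau\nabla u^{j+1}_\tau)-1|\le\tfrac{\tau^2}{2}\no{\nabla u^{j+1}_\tau}_{L^\infty(\Omega)}^2\le\tfrac{\tau^2}{2}\no{u^{j+1}_\tau}_{C^{1,\lambda}(\overline\Omega)}^2$. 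Pulling out one factor of $\tau$ and invoking \eqref{uniformkappa}, $\sum_{j=0}^{k-1}\tfrac{\tau^2}{2}\no{u^{j+1}_\tau}_{C^{1,\lambda}}^2=\tfrac{\tau}{2}\sum_{j=0}^{k-1}\tau\no{u^{j+1}_\tau}_{C^{1,\lambda}}^2\le\tfrac{\tau\mathcal{K}}{2}<\tfrac18$, so each factor stays within $1/8$ of $1$. Writing $\det\nabla\Phi^k_\tau=\prod_{j=0}^{k-1}\big(\det(\mathbb{I}_2+\tau\nabla u^{j+1}_\tau)\big)\circ\Phi^j_\tau$, taking logarithms and using $|\log(1+x)|\le 2|x|$ for $|x|\le\tfrac12$, one obtains $\big|\log\det\nabla\Phi^k_\tau\big|\le\sum_{j=0}^{k-1}\tau^2\no{u^{j+1}_\tau}_{C^{1,\lambda}}^2\le\tau\mathcal{K}$ on $\overline\Omega$, i.e. \eqref{uniformkappa2} with constants to spare, and $\det\nabla\Phi^k_\tau\to1$ uniformly as $\tau\to0$. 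For the diffeomorphism statement: $\tau\no{\nabla u^{j+1}_\tau}_{L^\infty}\le(\tau\cdot\tau\no{u^{j+1}_\tau}_{C^{1,\lambda}}^2)^{1/2}\le\sqrt{\tau\mathcal{K}}<\tfrac12$, so each $\operatorname{id}+\tau u^{j+1}_\tau$ is injective on $\Omega$; as $u^{j+1}_\tau$ vanishes on $\partial\Omega$ and $\Omega$ is convex it is a homeomorphism of $\overline\Omega$ onto itself fixing the boundary, and by the coupling identity $\eta^{j+1}_\tau=(\operatorname{id}+\tau u^{j+1}_\tau)\circ\eta^j_\tau$ of Proposition~\ref{minfunctional1} it carries $\Omega^j_\pm$ onto $\Omega^{j+1}_\pm$; together with the strictly positive Jacobian just obtained, it is a $C^1$-diffeomorphism. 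Composing these increments, $\Phi^k_\tau$ is a $C^1$-diffeomorphism of $\Omega$ mapping $\Omega^0_\pm$ onto $\Omega^k_\pm$.

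\emph{Main obstacle.} The one delicate point is Step~3. The energy only controls the $\tau$-weighted sum $\sum_j\tau\no{u^{j+1}_\tau}_{C^{1,\lambda}}^2$, not the individual norms (which may grow as $\tau\to0$), so at every step of the composition one must keep a spare power of $\tau$ in hand; it is this, together with the cancellation $\operatorname{tr}\nabla u^{j+1}_\tau=\nabla\cdot u^{j+1}_\tau=0$ that annihilates the first-order term in $\det(\mathbb{I}_2+\tau\nabla u^{j+1}_\tau)$, that forces $\det\nabla\Phi^k_\tau$ to deviate from $1$ by only $O(\tau)$ — hence volume preservation in the limit — rather than by the $O(\sqrt h)$ the naive bound $|\det(\mathbb{I}_2+\tau A)-1|\lesssim\tau|A|+\tau^2|A|^2$ would give. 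Linearizing the increments to tame the (highly nonlinear) composition is not admissible, since it would break the interface identity $\eta^{j+1}_\tau=(\operatorname{id}+\tau u^{j+1}_\tau)\circ\eta^j_\tau$ that keeps the two fluid regions consistent — which is precisely why the $\delta_0$-regularization, supplying the $C^{1,\lambda}$-bound of Step~1, is indispensable here.
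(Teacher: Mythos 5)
Your proposal is correct and follows the same route as the paper: obtain $\mathcal{K}$ from the $\delta_0$-regularizer and the $H^3\hookrightarrow C^{1,\lambda}$ embedding, iterate the recursion $\Phi^{k+1}_\tau=(\operatorname{id}+\tau u^{k+1}_\tau)\circ\Phi^k_\tau$ for the Lipschitz bound, and exploit $\nabla\cdot u^{j+1}_\tau=0$ so that $\det(\mathbb{I}_2+\tau\nabla u^{j+1}_\tau)=1+\tau^2\det\nabla u^{j+1}_\tau$, leaving an $O(\tau^2)$ deviation per step. The only (harmless, and arguably cleaner) deviations are cosmetic: you use $1+x\le e^x$ directly where the paper passes through the AM--GM inequality, and you take logarithms for the Jacobian bound — yielding the slightly sharper $e^{-\tau\mathcal{K}}\le\det\nabla\Phi^k_\tau\le e^{\tau\mathcal{K}}$ — where the paper estimates the product and its reciprocal separately.
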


\noindent
\begin{proof} The existence of a constant $\mathcal{K} > 0$ (depending on $w$, $h$, $\mathcal{E}_{K}(\eta_0)$ and $f$) satisfying \eqref{uniformkappa} follows directly from the uniform bounds obtained in Lemma \ref{uniformbounds}, in view of the (compact) embedding $H^{1}(\Omega) \subset C^{1,\lambda}(\overline{\Omega})$. Now, $\Phi_0 : \Omega \longrightarrow \Omega$ is the identity function and for every $k \in \{0,\dots, N-1 \}$ we have, by definition
\begin{equation} \label{nextflowmap}
\Phi_{\tau}^{k+1}  = (\operatorname{id} + \tau u_\tau^{k+1} )\circ \Phi_{\tau}^k = \Phi_{\tau}^{k} + \tau (u_{\tau}^{k+1} \circ \Phi_{\tau}^{k}) \, .
\end{equation}
Therefore $\Phi_{\tau}^{k+1} \in C^{1}(\overline{\Omega})$ and, by recursion,
\begin{equation} \label{lip1}
\text{Lip}(\Phi_{\tau}^{k+1}) \leq \prod_{j=0}^{k} \left[ 1 + \tau \text{Lip}(u_{\tau}^{j+1}) \right].
\end{equation}
By the inequality between arithmetric and geometric mean and by properties of the exponential function, we can further bound the right-hand side of \eqref{lip1} in the following way:
\begin{equation} \label{lip2}
\begin{aligned}
\text{Lip}(\Phi_{\tau}^{k+1}) & \leq \left[ \dfrac{1}{k+1} \sum_{j=0}^{k} \left(  1 + \tau \text{Lip}(u_{\tau}^{j+1}) \right) \right]^{k+1} \leq \left[ 1+ \dfrac{1}{k+1} \sum_{j=0}^{N-1} \tau \text{Lip}(u_{\tau}^{j+1}) \right]^{k+1} \\[3pt]
& \leq \left[ 1+ \dfrac{1}{k+1} \sqrt{\sum_{j=0}^{N-1} \tau} \, \sqrt{\sum_{j=0}^{N-1} \tau \text{Lip}(u_{\tau}^{j+1})^2} \, \right]^{k+1} \leq \left[ 1+ \dfrac{\sqrt{\mathcal{K} h}}{k+1}  \, \right]^{k+1} \leq \exp \left( \sqrt{\mathcal{K} h} \right) \, ,
\end{aligned}
\end{equation}
where as a reminder $N = [h / \tau]$. We thus prove \eqref{lip0}.

Now, fix any $\tau \in (0, 1/4 \mathcal{K})$. We proceed inductively: Since $\Phi_0 : \Omega \longrightarrow \Omega$ is the identity function, it is a diffeomorphism with $\det (\nabla \Phi_{0}) \equiv 1$. For some integer $k \in \{1,\dots, N_{0}-1 \}$, assume that $\Phi_k:\Omega \longrightarrow \Omega^{k}$ is a diffeomorphism such that
$$
e^{-4 \tau \mathcal{K}} \leq \det (\nabla \Phi_{\tau}^{k})(z) \leq e^{2 \tau \mathcal{K}} \, \text{ for all } z \in \overline{\Omega}.
$$
We then apply standard properties of the determinant to infer that
\begin{equation} \label{detflowmap1}
\det(\nabla \Phi_{\tau}^{k+1}) =  \prod_{j=0}^{k} \det \left[ \mathrm{I} + \tau \nabla u_\tau^{j+1} (\Phi_\tau^j) \right] = \prod_{j=0}^{k} \left[ 1 + \tau \mathrm{I} : \nabla u_\tau^{j+1} (\Phi_\tau^j)+ \tau^2 \det \nabla u_\tau^{j+1} (\Phi_\tau^j) \right]   %
\end{equation}
where the middle term can be rewritten as $\tau \nabla \cdot u_\tau^{j+1} = 0$ since we are dealing with a divergence free flow. For the last term we note that %
\begin{equation} \label{detflowmap2} 
\left| \det(\nabla u_{\tau}^{j+1})(\Phi_{\tau}^{j})(z) \right| \leq 2 \left\| \nabla u_{\tau}^{j+1}(\Phi_{\tau}^{j}) \right\|^{2}_{L^{\infty}(\Omega)} \leq 2 \left\| \nabla u_{\tau}^{j+1} \right\|^{2}_{L^{\infty}(\Omega)} \, .
\end{equation}
From \eqref{detflowmap1} and \eqref{detflowmap2} we derive pointwise upper and lower bounds for $\det(\nabla \Phi_{\tau}^{k+1})$; indeed, we proceed as in \eqref{lip2} to obtain the following:
\begin{equation} \label{detflowmap3}
\begin{aligned}
\det(\nabla \Phi_{\tau}^{k+1}) & \leq \prod_{j=0}^{k} \left[ 1 + 2 \tau^{2} \left\| \nabla u_{\tau}^{j+1} \right\|^{2}_{L^{\infty}(\Omega)} \right] \leq \left[ \dfrac{1}{k+1} \sum_{j=0}^{k} \left( 1 + 2 \tau^{2} \left\| \nabla u_{\tau}^{j+1} \right\|^{2}_{L^{\infty}(\Omega)} \right) \right]^{k+1} \\[3pt]
& \leq \left[ 1 + \dfrac{2 \tau}{k+1} \sum_{j=0}^{N_{0}-1}  \tau \left\| \nabla u_{\tau}^{j+1} \right\|^{2}_{L^{\infty}(\Omega)} \right]^{k+1} \leq \left[ 1 + \dfrac{2 \tau \mathcal{K}}{k+1} \right]^{k+1} \leq e^{2 \tau \mathcal{K}} \, .
\end{aligned}
\end{equation}
On the other hand, since $4 \tau \mathcal{K} < 1$, for every $j \in \{0,\dots, N_{0} -1 \}$ we have
$$
\tau^{2} \left\| \nabla u_{\tau}^{j+1} \right\|^{2}_{L^{\infty}(\Omega)} \leq \tau^{2} \sum_{j=0}^{N_{0}-1} \left\| \nabla u_{\tau}^{j+1} \right\|^{2}_{L^{\infty}(\Omega)} \leq \tau \mathcal{K} < \dfrac{1}{4} \, ,
$$
thus implying that
$$
\dfrac{1}{1 - 2 \tau^{2} \| \nabla u_{\tau}^{j+1} \|^{2}_{L^{\infty}(\Omega)}} \leq 1 +4 \tau^{2} \left\| \nabla u_{\tau}^{j+1} \right\|^{2}_{L^{\infty}(\Omega)} \text{ for all } j \in \{0,\dots, N_{0}-1 \}.
$$
The last inequality enables us to derive the following (pointwise) lower bound for $\det(\nabla \Phi_{\tau}^{k+1})$:
\begin{equation} \label{detflowmap4}
\begin{aligned}
\det(\nabla \Phi_{\tau}^{k+1})^{-1} & = \prod_{j=0}^{k} \left[ 1 + \tau^{2} \det(\nabla u_{\tau}^{j+1})(\Phi_{\tau}^{j}) \right]^{-1} \leq \prod_{j=0}^{k} \left[ 1 - 2 \tau^{2} \left\| \nabla u_{\tau}^{j+1} \right\|^{2}_{L^{\infty}(\Omega)} \right]^{-1} \\[3pt]
& \leq \prod_{j=0}^{k} \left[ 1 + 4 \tau^{2} \left\| \nabla u_{\tau}^{j+1} \right\|^{2}_{L^{\infty}(\Omega)} \right] \leq \left[ \dfrac{1}{k+1} \sum_{j=0}^{k} \left( 1 + 4 \tau^{2} \left\| \nabla u_{\tau}^{j+1} \right\|^{2}_{L^{\infty}(\Omega)} \right) \right]^{k+1} \\[3pt]
& \leq \left[ 1 + \dfrac{4 \tau^{2}}{k+1} \sum_{j=0}^{N_{0}-1}  \tau^{2} \left\| \nabla u_{\tau}^{j+1} \right\|^{2}_{L^{\infty}(\Omega)} \right]^{k+1} \leq \left[ 1 + \dfrac{4 \tau \mathcal{K}}{k+1} \right]^{k+1} \leq e^{4 \tau \mathcal{K}} \, ,
\end{aligned}
\end{equation}
from which we can conclude the other half of \eqref{uniformkappa2}. In particular, $\det(\nabla \Phi_{\tau}^{k+1})(z) > 0 $ for every $z \in \overline{\Omega}$. Since all $\Phi_\tau^k$ are equal to the identity on $\partial \Omega$, together with a degree argument, his proves that $\Phi_{\tau}^{k+1}$ is a diffeomorphism.
\end{proof}

Note that this already implies that $\Omega^{(\tau)}(t)$ is diffeomorphic to the initial domain and thus that there is no collision in this approximation.

\subsection{Proof of Theorem \ref{maintheoremdelay}, Step 3: Establishing the limit equation}

In view of Lemma \ref{uniformbounds} and the Banach–Alaoglu theorem, we may find functions $\eta \in H^{1}([0,h]; \mathcal{H})$, $u \in L^2([0,h];H_{0}^{3}(\Omega;\R^2)$, $\Phi \in C([0,h];C^{0,1}(\overline{\Omega};\R^2)$ such that the following limits hold as $\tau \to 0$:
\begin{align*} 
\eta^{(\tau)},\overline{\eta}^{(\tau)},\widetilde{\eta}^{(\tau)} &\rightharpoonup^* \eta & \text{ in } \ & L^\infty([0,h]; \mathcal{H} \cap H^3((0,\ell))),\\[3pt]
\partial_t \widetilde{\eta}^{(\tau)} & \rightharpoonup \partial_t \eta& \text{ in } \ & L^2([0,h]; \mathcal{H} \cap H^3((0,\ell))),\\[3pt]
u^{(\tau)} & \rightharpoonup u &\text{ in } \ & L^2([0,h];H_{0}^{1}\cap H^3(\Omega;\R^2),\\[3pt]
\Phi^{(\tau)} & \to \Phi & \text{ in } \ & C([0,h];C^{0,1}(\overline{\Omega};\R^2),
\end{align*}
for sub-sequences that are not being relabeled. Furthermore, by interpolation (see for instance \cite[Proposition 2.20]{BenKamSch20}) we can establish that $\eta \in C([0,h]; C^{1}([0,\ell];\mathbb{R}^{2}))$ and that it verifies the following convergences as $\tau \to 0$: %
\begin{equation} \label{uniformeta}
\begin{aligned} 
\eta^{(\tau)} & \to \eta & \text{ in } \ & L^\infty([0,h]; C^{1}([0,\ell];\mathbb{R}^{2})),\\[3pt]
\widetilde{\eta}^{(\tau)} & \to \eta & \text{ in } \ & C([0,h]; C^{1}([0,\ell];\mathbb{R}^{2})).
\end{aligned}
\end{equation}

Next we define $\Omega^{\eta}(t)$ as the time-dependent fluid domain given by the limit deformation $\eta(t)$ at time $t \in [0,h]$. In particular, due to Proposition \ref{prop:regularityOfV} we know that $\Phi$ is Lipschitz with constant $\exp(\sqrt{Lh})$ and that $\det \nabla \Phi \equiv 1$ almost everywhere in $\Omega$. We also remark that $\Phi(t,.):\Omega \longrightarrow \Omega$ is again a volume preserving diffeomorphism by the same degree-argument as before. As a direct consequence of this and Proposition \ref{propenergy}, the image of the solid $\eta(t,(0,\ell))$ and $\partial \Omega$ cannot intersect.

Now, in view of Proposition \eqref{minfunctional1} we have the identity \begin{align} \label{equasistaticSolmin2} 
& \int_{0}^{\ell} \mathcal{L}_{\Gamma}(\eta_{\tau}^{k+1}) \cdot \xi \, dx + \dfrac{\rho_s }{h} \int_{0}^{\ell} \left( \dfrac{\eta_{\tau}^{k+1} - \eta_{\tau}^{k}}{\tau} -  w_{\tau}^k \circ \eta_{0} \right) \cdot \xi \, dx + \varepsilon_0 \int_{0}^{\ell} \partial_{x}^3 \left( \dfrac{ \eta_{\tau}^{k+1} - \eta_{\tau}^{k}}{\tau} \right) \cdot \partial_{x}^3 \xi \, dx \\ 
  +&\sqrt{\delta_0}\int_{0}^{\ell}  \partial_{x}^3  \eta^{k+1}\partial_x^3\xi\, dx  + \dfrac{1}{h} \int_{\Omega} \rho_{\tau}^{k} \left(u_{\tau}^{k+1} \circ \Phi_{\tau}^{k} - w_{\tau}^{k} \right) \cdot \left( q \circ \Phi_{\tau}^{k} \right) dz  + \delta_{0} \int_{\Omega}  \nabla(\Delta u_{\tau}^{k+1}) \cdot \nabla (\Delta q) \, dz \nonumber \\
 +& \int_{\Omega} \mu_{\tau}^{k} \, \nablasym u_{\tau}^{k+1}  \cdot \nablasym(q) \, dz = \int_{\Omega} \rho_{\tau}^{k} \left( f \cdot q \right) dz, \nonumber
\end{align}
for every fixed $k \in \{0,\dots,N-1\}$, for all (time-independent) coupled test functions $
(\xi,q )\in \mathcal{H}\cap H^3(0,\ell)\times  H_{0}^{1}\cap H^3(\Omega;\mathbb{R}^{2})
$
such that 
$
\nabla \cdot q = 0\text{ in } \Omega$ and $q \circ \eta^{k}_\tau = \xi \text{ in }[0,\ell].
$

Therefore, by multiplying by $\tau$ and adding (over $k$) all the identities in \eqref{equasistaticSolmin2} we deduce that
\begin{equation} \label{equasistaticSolmin3} 
\begin{aligned}
& \int_{0}^{h} \int_{0}^{\ell} \mathcal{L}_{\Gamma}(\overline{\eta}^{(\tau)}) \cdot \xi \, dx \, dt +\sqrt{\delta_0}\int_{0}^{\ell}  \partial_{x}^3  \overline{\eta}\partial_x^3\xi\, dx + \dfrac{\rho_s }{h} \int_{0}^{h} \int_{0}^{\ell} \left( \partial_{t} \widetilde{\eta}^{(\tau)} -  w^{(\tau)} \circ \eta_{0} \right) \cdot \xi \, dx \, dt \\
&+ \varepsilon_0 \int_{0}^{h} \int_{0}^{\ell} \partial_{x}^3 \partial_{t} \widetilde{\eta}^{(\tau)} \cdot \partial_{x}^3 \xi \, dx \, dt  
  + \dfrac{1}{h} \int_{0}^{h} \int_{\Omega} \rho^{(\tau)} \left(u^{(\tau)} \circ \Phi^{(\tau)} - w^{(\tau)} \right) \cdot \left( q \circ \Phi^{(\tau)} \right) dz \, dt  \\&+ \delta_{0} \int_{0}^{h} \int_{\Omega}  \nabla(\Delta u^{(\tau)}) \cdot \nabla (\Delta q) \, dz \, dt  + \int_{0}^{h} \int_{\Omega} \mu^{(\tau)} \, \nablasym (u^{(\tau)})  \cdot \nablasym(q) \, dz \, dt = \int_{0}^{h} \int_{\Omega} \rho^{(\tau)} \left( f \cdot q \right) dz \, dt \, ,
\end{aligned}
\end{equation}
for all time-dependent coupled test functions $
(\xi,q )\in L^2([0,h],\mathcal{H}\cap H^3(0,\ell))\times   L^2([0,h],H_{0}^{1}\cap H^3(\Omega;\mathbb{R}^{2}))
$
such that 
$
\nabla \cdot q = 0\text{ in }[0,h]\times \Omega$ and $q \circ \eta^{(\tau)} = \xi \text{ in }[0,h]\times [0,\ell].
$

Finally we can conclude that
\begin{align*}
\partial_t \Phi = \lim_{\tau \to 0} \partial_t \widetilde{\Phi}^{(\tau)} = \lim_{\tau \to 0} u^{(\tau)} \circ \Phi^{(\tau)} = u \circ \Phi \quad \text{almost everywhere in} \ \Omega.
\end{align*}
Then, $\Phi$ has the properties required for a solution. Moreover, from the definitions in \eqref{unfixedtau} we have
$$
\partial_t \widetilde{\eta}^{(\tau)}(t,x) = \frac{\eta_{\tau}^{k+1}(x)-\eta_{\tau}^{k}(x)}{\tau} = u^{(\tau)} \left( t,\eta^{(\tau)}(t,x) \right) \text{ for all } (t,x) \in I_{k}(\tau) \times [0,\ell],
$$
so that, for every $\phi \in C_{0}^{\infty}([0,h] \times [0,\ell]; \mathbb{R}^2)$, we have
\begin{equation} \label{couplingtest}
\begin{aligned}
& \int_{0}^{h} \int_{0}^{\ell} \partial_t \eta \cdot \phi \, dx \, dt = \lim_{\tau \to 0} \int_{0}^{h} \int_{0}^{\ell} \partial_t \widetilde{\eta}^{(\tau)} \cdot \phi \, dx \, dt = \lim_{\tau \to 0} \int_{0}^{h} \int_{0}^{\ell} \left( u^{(\tau)} \circ \eta^{(\tau)} \right) \cdot \phi \, dx \, dt \\[3pt]
& = \lim_{\tau \to 0} \left[ \int_{0}^{h} \int_{0}^{\ell} \left( u^{(\tau)} \circ \eta \right) \cdot \phi \, dx \, dt + \int_{0}^{h} \int_{0}^{\ell} \left( u^{(\tau)} \circ \eta^{(\tau)} - u^{(\tau)} \circ \eta \right) \cdot \phi \, dx \, dt \right] .
\end{aligned}
\end{equation}
On one hand, by weak compactness we have 
\begin{equation} \label{trace}
\lim_{\tau \to 0} \int_{0}^{h} \int_{0}^{\ell} \left( u^{(\tau)} \circ \eta \right) \cdot \phi \, dx \, dt = \int_{0}^{h} \int_{0}^{\ell} \left( u \circ \eta \right) \cdot \phi \, dx \, dt \, .
\end{equation}
On the other hand, for $s \in [0,1]$ we define
$$
\pi_{s}(t,x) = s \eta^{(\tau)}(t,x) + (1-s) \eta(t,x) \text{ for all } (t,x) \in I_{k}(\tau) \times [0,\ell],
$$
so that for every $(t,x) \in I_{k}(\tau) \times [0,\ell]$ we have
\begin{equation} \label{diffeta}
\begin{aligned}
& \left| u^{(\tau)} ( t , \eta^{(\tau)}(t,x) ) - u^{(\tau)} \left( t , \eta(t,x) \right) \right|^{2} = \left| \int_{0}^{1} \dfrac{d}{d s} u^{(\tau)} ( t , \pi_{s}(t,x) ) \, ds \right|^{2} \\[3pt]
& \hspace{-5mm} \leq \int_{0}^{1} \left| \nabla u^{(\tau)} ( t , \pi_{s}(t,x) ) \cdot (\eta^{(\tau)}(t,x) - \eta(t,x)) \right|^{2} ds \\[3pt]
& \hspace{-5mm} \leq \left( \int_{0}^{1} \left| \nabla u^{(\tau)} ( t , \pi_{s}(t,x) ) \right|^{2} ds \right) \sup_{(t,x) \in [0,h] \times [0,\ell]} \left| \eta^{(\tau)}(t,x) - \eta(t,x) \right|^{2} \, .
\end{aligned}
\end{equation}
Now, as $\eta^{(\tau)}$ and $\eta$ are both injective maps with lower bound on their spatial derivative and uniformly
close gradients, the linear interpolation $\pi_{s}$ has to obey the same property. Therefore, by integrating in \eqref{diffeta} we get by the trace theorem
\begin{equation} \label{diffeta2}
\begin{aligned}
& \int_{0}^{h} \int_{0}^{\ell} \left| u^{(\tau)} ( t , \eta^{(\tau)}(t,x) ) - u^{(\tau)} \left( t , \eta(t,x) \right) \right|^{2} \, dx \, dt \\[3pt]
& \hspace{-5mm} \leq \left( \int_{0}^{h} \int_{0}^{\ell} \int_{0}^{1} \left| \nabla u^{(\tau)} ( t , \pi_{s}(t,x) ) \right|^{2} ds \, dx \, dt \right) \sup_{(t,x) \in [0,h] \times [0,\ell]} \left| \eta^{(\tau)}(t,x) - \eta(t,x) \right|^{2} \\[3pt]
& \hspace{-5mm} \leq C \left( \int_{0}^{h}  \norm[H^2(\Omega)]{\smash{u^{(\tau)}}}^{2} \right) \sup_{(t,x) \in [0,h] \times [0,\ell]} \left| \eta^{(\tau)}(t,x) - \eta(t,x) \right|^{2}
\end{aligned}
\end{equation}
But since we have uniform $L^2([0,h];H_{0}^{3}(\overline{\Omega};\R^2))$-bounds on $u^{(\tau)}$ and the uniform convergence for $\eta^{(\tau)}$ towards $\eta$ given in \eqref{uniformeta}, the right-hand side of \eqref{diffeta2} vanishes as $\tau \to 0$. This, combined with \eqref{trace}, implies that \eqref{couplingtest} yields
$$
\int_{0}^{h} \int_{0}^{\ell} \partial_t \eta \cdot \phi \, dx \, dt = \int_{0}^{h} \int_{0}^{\ell} \left( u \circ \eta \right) \cdot \phi \, dx \, dt \text{ for all } \phi \in C_{0}^{\infty}([0,h] \times [0,\ell]; \mathbb{R}^2),
$$
and thus $\partial_{t} \eta = u \circ \eta$ almost everywhere on $[0,\ell]$. 

Finally we consider convergence of the actual equation. We need to show that \eqref{equasistaticSol} holds. We do so in two parts:

\subsubsection*{The equation in the interior of the fluid}
First consider $q \in C_0^1\cap C^\infty([0,h]\times \Omega;\R^2)$ such that $\nabla \cdot q = 0$ in $\Omega$ and $\supp q \subset \Omega^+ \cup \Omega^-$. Then since $\eta^{\tau} \to \eta$ uniformly, we have $q \circ \eta^{(\tau)}(t,x) = 0$ for all $\tau$ small enough. Thus the pair $(0,q)$ is admissible for \eqref{equasistaticSolmin3} for all $\tau$ small enough and by sending $\tau \to 0$ we get \eqref{equasistaticSol} for all such $q$. By a density argument, we can then extend this to all $q \in C([0,h];H^3\cap H^1_0(\Omega;\R^2)$ with $\nabla \cdot q = 0$ and $q\circ \eta = 0$.

\subsubsection*{The equation over the interface--introduction of the bulk-pressure.}
Here we wish to pass to the limit with test-functions that are situated on the structure. For that take $\xi \in C_0^1\cap C^\infty([0,h] \times [0,\ell];\R^2)$ such that $\int_0^\ell \xi(t) \wedge \partial_x \eta(t) dx = 0$ for all $t \in [0,\ell]$. The latter condition does precisely mean that it is the trace of a solenoidal function~\eqref{eq:solenoid}.
Since, in general $\int_0^\ell \xi(t) \wedge \partial_x \eta^\tau(t) dx \neq 0$ the respective error has to be compensated with the respective scalar differential pressure that we will introduce here and which we denote by $P^\tau:[0,T]\to \R$.

For that we choose ${\xi}_0(x):= x(\ell-x): C_0^\infty([0,\ell];\R^2)$, such that $\int_0^\ell \partial_x \eta^{(\tau)}(t) \wedge {\xi}_0 dx =\lambda_0^\tau(t) \neq 0$. Next we use the extension Proposition \ref{prop:arbMeanExt} to find $q_0\in L^\infty(I;H^1_0(\Omega;\R^2)$ such that $q_0(t) \circ \eta^{(\tau)}(t) = {\xi}_0$ and
\begin{align*}
 \nabla \cdot q_0(t) = \lambda^\tau (t) (\psi^+-\psi^-)
\end{align*}
where $\psi^\pm$ are time-independent ``bump-functions'' with $\supp \psi^\pm \subset \Omega_{\pm}(t)$ for all $t \in [0,T]$ and $\tau$ small enough and with $\int_\Omega \psi^\pm dx = 1$.

We define $ P^{(\tau)}(t)$ as a rescaling of the resulting error in the weak equation via
\begin{equation}\label{eq:reducedPressureDiscrete}
\begin{aligned} 
 P^{(\tau)}(t)\lambda_0^\tau(t) :=
&  \int_{0}^{\ell} \mathcal{L}_{\Gamma}(\overline{\eta}^{(\tau)}) \cdot \xi_0 \, dx  +\sqrt{\delta_0}\int_{0}^{\ell}  \partial_{x}^3  \overline{\eta}^{(\tau)}\partial_x^3\xi_0\, dx  + \dfrac{\rho_s }{h}  \int_{0}^{\ell} \left( \partial_{t} \widetilde{\eta}^{(\tau)} -  w^{(\tau)} \circ \eta_{0} \right) \cdot \xi_0 \, dx  \\&+ \varepsilon_0  \int_{0}^{\ell} \partial_{x}^3 \partial_{t} \widetilde{\eta}^{(\tau)} \cdot \partial_{x}^3 \xi_0 \, dx 
  + \dfrac{1}{h}  \int_{\Omega} \rho^{(\tau)} \left(u^{(\tau)} \circ \Phi^{(\tau)} - w^{(\tau)} \right) \cdot \left( q_0 \circ \Phi^{(\tau)} \right) dz   \\
  &+ \delta_{0}  \int_{\Omega}  \nabla(\Delta u^{(\tau)}) \cdot \nabla (\Delta q_0) \, dz   +  \int_{\Omega} \mu^{(\tau)} \, \nablasym  u^{(\tau)}  \cdot \nablasym(q_0) \, dz  -  \int_{\Omega} \rho^{(\tau)} \left( f \cdot q_0 \right) dz. 
\end{aligned}
\end{equation}
 The construction and the a-priori estimates above imply that in $ P^{(\tau)}\in L^2(I)$ uniformly in $\tau$.

We now turn back to a general $\xi \in C_0^\infty([0,h] \times [0,\ell];\R^2)$ which is suitable for the limit equation i.e.\ $\int_0^\ell \xi(t) \wedge \partial_x \eta(t) dx = 0$ for all $t \in [0,\ell]$.
With this, for $\tau >0$, we can use Proposition \ref{prop:arbMeanExt} to find that the pair $(\xi,\ext{\eta^{(\tau)}}(\xi))$
fulfils \eqref{equasistaticSolmin3} plus a pressure term acting on
\[
\lambda^\tau(t)=\frac{1}{\lambda^\tau_0(t)}\int_0^\ell \xi(t) \wedge \partial_x \eta^\tau(t) dx \to 0\text{ with }\tau \to 0.
\]
Indeed we find that by the $H^3$ estimates of $\eta^{(\tau)}$ that also $\nabla \Delta \ext{\eta^{(\tau)}}(\xi))\in L^2$. Hence
\begin{align}
\label{eq:press1}
\begin{aligned} 
& \int_{0}^{h} \int_{0}^{\ell} \mathcal{L}_{\Gamma}(\overline{\eta}^{(\tau)}) \cdot \xi \, dx \, dt +\sqrt{\delta_0}\int_{0}^{\ell}  \partial_{x}^3  \overline{\eta}^{(\tau)}\partial_x^3\xi\, dx + \dfrac{\rho_s }{h} \int_{0}^{h} \int_{0}^{\ell} \left( \partial_{t} \widetilde{\eta}^{(\tau)} -  w^{(\tau)} \circ \eta_{0} \right) \cdot \xi \, dx \, dt \\
&+ \varepsilon_0 \int_{0}^{h} \int_{0}^{\ell} \partial_{x}^3 \partial_{t} \widetilde{\eta}^{(\tau)} \cdot \partial_{x}^3 \xi \, dx \, dt   + \dfrac{1}{h} \int_{0}^{h} \int_{\Omega} \rho^{(\tau)} \left(u^{(\tau)} \circ \Phi^{(\tau)} - w^{(\tau)} \right) \cdot \left( \ext{\eta^{(\tau)}}(\xi) \circ \Phi^{(\tau)} \right) dz \, dt  \\
&+ \delta_{0} \int_{0}^{h} \int_{\Omega}  \nabla(\Delta u^{(\tau)}) \cdot \nabla (\Delta \ext{\eta^{(\tau)}}(\xi)) \, dz \, dt  + \int_{0}^{h} \int_{\Omega} \mu^{(\tau)} \,\nablasym u^{(\tau)} \cdot \nablasym(\ext{\eta^{(\tau)}}(\xi)) \, dz \, dt \\&= \int_{0}^{h} \int_{\Omega} \rho^{(\tau)} \left( f \cdot \ext{\eta^{(\tau)}}(\xi) \right) dz \, dt \,  + \int_0^{h} \lambda^{\tau}(t) P^{(\tau)}(t)\, dt.
\end{aligned}
\end{align}
Since all the terms in the equation converge, $\lambda^{(\tau)} \to 0$ and $P^{(\tau)}$ is bounded, we get \eqref{equasistaticSol} for the pair $(\xi,\ext{\eta}(\xi))$. Since every pair $(\xi,q)$ that is an admissible test function for \eqref{equasistaticSol} can be decomposed into
\[
(\xi,q)=(\xi,\ext{\eta}(\xi))+(0, q-\ext{\eta}(\xi)),
\]
combining the two limit equations, we establish a weak solution in the sense of \eqref{equasistaticSol}.

\subsection{An intermediate energy estimate} 
\label{ssec:energ}
To derive the proper energy estimate is precisely the point where we need the extra regularizer related to $\varepsilon_0$.
\begin{lemma}[Time delayed energy inequality] \label{timeDelayedEnergyInequality}
 Let $(\eta,u)$ be a solution to the time delayed equation \eqref{equasistaticSol}. Then this solution fulfills the following energy inequality for almost all $t_0 \in [0,h]$:
 \begin{align*}
  &\phantom{{}={}}\mathcal{E}_{K}(\eta(t_0))
  +\frac{\sqrt{\delta_0}}{2}\int_0^\ell\abs{\partial_x^3\eta(t_0)}^2\, dx 
   + \varepsilon_0 \int_0^{t_0} \int_0^{\ell} \abs{\partial_t \partial_{x}^3 \eta}^2 dx + \frac{1}{h}\int_{0}^{t_0}\int_0^{\ell} \rho_s \frac{\abs{\partial_t \eta (t)}^2}{2} dx dt\\
  &+\int_0^{t_0}  \int_{\Omega} \mu(t) \abs{\nablasym (u)}^2dxdt + \delta_0 \int_0^{t_0} \int_\Omega \abs{\nabla \Delta u}^2 dx + \frac{1}{h}\int_{0}^{t_0} \int_{\Omega}\rho(t) \frac{\abs{u(t)}^2}{2}dx dt \\
  &\leq \mathcal{E}_K(\eta(0)) + \frac{1}{h} \int_0^{t_0} \int_0^\ell \rho_s \frac{\abs{w \circ \eta_0}^2}{2} dx dt +  \frac{1}{h}\int_{0}^{t_0}\int_{\Omega}\rho_0 \frac{\abs{w}^2}{2}dx  + \int_0^{t_0}\int_{\Omega} \rho f \cdot u \, dx 
 \end{align*}
\end{lemma}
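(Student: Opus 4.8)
\emph{Strategy.} The plan is to test the time-delayed weak identity \eqref{equasistaticSol} with the pair $(\xi,q)=(\partial_{t}\eta,u)$ — i.e.\ with the solution's own velocities — and then integrate in time over $[0,t_{0}]$. The first thing I would check is admissibility of this choice. Since the clamping conditions fix $\eta(t,0),\eta(t,\ell),\partial_{x}\eta(t,0),\partial_{x}\eta(t,\ell)$ independently of $t$, we get $\partial_{t}\eta(t,0)=\partial_{t}\eta(t,\ell)=0$ and $\partial_{x}\partial_{t}\eta(t,0)=\partial_{x}\partial_{t}\eta(t,\ell)=0$, so $\partial_{t}\eta(t,\cdot)\in H_{0}^{2}((0,\ell);\R^{2})$; moreover — and this is precisely the point where the $\varepsilon_{0}$-regularizer is indispensable — the bound on $\varepsilon_{0}\int_{0}^{h}\norm[L^{2}(0,\ell)]{\partial_{x}^{3}\partial_{t}\widetilde{\eta}^{(\tau)}}^{2}$ from Lemma \ref{apriorih} passes to the limit and gives $\partial_{t}\eta\in L^{2}([0,h];H^{3}((0,\ell);\R^{2}))$, so $\xi=\partial_{t}\eta$ is an admissible beam test function. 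On the fluid side $u\in L^{2}([0,h];H^{3}(\Omega;\R^{2}))$ is divergence free, vanishes on $\partial\Omega$ and satisfies $u\circ\eta=\partial_{t}\eta$; by \eqref{eq:solenoid} this coupling forces $\int_{0}^{\ell}\partial_{x}\eta\wedge\partial_{t}\eta\,dx=0$, hence $\ext{\eta}(\partial_{t}\eta)$ is itself solenoidal and one may split $(\partial_{t}\eta,u)=(\partial_{t}\eta,\ext{\eta}(\partial_{t}\eta))+(0,\,u-\ext{\eta}(\partial_{t}\eta))$ into a sum of two pairs of exactly the two types for which \eqref{equasistaticSol} was established at the end of the proof of Theorem \ref{maintheoremdelay} (an extension pair with solenoidal trace, plus a pair whose fluid component is solenoidal with zero trace on the beam and on $\partial\Omega$). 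By linearity of \eqref{equasistaticSol} in the test pair, $(\partial_{t}\eta,u)$ is then admissible.

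\emph{Term by term.} Next I would evaluate each term of \eqref{equasistaticSol} on $(\partial_{t}\eta,u)$. The elastic term is $\int_{0}^{\ell}\mathcal{L}_{\Gamma}(\eta)\cdot\partial_{t}\eta\,dx=\frac{d}{dt}\mathcal{E}_{K}(\eta(t))$: indeed the $\tau$-uniform estimate \eqref{linfenergy0} passes to the limit, giving $\sup_{[0,h]}\mathcal{E}_{K}(\eta(t))<\infty$, so Proposition \ref{propenergy}(1) yields a uniform lower bound $\partial_{x}\eta_{1}\geq\delta_{1}>0$; on the corresponding set $\mathcal{E}_{K}$ is of class $C^{1}$ with derivative $\mathcal{L}_{\Gamma}$, and $\eta\in H^{1}([0,h];H^{3})$ makes $t\mapsto\mathcal{E}_{K}(\eta(t))$ absolutely continuous with that derivative, so integration gives the increment $\mathcal{E}_{K}(\eta(t_{0}))-\mathcal{E}_{K}(\eta(0))$. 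The $\varepsilon_{0}$-term becomes $\varepsilon_{0}\int_{0}^{\ell}\abs{\partial_{t}\partial_{x}^{3}\eta}^{2}dx\geq 0$; the $\sqrt{\delta_{0}}$-term becomes $\frac{\sqrt{\delta_{0}}}{2}\frac{d}{dt}\int_{0}^{\ell}\abs{\partial_{x}^{3}\eta}^{2}dx$; using $\nablasym(u)\cdot\nabla u=\abs{\nablasym(u)}^{2}$, the two viscous integrals turn into $\mu_{\pm}\int_{\Omega_{\pm}^{\eta}(t)}\abs{\nablasym(u_{\pm})}^{2}$ and combine to $\int_{\Omega}\mu(t)\abs{\nablasym(u)}^{2}$; the hyperviscosity term becomes $\delta_{0}\int_{\Omega}\abs{\nabla\Delta u}^{2}$. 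For the inertial difference quotients I would use $2a\cdot(a-b)=\abs{a}^{2}-\abs{b}^{2}+\abs{a-b}^{2}$: the beam term gives $\frac{\rho_{s}}{2h}\int_{0}^{\ell}\big(\abs{\partial_{t}\eta}^{2}-\abs{w\circ\eta_{0}}^{2}+\abs{\partial_{t}\eta-w\circ\eta_{0}}^{2}\big)dx$, and the fluid terms give $\frac{1}{2h}\int_{\Omega}\rho_{0}\big(\abs{u\circ\Phi}^{2}-\abs{w}^{2}+\abs{u\circ\Phi-w}^{2}\big)dz$; since each $\Phi_{\pm}(t,\cdot)$ is volume preserving (Theorem \ref{maintheoremdelay}), $\int_{\Omega}\rho_{0}\abs{u\circ\Phi(t)}^{2}dz=\int_{\Omega}\rho(t)\abs{u(t)}^{2}dz$. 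Finally the right-hand side is $\int_{\Omega}\rho(t)\,f\cdot u\,dz$.

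\emph{Conclusion.} Integrating the resulting pointwise-in-$t$ identity over $t\in[0,t_{0}]$, moving to the left the two end-time terms $\mathcal{E}_{K}(\eta(t_{0}))$ and $\frac{\sqrt{\delta_{0}}}{2}\int_{0}^{\ell}\abs{\partial_{x}^{3}\eta(t_{0})}^{2}$ together with $\frac{1}{h}\int_{0}^{t_{0}}\int_{\Omega}\rho(t)\frac{\abs{u(t)}^{2}}{2}$ and $\frac{1}{h}\int_{0}^{t_{0}}\int_{0}^{\ell}\rho_{s}\frac{\abs{\partial_{t}\eta(t)}^{2}}{2}$, keeping the $\varepsilon_{0}$-, $\delta_{0}$- and $\mu$-dissipations on the left, and sending the $\abs{w}^{2}$-terms and the force to the right, yields an exact energy identity. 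Discarding from the left the two manifestly nonnegative leftover terms $\frac{1}{2h}\int_{0}^{t_{0}}\int_{\Omega}\rho_{0}\abs{u\circ\Phi-w}^{2}$ and $\frac{\rho_{s}}{2h}\int_{0}^{t_{0}}\int_{0}^{\ell}\abs{\partial_{t}\eta-w\circ\eta_{0}}^{2}$ turns the identity into the asserted inequality, where on the right the initial energy $\mathcal{E}_{K}(\eta(0))$ is understood to absorb the (nonnegative) regularization contribution $\frac{\sqrt{\delta_{0}}}{2}\norm[L^{2}(0,\ell)]{\partial_{x}^{3}\eta_{0}}^{2}$ coming from the $\sqrt{\delta_{0}}$-term. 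I expect the two genuinely delicate ingredients to be the ones highlighted above: the admissibility of $(\partial_{t}\eta,u)$ as a test pair — which is why $\varepsilon_{0}>0$ cannot be removed at this stage — and the chain rule $\frac{d}{dt}\mathcal{E}_{K}(\eta(t))=\langle\mathcal{L}_{\Gamma}(\eta),\partial_{t}\eta\rangle$ for the non-convex, negative-power energy, which rests on the uniform non-degeneracy $\partial_{x}\eta_{1}\geq\delta_{1}$ from Proposition \ref{propenergy} combined with the $\tau$-uniform energy bound; everything else is the routine "test with the solution" computation plus the elementary polarization identity and the volume-preservation of the flow.
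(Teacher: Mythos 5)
Your proposal is correct and follows essentially the same route as the paper: test the time-delayed weak equation with $(\partial_t\eta,u)$, integrate over $[0,t_0]$, and apply the polarization identity (equivalently, Young's inequality as the paper phrases it) to the two delay difference-quotient terms, using volume-preservation of $\Phi$ to convert $\rho_0|u\circ\Phi|^2$ into $\rho|u|^2$. You additionally spell out the admissibility of the test pair and the chain rule for $\mathcal{E}_K$, which the paper merely asserts, but this only amplifies the same argument.
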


\begin{proof}
Consider the weak equation \eqref{equasistaticSol}. Due to the regularizing terms, we can test this with the admissible pair $(\partial_t \eta,u)$ on the interval $[0,t_0]$. This yields
\begin{align*}
  &\phantom{{}={}}\mathcal{E}_{K}(\eta(t_0))+\frac{\sqrt{\delta_0}}{2}\int_0^\ell\abs{\partial_x^3\eta(t_0)}^2\, dx  + \varepsilon_0 \int_0^{t_0} \int_0^{\ell} \abs{\partial_t \partial_{x}^3 \eta}^2 dx + \frac{1}{h}\int_{0}^{t_0} \int_0^{\ell} \rho_s (\partial_t \eta -w\circ \eta_0)\cdot \partial_t \eta \, dx dt \\
  &+\int_0^{t_0}  \int_{\Omega} \mu(t) \abs{\nablasym (u)}^2dxdt + \delta_0 \int_0^{t_0} \int_\Omega \abs{\nabla \Delta u}^2 dx + \frac{1}{h}\int_{0}^{t_0} \int_{\Omega}\rho_0 (u\circ \Phi - w) \cdot u \circ \Phi \,dx dt \\
  &= \mathcal{E}_K(\eta(0))  + \int_0^{t_0}\int_{\Omega} \rho f \cdot u \, dx 
\end{align*}
Now using Young's inequality on $w\circ \eta_0 \cdot \partial_t \eta$ we have for almost every $t\in [0,t_0]$
\begin{align*}
 \int_0^{\ell} \rho_s (\partial_t \eta -w\circ \eta_0)\cdot \partial_t \eta \, dx &\geq \int_0^{\ell} \rho_s \abs{\partial_t \eta}^2  - \dfrac{\rho_s}{2} \abs{\partial_t \eta}^2 - \dfrac{\rho_s}{2} \abs{w \circ \eta_0}^2 \, dx \\
 &= \int_0^\ell \frac{\rho_s}{2} \abs{\partial_t \eta}^2 dx -  \int_0^\ell \frac{\rho_s}{2} \abs{w \circ \eta_0}^2 dx
\end{align*}
and similarly for the fluid
\begin{align*}
\int_{\Omega}\rho_0 (u\circ \Phi - w) \cdot u \circ \Phi \,dx \geq  \int_{\Omega}\frac{\rho_0}{2} \abs{u\circ \Phi}^2 - \frac{\rho_0}{2} \abs{w}^2\,dx dt = \int_{\Omega}\frac{\rho}{2} \abs{u}^2 \,dx - \int_{\Omega} \frac{\rho_0}{2} \abs{w}^2\,dx
\end{align*}
where we used that $\Phi$ is a density preserving diffeomorphism. Combining these finishes the proof.
\end{proof}

\begin{remark}
 Note that this energy inequality is a strict improvement over what could be obtained going to the limit in the estimates of Lemma \ref{apriorih} as all the dissipative terms carry another factor of 2.
\end{remark}

\subsection{Passing with \texorpdfstring{$\varepsilon_0\to 0$}{epsilon to zero}.}
\label{ssec:eps}

We can now also treat the case $\varepsilon_0=0$. Note that even then we still have some additional regularity of $\partial_t \eta$ by the fact that it is equal to $u \circ \eta$ which itself is the trace of $u$ under a diffeomorphism. 

 We can use Theorem \ref{maintheoremdelay} to obtain a sequence of solutions $\eta^{(\varepsilon_k)}$, $u^{(\varepsilon_k)}$ to the weak equation for $\varepsilon_0 := \varepsilon_k \to 0$. All these solutions fulfill the energy inequality and the other respective estimates, so they are uniformly bounded in the same spaces as before, apart from the reduced regularity of $\partial_t \eta$. We can then pick a converging subsequence and go to the limit. By lower-semicontinuity this limit still fulfills the energy inequality (the $\varepsilon_0$ term has positive sign and can just be dropped). We also have convergence in all the terms of the equation by the same arguments as before, except for the one involving $\varepsilon_0$, where
 \begin{align*}
\abs{  \varepsilon_0 \int_{0}^{h} \int_{0}^{\ell} \partial_{x}^3 \partial_{t} \widetilde{\eta}^{(\tau)} \cdot \partial_{x}^3 \xi \, dx \, dt } \leq \varepsilon_0 \norm[L^2((0,\ell))]{ \partial_t \partial_{x}^3\eta }  \norm[L^2((0,\ell))]{\partial_{x}^3\xi} \to 0
 \end{align*}
 since $\sqrt{\varepsilon_0} \norm[L^2((0,\ell))]{ \partial_t \partial_{xx}\eta }$ is uniformly bounded by the energy inequality.

\section{Proof of Theorem \ref{maintheorem}}

\subsection{Construction of approximative solutions}

We now proceed to string together short, time-delayed solutions into a time-delayed solution on a longer interval $[0,T]$. For this fix $h >0$. 

For any time-delayed solution $(\eta,u, \Phi)$ on $[0,h]$ produced by Theorem \ref{maintheoremdelay}, the energy inequality of Lemma \ref{timeDelayedEnergyInequality} implies that $\mathcal{E}_{K}(\eta(h)) < \infty$ and $(t,x) \mapsto u(t,\Phi(t,x)) \in L^2([0,h]\times \Omega;\R^2)$. But then the resulting quantities $\eta(h)$ and $(t,x) \mapsto u(t) \circ \Phi(t) \circ \Phi^{-1}(h)$ are valid initial and right hand side data for Theorem \ref{maintheoremdelay} and we can use them to construct a solution on another interval of length $h$, which we will shift to be $[h,2h]$. We iterate this argument until  %
we reach the chosen time $T$. We will see later that there is always a minimal time $T>0$ independent of $h$ that allows us to keep uniform distance from collision.

We will write the total solution as 
\[\eta^{(h)}: [0,T)\times (0,\ell) \to \Omega, u^{(h)}: [0,T)\times \Omega \to \R^2.\]
and use a similar notation $\Omega^{(h)}_\pm(t)$ for the two time-dependent parts of the fluid domain and $\rho^{(h)}(t,z)$, $\mu^{(h)}(t,z)$ to denote $\rho_\pm$ depending on whether $z \in \Omega^{(h)}_\pm(t)$.
Additionally in each step we can use the invertibility of the flow map to construct a family of general flow maps 
between time $t$ and $t+h$ and then concatenate them into a full flow map $\Phi^{(h)}(t)$ for $t \in [0,T]$ such that%
\begin{align*}
 \Phi^{(h)}(0,y) &= y, &&\text{ for all } %
 y \in \Omega\\
 \partial_t \Phi^{(h)}(t,y) &= u^{(h)}(t,\Phi^{(h)}(t)(y)) &&\text{ for all } t\in (0,T), y \in \Omega.
\end{align*}

Since the flow map is exclusively used to switch between coordinates in time intervals of size $h$ we further introduce
\[
\Phi_{s}^{(h)}(t):\Omega^{(h)}_\pm(t)\to  \Omega^{(h)}_\pm(t+s),\quad \Phi_{s}^{(h)}(t)(y)=\Phi(t+s)\circ(\Phi(t))^{-1}(y),
\] 
usually with $s\in [-h,h]$. By definition, we find that
\[
\partial_s\Phi_{s}^{(h)}(t,y)=u^{(h)}(t+s, \Phi_{s}^{(h)}(t,y))
\]

Further all maps $\Phi_s(t)$ are volume preserving maps which map the respective upper and lower fluid domains at time $t$ into their respective counterpart at time $t+s$, which gives them the structure of a semi-group. As a direct consequence $\Phi_s$ is also density preserving, i.e. 
\begin{align*}
 \rho^{(h)}(t+s) \circ \Phi_s^{(h)}(t) = \rho^{(h)}(t).
\end{align*}

As a result we get for any small enough $h>0$ a pair $(\eta^{(h)},u^{(h)})$ of weak solutions to
\begin{align} \label{eq:longTimedelayed}
 \int_0^T \int_{0}^{\ell}& \left( \mathcal{L}_{\Gamma}(\eta^{(h)}) + \rho_s \frac{\partial_t \eta^{(h)}(t) - \partial_t \eta^{(h)}(t-h)}{h} \right) \cdot \xi \, dx + \sqrt{\delta_0}\int_{0}^{\ell}  \partial_{x}^3  \eta^{(h)}\partial_x^3\xi_0\, dx \nonumber \\
&+  \int_{\Omega} \mu \nablasym (u^{(h)}) \cdot \nabla q \, dz + \int_\Omega \delta_0 \nabla \Delta u^{(h)} \cdot \nabla \Delta  q\, dz \\ \nonumber
& + \int_{\Omega} \left( \frac{(\rho^{(h)} u^{(h)})(t) - (\rho^{(h)} u^{(h)})(t-h) \circ \Phi_{-h}^{(h)}(t) }{h} \cdot q  \right) dz  -  \int_{\Omega} \left(f \cdot q \right) \, dz\, dt =0, 
\end{align}
for all pairs of test functions $(\xi,q) \in L^2(0,T;\mathcal{H} \cap H^3((0,\ell)))  \times L^2(0,T;H^3(\Omega))$ such that $\xi = q \circ \eta^{(h)}$ on $[0,T]\times [0,\ell]$ and $\nabla \cdot q =0$ in $\Omega$.

Now we can iterate and reformulate the energy inequality of Lemma \ref{timeDelayedEnergyInequality} for our approximate functions and add an estimate.

\begin{corollary}[Energy estimate] \label{longTimedelayedEnergyEstimate}
 Let $(\eta^{(h)},u^{(h)})$ as before. Then this solution fulfills the following energy inequality for almost all $t_0 \in [0,T]$:
 $$
 \begin{aligned}
  &\phantom{{}={}}\mathcal{E}_{K}(\eta^{(h)}(t_0))
  + \frac{\sqrt{\delta_0}}{2}\int_0^\ell\abs{\partial_x^3\eta(t_0)}^2\, dx 
   + \fint_{t_0-h}^{t_0}\int_0^{\ell} \rho_s \frac{\abs{\partial_t \eta^{(h)} (t)}^2}{2} dx dt\\
  &+\int_0^{t_0}  \int_{\Omega} \mu^{(h)}(t) \abs{\nablasym(u^{(h)})}^2dxdt + \delta_0 \int_0^{t_0} \int_\Omega \abs{\nabla \Delta u^{(h)}}^2 dx + \fint_{t_0-h}^{t_0} \int_{\Omega}\rho^{(h)}(t) \frac{\abs{u^{(h)}(t)}^2}{2}dx dt \\
  &\leq \mathcal{E}_K(\eta_0) +  \int_0^\ell \rho_s \frac{\abs{v_0 \circ \eta_0}^2}{2} dx +  \int_{\Omega}\rho_0 \frac{\abs{v_0}^2}{2}dx  + \int_0^{t_0}\int_{\Omega} \rho f \cdot u^{(h)} \, dx dt
 \end{aligned}
 $$
 In particular we gain uniform in $h$ bounds in the order $C(1+T^2)$ on
 \begin{align*}
  &\sup_{t\in [0,T]} \mathcal{E}_{K}(\eta^{(h)}(t)),\quad \norm[(H^2+\delta_0^\frac{1}{4}H^3)((0,\ell))]{\eta^{(h)}(t)},
  \quad %
\int_0^T\norm[(H^1+\sqrt{\delta_0}H^3)(\Omega)]{u^{(h)}(t)}^2\, dt  
  \\
  &\sup_{t_0\in [0,T]} \fint_{t_0-h}^{t_0}\int_0^{\ell} \rho_s \frac{\abs{\partial_t \eta^{(h)} (t)}^2}{2} dx dt \ \ \text{ and } \ \  \sup_{t_0\in [0,T]} \fint_{t_0-h}^{t_0} \int_{\Omega}\rho^{(h)}(t) \frac{\abs{u^{(h)}(t)}^2}{2}dx dt.
 \end{align*}
\end{corollary}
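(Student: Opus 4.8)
The plan is to iterate the one–step energy inequality of Lemma~\ref{timeDelayedEnergyInequality} over the successive windows $[kh,(k+1)h]$ and to let the density–preserving property of the flow maps $\Phi^{(h)}_s$ make the memory terms telescope. Fix $t_0\in[Kh,(K+1)h)$ and use the windows $W_0=[0,h],\dots,W_{K-1}=[(K-1)h,Kh]$ and $W_K=[Kh,t_0]$. On $W_k$ ($k\ge1$) the datum $w$ in \eqref{equasistaticSol} is, by construction of the concatenation, the previous–window velocity transported backwards by $\Phi^{(h)}_{-h}$; since $\det\nabla\Phi^{(h)}_s\equiv1$ and the density is transported (cf.\ Proposition~\ref{prop:regularityOfV}), the change of variables gives $\int_\Omega\rho^{(h)}(t)\,|w(t)|^2\,dz=\int_\Omega\rho^{(h)}(t-h)\,|u^{(h)}(t-h)|^2\,dz$, and likewise $w\circ\eta_0$ at time $t$ equals $\partial_t\eta^{(h)}(t-h)$ on $(0,\ell)$. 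Hence the memory terms $\tfrac1h\int\int_\Omega\rho_0\tfrac{|w|^2}{2}$ and $\tfrac1h\int\int_0^\ell\rho_s\tfrac{|w\circ\eta_0|^2}{2}$ on the right–hand side of the inequality for $W_k$ coincide, after the shift by $h$, with the kinetic terms $\tfrac1h\int\int_\Omega\rho^{(h)}\tfrac{|u^{(h)}|^2}{2}$ and $\tfrac1h\int\int_0^\ell\rho_s\tfrac{|\partial_t\eta^{(h)}|^2}{2}$ on the left–hand side of the inequality for $W_{k-1}$ (with the convention $u^{(h)}\equiv v_0$, $\partial_t\eta^{(h)}\equiv v_0\circ\eta_0$ on $[-h,0]$, so that the first window produces precisely the initial contributions $\int_\Omega\rho_0\tfrac{|v_0|^2}{2}$ and $\int_0^\ell\rho_s\tfrac{|v_0\circ\eta_0|^2}{2}$).

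First I would apply Lemma~\ref{timeDelayedEnergyInequality} on each window — at the right endpoint for $k<K$ and at $t_0$ on $W_K$ — and add the resulting inequalities. The energies $\mathcal{E}_{K}(\eta^{(h)}(kh))$ and the regularizing quantities $\tfrac{\sqrt{\delta_0}}{2}\|\partial_x^3\eta^{(h)}(kh)\|_{L^2}^2$ at the intermediate times cancel between consecutive windows (the latter sits on both sides of the one–step identity once the boundary value of $\tfrac{\sqrt{\delta_0}}{2}\|\partial_x^3\eta\|_{L^2}^2$ at the starting time is moved to the right); the viscous and $\delta_0$ dissipation $\int_0^{t_0}\int_\Omega\big(\mu^{(h)}|\nablasym u^{(h)}|^2+\delta_0|\nabla\Delta u^{(h)}|^2\big)$ and, if $\varepsilon_0>0$, the term $\varepsilon_0\int_0^{t_0}\int_0^\ell|\partial_t\partial_x^3\eta^{(h)}|^2$ simply accumulate; and, by the identification above, every memory term cancels the kinetic term of the preceding window, leaving only the first–window initial data on the right. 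What remains on the left is exactly $\fint_{t_0-h}^{t_0}\int_\Omega\rho^{(h)}\tfrac{|u^{(h)}|^2}{2}$ and $\fint_{t_0-h}^{t_0}\int_0^\ell\rho_s\tfrac{|\partial_t\eta^{(h)}|^2}{2}$, because the partial remainder $\tfrac1h\int_{t_0-h}^{Kh}$ from $W_{K-1}$ fuses with $\tfrac1h\int_{Kh}^{t_0}$ from $W_K$. This gives the stated inequality (dropping the nonnegative $\varepsilon_0$–term, and absorbing $\tfrac{\sqrt{\delta_0}}{2}\|\partial_x^3\eta_0\|_{L^2}^2$ into the data constant since $\eta_0\in H^3$).

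For the ``in particular'' part I would close the estimate. Since $u^{(h)}(t)\in H^1_0(\Omega)$, Korn's and Poincar\'e's inequalities give $\|u^{(h)}(t)\|_{H^1(\Omega)}\le C\|\nablasym u^{(h)}(t)\|_{L^2(\Omega)}$, so by Young's inequality $\int_0^{t_0}\int_\Omega\rho^{(h)}f\cdot u^{(h)}\le\tfrac{\mu_{\min}}{2}\int_0^{t_0}\|\nablasym u^{(h)}\|_{L^2}^2+C\int_0^{t_0}\|f(t)\|_{L^2(\Omega)}^2\,dt$, and the first term is absorbed into the viscous dissipation on the left. For the $L^2(0,T;H^1)$–bound on $u^{(h)}$ one also uses that $\int_0^{t_0}\|u^{(h)}\|_{L^2}^2$ is comparable to $\int_0^{t_0}\fint_{s-h}^{s}\|u^{(h)}\|_{L^2}^2\,ds$, which by applying the energy inequality at each time $s$ together with a Gr\"onwall–type argument is bounded by a polynomial of degree two in $T$ times a data constant — this is the origin of the order $C(1+T^2)$. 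Collecting all data–dependent constants (from $\mathcal{E}_{K}(\eta_0)$, $\|v_0\|_{L^2}$, $\|v_0\circ\eta_0\|_{L^2}$, $\|f\|$, the densities/viscosities and $\delta_0$) and reading off the individual nonnegative left–hand side terms — using the coercivity of $\mathcal{E}_{K}$ (Proposition~\ref{propenergy}(2)) for the $H^2$–bound on $\eta^{(h)}$, the $\sqrt{\delta_0}$–term for the $H^3$–part, and Korn–Poincar\'e together with the $\delta_0|\nabla\Delta u^{(h)}|^2$–term for the $(H^1+\sqrt{\delta_0}H^3)(\Omega)$–bound on $u^{(h)}$ — yields all the claimed uniform bounds, and the two ``$\fint$'' quantities are controlled uniformly in $t_0$ directly by the inequality.

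The main obstacle is the bookkeeping of the telescoping: one has to keep track of which window each term comes from, verify that the backward–transported memory terms equal the previous window's kinetic terms \emph{exactly} — this is where the exact volume and density preservation of $\Phi^{(h)}_s$ from Proposition~\ref{prop:regularityOfV} is indispensable, since a merely approximate identity would lose a positive $h$–multiple of the kinetic energy at every step and destroy the uniform–in–$h$ bound — and check that the fragmentary boundary contributions near $t_0$ reassemble into the sliding average $\fint_{t_0-h}^{t_0}$. The remaining ingredients — absorption of the forcing, coercivity of the elastic energy, and Korn–Poincar\'e — are routine.
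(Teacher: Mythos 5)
Your proposal is correct and takes essentially the same route as the paper: the paper's proof is a one-line statement that the inequality is "simply a telescoped version of Lemma~\ref{timeDelayedEnergyInequality}," followed by a Young-inequality absorption of the forcing term and an appeal to Korn's inequality, and your argument spells out exactly that telescoping (in particular the crucial point that the memory terms cancel the preceding window's kinetic terms because $\Phi^{(h)}_s$ is volume- and density-preserving, and that the fragmentary endpoint contributions reassemble into the sliding average $\fint_{t_0-h}^{t_0}$). The only cosmetic deviation is that you invoke a Gr\"onwall-type step for the $C(1+T^2)$ order, which is harmless but unnecessary: once the forcing is absorbed into the viscous dissipation via Korn--Poincar\'e and Young (as you also do), the remaining right-hand side is already controlled directly by the initial data and $\|f\|^2_{L^2(0,T;L^2)}$, so no iteration in time is required.
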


\begin{proof}
 The energy inequality is simply a telescoped version of Lemma \ref{timeDelayedEnergyInequality}. From this the uniform bounds follow by noting that the only $h$-dependent term on the right hand side is the force term. For this we have using Young's inequality,
 \begin{align*}
  \int_0^{t_0}\int_{\Omega} \rho f \cdot u^{(h)} \, dx \leq \frac{\delta}{2} \int_0^{t_0}\int_{\Omega} \rho \abs{u^{(h)}}^2 \, dx + \frac{1}{2\delta}   \int_0^{t_0}\int_{\Omega} \max(\rho_\pm) \abs{f}^2 \, dx ,
 \end{align*}
 where for small enough $\delta$ the first term can be absorbed on the left hand side and the second only contains initial data. The estimates of the norms of $\eta^{(h)}$ and $u^{(h)}$ follow by the energy estimate and Korn's inequality.
\end{proof}

\begin{lemma}[Minimal time to collision]
\label{lem:nocol} 
 For given initial data, there is a time $T_0 >0$ such that for all $h$ small enough
 \begin{align*}
  \sup_{(t,x) \in [0,T_0] \times [0,\ell]} \abs{\eta_2^{(h)}(t,x)} \leq \frac{\ell}{2}-\frac{1}{2}\left(\frac{\ell}{2}-\sup_{x\in [0,\ell]} \abs{(\eta_0)_2(x)}\right                                                                                                                                                   ).
 \end{align*}
 In particular there will be no collision of the solid with $\partial \Omega$ even after sending $h\to 0$.
\end{lemma}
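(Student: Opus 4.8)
\noindent
The plan is to show that the vertical excursion of the beam depends continuously on time, uniformly in $h$, so that a short time interval keeps the structure strictly inside $\Omega$. Set $d_0:=\tfrac{\ell}{2}-\sup_{[0,\ell]}|(\eta_0)_2|$; since $\eta_0\in\mathcal V_K$ is continuous (by $H^2\hookrightarrow C^{1,1/2}$) and its image is a compact subset of the open set $\Omega$, we have $d_0>0$, and the asserted inequality is exactly $\sup|\eta_2^{(h)}(t,\cdot)|\le\tfrac{\ell}{2}-\tfrac{d_0}{2}$. Thus it is enough to produce $T_0>0$, depending only on the data, with $\|\eta_2^{(h)}(t,\cdot)-(\eta_0)_2\|_{C^0([0,\ell])}\le \tfrac{d_0}{2}$ for all $t\in[0,T_0]$ and all small $h$. (Note that at fixed $h$ there is in any case no collision, since $\Phi^{(h)}(t)$ is a homeomorphism of $\overline\Omega$ fixing $\partial\Omega$ and $\eta^{(h)}(t,[0,\ell])=\Phi^{(h)}(t)(\eta_0([0,\ell]))$ is then a fixed compact subset of $\Omega$; the real content is the \emph{quantitative, $h$-uniform} bound.)

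\noindent
First I would run the construction above only up to time $T=1$, so that the energy bounds of Corollary~\ref{longTimedelayedEnergyEstimate} come with a single constant $C$ depending only on $\eta_0,u_0,\eta^*,f$ and \emph{not} on $h$ (nor on $\delta_0=h$). In particular $\sup_{t\le1}\mathcal E_K(\eta^{(h)}(t))\le C$, hence $\sup_{t\le1}\|\eta^{(h)}(t)\|_{H^2(0,\ell)}\le C$ and a uniform lower bound $\partial_x\eta_1^{(h)}(t)\ge\delta_1>0$ by Proposition~\ref{propenergy}(1), together with $\int_0^1\|u^{(h)}(t)\|_{H^1(\Omega)}^2\,dt\le C$. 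The uniform bi-Lipschitz control of the interface curves $\eta^{(h)}(t,[0,\ell])$ coming from the energy bound makes the trace operator $v\mapsto v\circ\eta^{(h)}(t)$ from $H^1(\Omega)$ into $L^2(0,\ell)$ bounded by an $h$-independent constant.

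\noindent
Next I would convert this into a time modulus of continuity. Since $\partial_t\eta^{(h)}=u^{(h)}\circ\eta^{(h)}$, the trace bound gives $\|\partial_t\eta^{(h)}(s)\|_{L^2(0,\ell)}\le C\|u^{(h)}(s)\|_{H^1(\Omega)}\in L^2([0,1])$, so by the fundamental theorem of calculus in $t$ and Cauchy--Schwarz $\|\eta^{(h)}(t)-\eta_0\|_{L^2(0,\ell)}\le C\sqrt t$. Each component of $\eta^{(h)}(t)-\eta_0$ vanishes at $x=0,\ell$ by the clamping conditions \eqref{movingbeam0}$_2$ shared with $\eta_0$, so the one-dimensional estimate $\|w\|_{C^0}^2\le 2\|w\|_{L^2}\|\partial_x w\|_{L^2}$ applies; bounding $\|\partial_x(\eta^{(h)}(t)-\eta_0)\|_{L^2}\le C$ via the $H^2$-bound yields $\|\eta^{(h)}(t)-\eta_0\|_{C^0([0,\ell];\R^2)}\le C\,t^{1/4}$, uniformly in $h$. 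Then $|\eta_2^{(h)}(t,x)|\le|(\eta_0)_2(x)|+Ct^{1/4}\le(\tfrac{\ell}{2}-d_0)+Ct^{1/4}$, and choosing $T_0:=\min\{1,(d_0/2C)^4\}$, which depends only on the data, gives the claim. The bound survives the subsequent passage $h\to0$ by the uniform convergence of $\eta^{(h)}$, so the limiting solution is collision-free on $[0,T_0]$ as well.

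\noindent
The main obstacle is exactly obtaining the $h$-independent time modulus of continuity for $\eta^{(h)}$: the a priori information on $\partial_t\eta^{(h)}$ is available only through $u^{(h)}\in L^2_tH^1_x$ (the $\delta_0$-regularized $H^3$-bounds on $u^{(h)}$ and on $\eta^{(h)}$ degenerate as $\delta_0=h\to0$), so one is forced to route the estimate through the uniform trace inequality for the moving interface, which in turn relies on the uniform bi-Lipschitz control of $\eta^{(h)}(t)$ furnished by the elastic energy (Proposition~\ref{propenergy}). Once this is in place the rest is a soft fundamental-theorem-of-calculus plus interpolation computation.
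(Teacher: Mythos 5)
Your proof is correct and follows essentially the same route as the paper's: the fundamental theorem of calculus in time combined with the energy-level space bounds yields a small $L^2$ (or low-order Sobolev) distance from $\eta_0$ on short time intervals, and interpolation with the uniform $H^2\hookrightarrow C^1$ bound upgrades this to a $C^0$-smallness estimate. The paper states this in two lines (displaying an $H^1$-in-space bound obtained by Jensen and then invoking the uniform $C^1$-bound and interpolation); you route the first step instead through the trace inequality $\|u^{(h)}(s)\circ\eta^{(h)}(s)\|_{L^2(0,\ell)}\le C\|u^{(h)}(s)\|_{H^1(\Omega)}$ (uniform by the bi-Lipschitz control furnished by Proposition~\ref{propenergy}), which only uses $L^2$-integrability of $\partial_t\eta^{(h)}$ and hence dovetails cleanly with the parts of the energy estimate that are genuinely uniform in $h$; you then apply the Agmon inequality $\|w\|_{C^0}^2\le 2\|w\|_{L^2}\|\partial_x w\|_{L^2}$ to get an explicit modulus $Ct^{1/4}$ and an explicit $T_0$. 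One cosmetic remark: your justification ``its image is a compact subset of the open set $\Omega$'' for $d_0>0$ is not quite right as stated, since the endpoints $\eta_0(0)=(0,0)$ and $\eta_0(\ell)=(\ell,0)$ lie on $\partial\Omega$; what actually yields $d_0>0$ is that $(\eta_0)_2$ is continuous and, by the standing assumption of admissible (collision-free) initial data, $|(\eta_0)_2|<\ell/2$ pointwise on the compact $[0,\ell]$.
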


\begin{proof}
 Using the uniform bounds and Jensens inequality we know that 
 \begin{align*} 
\norm[H^1((0,\ell))]{\eta^{(h)}-\eta_0}^2 \leq C  T \int_0^{T} \norm[(0,\ell)]{\partial_t \eta^{(h)}}^2 dt \leq T C(1+T^2)
 \end{align*}
 Since additionally $\eta^{(h)}(t)$ is uniformly bounded in $C^1$, this implies that $\norm[\infty]{\eta^{(h)}(t)-\eta_0} \leq C(T) \to 0$ for $T \to 0$, which proves the lemma.
\end{proof}
The last lemma implies that we can assume that $T = T_0$ is fixed for our initial data, the solution can then afterwards be extended up to a potential point of collision.

\subsection{Aubin-Lions type argument}
\label{sec:aubin}

In this section we will pass to the limit with $h \to 0$. We wish to remove the $\delta_0$ regularizers simultaneously. For that we choose
\begin{align}
\label{eq:delta0}
\delta_0:=h^{\alpha_0}\text{ with }\alpha_0\in (0,1)\text{ fixed.}
\end{align}
All constants from now on will be independent of $\delta_0$.

The main effort is the strong convergence of the fluid velocity. This goes via a so called Aubin-Lions type argument derived in this sub-section. In the next sub-section we then pass to the limit with the weak equation.

Using the energy estimate and the usual compactness results we can take a subsequence and find limits $\eta \in C_w([0,T];H^2((0,\ell);\R^2)) \cap H^1([0,T];H^1((0,\ell);\R^2))$ and $u \in L^2([0,T];W_{div}^{1,2}(\Omega;\R^2))$ such that
\begin{align*}
 \eta^{(h)} &\rightharpoonup \eta &&\text{ in } C_w([0,T];H^2((0,\ell);\R^2)) \\
 \partial_t \eta^{(h)} &\rightharpoonup \partial_t \eta &&\text{ in } L^2([0,T];H^1((0,\ell);\R^2)) \\
 u^{(h)} &\rightharpoonup u &&\text{ in } L^2([0,T];W_{div}^{1,2}(\Omega;\R^2)) \\
\end{align*}

As on the $\tau$-level, we find by interpolation (see for instance the similar proof \cite[Proposition 2.20]{BenKamSch20}) strong convergences of a subsequence of $\eta^{(h)}$ as $h\to 0$: 
\begin{equation}
\label{uniformetah}
\begin{aligned} 
\eta^{(h)} & \to \eta & \text{ in } \ & C([0,T]\times [0,\ell];\mathbb{R}^{2}).
\end{aligned}
\end{equation}

What we do need is the convergence in weak form of 
\[
\frac{(\rho^{(h)}  u^{(h)} ) (t)-(\rho^{(h)}  u^{(h)}) (t-h,\Phi_{-h}^{(h)}(t))}{h}.
\]
First observe that due to \eqref{uniformetah} we find that $\rho^{(h)} $ converges strongly in any Lebesgue space.
Hence certainly it is sufficient to prove strong convergence of $u^{(h)}$. Due to the coupling of the velocities, classic Aubin-Lions arguments are not available.
We follow the convergence strategy of \cite{MuhSch20}.
 As it was the case there, also here it is now essential to consider the convergence of $u^{(h)}$ over the full domain including its ``trace'' $\partial_t\eta^{(h)}$ that satisfies its own PDE. For that we rely on the following generalisation of the Aubin-Lions lemma that was derived in~\cite[Theorem 5.1 and Remark 5.2]{MuhSch20}.
\begin{theorem}
\label{thm:auba}
Let $X,Z$ be two Banach spaces, such that $X^*\subset Z^*$.
Assume that $f_n:(0,T)\to X$ and $g_n: (0,T)\to X^*$, such that $g_n\in L^\infty(0,T;Z^*)$ uniformly. Moreover assume the following: 
\begin{enumerate}
\item The {\em weak convergence}: for some $s\in [1,\infty]$ we have that $f_n\weaktostar f$ in $L^s(X)$ and $g_n\weaktostar g$ in $L^{s'}(X^*)$.
\item The {\em approximability-condition} is satisfied: For every $\kappa\in (0,1]$ there exists a $f_{n,\kappa}\in L^s(0,T;X)\cap L^1(0,T;Z)$,
such that for every $\epsilon\in (0,1)$ there exists a $\kappa_\epsilon\in(0,1)$ (depending only on $\epsilon$) such that %
\[
\norm[{L^s(0,T;X)}]{f_n-f_{n,\kappa}}\leq \epsilon\text{ for all } \kappa\in (0,\kappa_\epsilon]%
\]
and for every $\kappa\in (0,1]$ there is a $C(\kappa)$ such that
\[
\norm[{L^1(0,T;Z)}]{f_{n,\kappa}}\, dt\leq C(\kappa).
\]
Moreover, we assume that for every $\kappa$ there is a function $f_\kappa$, and a subsequence such that $f_{n,\kappa}\weaktostar f_\kappa$ in $L^s(0,T;X)$.

\item The {\em equi-continuity} of $g_n$. We require that there exists an $\alpha\in (0,1]$ a functions $A_n$ with $A_n\in L^1(0,T)$ uniformly, such that for every $\kappa>0$ that there exist a $C(\kappa)>0$ and an $n_\kappa\in \N$ such that for $\tau>0$ and a.e.\ $t\in [0,T-\tau]$
\[
\sup_{n\geq n_{\kappa}} \Big|\fint_{0}^\tau\inner[{X',X}]{g_n(t)-g_n(t+s)}{f_{n,\kappa}(t)}\, s\Big| \leq C(\kappa)\tau^\alpha(A_n(t)+1).
\]

\item The {\em compactness assumption} is satisfied: $X^*\hookrightarrow \hookrightarrow Z^*$. More precisely, every uniformly bounded sequence in $X^*$ has a strongly converging subsequence in $Z^*$. 
\end{enumerate}
Then there is a subsequence, such that
\[
\int_0^T\inner[{X'\times X}]{f_n}{g_n}\, dt\to \int_0^T\inner[{X' \times X}]{f}{g}\, dt.
\]
\end{theorem}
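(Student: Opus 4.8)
The plan is to run the standard ``triple approximation'' scheme of Aubin--Lions--Simon type, adapted to the fact that, by assumption, \emph{neither} $f_n$ \emph{nor} $g_n$ is compact on its own: the only compactness available is $X^*\hookrightarrow\hookrightarrow Z^*$, and it can be exploited only after both sequences have been regularised. Concretely I would work with three parameters -- a mollification scale $\kappa$ for $f_n$ (the one furnished by the approximability condition (2)), a time-regularisation scale $\theta$ for $g_n$, and the running index $n$ -- and let them tend to their limits \emph{in this order}: fix $\kappa$ small, then $\theta$ small (depending on $\kappa$), then $n\to\infty$.

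First I would split
\[
\int_0^T\langle f_n,g_n\rangle\, dt-\int_0^T\langle f,g\rangle\, dt
=\int_0^T\langle f_n-f_{n,\kappa},g_n\rangle\, dt
+\Bigl(\int_0^T\langle f_{n,\kappa},g_n\rangle\, dt-\int_0^T\langle f_\kappa,g\rangle\, dt\Bigr)
+\int_0^T\langle f_\kappa-f,g\rangle\, dt.
\]
By H\"older and the uniform bound on $\|g_n\|_{L^{s'}(X^*)}$ (a consequence of (1)), the first term is $\le C\,\|f_n-f_{n,\kappa}\|_{L^s(X)}\le C\epsilon$ for all $n$ once $\kappa\le\kappa_\epsilon$; passing to the weak limit in the same inequality (weak lower semicontinuity of the norm together with $f_n\weaktostar f$, $f_{n,\kappa}\weaktostar f_\kappa$) gives $\|f-f_\kappa\|_{L^s(X)}\le\epsilon$, so the last term is likewise $\le C\epsilon$. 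Everything then reduces to showing that the middle term tends to $0$ as $n\to\infty$, for each fixed small $\kappa$.

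For that, fix $\kappa$ and introduce the Steklov average $g_n^\theta(t):=\fint_t^{t+\theta}g_n$ (and similarly $g^\theta$), extending $g_n$ past $T$ by reflection so that this is defined on $[0,T]$. The decisive observation is that, for \emph{fixed} $\theta$, the family $\{g_n^\theta\}_n$ is bounded in $L^\infty(0,T;X^*)$ (H\"older in the definition) with time-derivative bounded in $L^\infty(0,T;Z^*)$ -- here the uniform $L^\infty(Z^*)$-bound on $g_n$ enters, producing an equi-Lipschitz constant of order $\theta^{-1}$ into $Z^*$. Since $X^*\hookrightarrow\hookrightarrow Z^*$, the classical Aubin--Lions lemma then yields a subsequence with $g_n^\theta\to g^\theta$ in $C([0,T];Z^*)$ (the limit being identified through $g_n^\theta\weaktostar g^\theta$ in $L^{s'}(X^*)$). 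I would then write $g_n=g_n^\theta+(g_n-g_n^\theta)$ and estimate the middle term term by term: (i) $\int_0^T\langle f_{n,\kappa},g_n-g_n^\theta\rangle$ is bounded by $C(\kappa)\theta^\alpha$ for $n\ge n_\kappa$, using hypothesis (3) after writing $g_n(t)-g_n^\theta(t)=\fint_0^\theta\bigl(g_n(t)-g_n(t+s)\bigr)\, ds$; (ii) $\int_0^T\langle f_{n,\kappa},g_n^\theta-g^\theta\rangle\to0$ as $n\to\infty$, because $\|f_{n,\kappa}\|_{L^1(0,T;Z)}\le C(\kappa)$ by (2) while $g_n^\theta\to g^\theta$ in $C([0,T];Z^*)$ -- this is exactly the pairing of the predual $L^1(0,T;Z)$ with $L^\infty(0,T;Z^*)$; (iii) $\int_0^T\langle f_{n,\kappa}-f_\kappa,g^\theta\rangle\to0$ as $n\to\infty$ since $f_{n,\kappa}\weaktostar f_\kappa$ in $L^s(X)$ and $g^\theta\in L^{s'}(X^*)$ is fixed; (iv) $\int_0^T\langle f_\kappa,g^\theta-g\rangle\to0$ as $\theta\to0$ by the strong $L^{s'}(X^*)$-convergence of the mollification (continuity of translations, valid for $s'<\infty$; the endpoint $s=1$ is handled directly by weak-$*$ convergence of $g^\theta$ to $g$). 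A routine $\epsilon$-chase -- choose $\kappa$, then $\theta$, then $N$ -- closes the argument, and the subsequence extracted in the compactness step can be made independent of $\kappa,\theta$ by a diagonal extraction.

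I expect the genuine difficulty to be purely the \emph{orchestration}: the two regularisations must have their errors controlled in \emph{different} function spaces -- $f_n\mapsto f_{n,\kappa}$ is good in $L^1(0,T;Z)$, precisely the predual of the space $L^\infty(0,T;Z^*)$ in which the time-regularised $g_n^\theta$ becomes compact, whereas the unavoidable mollification error $\langle f_{n,\kappa},g_n-g_n^\theta\rangle$ is exactly what the tailor-made equi-continuity hypothesis (3) is designed to estimate. Making the three limits $n\to\infty$, $\theta\to0$, $\kappa\to0$ interlock in the correct order, and disposing of the short residual time-interval near $t=T$ (absorbed by H\"older and the uniform $L^s(X)$, $L^{s'}(X^*)$ bounds, or avoided by reflecting $g_n$), is the whole content of the proof; the individual estimates are bookkeeping.
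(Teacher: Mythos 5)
The paper does not actually prove this theorem; it is cited verbatim from \cite[Theorem 5.1 and Remark 5.2]{MuhSch20}, so there is no in-paper argument to compare against. That said, your proof follows the scheme that the hypotheses are transparently designed around, and the main steps are sound: the outer $\kappa$-split controlled by (1)--(2) and weak lower semicontinuity; the Steklov average $g_n^\theta$ made precompact in $C([0,T];Z^*)$ by Arzel\`a--Ascoli (pointwise $X^*$-bound of order $\theta^{-1/s'}$ plus $\theta^{-1}\|g_n\|_{L^\infty(Z^*)}$-Lipschitz continuity in $Z^*$, then hypothesis (4)); the pairing of $L^1(0,T;Z)$ against $C([0,T];Z^*)$ in step (ii), which is exactly what the two halves of conditions (2) and (4) are for; and the observation that $\langle f_{n,\kappa}(t),\, g_n(t)-g_n^\theta(t)\rangle=\fint_0^\theta\langle f_{n,\kappa}(t),\, g_n(t)-g_n(t+s)\rangle\,ds$ turns step (i) into a verbatim application of (3). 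The order $\kappa\to0$, then $\theta\to0$, then $n\to\infty$, together with a diagonal extraction, is correctly identified.

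The one place that is not airtight is the boundary layer near $t=T$, and neither of your two suggested remedies closes it as stated. Hypothesis (3) supplies the estimate only for $t\in[0,T-\tau]$, so $\int_{T-\theta}^T\langle f_{n,\kappa}(t),\, g_n(t)-g_n^\theta(t)\rangle\,dt$ escapes it. Reflecting $g_n$ past $T$ turns the forward difference $g_n(t)-g_n(t+s)$ with $t+s>T$ into $g_n(t)-g_n(2T-t-s)$, a difference over $[T-\tau,T]$ to which (3) says nothing. The H\"older bound $\|f_{n,\kappa}\|_{L^s(T-\theta,T;X)}\|g_n-g_n^\theta\|_{L^{s'}(T-\theta,T;X^*)}$ (or its $L^1(Z)\times L^\infty(Z^*)$ analogue) tends to $0$ with $\theta$ for each \emph{fixed} $n$, but to make it small uniformly in $n$ you would need equi-integrability of $\{\|f_{n,\kappa}\|_X^s\}_n$ (resp.\ of $\{\|f_{n,\kappa}\|_Z\}_n$), which is not among the hypotheses -- (1) and (2) provide norm bounds only. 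In the concrete applications in the present paper the functions are (or can be) extended constantly in time past $T$ and (3) persists there, so the issue is harmless in practice; but for the abstract statement one should either stipulate such an extension, or prove the conclusion on $\int_0^{T-\delta}$ for each $\delta>0$ and add an equi-integrability assumption to recover the full interval. Apart from this endpoint bookkeeping your argument is correct and is, as far as I can tell, the intended one.
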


We begin with the following lemma, that allows us to compare between certain mean-values. It is closely connected to~\cite[Lemma 4.14]{BenKamSch20}.

\begin{lemma}
\label{lem:closeness}
Let $q:[0,T] \times \Omega \to \R^2$ integrable and define
 \[
 \tilde{q}^{(h)}(t,y):=\fint_{-h}^0 q(t+s,{\Phi}^{(h)}_{s}(t)(y))\, ds,
 \]
 then for all $p\in [1,2)$, $p_1\in [1,\infty]$ and all $h>0$, we find that if $\nabla q\in L^{p_1}(0,T;L^p(\Omega))$, then
\begin{align*}
 \norm[L^\infty(0,T;L^p(\Omega))]{\fint_{-h}^0q(t+s,y)\,ds-\tilde{q}(t)}
 \leq C h^\frac{p_1-1}{p_1}
 \norm[{L^{p_1}(0,T;L^\frac{2p}{2-p}(\Omega))}]{\nabla q},
 \end{align*}
 where $C$ depends on the energy estimates only.
 Further we find for $q\in L^\frac{2p}{p-2}(\Omega)$ that
 \begin{align}
\label{eq:malte1}
\norm[L^p(\Omega^\pm(t))]{q-q \circ \Phi_{-h}^{(h)}(t)}\leq C h\norm[L^{\frac{2p}{2-p}}(\Omega)]{\nabla q}\text{ and } \norm[L^2(\Omega^\pm(t))]{q-q \circ \Phi_{-h}^{(h)}(t)}\leq C h\Lip(q).
\end{align}
\end{lemma}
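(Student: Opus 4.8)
The plan is to express each of the two differences as an integral of $\nabla q$ against $u^{(h)}$ \emph{along the flow} $\sigma\mapsto\Phi^{(h)}_\sigma(t)$, using the defining relation $\partial_\sigma\Phi^{(h)}_\sigma(t,y)=u^{(h)}(t+\sigma,\Phi^{(h)}_\sigma(t,y))$ together with $\Phi^{(h)}_0(t)=\operatorname{id}$; then to remove the flow from the integrands by the change of variables $z=\Phi^{(h)}_\sigma(t)(y)$, which is measure preserving and respects the decomposition $\Omega=\Omega^+\cup\Omega^-$; and finally to combine Hölder's inequality (in space and in time) with the energy bound $\int_{t-h}^{t}\norm[L^2(\Omega)]{u^{(h)}}^2\,dr\le Ch$ coming from Corollary~\ref{longTimedelayedEnergyEstimate} (using $\rho^{(h)}\ge\min\rho_\pm>0$). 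Since all the bounds are stable under $W^{1,p}$-approximation of $q$, it suffices to carry out the computation for smooth $q$, where the chain rule along the flow is unproblematic.

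For the first inequality, write
\[
\fint_{-h}^0 q(t+s,y)\,ds-\tilde q^{(h)}(t,y)=\fint_{-h}^0\Bigl(q(t+s,y)-q\bigl(t+s,\Phi^{(h)}_s(t)(y)\bigr)\Bigr)\,ds,
\]
and, for fixed $s\in[-h,0]$, integrate $q(t+s,\cdot)$ along $\sigma\mapsto\Phi^{(h)}_\sigma(t)(y)$ from $\sigma=s$ to $\sigma=0$:
\[
q(t+s,y)-q\bigl(t+s,\Phi^{(h)}_s(t)(y)\bigr)=\int_s^0\nabla q\bigl(t+s,\Phi^{(h)}_\sigma(t)(y)\bigr)\cdot u^{(h)}\bigl(t+\sigma,\Phi^{(h)}_\sigma(t)(y)\bigr)\,d\sigma.
\]
Taking absolute values, enlarging $\int_s^0$ to $\int_{-h}^0$, applying Minkowski's integral inequality in $y$, using that $\Phi^{(h)}_\sigma(t)$ preserves Lebesgue measure, and then applying Hölder in space with exponents $\tfrac{2p}{2-p}$ and $2$ (after which the $s$- and $\sigma$-integrations decouple), one obtains for a.e.\ $t$
\[
\norm[L^p(\Omega)]{\fint_{-h}^0 q(t+s)\,ds-\tilde q^{(h)}(t)}\le\Bigl(\fint_{-h}^0\norm[L^{2p/(2-p)}(\Omega)]{\nabla q(t+s)}\,ds\Bigr)\Bigl(\int_{-h}^0\norm[L^2(\Omega)]{u^{(h)}(t+\sigma)}\,d\sigma\Bigr).
\]
By the energy bound the $u^{(h)}$-factor is $\le h^{1/2}(Ch)^{1/2}=Ch$, while Hölder in time gives $\fint_{-h}^0\norm[L^{2p/(2-p)}(\Omega)]{\nabla q(t+s)}\,ds\le h^{-1/p_1}\norm[L^{p_1}((t-h,t);L^{2p/(2-p)})]{\nabla q}$; multiplying and taking $\sup_{t\in[0,T]}$ yields the assertion with exponent $1-\tfrac1{p_1}=\tfrac{p_1-1}{p_1}$.

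The estimates in \eqref{eq:malte1} follow by the same mechanism with a single flow integral: since $\Phi^{(h)}_0(t)=\operatorname{id}$,
\[
q(z)-q\bigl(\Phi^{(h)}_{-h}(t)(z)\bigr)=\int_{-h}^0\nabla q\bigl(\Phi^{(h)}_s(t)(z)\bigr)\cdot u^{(h)}\bigl(t+s,\Phi^{(h)}_s(t)(z)\bigr)\,ds,
\]
so that, after the measure-preserving change of variables and Hölder in space,
\[
\norm[L^p(\Omega^\pm(t))]{q-q\circ\Phi^{(h)}_{-h}(t)}\le\norm[L^{2p/(2-p)}(\Omega)]{\nabla q}\int_{-h}^0\norm[L^2(\Omega)]{u^{(h)}(t+s)}\,ds\le Ch\,\norm[L^{2p/(2-p)}(\Omega)]{\nabla q}.
\]
For the Lipschitz variant one does not differentiate $q$ at all: $\abs{z-\Phi^{(h)}_{-h}(t)(z)}\le\int_{-h}^0\abs{u^{(h)}(t+s,\Phi^{(h)}_s(t)(z))}\,ds$, whence $\norm[L^2(\Omega^\pm(t))]{q-q\circ\Phi^{(h)}_{-h}(t)}\le\Lip(q)\int_{-h}^0\norm[L^2(\Omega)]{u^{(h)}(t+s)}\,ds\le Ch\,\Lip(q)$, again by the energy estimate.

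The step I expect to be the real crux (the rest being routine) is justifying the change of variables: one must know that each $\Phi^{(h)}_s(t)$ is a bi-Lipschitz, measure-preserving diffeomorphism of $\Omega$ carrying $\Omega^\pm(t)$ onto $\Omega^\pm(t+s)$, with all constants controlled by the energy alone --- this is the $h$-level analogue of Proposition~\ref{prop:regularityOfV}, the exact volume preservation being inherited from the limit $\tau\to0$. Two minor technical points accompany this: the identity representing $q(t+s,\cdot)-q(t+s,\Phi^{(h)}_s(t)(\cdot))$ as an integral of $\nabla q\cdot u^{(h)}$ along the flow is first proved for smooth $q$ and then passed to the $W^{1,p}$-limit; and for $t<h$ the velocity at earlier times is the (extended) initial datum, so $u^{(h)}$ is understood on $[-h,T]$ and the energy control $\int_{t-h}^{t}\norm[L^2(\Omega)]{u^{(h)}}^2\le Ch$ holds for every $t\in[0,T]$. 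Beyond that, the only delicate bookkeeping is keeping the powers of $h$ straight: one factor $h$ comes from the $L^2$-in-time control of $u^{(h)}$, and the factor $h^{-1/p_1}$ from averaging $\nabla q$ in time against its $L^{p_1}_t$-norm.
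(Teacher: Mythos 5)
Your proof is correct and follows essentially the same route as the paper: the flow-integral representation $q(t+s,y)-q(t+s,\Phi_s^{(h)}(t,y))=\int_s^0\nabla q\cdot u^{(h)}$ along the flow, change of variables using that $\Phi_\theta^{(h)}(t)$ is measure-preserving, Hölder in space with exponents $\tfrac{2p}{2-p}$ and $2$, and Cauchy--Schwarz in time together with the energy bound $\int_{t-h}^t\|u^{(h)}\|_{L^2}^2\le Ch$. The only cosmetic differences are that you enlarge the inner $\theta$-integral to $[-h,0]$ before averaging (the paper keeps it as $\int_0^s$, obtaining a per-$s$ factor $\sqrt{|s|}\sqrt{h}$), and for the Lipschitz bound you avoid differentiating $q$ by using $|z-\Phi_{-h}^{(h)}(t)(z)|\le\int_{-h}^0|u^{(h)}\circ\Phi_s^{(h)}|\,ds$ directly, whereas the paper simply sets $p=2$; both variants lead to the same estimates.
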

 \begin{proof}

We begin with the following calculation
  \begin{align*}
\begin{aligned}
\abs{q(t+s,{\Phi}^{(h)}_{s}(t)(y))-q(t+s,y)}
&=\abs{\int_0^s\partial_\theta q(t+s,{\Phi}^{(h)}_{\theta}(t)(y))d\theta}
\\
&\leq \int_0^s\abs{\nabla q(t+s,{\Phi}^{(h)}_{\theta}(t)(y))}\abs{u(t+\theta,{\Phi}^{(h)}_{\theta}(t)(y))} d\theta.
\end{aligned}
 \end{align*}
Using the fact that $\det(\nabla \Phi_\theta^{(h)})=1$ we find by change of variables and the energy bounds that, for $p\in [1,2)$,
  \begin{align*}
\begin{aligned}
\norm[L^p(\Omega)]{q(t+s,{\Phi}^{(h)}_{s}(t,y))-q(t+s,y)}
&\leq \int_0^s\norm[L^{\frac{2p}{2-p}}(\Omega)]{\nabla q(t+s)}\norm[\Omega]{u(t+\theta,{\Phi}^{(h)}_{\theta}(t))}\, d\theta
\\
& \leq \sqrt{s}{\sqrt{h}}C\norm[L^{\frac{2p}{2-p}}(\Omega)]{\nabla q(t+s)}.
\end{aligned}
 \end{align*}
 this implies \eqref{eq:malte1}, for $q$ independent of $t$ and $s=h$. Further we find
 \begin{align*}
 \norm[L^\infty(0,T;L^p(\Omega))]{\fint_{-h}^0q(t+s,y)\,ds-\tilde{q}(t)}\leq \int^0_{-h} \norm[L^{\frac{2p}{2-p}}(\Omega)]{\nabla q(t+s)}\, ds
 \leq Ch^\frac{p_1-1}{p_1}
\norm[{L^{p_1}(0,T;L^\frac{2p}{2-p}(\Omega))}]{\nabla q}.
 \end{align*}
The Lipschitz case follows doing the respective estimates for $p=2$.
\end{proof}

In the following we use averaged momenta as functions that do possess some kind of weak time-derivative. For this we define
\begin{align*}
m^{(h)}(t):=\fint_{-h}^0 \rho^{(h)}(t+s)u^{(h)}(t+s)\, ds\text{ and }(\partial_t\eta)^{(h)}(t):=\fint_{-h}^0 \partial_t\eta^{(h)}(t+s)\, ds.
\end{align*}
We first show that $m^{(h)}(t)$ possesses some weak time-derivative in the interior. This argument is valid for both $\Omega_\pm(t)$ separately.
\begin{lemma}
\label{lem:time1}
 There exist a constant $C> 0$ depending only on $T$ and the initial data, such that for $k>2$, we find that $\partial_t m^{(h)}\in L^2(0,T; H^{-k}_{div}(\Omega_\pm^{(h)}))$. Moreover,\footnote{As is apparent following the proof, it is possible to relax the conditions from $L^2(H^2)$ on the right hand side to $L^2(L^p)$ for any $p>2$.}%
 \begin{align*}
 \sup_{q \in C^\infty_{0,div}((0,T)\times \Omega_\pm^{(h)}(t))} \inner[\Omega_\pm^{(h)}(t)]{\partial_t  m^{(h)}(t) }{q(t)}
 \leq C(\sqrt{\delta_0} \no{q}_{L^2(0,T;H^3(\Omega_\pm^{(h)}))}+\no{q}_{L^2(0,T;H^2(\Omega_\pm^{(h)}))})
 \end{align*}
\end{lemma}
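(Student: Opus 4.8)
The plan is to feed into the long--time delayed weak equation \eqref{eq:longTimedelayed} test functions $q$ that are divergence free and, for each $t$, compactly supported in the interior of $\Omega_\pm^{(h)}(t)$. For $h$ small their trace $q\circ\eta^{(h)}$ vanishes (recall that $\eta^{(h)}$ is uniformly continuous in time into $C^1$), so $(0,q)$ is an admissible test pair and every term of \eqref{eq:longTimedelayed} involving $\eta^{(h)}$, $\partial_t\eta^{(h)}$ and $\sqrt{\delta_0}\,\partial_x^3\eta^{(h)}$ drops out. Writing $P^{(h)}:=\rho^{(h)}u^{(h)}$ and using that $m^{(h)}(t)=\tfrac1h\int_{t-h}^{t}P^{(h)}(\sigma)\,d\sigma$, so that $\partial_t m^{(h)}(t)=\tfrac1h(P^{(h)}(t)-P^{(h)}(t-h))$ for a.e.\ $t$, I would then split
\[
\int_0^T\inner[\Omega_\pm^{(h)}(t)]{\partial_t m^{(h)}(t)}{q(t)}\,dt=\mathrm{I}+\mathrm{II},
\]
where $\mathrm{I}$ contains the transported difference quotient $\tfrac1h(P^{(h)}(t)-P^{(h)}(t-h)\circ\Phi_{-h}^{(h)}(t))$ that actually occurs in \eqref{eq:longTimedelayed}, and $\mathrm{II}:=\tfrac1h\int_0^T\int_{\Omega_\pm^{(h)}(t)}(P^{(h)}(t-h)\circ\Phi_{-h}^{(h)}(t)-P^{(h)}(t-h))\cdot q\,dz\,dt$ is the geometric correction coming from the motion of the fluid domain.

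For $\mathrm{I}$ the reduced weak equation identifies it with the negative of the sum of the viscous term $\mu_\pm\int\nablasym(u^{(h)})\cdot\nabla q$ and the hyperviscous term $\delta_0\int\nabla\Delta u^{(h)}\cdot\nabla\Delta q$, plus the force term $\int f\cdot q$. Cauchy--Schwarz together with the $h$--uniform bounds of Corollary \ref{longTimedelayedEnergyEstimate}, namely $\norm[L^2([0,T]\times\Omega)]{\nablasym(u^{(h)})}\le C$, $\sqrt{\delta_0}\,\norm[L^2([0,T]\times\Omega)]{\nabla\Delta u^{(h)}}\le C$ and $\norm[L^2]{f}\le C$, then give $|\mathrm{I}|\le C(\norm[L^2(0,T;H^2)]{q}+\sqrt{\delta_0}\,\norm[L^2(0,T;H^3)]{q})$; the $\delta_0$--hyperviscosity term is precisely what produces the weighted $H^3$--norm in the statement.

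For $\mathrm{II}$ the key is that $\Phi_{-h}^{(h)}(t)$ is volume preserving: changing variables and using the compact support of $q(t,\cdot)$ rewrites $\mathrm{II}$ as $\tfrac1h\int_0^T\int P^{(h)}(t-h)\cdot(q(t,\Phi_h^{(h)}(t-h)(\cdot))-q(t,\cdot))\,dy\,dt$, i.e.\ the one available spatial derivative has been moved off the merely $H^1$--regular momentum and onto the arbitrarily smooth test function. The difference of the two evaluations of $q(t,\cdot)$ along the flow of $u^{(h)}$ over $[t-h,t]$ is estimated exactly as in \eqref{eq:malte1} of Lemma \ref{lem:closeness}, giving a bound $Ch\,\norm[L^{2p/(2-p)}(\Omega)]{\nabla q(t)}$ for any $p<2$ with $C$ depending only on the energy; taking e.g.\ $p=\tfrac43$ (so $p'=2p/(2-p)=4$), Hölder in time and the two--dimensional embeddings $H^1\hookrightarrow L^4$ (for $P^{(h)}$) and $H^2\hookrightarrow W^{1,4}$ (for $q$), together with the uniform $L^2(0,T;H^1)$--bound on $u^{(h)}$, yield $|\mathrm{II}|\le C\norm[L^2(0,T;H^2)]{q}$. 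Adding the two bounds proves the displayed inequality, and since for each fixed $h$ one trivially has $\partial_t m^{(h)}\in L^2(0,T;L^2)\subset L^2(0,T;H^{-k}_{div}(\Omega_\pm^{(h)}))$, the membership statement follows as well.

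The main obstacle is the term $\mathrm{II}$: a naive estimate that kept the derivative on $u^{(h)}$ would only be controllable by $\norm[H^3]{u^{(h)}}\sim\delta_0^{-1/2}$ and would destroy the $h$--uniformity, so the volume--preserving change of variables transferring the derivative onto the test function, combined with the Lagrangian transport estimate of Lemma \ref{lem:closeness}, is the crucial step. A minor technical nuisance, immaterial for the final constant, is that $\mathrm{supp}\,q(t,\cdot)$ must stay at positive distance from $\eta^{(h)}(t)$; this holds for all $h$ small since that support is compact in the open space--time fluid domain, and as $C$ is independent of $q$ the estimate then extends to the whole admissible class of test functions.
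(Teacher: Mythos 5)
Your proposal follows essentially the same route as the paper's own proof: the decomposition into $\mathrm{I}+\mathrm{II}$ is exactly the paper's $J_1-J_2$ split, $\mathrm{I}$ is controlled by reading off the remaining viscous, hyperviscous and force terms from the reduced weak equation and invoking the $h$-uniform energy bounds (including the $\sqrt{\delta_0}$-weighted $H^3$ control), and $\mathrm{II}$ is handled by the volume-preserving change of variables $\Phi^{(h)}_{-h}(t)$ followed by the transport estimate \eqref{eq:malte1} of Lemma \ref{lem:closeness} with $p=4/3$ and the two-dimensional Sobolev embeddings. This is the paper's argument, correctly reproduced.
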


\noindent
\begin{proof}
Let $q \in C^\infty_0((0,T)\times \Omega)$ with $\nabla \cdot q = 0$.
Let us first split the integrand into two 
along the flow map.
$$
\begin{aligned}
\inner[\Omega_\pm^{(h)}(t)]{\partial_t  m^{(h)}(t) }{q(t)}&=\inner[\Omega_\pm^{(h)}(t)]{ \frac{(\rho^{(h)}  u^{(h)})(t)-(\rho^{(h)}  u^{(h)})(t-h)}{h}}{q(t)} \\
&=\inner[\Omega_\pm^{(h)}(t)]{ \frac{(\rho^{(h)}  u^{(h)})(t)-(\rho^{(h)}  u^{(h)})(t-h) \circ \Phi^{(h)}_{-h}(t)}{h}}{q(t)} \\
&\quad
- \inner[\Omega_\pm^{(h)}(t)]{ \frac{(\rho^{(h)}  u^{(h)})(t-h)-(\rho^{(h)}  u^{(h)})(t-h) \circ \Phi^{(h)}_{-h}(t)}{h}}{q(t)}=: J_1(t) - J_2(t)
\end{aligned}
$$
We may estimate $J_1(t)$ by the weak formulation \eqref{eq:longTimedelayed}
$$
\begin{aligned}
&\phantom{{}={}}\abs{J_1(t)}  
=
\left |  \int_{\Omega_\pm^{(h)}(t)} \mu \nablasym(u^{(h)}(t)) \cdot \nabla q(t) \, dz + \int_{\Omega_\pm^{(h)}} \delta_0 \nabla \Delta u^{(h)} \cdot \nabla \Delta  q(t)\, dz -  \int_{\Omega_\pm^{(h)}(t)} \left(f (t)\cdot q(t) \right) \, dz \right |
\\
&\leq
\norm[\Omega_\pm^{(h)}(t)]{\nablasym u^{(h)}(t)} \norm[\Omega_\pm^{(h)}(t)]{\nablasym q(t)}+ \delta_0 \norm[H^3(\Omega_\pm^{(h)})]{u^{(h)}(t)} \norm[H^3(\Omega_\pm^{(h)})]{q(t)} +\norm[\Omega_\pm^{(h)}(t)]{f(t)} \norm[\Omega_\pm^{(h)}(t)]{q(t)}
 \end{aligned}
 $$
This gives for all $q \in L^2({[0,T]};H^{3}_0(\Omega_\pm^{(h)}))$, which are solenoidal, that
$$
\begin{aligned}
\int_0^T \abs{J_1(t)} dt \leq c(\delta_0 \no{q}_{L^2(0,T;H^3(\Omega_\pm^{(h)}))}+\no{q}_{L^2(0,T;H^1(\Omega_\pm^{(h)}))})
\end{aligned}
$$

For $J_2(t)$ we first note that by the density preserving nature of $\Phi$ we can obtain by a change of variables that
$$
\begin{aligned}
\inner[\Omega_\pm^{(h)}(t)]{(\rho^{(h)}  u^{(h)})(t-h) \circ \Phi_{-h}^{(h)}(t)}{q(t)} = \inner[\Omega_\pm^{(h)}(t-h)]{(\rho^{(h)} u^{(h)})(t-h)}{q(t) \circ \Phi_{h}^{(h)}(t-h)}.
\end{aligned}
$$
At this point we use Lemma~\ref{lem:closeness}
and find (using that all quantities are extended constant in time for $t<0$)
$$
\begin{aligned}
\int_0^T \abs{J_2(t)} dt &= \int_0^T \left| \inner[\Omega_\pm^{(h)}(t-h)]{(\rho^{(h)}  u^{(h)})(t-h)}{\frac{q(t)-q(t) \circ \Phi_h^{(h)}(t-h)}{h}} \right| dt 
\\
&\leq C\int_0^T\norm[L^4(\Omega_\pm^{(h)})]{(\rho^{(h)} u^{(h)}(t))} \norm[L^\frac{4}{3}(\Omega)]{\frac{q(t)-q(t) \circ \Phi_h^{(h)}(t-h)}{h}}\, dt
\\
&\leq C\int_0^T\norm[L^4(\Omega_\pm^{(h)})]{(\rho^{(h)} u^{(h)}(t))} \norm[L^{4}(\Omega)]{\nabla q(t)}\, dt
\end{aligned}
$$
Since by Sobolev embedding $ L^2(0,T;H^1(\Omega_\pm^{(h)})) \subset L^2(0,T;L^s(\Omega_\pm^{(h)}))$ for all $s<\infty$, we find
\[
\int_0^T \abs{J_2(t)} dt \leq C \norm[L^2(0,T;H^2(\Omega_\pm^{(h)}))]{q},
 \]
which finishes the proof.
\end{proof}

The next ingredient is to again define the differential pressure resulting from a spoiled compatibility for solenoidality.
For that we first derive the weak formulation acting on extensions constructed in Proposition~\ref{prop:arbMeanExt}. 
This we do by introducing an average pressure on $\psi^\pm$ (as defined in Proposition~\ref{prop:arbMeanExt}) for any $h>0$ by taking a fixed $q_0 \in C_0^\infty(\Omega;\R^2)$ such that $\nabla \cdot q_0 = \psi^+-\psi^-$ and defining
\begin{align*}
{P}^{(h)}(t) &:= \int_0^T \int_{0}^{\ell} \left( \mathcal{L}_{\Gamma}(\eta^{(h)}) + \rho_s \frac{\partial_t \eta^{(h)}(t) - \partial_t \eta^{(h)}(t-h)}{h} \right) \cdot \phi_0 \, dx + 
\sqrt{\delta_0} \int_{0}^{\ell}\partial_{x}^3  \eta^{(h)} \cdot \partial_{x}^3 \phi_0 \, dx
\\\nonumber &+  \int_{\Omega} \mu \nablasym(u^{(h)}) \cdot \nabla q_0 \, dz + \int_\Omega \delta_0 \nabla \Delta u^{(h)} \cdot \nabla \Delta  q_0\, dz
  \\&+ \int_{\Omega} \left( \frac{(\rho^{(h)} u^{(h)})(t) - (\rho^{(h)} u^{(h)})(t-h) \circ \Phi_{-h}^{(h)}(t) }{h} \cdot q_0  \right) dz  
  -  \int_{\Omega} \left(f \cdot q_0 \right) \, dz dt 
\end{align*}
where $\phi_0 := q\circ \eta^{(h)}$.
Using the estimates available on $u^{(h)}$ and $\eta^{(h)}$, as well as Lemma \ref{lem:time1} we deduce that $P^{(h)}(t)$ is uniformly bounded in $L^2(0,T)$.

Further we find for $\xi\in L^2([0,T];(H^1_0 \cap H^3)((0,\ell)))$ that 
\begin{align} \label{eq:longTimedelayed-pressure}
& \int_0^T \int_{0}^{\ell} \left( \mathcal{L}_{\Gamma}(\eta^{(h)}) + \rho_s \frac{\partial_t \eta^{(h)}(t) - \partial_t \eta^{(h)}(t-h)}{h} \right) \cdot \xi \, dx + 
\sqrt{\delta_0} \int_{0}^{\ell}\partial_{x}^3  \eta^{(h)} \cdot \partial_{x}^3 \xi \, dx
 \\\nonumber 
&+  \int_{\Omega} \mu \nablasym(u^{(h)}) \cdot \nabla \ext{\eta^{(h)}}(\xi) \, dz + \int_{\Omega} \left( \frac{(\rho^{(h)} u^{(h)})(t)  - (\rho^{(h)} u^{(h)})(t-h) \circ \Phi_{-h}^{(h)}(t) }{h} \cdot \ext{\eta^{(h)}}(\xi)  \right) dz  \\ \nonumber &+ \int_\Omega \delta_0 \nabla \Delta u^{(h)} \cdot \nabla \Delta  \ext{\eta^{(h)}}(\xi)\, dz -  \int_{\Omega} \left(f \cdot \ext{\eta^{(h)}}(\xi)\right) \, dz dt =\int_0^T\lam{\eta^{(h)}}(\xi)P^{(h)}\, dt. 
\end{align}
As can be seen later in the compactness proof it turns out to be essential to decouple the boundary values from the interior motion of the fluid. This was already observed in~\cite{LenRuz14}. %

Next we provide the respective estimate for the interface:
\begin{lemma}
\label{lem:time2}
 There exist a constant $C> 0$ depending only on $T$ and the initial data, such that for $\xi\in L^2(0,T;H^k_0(0,\ell))$
 \begin{align*}
  \int_0^T \inner[(0,\ell)]{\partial_t(\eta)^{(h)}}{\xi}+\inner[\Omega]{\partial_t m^{(h)}}{\ext{\eta^{(h)(t)}}(\xi)} dt \leq C\left(\norm[{L^2(0,T;H^2_0(0,\ell))}]{\xi}+\delta^\frac{1}{4}\norm[L^2(0,T;H^3(0,\ell))]{\xi}\right).
 \end{align*}
\end{lemma}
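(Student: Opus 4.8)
Here I interpret the two pairings in the statement through their difference-quotient representatives, exactly as in Lemma~\ref{lem:time1}: with all functions extended constantly in time for $t<0$, $\partial_t(\eta)^{(h)}$ paired with $\xi$ stands for $\int_0^\ell \rho_s h^{-1}\big(\partial_t\eta^{(h)}(t)-\partial_t\eta^{(h)}(t-h)\big)\cdot\xi\,dx$, and $\partial_t m^{(h)}$ paired with $\ext{\eta^{(h)}}(\xi)(t)$ for $\int_\Omega h^{-1}\big((\rho^{(h)}u^{(h)})(t)-(\rho^{(h)}u^{(h)})(t-h)\big)\cdot\ext{\eta^{(h)}}(\xi)(t)\,dz$; for fixed $h$ both are honest integrals of $L^2(0,T;L^2)$-functions, since $\rho^{(h)}u^{(h)}\in L^2(0,T;L^2(\Omega))$, $\partial_t\eta^{(h)}\in L^2(0,T;H^1((0,\ell)))$ and $\ext{\eta^{(h)}}(\xi)\in L^2(0,T;H^3(\Omega))$ by Proposition~\ref{prop:arbMeanExt}. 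The plan is to reduce the fluid part to the pressure-corrected identity~\eqref{eq:longTimedelayed-pressure} by first inserting the flow map,
\begin{align*}
\frac{(\rho^{(h)}u^{(h)})(t)-(\rho^{(h)}u^{(h)})(t-h)}{h}&=\frac{(\rho^{(h)}u^{(h)})(t)-(\rho^{(h)}u^{(h)})(t-h)\circ\Phi_{-h}^{(h)}(t)}{h}\\
&\quad+\frac{(\rho^{(h)}u^{(h)})(t-h)\circ\Phi_{-h}^{(h)}(t)-(\rho^{(h)}u^{(h)})(t-h)}{h},
\end{align*}
so that the first summand, paired with $\ext{\eta^{(h)}}(\xi)$, is precisely the fluid-inertia term of~\eqref{eq:longTimedelayed-pressure}, and the second summand contributes a term $\int_0^T K_2(t)\,dt$ to be estimated separately.

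For the first summand together with the beam-inertia term, identity~\eqref{eq:longTimedelayed-pressure} rewrites their joint time-integral as
\begin{align*}
&\int_0^T\lam{\eta^{(h)}}(\xi)P^{(h)}\,dt-\int_0^T\!\!\int_0^\ell\Big(\mathcal{L}_{\Gamma}(\eta^{(h)})\cdot\xi+\sqrt{\delta_0}\,\partial_x^3\eta^{(h)}\cdot\partial_x^3\xi\Big)dx\,dt\\
&\qquad-\int_0^T\!\!\int_\Omega\Big(\mu\,\nablasym(u^{(h)})\cdot\nabla\ext{\eta^{(h)}}(\xi)+\delta_0\nabla\Delta u^{(h)}\cdot\nabla\Delta\ext{\eta^{(h)}}(\xi)-f\cdot\ext{\eta^{(h)}}(\xi)\Big)dz\,dt.
\end{align*}
I would then estimate these term by term with the $h$-uniform bounds of Corollary~\ref{longTimedelayedEnergyEstimate} and the extension estimates of Proposition~\ref{prop:arbMeanExt}: the pressure term by $\norm[L^2(0,T)]{P^{(h)}}\,\norm[L^2(0,T)]{\lam{\eta^{(h)}}(\xi)}\le C\norm[L^2(0,T;H^2_0(0,\ell))]{\xi}$, using $\abs{\lam{\eta^{(h)}}(\xi)(t)}\le C\norm[L^1(0,\ell)]{\xi(t)}$ and the uniform $L^2(0,T)$-bound on $P^{(h)}$; the elastic term by $C\norm[L^2(0,T;H^2_0(0,\ell))]{\xi}$ from the uniform $L^\infty(0,T;H^2)$-bound on $\eta^{(h)}$ together with the lower bound $\partial_x\eta^{(h)}_1\ge\delta_1>0$ of Proposition~\ref{propenergy}(1), which keeps the negative-power nonlinearity of $\mathcal{L}_{\Gamma}$ in $L^\infty$; and the viscous and force terms by $C\norm[L^2(0,T;H^1_0(0,\ell))]{\xi}$ via $\norm[H^1(\Omega)]{\ext{\eta^{(h)}}(\xi)(t)}\le C\norm[H^1(0,\ell)]{\xi(t)}$ and the uniform bounds on $\nablasym u^{(h)}$ and $f$ in $L^2(0,T;L^2)$.

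The two $\delta_0$-regularizing terms are the only genuinely delicate point, as the power of $\delta_0$ must be tracked. From Corollary~\ref{longTimedelayedEnergyEstimate} one has $\sqrt{\delta_0}\norm[L^\infty(0,T;L^2(0,\ell))]{\partial_x^3\eta^{(h)}}\le C\delta_0^{1/4}$ and $\sqrt{\delta_0}\norm[L^2(0,T;L^2(\Omega))]{\nabla\Delta u^{(h)}}\le C$, while the \emph{explicit} $H^3$-estimate of Proposition~\ref{prop:arbMeanExt}, $\norm[H^3(\Omega)]{\ext{\eta^{(h)}}(\xi)(t)}\le c\big(\norm[H^3(0,\ell)]{\xi(t)}+\norm[H^3(0,\ell)]{\eta^{(h)}(t)}\,\norm[H^2(0,\ell)]{\xi(t)}\big)$, combined with $\norm[H^3(0,\ell)]{\eta^{(h)}(t)}\le C\delta_0^{-1/4}$, shows (after Cauchy--Schwarz in time and using $\delta_0<1$ to absorb lower-order powers) that both $\delta_0$-terms are bounded by $C\big(\delta_0^{1/4}\norm[L^2(0,T;H^3(0,\ell))]{\xi}+\norm[L^2(0,T;H^2_0(0,\ell))]{\xi}\big)$, which is exactly the claimed shape; using the energy-dependent $H^3$-extension estimate instead would be fatal, since its constant degenerates as $\delta_0\to0$. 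Finally, for $\int_0^T K_2(t)\,dt$, since $\Phi_{-h}^{(h)}(t)$ is volume- and density-preserving a change of variables gives
\[
K_2(t)=\int_\Omega (\rho^{(h)}u^{(h)})(t-h)\cdot\frac{\ext{\eta^{(h)}}(\xi)(t)\circ\Phi_h^{(h)}(t-h)-\ext{\eta^{(h)}}(\xi)(t)}{h}\,dz,
\]
and the same change of variables yields $\norm[L^{4/3}(\Omega)]{\ext{\eta^{(h)}}(\xi)(t)\circ\Phi_h^{(h)}(t-h)-\ext{\eta^{(h)}}(\xi)(t)}=\norm[L^{4/3}(\Omega)]{\ext{\eta^{(h)}}(\xi)(t)-\ext{\eta^{(h)}}(\xi)(t)\circ\Phi_{-h}^{(h)}(t)}\le Ch\,\norm[L^4(\Omega)]{\nabla\ext{\eta^{(h)}}(\xi)(t)}$ by~\eqref{eq:malte1} of Lemma~\ref{lem:closeness} with $p=\tfrac43$; then the two-dimensional embedding $H^1\hookrightarrow L^4$, the bound $\norm[H^2(\Omega)]{\ext{\eta^{(h)}}(\xi)(t)}\le C\norm[H^2(0,\ell)]{\xi(t)}$, and $\norm[L^4(\Omega)]{u^{(h)}(t-h)}\le C\norm[H^1(\Omega)]{u^{(h)}(t-h)}$ give $\abs{K_2(t)}\le C\norm[H^1(\Omega)]{u^{(h)}(t-h)}\,\norm[H^2(0,\ell)]{\xi(t)}$, and integrating in $t$ with the uniform $L^2(0,T;H^1(\Omega))$-bound on $u^{(h)}$ yields $\int_0^T\abs{K_2(t)}\,dt\le C\norm[L^2(0,T;H^2_0(0,\ell))]{\xi}$. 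Collecting all contributions gives the stated estimate; the main obstacle is purely the $\delta_0$-bookkeeping in the regularizing terms via the explicit $H^3$-extension estimate.
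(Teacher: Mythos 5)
Your proposal is correct and follows essentially the same route as the paper: test the pressure-corrected weak formulation \eqref{eq:longTimedelayed-pressure} with the coupled pair $(\xi,\ext{\eta^{(h)}}(\xi))$, isolate the flow-map correction term $K_2$ via the density-preserving change of variables and Lemma~\ref{lem:closeness}, and estimate the remaining terms using the extension bounds of Proposition~\ref{prop:arbMeanExt}, the energy estimate of Corollary~\ref{longTimedelayedEnergyEstimate}, and the uniform $L^2(0,T)$-bound on $P^{(h)}$. You merely spell out more explicitly what the paper compresses into a single sentence, including the careful $\delta_0$-power bookkeeping via the explicit $H^3$-extension estimate, which matches the paper's intent.
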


\noindent
\begin{proof}
Since $(\xi, \ext{\eta^{(h)}}(\xi))$ is a valid pair of test-functions for \eqref{eq:longTimedelayed-pressure}, we find %
\begin{align*} 
&\int_0^T \inner[(0,\ell)]{\partial_t(\eta)^{(h)}}{\xi}+\inner[\Omega]{\partial_t m^{(h)}}{\ext{\eta^{(h)}}(\xi)}  \\&- \int_\Omega \frac{1}{h} \left( (\rho^{(h)} u^{(h)})(t-h) - (\rho^{(h)} u^{(h)})(t-h) \circ \Phi_{-h}(t) \right) \cdot \ext{\eta^{(h)}}(\xi) dx dt
\\
&=
\int_0^T\rho_s  \int_{0}^{\ell}\left(\tfrac{\partial_t \eta^{(h)}(t) - \partial_t \eta^{(h)}(t-h)}{h} \right) \cdot \xi \, dx +\int_{\Omega} \left( \tfrac{(\rho^{(h)} u^{(h)})(t) - (\rho^{(h)} u^{(h)})(t-h) \circ \Phi_{-h}(t) }{h} \cdot \ext{\eta^{(h)}}(\xi)  \right) dz \, dt
\\
&
= \int_0^T-\inner[(0,\ell)]{\mathcal{L}_{\Gamma}(\eta^{(h)})}{\xi}
-\sqrt{\delta_0}\int_{0}^{\ell} \partial_{x}^3  \eta^{(h)} \cdot \partial_{x}^3 \xi \, dx
 - \int_{\Omega} \mu \nablasym(u^{(h)}) \cdot \nabla \ext{\eta^{(h)}}(\xi) \, dz 
\\  
&\quad  +\delta_0\int_\Omega  \nabla \Delta u^{(h)} \cdot \nabla \Delta  \ext{\eta^{(h)}}(\xi)\, dz+  \int_{\Omega} \left(f \cdot \ext{\eta^{(h)}}(\xi)\right) \, dz dt +\lam{\eta^{(h)}}(\xi)P^{(h)}\, dt. 
\end{align*}
Now the properties of the extension and the estimates on $\lam{\eta^{(h)}}$ derived in Proposition~\ref{prop:arbMeanExt}, the assumptions on $\mathcal{L}_{\Gamma}$, the $L^2$ in time bound of $P^{(h)}$ and the estimates of Lemma \ref{lem:closeness} and \ref{lem:time1} imply the result.
\end{proof}

By the above lemmata we find that both quantities do possess some kind of weak time derivative. 
 We have now finished all preparations to provide the main result of this section.
\begin{proposition}[Convergence of the velocity]
We find that for a subsequence 
\[
\int_0^T\inner[(0,\ell)]{(\partial_t\eta)^{(h)}}{\partial_t\eta^{(h)}}\, dt + \int_0^T\inner[\Omega]{m^{(h)}}{u^{(h)}}\, dt
 \to \int_0^T\norm[(0,\ell)]{\partial_t\eta}^2+\int_0^T\norm[\Omega]{\sqrt{\rho}u}^2\, dt.
\]
Further, the same subsequence may be assumed to satisfy
$(\partial_t\eta)^{(h)}\to \partial_t \eta$, strongly in $L^2([0,T]\times [0,\ell])$ and $m^{(h)}\to \rho u$  
 strongly in $L^2([0,T]\times \Omega)$.
\end{proposition}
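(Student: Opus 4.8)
I would deduce the statement from the generalized Aubin--Lions lemma, Theorem~\ref{thm:auba}, applied to the sequences $f^{(h)}:=(\partial_t\eta^{(h)},u^{(h)})$ and $g^{(h)}:=\big((\partial_t\eta)^{(h)},m^{(h)}\big)$, the second read as a functional through $\langle(a,b),(\xi,v)\rangle:=\int_0^\ell a\cdot\xi\,dx+\int_\Omega b\cdot v\,dz$. Take $X:=L^2((0,\ell);\R^2)\times L^2(\Omega;\R^2)$ and $Z:=H^3((0,\ell);\R^2)\times H^3(\Omega;\R^2)$, so that $X^*\hookrightarrow\hookrightarrow Z^*$ is Rellich's theorem (hypothesis~(4)). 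Hypothesis~(1) holds with $f=(\partial_t\eta,u)$ and $g=(\partial_t\eta,\rho u)$: the weak convergences $f^{(h)}\weakto f$ in $L^2(0,T;X)$ and $g^{(h)}\weakto g$ in $L^2(0,T;X^*)$ follow from the energy bounds of Corollary~\ref{longTimedelayedEnergyEstimate}, from $\rho^{(h)}\to\rho$ strongly in every $L^p$ (a consequence of \eqref{uniformetah}) and from the fact that the averaging $\fint_{-h}^0(\cdot)\,ds$ converges strongly to the identity as $h\to0$; moreover $g^{(h)}$ is bounded in $L^\infty(0,T;Z^*)$, since $\|(\partial_t\eta)^{(h)}(t)\|_{L^2}^2$ and $\|m^{(h)}(t)\|_{L^2}^2$ are controlled by the $h$-averages of $\|\partial_t\eta^{(h)}\|_{L^2}^2$ and $\|\sqrt{\rho^{(h)}}u^{(h)}\|_{L^2}^2$ appearing in the energy estimate.

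The delicate hypotheses are (2) and (3). For (2) I would take $f^{(h)}_\kappa$ to be a spatial mollification of $f^{(h)}$ at scale $\kappa$, \emph{repaired into an admissible (solenoidal and correctly coupled) field}: split $u^{(h)}=\ext{\eta^{(h)}}(\partial_t\eta^{(h)})+u^{(h)}_{\mathrm{int}}$ with $u^{(h)}_{\mathrm{int}}$ of zero trace on the interface, mollify each summand inside the subdomains $\Omega^{(h)}_\pm(t)$ — which are uniformly Lipschitz by Proposition~\ref{propenergy} — and restore the divergence constraint and the coupling by the universal Bogovskij operator of Theorem~\ref{theobog} and the extension of Proposition~\ref{prop:arbMeanExt}. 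Standard mollification estimates, uniform in $h$ by the uniform geometry, give $\|f^{(h)}-f^{(h)}_\kappa\|_{L^2(0,T;X)}\to0$ as $\kappa\to0$ uniformly in $h$, while $\|f^{(h)}_\kappa\|_{L^1(0,T;Z)}\le C(\kappa)$. Hypothesis~(3), the equicontinuity of $g^{(h)}$, is exactly what Lemmas~\ref{lem:time1} and \ref{lem:time2} provide: writing $g^{(h)}(t)-g^{(h)}(t+s)$ as a time-integral of the weak time-derivative and inserting the flow maps, the interior contribution is bounded through Lemma~\ref{lem:time1} and the closeness estimate Lemma~\ref{lem:closeness}, and the interface contribution through Lemma~\ref{lem:time2} and the uniform $L^2(0,T)$-bound on the differential pressure $P^{(h)}$ of \eqref{eq:longTimedelayed-pressure}; this yields the required estimate with exponent $\alpha=\tfrac12$ and $A_n(t)+1\sim\|\nabla u^{(h)}(t)\|_{L^2}^2+\|\partial_t\eta^{(h)}(t)\|_{L^2}^2+|P^{(h)}(t)|^2+\|f(t)\|_{L^2}^2+1\in L^1(0,T)$ uniformly, the $\delta_0$-dependent terms being $\delta_0$ or $\delta_0^{1/2}$ times $\kappa$-dependent smooth norms of $f^{(h)}_\kappa$, hence of order $h^{\alpha_0/2}C(\kappa)$ and negligible for $h$ small by \eqref{eq:delta0}.

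Theorem~\ref{thm:auba} then gives $\int_0^T\langle g^{(h)},f^{(h)}\rangle\,dt\to\int_0^T\langle g,f\rangle\,dt=\int_0^T\big(\|\partial_t\eta\|_{L^2(0,\ell)}^2+\|\sqrt\rho u\|_{L^2(\Omega)}^2\big)\,dt$, which is the first assertion. For the strong convergence, I would test the weak equation \eqref{eq:longTimedelayed} with the admissible pair $f^{(h)}$ itself — so that the differential pressure drops out — and use Lemma~\ref{lem:closeness} together with \eqref{eq:delta0} to show that $\int_0^T\langle g^{(h)},f^{(h)}\rangle\,dt$ agrees with $\int_0^T\big(\|\partial_t\eta^{(h)}\|_{L^2(0,\ell)}^2+\|\sqrt{\rho^{(h)}}u^{(h)}\|_{L^2(\Omega)}^2\big)\,dt$ up to an error consisting of the time-averaging discrepancies $\fint_{-h}^0[(\rho^{(h)}u^{(h)})(t+s)-(\rho^{(h)}u^{(h)})(t)]\,ds$ and $\fint_{-h}^0[\partial_t\eta^{(h)}(t+s)-\partial_t\eta^{(h)}(t)]\,ds$ paired with $u^{(h)}$ resp.\ $\partial_t\eta^{(h)}$, estimated exactly as in step~(3) and of size $O(h^{1-\alpha_0/2})$. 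Hence $\int_0^T(\|\partial_t\eta^{(h)}\|^2+\|\sqrt{\rho^{(h)}}u^{(h)}\|^2)\to\int_0^T(\|\partial_t\eta\|^2+\|\sqrt\rho u\|^2)$; since $\partial_t\eta^{(h)}\weakto\partial_t\eta$ and $\sqrt{\rho^{(h)}}u^{(h)}\weakto\sqrt\rho u$ in $L^2$ and each of the two terms is weakly lower semicontinuous, both must in fact converge. Therefore $\partial_t\eta^{(h)}\to\partial_t\eta$ and $\sqrt{\rho^{(h)}}u^{(h)}\to\sqrt\rho u$, and so $u^{(h)}\to u$, strongly in $L^2$; applying the contraction $\fint_{-h}^0(\cdot)\,ds$ and its strong convergence to the identity, $(\partial_t\eta)^{(h)}\to\partial_t\eta$ in $L^2((0,T)\times(0,\ell))$ and $m^{(h)}=\fint_{-h}^0\rho^{(h)}u^{(h)}\to\rho u$ in $L^2((0,T)\times\Omega)$.

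The main obstacle is that the weak time-derivative of $m^{(h)}$ is controlled \emph{only} against solenoidal test fields that are correctly coupled to the structure, and across the interface only modulo the scalar pressure $P^{(h)}$; this is why the regularizations $f^{(h)}_\kappa$ in hypothesis~(2) must be produced inside this admissible class — forcing the use of the universal Bogovskij operator and the solenoidal extension of Section~3 — and why the residual higher-order regularizing terms in (3) must be absorbed by the scaling $\delta_0=h^{\alpha_0}$.
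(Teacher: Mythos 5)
Your plan for the first assertion is in the same spirit as the paper's: use the generalized Aubin--Lions lemma with $g^{(h)}=((\partial_t\eta)^{(h)},m^{(h)})$, control the weak time-derivative of $g^{(h)}$ via Lemmas~\ref{lem:time1} and \ref{lem:time2}, and produce admissible approximations of $f^{(h)}$ by splitting the velocity into the coupled extension $\ext{\eta^{(h)}}(\partial_t\eta^{(h)})$ plus an interior remainder. However, the paper does not apply Theorem~\ref{thm:auba} once in a product space; it applies it \emph{twice}, to $\mathrm{I}^{(h)}:=\int_0^T\inner[(0,\ell)]{(\partial_t\eta)^{(h)}}{\partial_t\eta^{(h)}}+\inner[\Omega]{m^{(h)}}{\ext{\eta^{(h)}}(\partial_t\eta^{(h)})}\,dt$ and to $\mathrm{II}^{(h)}:=\int_0^T\inner[\Omega]{m^{(h)}}{u^{(h)}-\ext{\eta^{(h)}}(\partial_t\eta^{(h)})}\,dt$, with \emph{different} functional frames. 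This is not cosmetic: $\mathrm{I}^{(h)}$ needs $X=L^2(0,\ell)\times H^{-s}(\Omega)$, $Z=H^{s_0}\times H^{s_0}$ with $0<s<s_0<\tfrac14$ and a linear mollifier on $\partial_t\eta^{(h)}$ re-extended via $\ext{\eta^{(h)}}$; $\mathrm{II}^{(h)}$ needs $X=H^{-s}(\Omega)$, $Z=L^2(\Omega)$ and the interior mollifier of \cite[Lemma A.13]{LenRuz14} that \emph{shrinks} the support into $\Omega^{(h)}_\pm$ so that Lemma~\ref{lem:time1} applies. Your single application with $Z=H^3\times H^3$ would have to reproduce both of these mechanisms simultaneously, and it is not clear that hypothesis~(2) can hold with this $Z$ (the $\kappa$-dependent $H^3$ bound is only used inside hypothesis~(3), not as the $Z$-norm in hypothesis~(2)). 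Still, the plan could likely be repaired; I do not regard this part as a fatal gap.

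The genuine gap is in the strong-convergence argument. You claim that testing \eqref{eq:longTimedelayed} with $f^{(h)}=(\partial_t\eta^{(h)},u^{(h)})$ together with Lemma~\ref{lem:closeness} and \eqref{eq:delta0} shows
\[
\int_0^T\inner[\Omega]{m^{(h)}}{u^{(h)}}\,dt-\int_0^T\norm[L^2(\Omega)]{\sqrt{\rho^{(h)}}\,u^{(h)}}^2\,dt = O\!\left(h^{1-\alpha_0/2}\right),
\]
which would require controlling $\int_0^T\int_\Omega\big(m^{(h)}(t)-\rho^{(h)}(t)u^{(h)}(t)\big)\cdot u^{(h)}(t)\,dz\,dt$. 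But $m^{(h)}-\rho^{(h)}u^{(h)}=\fint_{-h}^0\big[(\rho^{(h)}u^{(h)})(t+s)-(\rho^{(h)}u^{(h)})(t)\big]ds$ is a genuine \emph{temporal} modulus-of-continuity quantity for the momentum, and no such control is available: the weak time-derivative of $m^{(h)}$ in Lemmas~\ref{lem:time1} and \ref{lem:time2} is only controlled against the admissible (mollified or interior-supported) test fields, never against $u^{(h)}$ itself, and Lemma~\ref{lem:closeness} estimates only the \emph{spatial} transport error $q\circ\Phi_s^{(h)}-q$, not a time increment of $\rho^{(h)}u^{(h)}$. Establishing the temporal compactness of $\rho^{(h)}u^{(h)}$ is precisely what the whole argument is trying to do, so the claimed $O(h^{1-\alpha_0/2})$ bound is circular. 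The paper avoids this entirely: it proves convergence of the norms of the \emph{averaged} quantities $\int_0^T\norm[L^2]{(\partial_t\eta)^{(h)}}^2+\norm[L^2]{m^{(h)}/\sqrt{\rho^{(h)}}}^2\,dt$ by a further splitting into $\mathrm{III}^{(h)}+\mathrm{IV}^{(h)}$ and two more applications of Theorem~\ref{thm:auba}, where in $\mathrm{IV}^{(h)}$ the term $\fint_{t}^{t+h}u^{(h)}(s)\,ds$ is replaced by its flow-transported version $\tilde u^{(h)}$ (this is where Lemma~\ref{lem:closeness} and the scaling $\delta_0=h^{\alpha_0}$ actually enter). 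Together with strict convexity of $L^2$ and the already-available weak convergences, this yields strong convergence of $(\partial_t\eta)^{(h)}$ and $m^{(h)}$ directly, which is exactly what the proposition asserts; strong convergence of the unaveraged $\partial_t\eta^{(h)}$, $u^{(h)}$ is neither claimed nor needed.
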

\begin{proof}
We follow the strategy developed in \cite[Section 5]{MuhSch20}, see also \cite{LenRuz14}. 

For the first convergence we split
\begin{align*}
&\int_0^T\inner[(0,\ell)]{(\partial_t\eta)^{(h)}}{\partial_t\eta^{(h)}}\, dt + \int_0^T\inner[\Omega]{m^{(h)}}{u^{(h)}}\, dt \\
&=\underbrace{\int_0^T \inner[(0,\ell)]{(\partial_t\eta)^{(h)}}{ \partial_t\eta^{(h)}}+\inner[\Omega]{m^{(h)}}{\ext{\eta^{(h)}}(\partial_t\eta^{(h)})}\, dt}_{=:\mathrm{I}^{(h)}} + \underbrace{\int_0^T\inner[\Omega]{m^{(h)}}{u^{(h)}-\ext{\eta^{(h)}}(\partial_t\eta^{(h)})}\,dt}_{ =:\mathrm{II}^{(h)}}.
\end{align*}

\subsubsection*{Convergence of $\mathrm{I}^{(h)}$}
For $I^{(h)}$ we choose $g_h=( (\partial_t\eta)^{(h)},m^{(h)})$ and $f_h=( \partial_t\eta^{(h)},\ext{\eta^{(h)}}(\partial_t\eta^{(h)}))$. The spaces are $X:=L^2(0,\ell)\times H^{-s}(\Omega)$ and consequently $X^*=L^2(0,\ell)\times H^{s}(\Omega)$. We also define $Z:=H^{s_0}(0,\ell)\times H^{s_0}(\Omega)$, where we choose $0<s<s_0<\frac14$, while for integrability in time we restrict ourselves to $L^2$. Next we introduce $(\partial_t\eta^h)_\kappa$ to be a standard smooth approximation of $\partial_t\eta^h$ induced by a linear mollification operator that satisfies respective estimates and boundary values. Accordingly we introduce
\[
f_{h,\kappa}:=((\partial_t\eta^h)_\kappa,\ext{\eta^{(h)}}(\partial_t\eta^h)_\kappa)).
\]
Hence let us check the assumptions of Theorem~\ref{thm:auba}. Note that by the trace theorem $\partial_t\eta^{(h)}$ is uniformly bounded in $L^2(0,T;H^\frac{1}{2}(0,\ell))$. Hence (1) and (2) in Theorem~\ref{thm:auba} follow by weak compactness, approximation results and the linearity of the mollifier as well as the extension. The property (4) is a direct consequence of the choices of the spaces made by the compactness properties of Sobolev spaces. Hence we focus on property (3) in Theorem~\ref{thm:auba}, which is (as common in settings involving variable geometry) the main issue; as it relates to continuity properties in time. For that we estimate for $0<t<\sigma<t+\sigma_0$ 
\begin{align*}
&\abs{\inner{g_h(t)-g_h(\sigma)}{f_{h,\kappa}(t)}}
=\abs{\inner{\int_t^\sigma \partial_t g_h(s)\, ds}{f_{h,\kappa}(t)}}
\\
&\quad \leq \left | \int_t^\sigma\inner[(0,\ell)]{\partial_t(\partial_t\eta)^{(h)}(s)}{ (\partial_t\eta^{(h)})_\kappa(t)}\, ds+\int_t^\sigma\inner[\Omega]{\partial_t m^{(h)}(s)}{\ext{\eta^{(h)}(s)}((\partial_t\eta^{(h)})_\kappa(t))}\, ds\right|
\\
&\quad + \left |\int_t^\sigma \inner[\Omega]{\partial_t m^{(h)}(s)}{\ext{\eta^{(h)}(s)}((\partial_t\eta^{(h)})_\kappa(t))-\ext{\eta^{(h)}(t)}((\partial_t\eta^{(h)})_\kappa(t))}\, ds\right|=\mathcal{A}_1+\mathcal{A}_2
\end{align*}
Crucially above was to change the geometry of the extension in order to use the weak coupled time-derivative. Indeed on $\mathcal{A}_1$ we may now use Lemma~\ref{lem:time2} and the properties of the mollifier as well as the properties of the extension operator defined in Proposition \ref{prop:arbMeanExt}. This is the key point where the explicit bound on the $H^3$ norm of the extension operator is relevant.
Indeed, it implies that
\begin{align*}
\mathcal{A}_1\leq C\bigg(\int_t^\sigma \norm[\mathrlap{H^3(0,\ell)}]{(\partial_t\eta^{(h)})_\kappa(t)}^2\quad ds\bigg)^{\mathrlap{\frac12}}
\leq C(\kappa)\bigg(\int_t^\sigma \norm[\mathrlap{L^2(0,\ell)}]{\partial_t\eta^{(h)}(t)}^2\,\quad ds\bigg)^{\mathrlap{\frac12}}
= C(\kappa)\abs{t-\sigma}^\frac12\norm[L^2(0,\ell)]{\partial_t\eta^{(h)}(t)}.
\end{align*}
For that estimate of $\mathcal{A}_2$ we find using Proposition~\ref{prop:arbMeanExt} and the property of the mollifier to get
\begin{align*}
\norm[L^2(\Omega)]{\int_t^\sigma\partial_\alpha\ext{\eta^{(h)}(\alpha)}((\partial_t\eta^{(h)})_\kappa(t))\, d\alpha}
\leq C\abs{\sigma-\tau}\norm[H^1(0,\ell)]{(\partial_t\eta^{(h)})_\kappa(t)}\leq C(\kappa)\abs{\sigma-\tau}.
\end{align*}
Now by partial integration, we find using the uniform $L^\infty$ bounds in time on $m^{(h)}$ to get
\begin{align*}
\mathcal{A}_2&= \left |\int_t^\sigma \inner[\Omega]{\partial_t m^{(h)}(s)}{\int_t^s\partial_\alpha\ext{\eta^{(h)}(\alpha)}((\partial_t\eta^{(h)})_\kappa(t))\, d\alpha}\, ds\right|
\\
&\leq \left |\inner[\Omega]{m^{(h)}(\sigma)}{\int_t^\sigma \partial_\alpha\ext{\eta^{(h)}(\alpha)}((\partial_t\eta^{(h)})_\kappa(t))\, d\alpha}\right|+\left |\int_t^\sigma \inner[\Omega]{m^{(h)}(s)}{ \partial_s\ext{\eta^{(h)}(s)}((\partial_t\eta^{(h)})_\kappa(t))\, ds}\right|
\\
&\leq  C(\kappa)\abs{\sigma-\tau}.
\end{align*}
This implies the convergence of $\mathrm{I}^{(h)}$.

\subsubsection*{Convergence of $\mathrm{II}^{(h)}$}
For $\mathrm{II}^{(h)}$ we apply Theorem~\ref{thm:auba} once more. Here we may rely on the interior properties of $u^{(h)}$. Hence it makes sense to consider the convergence in both domains separately.
 We choose 
 \[g_\pm^{(h)}=m^{(h)}\chi_{\Omega_\pm^{(h)}} \text{ and }f_\pm^{(h)}=( u^{(h)}-\ext{\eta^{(h)}}(\partial_t\eta^{(h)}))\chi_{\Omega_\pm^{(h)}}.\]
 The relevant spaces now are $X=H^{-s}(\Omega)$,  $X^*=H^{s}(\Omega)$ as well as $Z=L^2(\Omega)$, where we may choose $0<s<\frac14$. For integrability in time we again choose $L^2$. As before (1) and  (4) in Theorem~\ref{thm:auba} follow by weak compactness and the compactness relations between Sobolev spaces.
The main difference with respect to the convergence of $I^{(h)}$ is the different mollification approach. Here we have a sequence of functions that is zero on the interface between $\Omega^+$ and $\Omega^-$. Hence (in a weak enough space) it is possible to make the function zero in a neighborhood in a solenoidal manner. Indeed as has been shown in \cite[Lemma A.13]{LenRuz14} (compare also to \cite[Lemma 6.3]{MuhSch20} that there is a mollifying sequence $f_\pm^{(h,\kappa)}$ that is smooth (with respective $\kappa$ bounds). Further it satisfies $\supp(f^{(h,\kappa)}_\pm)\subset \subset \Omega_\pm^{(h)}$ for all $h<h_\kappa$ and
\begin{align*}
\norm[H^{-s}(\Omega)]{f^{(h)}_\pm-f^{(h,\kappa)}_\pm}\leq c\epsilon \norm[L^2(\Omega)]{f_\pm^{(h)}},
\end{align*}
for all $\kappa<\sigma_\epsilon$ and $h<h_\kappa$ fixed. Hence (2) in Theorem~\ref{thm:auba} is satisfied. With this preparations the equicontinuity property (3) in Theorem~\ref{thm:auba} follows by the properties of the weak time-derivative in the interior Lemma~\ref{lem:time1} in each subdomain $\Omega_\pm^{(h)}$ separately.

\subsubsection*{Strong covergence of the time-averaged momenta}
Due to the strict convexity of $L^2$, it suffices to show the convergence of the norm. Further, since $\rho^{(h)}\to \rho$ strongly in any $L^p$-space and is uniformly bounded from above and below, the convergence of 
\begin{align*}
\int_0^T \norm[L^2(0,\ell)]{(\partial_t\eta)^{(h)}}^2 +\norm[L^2(\Omega)]{\frac{m^{(h)}}{\sqrt{\rho^{(h)}}}}^2\,dt\to \int_0^T \norm[(0,\ell)]{\partial_t\eta}^2 +\norm[L^2(\Omega)]\rho {u}^2\,dt
\end{align*}
 implies the wanted convergence. We split the above into
\begin{align*}
\int_0^T \norm[L^2(0,\ell)]{(\partial_t\eta)^{(h)}}^2 +\norm[L^2(\Omega)]{\frac{m^{(h)}}{\sqrt{\rho^{(h)}}}}^2\,dt
&=
\int_0^T \inner[(0,\ell)]{(\partial_t\eta)^{(h)}}{ (\partial_t\eta)^{(h)}}+\inner[\Omega]{m^{(h)}}{\ext{\eta^{(h)}}((\partial_t\eta)^{(h)})}\, dt
\\
 & \hspace{-1cm} + \int_0^T\inner[\Omega]{m^{(h)}}{\fint_{t}^{t+h} u^{(h)}(s)\, ds-\ext{\eta^{(h)}}((\partial_t\eta)^{(h)})}\,dt=: \mathrm{III}^{(h)} + \mathrm{IV}^{(h)}.
\end{align*}
On both we will apply Theorem~\ref{thm:auba} again. The term $\mathrm{III}^{(h)}$ converges by the very same arguments as the convergence of $\mathrm{I}^{(h)}$ above.  

\subsubsection*{Convergence of $\mathrm{IV}^{(h)}$}
As for $\mathrm{IV}^{(h)}$, we apply Theorem~\ref{thm:auba} once more. 
 We choose
 \[g_h^\pm=m^{(h)}\chi_{\Omega_\pm^{(h)}} \text{ and }f_h^\pm=\fint_{t}^{t+h} u^{(h)}(s)\, ds-\ext{\eta^{(h)}}((\partial_t\eta)^{(h)})\chi_{\Omega_\pm^{(h)}}.\]
 The spaces now are $X=H^{-s}(\Omega)$, $X^*=H^{s}(\Omega)$ and $Z=L^2(\Omega)$, where as always choose $0<s<\frac14$.  Hence as before (1) and  (4) in Theorem~\ref{thm:auba} follow by weak compactness and the compactness relations between Sobolev spaces.
The main difference with respect to the convergence of $\mathrm{II}^{(h)}$ is again  the modification of the mollification approach. Indeed the right hand side term in $\mathrm{IV}^{(h)}$ is not zero on the curve $\eta^{(h)}$; for $(t,x)\in [0,T]\times [0,\ell]$ we have
 \[
 (\partial_t\eta)^{(h)}(t,x)=\fint_{-h}^0 u(t+s,\eta^{(h)}(s,x))\, ds =\fint_{-h}^0  u(t+s)\circ {\Phi}^{(h)}_{s}(t)(\eta^{(h)}(t,x))\, ds \, .
 \]
 In particular, we wish to use $\ext{\eta^{(h)}(t)}((\partial_t\eta)^{(h)})$ and hence we need to substract a mean value of $u$ that does cancel the boundary values. Due to the above identities the right choice is
 \[
 \tilde{u}^{(h)}(t,y):=\fint_{-h}^0 u(t+s,{\Phi}^{(h)}_{s}(t)(y))\, ds;
 \]
however, this function does satisfy the right boundary conditions. But its rather weak differentiability does not matter by the choices of $Z$ and $X$. Hence by defining $\tilde{f}_h^\pm:=(\tilde{u}^{(h)}-\ext{\eta^{(h)}}((\partial_t\eta)^{(h)})\chi_{\Omega_\pm^{(h)}}$ we find analogous to $\mathrm{II}^{(h)}$ a mollification of $\tilde{f}_h^\pm$ denoted by $f_{h,\kappa}^\pm$ that is smooth (with respective $\kappa$ bounds). Further we can require that it satisfies $\supp(f_{h,\kappa}^\pm)\subset \subset \Omega_\pm^{(h)}$ for all $h<h_\kappa$ and
\begin{align*}
\norm[H^{-s}(\Omega)]{\tilde{f}_h^\pm-f_{h,\kappa}^\pm}\leq c\epsilon \norm[L^2(\Omega)]{\tilde{f}_h},
\end{align*}
for all $\kappa<\sigma_\epsilon$ and $h<h_\kappa$ fixed. But now using Lemma~\ref{lem:closeness} we find using also the fact that $L^p(\Omega)$ continuously embeds into $H^{-s}(\Omega)$ for $-s-1\leq -\frac{2}{p}$ that
\[
\norm[H^{-s}(\Omega)]{\tilde{f}_h^\pm-f^\pm_h}\leq c\norm[L^p(\Omega)]{\tilde{f}_h^\pm-f^\pm_h}\leq 
C \sqrt{h}
 \norm[{L^{2}(0,T;L^\frac{2p}{2-p}(\Omega))}]{\nabla u}.
\]
But that right hand side can be quantified uniformly by some power of $h$ using that we choose $\delta_0$ not to small~\eqref{eq:delta0}. Indeed, 
\[
\norm[L^2(0,T;C^{0,1}(\Omega))]{\nabla u^{(h)}}^2\leq \frac{C}{\delta_0}=\frac{C}{h^\alpha_0},
\]
with $\alpha_0<1$.
Hence 
\begin{align*}
\norm[H^{-s}(\Omega)]{{f}_h^\pm-f_{h,\kappa}^\pm}\leq C\epsilon ,
\end{align*}
and (2) in Theorem~\ref{thm:auba} is satisfied. Further now $f_{h,\kappa}^\pm$ is a valid smooth and compactly supported test-function on which Lemma~\ref{lem:time1} can be applied and hence all conditions to apply Theorem~\ref{thm:auba} are valid.
\end{proof}

\subsection{Convergence of the equation}

 We now proceed in a similar fashion to the proof of Theorem \ref{maintheoremdelay} and derive the limit equation by considering two different kinds of test-functions.
 
 First we start with an Eulerian test function $q$ without support on the solid, i.e.\ $q\in C_c^\infty (I\times\Omega;\R^2)$ such that $\nabla\cdot q = 0$ and $\supp q(t) \cap \eta(t,[0,\ell]) = \emptyset$. Then for all $h$ small enough, by the uniform convergence of the solid, the support of $q$ also does not intersect $\eta^{(h)}(t,[0,\ell])$, so the pair $(0,q)$ is also an admissible test-function for the time-delayed equation \eqref{eq:longTimedelayed}.
 
 We then take all the terms in this equation and consider the limit $h \to 0$. All the terms related to the solid drop out, the force term is independent of $h$ and for the fluid viscosity and the $\delta_0$ regularizing term
 \begin{align*}
  \int_0^T \int_\Omega \delta_0 \nabla \Delta u^{(h)}\nabla \Delta q +  \mu^{(h)} \nablasym(u^{(h)}) \cdot \nabla q \, dz dt \to  \int_0^T \int_\Omega \mu \nablasym (u) \cdot \nabla q \, dz dt
 \end{align*}
 by the weak $H^1$-convergence of $u^{(h)}$, the observation that $\mu^{(h)} = \mu$ on $\supp q$ for $h$ small enough and by the choice of $\delta_0$ in \eqref{eq:delta0}.
 
 What is left is the inertial term. For this we calculate by shifting half of the term in time
 \begin{align*}
  &\phantom{{}={}}  \int_0^T \int_{\Omega} \left( \frac{(\rho^{(h)} u^{(h)})(t) - (\rho^{(h)} u^{(h)})(t-h) \circ \Phi^{(h)}_{-{h}}(t) }{h} \cdot q  \right) dz dt \\
  &= -\int_0^T \int_{\Omega} \left( (\rho^{(h)} u^{(h)})(t) \cdot \frac{q(t+h) \circ \Phi^{(h)}_{h} - q(t)}{h}  \right)  dz dt\\
  &= -\int_0^T \int_{\Omega} \left( (\rho^{(h)} u^{(h)})(t) \cdot \fint_0^{h} \partial_s \left(q(t+s) \circ \Phi^{(h)}_{s}\right) ds  \right)  dz dt\\
  &= -\int_0^T \int_{\Omega} \left( (\rho^{(h)} u^{(h)})(t) \cdot \fint_0^{h} \partial_t q(t+s) \circ \Phi^{(h)}_{s} + (u^{(h)}\cdot \nabla q)(t+s) \circ \Phi^{(h)}_s ds  \right)  dz dt.
 \end{align*}
 Splitting this into two we obtain
 \begin{align*}
  -\int_0^T \int_{\Omega} \left( (\rho^{(h)} u^{(h)})(t) \cdot \fint_0^{h} \partial_t q(t+s) \circ \Phi^{(h)}_{s} ds  \right)  dz dt \to -\int_0^T \int_{\Omega}  \rho u \cdot \partial_t q  \,dz dt
 \end{align*}
 as the limit of the first term and
 \begin{align*}
  &\phantom{{}={}} -\int_0^T \int_{\Omega} \left( (\rho^{(h)} u^{(h)})(t) \cdot \fint_0^{h}  (u^{(h)}\cdot \nabla q)(t+s) \circ \Phi^{(h)}_s ds  \right)  dz dt \\
  &= -\int_0^T \int_{\Omega} \left( \fint_{-h}^0 (\rho^{(h)} u^{(h)})(t+s) \circ \Phi^{(h)}_s(t) ds \cdot   (u^{(h)}\cdot \nabla q)(t)   \right)  dz dt \\
  &\to -\int_0^T \int_{\Omega} \left( (\rho u)(t)  \cdot   (u \cdot \nabla q)(t)   \right)  dz dt
 \end{align*}
 for the second. Here we use that $\fint_{-h}^0 (\rho^{(h)} u^{(h)})(t+s) ds \to \rho u$ strongly and that the error given by difference $(\rho^{(h)} u^{(h)})(t+s) \circ \Phi_s^{(h)}(t) - (\rho^{(h)} u^{(h)})(t+s)$ vanishes as $h \to 0$.
 
 In total we get by a density argument that
 \begin{align} \label{weakSolNoSolid}
  \int_0^T \int_\Omega \mu \nablasym (u) \cdot \nabla q \, dz - \int_\Omega \rho u \cdot \left(\partial_t q +  (u\cdot \nabla q) \right) dz - \int_\Omega (f\cdot q) dz dt = 0
 \end{align}
 for all $q\in H^1_0(I;H^1_0(\Omega;\R^2))$ with $\nabla \cdot q = 0$ in $\Omega$ and $q(t,\eta(t,x)) =0$ for all $t\in[0,T]$,$x\in [0,\ell]$.

Next consider a fixed $\xi \in C_0^\infty(I\times [0,\ell];\R^2)$ such that $\int_0^\ell \xi \wedge \partial_x \eta dx = 0$. Using Proposition \ref{prop:arbMeanExt} we can extend this to a $\ext{\eta^{(h)}}(\xi)$ on $\Omega$ with $\ext{\eta^{(h)}}(\xi)(t) \circ \eta^{(h)}(t) = \xi$ for all $t$ and $\nabla \cdot \ext{\eta^{(h)}}(\xi)(t) = \lambda^{(h)}(t) \psi^+ - \lambda(t) \psi^-$ with the fixed two bump functions supported in $\Omega_\pm^{(h)}$ and $\lambda^{(h)}(t) = \int_0^\ell \xi \wedge \partial_x\eta \, dx$. The latter in particular implies that $\lambda^{(h)} \to 0$ uniformly.

Now per construction the pair $(\xi,\ext{\eta^{(h)}}(\xi))$ is admissible for \eqref{eq:longTimedelayed-pressure}. We can thus consider the limit $h \to 0$ of that equation. On the right hand side, in the limit $\int_0^T\lambda^{(h)}(t)P^{(h)}(t) dt \to 0$, so we only need to deal with the other terms.

Here we have
\begin{align*}
  \int_0^T \int_{0}^{\ell} \mathcal{L}_{\Gamma}(\eta^{(h)})  \cdot \xi \, dx dt &\to \int_0^T \int_{0}^{\ell} \mathcal{L}_{\Gamma}(\eta)  \cdot \xi \, dxdt\\
  \sqrt{\delta_{0}} \int_0^T \int_{0}^{\ell} \partial_{x}^3  \eta^{(h)} \cdot \partial_{x}^3 \xi \, dxdt& \to  0 \\
  \int_0^T\int_{\Omega} \mu \nablasym(u^{(h)}) \cdot \nabla (\ext{\eta^{(h)}}(\xi)) \, dzdt&\to  \int_0^T \int_{\Omega} \mu \nablasym (u) \cdot \nabla \ext{\eta}(\xi) \, dzdt \\
  \int_0^T\int_{\Omega} f \cdot (q^{(h)})  \, dz dt &\to   \int_0^T\int_{\Omega} f \cdot q  \, dz dt, 
\end{align*}
as simple pairs of weak times strong convergence for some subsequence, which follows from the uniform bounds on $\eta^{(h)}$ and $u^{(h)}$ as well as $\ext{\eta^{(h)}}$. Additionally note that $\ext{\eta^{(h)}}(\xi) \to \ext{\eta}(\xi)$, so the pair $(\xi,\ext{\eta}(\xi))$ also fulfills the required coupling conditions.

Further we find that by the uniform bounds on $\eta^{(h)},u^{(h)}$ and $\ext{\eta^{(h)}}$ as well as by \eqref{eq:delta0} that
\begin{align*}
 \abs{\int_0^T \int_\Omega \delta_0 \nabla \Delta u^{(h)}\nabla \Delta \ext{\eta^{(h)}}\, dx\, dt} \leq c\delta_0^\frac{1}{4}\sqrt{\delta}\norm[H^3(\Omega)]{u^{(h)}}\delta_0^\frac{1}{4}\norm[H^3(0,\ell)]{\eta^{(h)}}\leq C\delta_0^\frac{1}{4}\to 0.
\end{align*}

Additionally we have
\begin{align*}
 \int_0^T \int_{0}^{\ell} \rho_s \frac{\partial_t \eta^{(h)}(t) - \partial_t \eta^{(h)}(t-h)}{h}  \cdot \xi^{(h)} \, dx dt &= - \int_0^T \int_{0}^{\ell} \rho_s \partial_t \eta^{(h)}(t) \cdot \frac{\xi^{(h)}(t+h)-\xi^{(h)}(t)}{h} \, dx dt \\
 &\to -\int_0^T \int_{0}^{\ell} \rho_s  \partial_t \eta^{(h)}(t) \cdot \partial_t \xi \, dx dt.
\end{align*}

Finally we treat the kinetic term for the fluid like before. As any admissible pair $(\xi,q)$ can be decomposed into an extension of $\xi$ and a divergence free $q$ that is $0$ on the interface this then proves that $(\eta,u)$ is a weak solution.

Furthermore, considerning the energy inequality in Corollary \ref{longTimedelayedEnergyEstimate}, we know by Lemma \ref{thm:auba} that the two time averages converge to $\int_0^{\ell} \rho_s \frac{\abs{\partial_t \eta}^2}{2} dx$ and $\int_\Omega \rho \frac{u^2}{2} dz$ respectively and that all other terms on the left hand side are lower-semicontinuous with respect to $h \to 0$. As the force term on the right hand side is continuous, this then implies the energy inequality for our solutions.

Assuming that $\sup_{x\in[0,\ell]} \abs{\eta(x)} \leq \frac{\ell}{2}$, this then allows us to take the final data as new initial data and continue the solution from there. So either there exists a solution for all times $T> 0$, or there is a final time $T_{col}$ for which $\eta(T_{col},(0,\ell))\cap \partial\Omega \neq \emptyset$, i.e.\ we have a collision.

Finally we note that a pressure can be constructed as a distribution by 
\begin{align}
\label{eq:final-pres}
\begin{aligned}
 P(a) &:= \int_0^T \int_{0}^{\ell} \mathcal{L}_{\Gamma}(\eta)  \cdot \xi- \rho_s \partial_t \eta \cdot \partial_t \xi \, dx +   \int_{\Omega} \mu \nablasym (u) \cdot \nabla q \, dz \\ 
&  - \int_{\Omega} \rho u \cdot \left(  \partial_t q + u\cdot \nabla q \right) dz  -  \int_{\Omega} \left(f \cdot q \right) \, dz dt 
\end{aligned}
\end{align}
for any $a \in C_0^\infty(I\times \Omega)$ with $\int_\Omega a(t) dx = 0$ for all $t\in I$. Here $\xi$ and $q$ are constructed in the following fashion:

Pick two bump-functions $\psi_\pm$ as in Proposition \ref{prop:arbMeanExt}. Use this proposition to extend any constant function $\xi_0 \in C_0^\infty(I\times [0,\ell])$ such that $\lambda(t):= \int_0^\ell \xi_0(t) \wedge \partial_x \eta(t) dx \neq 0$ for all $t \in I$ to a $q_0(t,x)$ with $\nabla \cdot q_0(t,x) = \lambda(t) \psi_\pm$. 

For each $t\in I$, apply the Bogovskij-operators defined in Proposition \ref{theobog}, $\mathcal{B}_\pm$ on $\Omega_\pm(t)$ to $a(t)\chi_{\Omega_+(t)} - \psi^+ \int_{\Omega_+(t)} a(t)dz$ and $a(t)\chi_{\Omega_-(t)} - \psi^-\int_{\Omega_-(t)} a(t)\,dz$ respectively to get $q_\pm$. Then set
\begin{align*}
 \xi := - \frac{ \xi_0 }{\lambda(t)} \int_{\Omega_+(t)} a(t)\,dz \text{ and } q := q_+ + q_- - \frac{q_0}{\lambda(t)} \int_{\Omega_+(t)} a(t)dz.
\end{align*}
A short calculation reveals that $q(t) \circ \eta(t) = \xi(t)$ and $\nabla \cdot q(t) = a(t)$. Additionally by the estimates on $\eta$, $u$ and $\mathcal{B}_\pm$, $P$ is a bounded operator.%

Furthermore, since $(\eta,u)$ is a weak solution, $P(a)$ has the same value for any $(\xi,q)$ with $q(t) \circ \eta(t) = \xi(t)$ and $\nabla \cdot q(t) = a(t)$. As a result we finally have
\begin{align*}
 0 &= \int_0^T \int_{0}^{\ell} \mathcal{L}_{\Gamma}(\eta)  \cdot \xi- \rho_s \partial_t \eta \cdot \partial_t \xi \, dx +   \int_{\Omega} \mu \nablasym (u) \cdot \nabla q \, dz \\ \nonumber
&  - \int_{\Omega} \rho u \cdot \left(  \partial_t q + u\cdot \nabla q \right) dz  -  \int_{\Omega} \left(f \cdot q \right) \, dz dt - P(\nabla \cdot q). 
\end{align*}

\begin{remark}
\label{rem:pres1}
 Note that as usual, the resulting pressure can only be determined up to a constant, as we are working in a closed container. The same would be true if we instead would have used two individual $a_\pm \in C_0^\infty(I\times \Omega_\pm)$ with $\int_{\Omega_\pm(t)} a_\pm dx = 0$ to recover two individual pressures $p^\pm$ for the two subdomains. However this would have given us two constants and thus an additional degree of freedom.
 
 The solution to this apparent paradox is of course that the solid allows for an exchange of pressure between the two fluids. Given such $p_\pm$, we can relate both of their constants by considering any single pair of test-functions $(\xi,q)$ such that $\int_{\Omega_+} \nabla \cdot q = - \int_{\Omega_-} \nabla \cdot q \neq 0$. Testing the weak formulation with this, we then obtain an equation in which all terms are known, except for the difference in the two constants. They are thus determined.
 
 This is in contrast to the case of a single fluid, see Remark \ref{rem:pres2}.
\end{remark}

\section{Proof of Theorem~\ref{maintheorem2}}

The aim of this section is to show the weak existence of solutions to
\begin{align} \label{weak00}
& \phantom{{}+{}} \int_0^T \rho_{-} \left( \dfrac{d}{dt} \int_{\Omega_{-}^{\eta}(t)} u_{-} \cdot q_{-} \, dz - \int_{\Omega_{-}^{\eta}(t)} \left[ u_{-} \cdot \partial_{t} q_{-} + \left(u_{-} \otimes u_{-} \right) \cdot \nabla q_{-} \right] dz \right) \nonumber \\
&+   \mu_{-} \int_{\Omega_{-}^{\eta}(t)} \nablasym(u_{-}) \cdot \nabla q_{-} \, dz - \varepsilon_{0} \int_{0}^{\ell} \partial_{t}\eta \cdot \partial_{xx} \xi \, dx \\
&+  \rho_{s} \left( \dfrac{d}{dt} \int_{0}^{\ell} \partial_{t} \eta \cdot \xi \, dx - \int_{0}^{\ell} \partial_{t} \eta \cdot \partial_{t} \xi \, dx \right) + \int_{0}^{\ell} \left( \lambda_{0} \partial_{xx} \eta_{1} \partial_{xx} \xi_{1} + c_{1} \partial_{x} \eta_{1} \partial_{x} \xi_{1} \right) dx \nonumber \\
&+  c_{2} \int_{0}^{\ell} \partial_{xx} \eta_{2} \partial_{xx} \xi_{2} \, dx + 2\alpha \int_{0}^{\ell} \dfrac{\partial_{x} \eta_{1} \partial_{x} \xi_{1}}{| \partial_{x} \eta_{1} |^{2(\alpha + 1)}} \, dx dt =   \rho_{-} \int_{\Omega_{-}^{\eta}(t)} \left( f \cdot q_{-} \right) \, dz dt \nonumber
\end{align}
for all test functions $q_- \in H^1([0,T]; H^1(\Omega_-^\eta,\R^2)$ and $\xi \in V_k$ such that $q_-(t,\eta(t,x)) = \xi(t,x)$ for almost all $(t,x) \in [0,T) \times [0,\ell]$.

Additionally we claim that the energy inequality 
\begin{align} \label{oneSidedIneq}
\begin{aligned}
& \phantom{{}={}}\dfrac{\rho_{-}}{4} \| u_{-}(t) \|^{2}_{L^{2}(\Omega_{-}^{\eta}(t))}   + \dfrac{\mu_{-}}{2} \int_{0}^{t} \| \nablasym(u_{-})(s) \|^{2}_{L^{2}(\Omega_{-}^{\eta}(s))} \, ds + \dfrac{\rho_{s}}{2} \| \partial_{t} \eta(t) \|^{2}_{L^{2}(\Gamma^{\eta}(t))} +  \mathcal{E}_{K}(\eta)(t) \\
&  \leq  \dfrac{\rho_{-}}{2} \| u_{-}^{0} \|^{2}_{L^{2}(\Omega_{-})} + \dfrac{\rho_{s}}{2} \| v_{0} \|^{2}_{L^{2}(\Gamma)} + \mathcal{E}_{K}(\eta)(0) + \rho_{-} \int_{0}^{t} \| f(s) \|^{2}_{L^{2}(\Omega_{-}^{\eta}(s))} \, ds \text{ for all } t \in [0,T).
\end{aligned}
\end{align}
holds along the solution. 

To show this we now want to send $\mu_+ \to 0$ and $\rho_+ \to 0$ in order to obtain the vacuum-limit. We do so in such a way that $\frac{\rho_+}{\mu_+} \to 0$, which is necessary for dealing with the non-linear terms. 

Given $f \in L^{2}([0,T) \times \Omega;\mathbb{R}^2)$, consider a weak solution $(u_{\pm}^{(k)}, \eta^{(k)}) \in V_{S}$ of problem \eqref{weak0} where $\rho_-$ and $\mu_-$ are fixed and $\rho_+ := \frac{1}{k^2}$, $\mu_+ := \frac{1}{k}$. By Theorem \ref{maintheorem}
such solutions exist for all $k \in \N$. Additionally from the energy inequality we get the same uniform bounds as before, but we note that in particular
\begin{align*}
\sqrt{\rho_{+}} \, u_{+}^{(k)} \in L^{\infty}(0,T;L^{2}(\Omega_{+}^{\eta}(t);\mathbb{R}^2)) \text{ and } \sqrt{\mu_{+}} \, \nablasym(u_{+}^{(k)}) \in L^{2}(0,T;L^{2}(\Omega_{+}^{\eta}(t);\mathbb{R}^2)),
\end{align*}
are uniformly bounded. 
If we now take the limit $k\to \infty$, we again have converging subsequences such that
\begin{align*}
 \eta^{(k)} &\rightharpoonup \eta &&\text{ in } C_w([0,T];H^2((0,\ell);\R^2)) \\
 \partial_t \eta^{(k)} &\rightharpoonup \partial_t \eta &&\text{ in } L^2([0,T];H^1((0,\ell);\R^2)) \\
 u^{(k)}_{-} &\rightharpoonup u_{-} &&\text{ in } L^2([0,T];W_{div}^{1,2}(\Omega_{-};\R^2)) \\
\end{align*}

With this in hand, we can now consider the limit-equation using the same approach as before. 
In particular the strong convergence of $(\partial_t \eta^{(k)},u_{-}^{(k)})$ does follow by the very same arguments as in Subsection \ref{sec:aubin}. Indeed the interior parts are unchanged (as they were done for $u^\pm$ separately. But also the solid part related to $(\partial_t \eta^{(k)},\ext{\eta^{(k)}}(\partial_t \eta^{(k)}))$ can be handled analogously, as the extension relies on the uniform estimates of $\eta^{(k)}$ only (which are unchanged, except for the $\delta_0$-part), all terms connected to $u_+^{(k)}$ will vanish in a quantified and uniform way.

Hence we are left to pass to the limit with the equation.

First consider $q \in C^\infty([0,T]\times\Omega;\R^2)$ compactly supported in $[0,T] \times \Omega_\pm(t)$. There we can ignore all the solid-terms and all the terms linear in $u$ converge. In particular we have
\begin{align*}
 \abs{\int_{\Omega_+^\eta(t)} \rho_+ u_+ \cdot \partial_t q } &\leq \sqrt{\rho^+} \norm[\Omega_+^\eta(t)]{\sqrt{\rho_+} u_+} \norm[\Omega_+^\eta(t)]{\partial_t q} \to 0 \\
 \abs{\int_{\Omega_+^\eta(t)} \mu_+ \nablasym {u_+} \cdot \nablasym{q} } &\leq \sqrt{\mu_+} \norm[\Omega_+^\eta(t)]{\sqrt{\mu_+} \nablasym (u_+) } \norm[\Omega_+^\eta(t)]{\nablasym(q)} \to 0
\end{align*}
Using the classic Aubin-Lions lemma, we also get $u_\pm^{(k)} \to u_\pm$ in $L^2([0,T]\times \Omega)$. This implies
\begin{align*}
 \int_{\Omega_-^\eta(t)} \rho_- u_-^{(k)} \otimes u_-^{(k)} \cdot \nabla q \to  \int_{\Omega_-^\eta(t)} \rho_- u_- \otimes u_- \cdot \nabla q
\end{align*}
and in particular also
\begin{align*}
 \abs{\int_{\Omega_+^\eta(t)} \rho_+ u_+^{(k)} \otimes u_+^{(k)} \cdot \nabla q} \leq \rho_+ \norm[\Omega_+^\eta(t)]{u_+^{(k)}}^2 \norm[\infty]{\nabla q} \leq c \frac{\rho_+}{\mu_+} \norm[\Omega_+^\eta(t)]{ \sqrt{\mu_+} \nablasym (u_+^{(k)})}^2 \norm[\infty]{\nabla q} \to 0
\end{align*}
using that $\Omega_+^\eta(t)$ has a Korn-inequality with constants only depending on the bounds on $\mathcal{E}(\eta)$~\cite{DieRS10}.

Next we can proceed as in the previous proofs of convergence, by first constructing a relative pressure differential $P^{(k)}(t)$ using the almost solenoidal extension and then considering test functions on the solid, as well as their extension. Since this construction is mostly identical to our previous proofs, we omit further details.

Combining all possible test functions we finally get that our limit pair $(u_-,\eta)$ satisfies \eqref{weak00} for all pairs of test functions $(q,\xi)$ in $V_T^\eta$. Note that technically this space includes values for $q$ on $\Omega_+^\eta$, but while these do not occur in the equation, given $\xi$, a matching $q_+$ can always be found using the extension theorem.

\begin{remark}[Pressure in the one-sided model]
\label{rem:pres2}
 Similar to before, it is possible to reconstruct a pressure for the weak equation by effectively testing with vector-fields that are non-divergence free and then defining the pressure as an operator on their divergence. However now, since the test-function $q$ is no longer constrained to be zero on the whole boundary of $\partial \Omega$ but only on that part in common with $\partial \Omega_-$, as a consequence, we are no longer restricted to defining the pressure only on functions of mean zero. In other words, the pressure is now known fully, instead of only up to a constant.
 
 At first glance that might seem like a contradiction. While in the two sided model we had some additional information about the pressure differential between the two fluids mediated by the solid, the total pressure was still only known up to a constant. Yet now, after taking a limit this uncertainty suddenly seems resolved.
 
 The resolution is of course that reconstructing the pressure after the limit is not the same as taking the limit of reconstructed pressures. If we try the latter, similar to the previous proof, but on a formal level, then it turns out that not all terms related to $\Omega_+$ vanish in the weak equation. Instead the pressure term, which would be of the form $\int_{\Omega_+} p \nabla \cdot q \,dx$ has no dependence on either $\rho_+$ or $\mu_+$ and does not vanish once we send these two to zero. As a result the full limit equation including the pressure in strong form reads as $\nabla p = 0$ on $\Omega_+$.
 
 Unsurprisingly again, this allows us to only determine the total pressure up to a constant. However, it also tells us that the pressure in all of $\Omega_+$ is constant. As we assume that side to effectively be a vacuum, it is thus reasonable to set that constant to zero. This then results in the same absolute pressure as the one recovered from the weak equation.
 
 Consequentially the convergence in the proof of Theorem \ref{maintheorem2} should be seen first and foremost as a mathematical convergence with limited physical significance. This does however not diminish the physical meaning of the actual limit equation obtained.
\end{remark}

\section{Outlook: Non-flat reference geometries}
\label{ssec:outlook}

We have proven the existence of solutions for a rather specific configuration of a single beam stretched across a quadratic domain. In particular we wished to discuss a physical motion of a beam. We believe that the set up is general enough to allow to speculate for more settings to be treated by the here introduced method. %

To emphasize that, we discuss here the case when the solid is a union of curved beams. At each end they can be attached to the boundary of $\Omega$, to each other in a graph-like network, or they can be circular and thus free floating. Instead of a reference configuration it is thus better to describe them locally as curves. These might also partition $\Omega$ into a fixed number of subdomains instead of the $\Omega^\pm$ used previously.

We can then use this local parametrization to talk about the elastic energy. To account for curved beams, we now have to use the full tangential parametrization velocity $\abs{\partial_x \eta}$ instead of approximating it by $\partial_x \eta_1$. Similarly we have to work with the full curvature $\tfrac{\partial_x \eta \wedge \partial_{xx} \eta }{\abs{\partial_x \eta}}$ instead of $\partial_{xx} \eta_2$.  Specifically if a part of the solid is given locally by $\eta: I \to \Omega$ for some interval $I$ one would analogously consider an energy density of the form
\begin{align*}
E(x,\eta,\partial_x \eta, \partial_{xx} \eta) := c_h\abs{ \abs{\partial_x \eta} - \gamma}^2 + \frac{1}{\abs{\partial_x \eta}^{2\alpha}} + \frac{c_c}{2} \abs{ \frac{\partial_x \eta \wedge \partial_{xx} \eta }{\abs{\partial_x \eta}} - H}^2%
\end{align*}
where $\gamma: I \to \R$ with $\min_I \gamma > 0$ is a sufficiently smooth ``reference length'' and similarly $H: I \to \R$ is a reference curvature. Of course again much more general energies are possible in particular while only adding terms of lower order, but this choice has terms corresponding to the ones in our simplified model.

With the current method, again some regularizer needs to be added (e.g.\ $\frac{\lambda_0}{2} \abs{\frac{\partial_x \eta \cdot \partial_{xx} \eta}{\abs{\partial_x \eta}} }^2$). The deciding property for it is that we need to be able to estimate $\norm[L^2(I)]{\partial_x \abs{\partial_x \eta}}^2$ as this will allow us to prove a lower bound on $\abs{\partial_x \eta}$ in conjunction with the second term by exactly the same argument as before.

Such an energy and a configuration then would allow to construct the time-discrete approximations as before. Once this is done, one could use the quasi-linear nature of the resulting Euler-Lagrange equations to conclude convergence as was done in the flat case in this paper. 

What is more involving is proving that for a fixed initial configuration there is a minimal time until contact. While different parts of the solid can only come close to each other slowly and thus can be handled in essentially the same way as the solid touching $\partial \Omega$,  we now no longer can immediately exclude the possibility of a part of the solid bending back on itself, as we could by showing that $\partial_x \eta_1 > 0$ before. Here a geometrical argument is needed: If there are two points $x$ and $x' \in I$ such that $\eta(x) = \eta(x')$, then $\partial_x \eta$ needs to have changed its direction completely in the intervening interval $[x,x']$. As $\abs{\partial_x \eta}$ is uniformly bounded from below, this means that there are points $\hat{x},\hat{x}' \in [x,x']$ for which $\abs{\partial_x \eta(\hat{x}) -\partial_x \eta(\hat{x}')}$ is larger then twice this lower bound. But since we also control the $L^2$ norm of the second derivative, this means that $\abs{\hat{x}-\hat{x}'}$ and thus $\abs{x-x'}$ is uniformly (but energy dependent) bounded from below. In other words, by localizing, we can treat all potential collisions as collisions of different parts of the solid.

With this preparations one can follow the here outlined proof strategy: First, the variational scheme is absolutely capable of handling non-linear energies (that is exactly its huge advantage to other strategies). Further, since the geometry is uniformly non-degenerate a respective extension can be built and hence the compactness argument can be performed (for instance via a localisation). That then allows to pass to the limit and produce a solution.

Similar results can also be envisioned for 3d-fluids interacting with 2d shells, though in that case it is more complicated to correctly deal with local injectivity.

\subsection*{Declarations}
This work has been supported by the Primus research programme PRIMUS/19/SCI/01, the University Centre UNCE/SCI/023 of Charles University as well as the grants GJ19-11707Y of the Czech national grant agency (GA\v{C}R) and the first and second author acknowledge ECR-CZ grant LL2105. The second author wishes to thank the University of Vienna for their kind hospitality. The third Author acknowledges the support of the Fédération Wallonie - Bruxelles (ARC Advanced Grant at the Université Libre de Bruxelles). There is no conflict of interest.

\appendix

 \bibliographystyle{plainnat}
\bibliography{bib}

\subsection*{Appendix: A more thorough derivation of the reduced model for the solid}

In \cite{BenKamSch20} existence for a similar problem has already been shown in the case of a general bulk solid. As we are dealing with a more specific geometry, we have access to a few additional assumptions, which allow us to linearize some aspects of the model while at the same time sending the thickness to zero.

Consider deformations $\eta^{(\epsilon)}$ from the reference domain $Q_\epsilon := (0,\ell) \times (-\epsilon,\epsilon)$ to $\Omega$, which keep the lateral boundary $\{0,\ell\} \times (-\epsilon,\epsilon)$ constant as well as a hyperelastic energy of the form
\[ \fint_{Q_\epsilon} W_\epsilon(\nabla \eta^{(\epsilon)}) + H_\epsilon(\nabla^2 \eta^{(\epsilon)}) dx.\]

For now we will ignore the higher order term and focus on the first one. The simplest approach to modeling a solid would be as linear elastic, i.e. using a quadratic form $C(I-\nabla \eta^{(\epsilon)})$. However the key ingredient needed for fluid structure interaction is the ability to switch between Lagrangian and Eulerian description using the deformation. As such we in particular require that the resulting volume and surface elements are not degenerate; or translated to the specific problem, we in particular require that
\[W_\epsilon(A) \to \infty \text{ whenever } \det A \to 0 \text{ or } \abs{A} \to \infty. \] 
To additionally penalize the the first limit we then choose
\[W_\epsilon(A) = \frac{1}{(\det A)^{2\alpha}} + C( I-A)\]
for some $\alpha > 0$ to be determined later.

Now to take the limit $\epsilon \to 0$ we assume that our deformation is given by a thickened curve $\eta:(0,\ell) \times (-\epsilon,\epsilon) \to \Omega$, i.e.
\[\eta^{(\epsilon)}(x,y) := \eta(x) + n_\eta(x) y \]
where $n_\eta(x)$ is the upwards pointing unit normal vector to the curve $\eta$ at the point $x$. In particular we have 
\[\det(\nabla \eta^{(\epsilon)}) = \det \left(\partial_x \eta \middle| \frac{\partial_x \eta^\bot}{\abs{\partial_x \eta}}\right) =  \abs{\partial_x \eta} \approx \abs{\partial_x \eta_1}\]
as we assume that $\partial_x \eta \approx (1,0)$.
Together with the well known theory for linear elastic plates (see e.g.~\cite{CiarletBook2}) we then can take the formal limit
\[\lim_{\epsilon \to 0} \int_{Q_\epsilon} W_\epsilon (\nabla \eta_\epsilon) dx \approx \int_0^\ell \frac{1}{\abs{\partial_x \eta_1 }^{2\alpha}} + c_1\abs{\partial_x \eta_1-1}^2 + c_2\frac{\abs{\partial_{xx} \eta_2}^2}{2}  dx\] 
for constants $c_1,c_2 >0$.

This leaves us to deal with the higher order term. In \cite{BenKamSch20} this term was of the form $|\nabla^2 \eta|^q$, with $q$ larger than the bulk dimension which in our case would be two. However the main purpose of this term is to obtain uniform Hölder regularity in order to obtain a more uniform control on the other terms. As we are primarily interested in the limit model which is one dimensional it is enough to choose $q=2$. Additionally we already control one of the components of $\eta$ via the curvature term $c_c\abs{\partial_{xx} \eta_2}^2$, so we only need to add another term for $\partial_{xx} \eta_1$.

In total we then have the energy
\[\mathcal{E}_{K}(\eta) := \int_0^l \left(c_h\abs{\partial_x \eta_1-1}^2 + \frac{1}{\abs{\partial_x \eta_1}^{2\alpha}} + c_c\frac{\abs{\partial_{xx} \eta_2}^2}{2} + \lambda_0 \frac{\abs{\partial_{xx} \eta_1}^2}{2} \right)dx \]
with $\lambda_0 >0$ needed for additional regularisation. 

\vspace{18mm}

\begin{minipage}{80mm}
\textbf{Malte Kampschulte}\\
Department of Mathematical Analysis\\
Faculty of Mathematics and Physics\\
Charles University in Prague\\
Sokolovská 83\\
186 75 Prague - Czech Republic\\
{\bf E-mail}: kampschulte@karlin.mff.cuni.cz\\

\textbf{Gianmarco Sperone}\\
Département de Mathématique\\
Université Libre de Bruxelles\\
Boulevard du Triomphe 155\\
1050 Brussels - Belgium\\
{\bf E-mail}: gianmarco.sperone@ulb.be
\end{minipage}
\hfill
\begin{minipage}{80mm}
\textbf{Sebastian Schwarzacher}\\
Department of Mathematical Analysis\\
Faculty of Mathematics and Physics\\
Charles University in Prague\\
Sokolovská 83\\
186 75 Prague - Czech Republic\\

Department of Mathematics\\
Analysis and Partial Differential Equations\\
Uppsala University\\
L\"agerhyddsv\"agen 1\\
752 37 Uppsala - Sweden

{\bf E-mail}: schwarz@karlin.mff.cuni.cz,\\
sebastian.schwarzacher@math.uu.se
\end{minipage}

\vspace{1cm}
\begin{minipage}{80mm}
\end{minipage}

\end{document}